\documentclass[11pt, a4paper]{amsart}

\title[Operator algebras and Subproduct Systems from Stochastic Matrices]{
Operator algebras and subproduct systems arising from stochastic matrices
}
\keywords{Non-self-adjoint operator algebras; tensor algebra; 
subproduct system; Cuntz-Pimsner algebra; cp-semigroup;
stochastic matrix}
\subjclass[2000]{Primary: 47L30, 46L55, 46L57. 
Secondary: 46L08, 60J10}
\author{Adam Dor-On}
\author[Daniel Markiewicz]{Daniel Markiewicz*}
\address{Adam Dor-On and Daniel Markiewicz,
Department of Mathematics,
Ben-Gurion University of the Negev, 
P.O.B. 653, Beersheva 84105,
Israel.} 
\email{adamd@math.bgu.ac.il, danielm@math.bgu.ac.il}
\thanks{The first author was partially supported by 
GIF (German-Israeli Foundation) research grant No. 2297-2282.6/201, and the second author was 
partially supported by
grant 2008295 from the U.S.-Israel Binational Science Foundation.}
\thanks{*corresponding author}

\date{\today}

\usepackage[a4paper, margin=2.3cm]{geometry}
\usepackage{amssymb}
\usepackage{xspace}
\hfuzz=10pt
\vfuzz=10pt

\renewcommand{\MR}[1]{} 
\newcommand{\la}{\langle}
\newcommand{\ra}{\rangle}
\newcommand{\vnM}{\mathcal{M}}
\newcommand{\vnN}{\mathcal{N}}
\newcommand{\vnMaS}{\ell^{\infty}(\Omega)}
\newcommand{\AS}{Arveson-Stinespring}
\newcommand{\St}{\Omega}
\DeclareMathOperator{\mindeg}{mindeg}
\DeclareMathOperator{\Res}{Res}

\DeclareMathOperator{\Span}{Span}
\DeclareMathOperator{\Ad}{Ad}
\newcommand{\nn}{\mathbb{N}}

\newcommand{\cc}{\mathbb{C}}
\newcommand{\tensor}{\mathcal{T}_+}

\DeclareMathOperator{\Diag}{Diag}
\newcommand{\cala}{\mathcal{A}}
\newcommand{\calb}{\mathcal{B}}
\newcommand{\cals}{\mathcal{S}}
\newcommand{\calm}{\vnM}
\newcommand{\streamlined}{streamlined\xspace}

\newcommand{\LL}{\mathcal{L}}
\newcommand{\ee}{\mathcal{E}}
\newcommand{\rp}{{\mathcal{R}_P}}
\newcommand{\calr}{\mathcal{R}}

\newcommand{\torus}{\mathbb{T}}
\newcommand{\disk}{\mathbb{D}}

\newtheorem{theorem}{Theorem}[section]
\newtheorem{defi}[theorem]{Definition}
\newtheorem{lemma}[theorem]{Lemma}

\newtheorem{proposition}[theorem]{Proposition}
\newtheorem{corollary}[theorem]{Corollary}
\newtheorem{remark}[theorem]{Remark}
\newtheorem{notation}[theorem]{Notation}
\newtheorem{example}[theorem]{Example}
\numberwithin{equation}{section}
%
%

\begin{document}

\begin{abstract}
We study subproduct systems in the sense of Shalit and 
Solel arising from stochastic matrices on countable
state spaces, and their associated operator algebras.
We focus on the non-self-adjoint tensor algebra, and
Viselter's generalization of the Cuntz-Pimsner 
C*-algebra to the context of subproduct systems.
Suppose that $X$ and $Y$ are Arveson-Stinespring 
subproduct systems associated to two stochastic matrices 
over a countable set $\Omega$, and 
let $\tensor(X)$ and $\tensor(Y)$ be their tensor
algebras. We show that every algebraic
isomorphism from $\tensor(X)$ onto $\tensor(Y)$
is automatically bounded. Furthermore, 
$\tensor(X)$ and $\tensor(Y)$
are isometrically isomorphic if and only if 
$X$ and $Y$ are unitarily isomorphic up to
a *-automorphism of $\vnMaS$. When $\Omega$
is finite, we prove that $\tensor(X)$ and 
$\tensor(Y)$ are algebraically isomorphic if and only 
if there exists a similarity between $X$ and $Y$ up
to a *-automorphism of $\vnMaS$.
Moreover, we provide an explicit description of the Cuntz-Pimsner algebra $\mathcal{O}(X)$ 
in the case where $\Omega$ is finite and the stochastic matrix is 
essential.
\end{abstract}
\maketitle


\section{Introduction}

In this paper we study the structure of tensor and Cuntz-Pimsner algebras (in the sense of
 Viselter~\cite{Vis}) associated to subproduct systems, 
 and to what extent these algebras provide invariants 
 for their subproduct systems. 
These algebras generalize the tensor and Cuntz-Pimsner 
operator algebras associated to 
C*-correspondences, which have been the focus of considerable interest by many researchers.
tensor  algebras of a C*-correspondence, in particular, have been the subject of
a deep study by 
Muhly and Solel~\cite{muhly-solel-tensor, 
muhly-solel-morita, MS}, which has led
into a far-reaching non-commutative generalization of
function theory. 
We will focus on subproduct systems associated to stochastic matrices,
and in this context we prove several results which have a close parallel
in the work of Davidson, Ramsey and Shalit~\cite{DRS} 
on the isomorphism problem of tensor algebras of 
subproduct systems over $\cc$ with finite dimensional 
(Hilbert space) fibers.

A subproduct system over a W*-algebra $\vnM$ (and
over the additive semigroup $\nn$) is a family
$\{ X_n \}_{n\in\nn}$ of W*-correspondences over $\vnM$
endowed with an isometric comultiplication  
$X_{n+m} \to X_n \otimes X_m$ which is an adjointable 
$\vnN$-bimodule map for every $n,m$. 
 Subproduct
systems were first defined and studied for their own
sake by Shalit and Solel~\cite{SS}, and 
in the special case of $\vnM=\cc$
they were also independently studied under the name
of inclusion systems by
Bhat and Mukherjee~\cite{BM}. 
Subproduct systems had appeared implicitly earlier in the work of many
researchers 
 in the study of dilations of semigroups
of completely positive maps (cp-semigroups for short) 
on von Neumann algebras and later C*-algebras (see for
example \cite{BhS, MS, Dan}).
The study of cp-semigroups is closely related
to the analysis of E$_0$-semigroups and
product systems
pioneered by Arveson and Powers
(for a comprehensive introduction
 see \cite{arv-monograph},
and also 
 \cite{skeide-habilitation} for product systems of Hilbert
 modules).

Given a correspondence $E$ over a 
C*-algebra $\mathcal{A}$, the Toeplitz C*-algebra $\mathcal{T}(E)$ and the
Cuntz-Pimsner C*-algebra $\mathcal{O}(E)$ were introduced by 
Pimsner~\cite{Pims},  and
modified by Katsura~\cite{Kats} in the case of non-injective
left action of $\mathcal{A}$. As is well-known, in general 
the Cuntz-Pimsner
algebra does not provide a very strong invariant of the
underlying correspondence. However, some information does
remain. In the case of graph C*-algebras, for example, if
a graph is row-finite, then its C*-algebra is simple if and only if the graph is cofinal and every cycle has an entry. And it is easy to find  two graphs with $d$ vertices and
irreducible adjacency matrix whose C*-algebras are not isomorphic (see \cite{raeburn-graph-algebras}). In contrast, in section~\ref{section:cuntz-pimsner-stochastic} we show that
if $X$ is the Arveson-Stinespring subproduct system of a $d \times d$ irreducible stochastic matrix, then $\mathcal{O}(X) \cong C(\mathbb{T})\otimes M_d(\cc)$. More generally,
we also provide an explicit description for the
Cuntz-Pimsner algebra of a subproduct system associated to  essential finite stochastic matrices.

On the other hand, the non-self-adjoint tensor
algebra $\mathcal{T}_+(E)$ of a C*-correspondence $E$ over 
$\mathcal{A}$ has 
often proven to be a strong invariant of the correspondence.
Muhly and Solel~\cite{muhly-solel-morita} proved that
if $E$ and $F$ are aperiodic C*-correspondences, then
$\tensor(E)$ is  isometrically isomorphic to $\tensor(F)$ if
and only if $E$ and $F$ are isometrically isomorphic as $\mathcal{A}$-bimodules. Similarly, Katsoulis and Kribs~\cite{katsoulis-kribs} and Solel~\cite{solel-quiver}
proved that if $G$ and $G'$ are countable directed graphs,
then the tensor algebras $\tensor(G)$ and $\tensor(G')$ are
isomorphic as algebras if and only if $G$ and $G'$ 
are isomorphic as directed graphs. 
See also Davidson and Katsoulis~\cite{davidson-katsoulis} 
for another important
example of this phenomenon of increased acuity of the normed (non-self-adjoint)
algebras as opposed to C*-algebras, perhaps first recognized in
Arveson~\cite{arv-non-self-adjoint} and Arveson and Josephson~\cite{arv-josephson}.

The tensor algebras of subproduct
systems were first considered by
Solel and Shalit~\cite{SS} in the special case
of $\vnM=\cc$, and they analyzed the
problem of \emph{graded} isomorphism of their tensor algebras.
The general isomorphism problem for such subproduct systems
was resolved by Davidson, 
Ramsey and Shalit~\cite{DRS}. They proved that
if $X, Y$ are subproduct systems of finite-dimensional
Hilbert space fibers, then $\tensor(X)$ and $\tensor(Y)$
are isometrically isomorphic if and only if $X$ and $Y$
are (unitarily) isomorphic. 

On the other hand, the recent work of Gurevich~\cite{G}
provides a useful contrast. 
Although in this paper we focus on subproduct systems
over $\nn$, it is possible to consider
more general semigroups. Gurevich 
studied subproduct systems over the semigroup $\mathbb{N} \times \mathbb{N}$, with finite dimensional Hilbert space fibers. He proved in \cite{G}
that subproduct systems of that type in a certain large
class can be distinguished by their tensor algebras, however 
he also provided an example of two non-isomorphic 
subproduct systems over $\nn\times\nn$ of
finite dimensional Hilbert space fibers
whose tensor algebras are isometrically isomorphic.

The following are our main results. 
Suppose that $X$ and $Y$ are Arveson-Stinespring 
subproduct systems over $\vnM=\vnMaS$, 
associated to two stochastic matrices over a 
countable set $\Omega$, and 
let $\tensor(X)$ and $\tensor(Y)$ be their tensor
algebras. Then $\tensor(X)$ and $\tensor(Y)$
are isometrically isomorphic if and only if 
$X$ and $Y$ are unitarily isomorphic up to
a *-automorphism of $\vnMaS$. Every algebraic
isomorphism from $\tensor(X)$ onto $\tensor(Y)$
is automatically bounded. Furthermore, when
 $\Omega$ is finite, $\tensor(X)$ and 
$\tensor(Y)$ are algebraically isomorphic if and only 
if there exists a similarity between $X$ and $Y$ up
to a *-automorphism of $\vnMaS$, in the appropriate sense.

We now describe the structure of this paper.
In section~\ref{section:preliminaries} we review some preliminary material.
In section~\ref{section:subproduct-systems-stochastic} we describe the subproduct system
of a stochastic matrix, and in Theorem~\ref{theorem:subproduct-systems-isomorphism} we provide an effective isomorphism theorem for such objects.
In section~\ref{section:cuntz-pimsner} we review some basic facts about the
Cuntz-Pimsner algebra of a subproduct system, and in
section~\ref{section:cuntz-pimsner-stochastic} we characterize the Cuntz-Pimsner 
C*-algebra of the subproduct system of 
essential stochastic matrices. Finally, in 
section~\ref{section:tensor-algebras} we begin the study of the tensor 
algebra of a general subproduct system, 
culminating in the main results for the case
of stochastic matrices in 
section~\ref{section:tensor-algebras-stochastic}.


\section{Preliminaries}\label{section:preliminaries}


\subsection*{Stochastic Matrices}
We review the basic terminology and facts about stochastic matrices that are relevant to our study.  

\begin{defi}
Let $\St$ be a countable set. A stochastic matrix is a function $P : \St \times \St \rightarrow \mathbb{R}_+$ such that for all $i\in \St$ we have
$ \sum_{j\in \St}P_{ij} = 1 $. The set $\St$ is called the \emph{state set or space} of the matrix $P$, and elements of $\St$ are called \emph{states} of $P$.
\end{defi}

Two stochastic matrices $P$ and $Q$ can be multiplied to obtain a new stochastic matrix $PQ$ defined 
by $(PQ)_{ik} = \sum_{j\in \St}P_{ij}Q_{jk}$. Henceforth, we 
will denote by $P^n$ the product of $P$ with itself $n$ times, and by $P^{(n)}_{ij}: = (P^n)_{ij}$ the $(i,j)$-th entry of $P^n$.

\begin{defi}
Let $P$ be a stochastic matrix on a state space $\St$. Denote by $Gr(P)$ the matrix representing the directed graph of $P$ which is defined to be 
$$ 
Gr(P)_{ij} = \left\{     
\begin{array}{lr}       
1, &  P_{ij} > 0 \\       
0, &  P_{ij} = 0     
\end{array}   \right.
$$
\end{defi}

\begin{defi}
Let $P$ and $Q$ be stochastic matrices on a state space $\St$, let  $\sigma :\St \rightarrow \St$
be a permutation with corresponding permutation matrix $R_{\sigma} = \big[\delta_{i}(\sigma(j))\big]$.

We say that $P$ is \emph{graph isomorphic} to $Q$ through 
$\sigma$ and write $P \sim_{\sigma} Q$ if $R_{\sigma}^{-1} Gr(Q) R_{\sigma} = Gr(P) $. We will also say that $P$ is  \emph{isomorphic} to $Q$ through $\sigma$ and write $P \cong_{\sigma} Q$ if $R_{\sigma}^{-1} Q R_{\sigma} = P$.

\end{defi}

Thus we have that $P\sim_\sigma Q$ if the directed 
graphs of $P$ and $Q$ are isomorphic,
while $P\cong_\sigma Q$ if the \emph{weighted} directed graphs of $P$ and $Q$ are isomorphic.

\begin{defi} \label{definition:paths}
Let $P$ be a stochastic matrix over a state set $\St$. A \emph{path} in $P$ is a path in the directed graph of $P$, that is a function $\gamma : \{0,...,\ell \} \rightarrow \St$ such that $P_{\gamma(k)\gamma(k+1)} > 0$ for every $0\leq k \leq \ell - 1$. The path $\gamma$ is said to be a cycle if $\gamma(0) = \gamma(\ell)$. We will say that two states $i, j$ 
\emph{communicate} if and only if there exists a path from
$i$ to $j$ and vice-versa.
\end{defi}

It is clear that the communication relation is an equivalence relation. 
Note also that a path from $i$ to $j$ of length $n$ exists if and only if $P^n_{ij} > 0$.

\begin{defi}
Let $P$ be a stochastic matrix over a state set $\St$, and let $i\in \St$.
\begin{enumerate}
\item
The \emph{period} of $i$ is $r(i) = \gcd \{ \ n \ | \ P^{(n)}_{ii} >0 \ \}$. If no such $r(i)$ exists, or if $r(i) = 1$ we say that $i$ is \emph{aperiodic}.
\item
$i\in \St$ is said to be \emph{inessential} in $P$ if there is some $j \in \St$ and $n\in \mathbb{N}$ such that $P^n_{ij} > 0$ but $P^m_{ji} = 0 $ for all $m\in \mathbb{N}$. $P$ is said to be \emph{essential} if it has no inessential states.
\item
$P$ is said to be \emph{irreducible} if any pair $i,j \in \St$ communicates in $P$.
\end{enumerate}
\end{defi}

Clearly, irreducible stochastic matrices are automatically essential. Further note that for an essential state $i\in \St$, the number $r(i)$ is always well-defined.

\begin{defi}
Let $P$ be a stochastic matrix over a state set $\St$. A state $i\in \St$ is said to be \emph{transient} if the series $\sum_{n\in \mathbb{N}}P^{(n)}_{ii}$ converges, and otherwise \emph{recurrent}. If $i \in \St$ is recurrent then it is \emph{null-recurrent} if $P^{(n)}_{ii} \rightarrow_{n \rightarrow \infty} 0$, and \emph{positive-recurrent} otherwise. We say that $P$ is transient/recurrent if all states are transient/recurrent in $P$, respectively.
\end{defi}

In an irreducible stochastic matrix, all states have 
the same period and classification in terms of recurrence type.  
We also note that recurrent states are essential (see
 \cite[Part I, Section 4, Theorem 4]{Chung}). 

The next two theorems can be found in various forms in the literature, see \cite[Part I, Section 3]{Chung}, \cite[Chapter XV, Section 6]{Feller}. We restate them in a form convenient for our purposes.

\begin{theorem} \label{theorem:graph-decomposition}
(Irreducible decomposition for essential matrices) \\
Let $P$ be an essential stochastic matrix over a state set $\St$. Let 
$(\Omega_\alpha)_{\alpha \in A}$ be the partition of $\Omega$ into
equivalence classes of communicating states. 
With the appropriate enumeration of $\Omega$, the
matrix $P$ decomposes into a block diagonal matrix whose diagonal
blocks are irreducible stochastic matrices corresponding to the restriction
of $P$ to $\Omega_\alpha \times \Omega_\alpha$, for $\alpha \in A$.
%
\end{theorem}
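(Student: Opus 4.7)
The plan is to reduce the block decomposition claim to a single observation about essential matrices, namely that $P_{ij}>0$ forces $i$ and $j$ to communicate. Granting this, the partition into communicating classes immediately yields the desired block structure.

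First I would verify that the communication relation is an equivalence relation on $\Omega$, which is routine: reflexivity uses the length-zero path, symmetry is built into the definition, and transitivity follows by concatenating paths (equivalently, by the fact that $P^{n+m}_{ik}\geq P^{n}_{ij}P^{m}_{jk}$). So $(\Omega_\alpha)_{\alpha\in A}$ is a well-defined partition of $\Omega$.

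Next comes the key step. Suppose $P$ is essential and fix $i\in\Omega_\alpha$. I claim that $P_{ij}=0$ whenever $j\notin\Omega_\alpha$. Indeed, if $P_{ij}>0$ there is a path of length $1$ from $i$ to $j$, so already $P^{1}_{ij}>0$. Since $i$ and $j$ do not belong to the same communicating class, there can be no path from $j$ back to $i$, i.e.\ $P^{m}_{ji}=0$ for every $m\in\mathbb{N}$. But this exhibits $i$ as an inessential state in the sense of the definition, contradicting the hypothesis. Hence $P_{ij}=0$ for every such $j$. The same argument, applied inductively (or by iterating the previous observation, since the zero block persists under multiplication), shows $P^n_{ij}=0$ for all $n\geq 1$ and $j\notin\Omega_\alpha$; one only uses the one-step statement to obtain the block decomposition itself.

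Now enumerate $\Omega$ so that the states in each class $\Omega_\alpha$ appear consecutively. The previous step implies that $P_{ij}=0$ whenever $i$ and $j$ lie in different classes, so in this enumeration $P$ is block diagonal, with blocks indexed by $A$. Denote by $P^{(\alpha)}$ the restriction of $P$ to $\Omega_\alpha\times\Omega_\alpha$. For $i\in\Omega_\alpha$,
\begin{equation*}
\sum_{j\in\Omega_\alpha}P^{(\alpha)}_{ij}=\sum_{j\in\Omega_\alpha}P_{ij}=\sum_{j\in\Omega}P_{ij}=1,
\end{equation*}
so each diagonal block $P^{(\alpha)}$ is a stochastic matrix on $\Omega_\alpha$. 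Finally, $P^{(\alpha)}$ is irreducible because any two states in $\Omega_\alpha$ communicate in $P$ by definition, and, as just shown, every path connecting two states of $\Omega_\alpha$ stays inside $\Omega_\alpha$, so the same communication holds within $P^{(\alpha)}$.

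The only place where any care is required is the key step: the essentiality hypothesis enters at exactly one point, to exclude the possibility of a transition from one communicating class to another. Once that observation is made, the block structure, stochasticity of blocks, and irreducibility are all immediate.
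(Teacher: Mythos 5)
Your proof is correct. The paper does not actually supply a proof of this theorem (it is stated as a restatement of classical results, with references to Chung and Feller), so there is nothing to compare against; your argument is the standard one, and the key step --- that essentiality forbids a one-step transition out of a communicating class, since $P_{ij}>0$ together with the absence of any return path from $j$ to $i$ would make $i$ inessential --- is exactly where the hypothesis is needed. The remaining assertions (stochasticity and irreducibility of the diagonal blocks) follow as you say.
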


As a corollary, we observe that complete reducibility is equivalent to essentiality.

\begin{theorem} \label{theorem:cyclic-graph-decomposition}
(Cyclic decomposition for periodic irreducible matrices)\\
Let $P$ be an irreducible stochastic matrix over a state set $\St$ with period $r$, and let $\omega \in\Omega$.  For each $\ell=0, \dots r-1$, let
$\Omega_\ell = \{ j \in \Omega \mid P_{\omega j}^{(n)}>0 \implies  n\equiv \ell 
\mod r \}$. Then the family $(\Omega_\ell)_{\ell=0}^{r-1}$ is a partition of 
$\Omega$. Furthermore if $j\in \Omega_{\ell}$ then there exists $N(j)$ such that for all $n\geq N(j)$ we have $P^{(nr + \ell)}_{\omega j}>0$. In fact,
with an appropriate enumeration of $\Omega$, there exist stochastic matrices $P_0,...P_{r-1}$ such that $P$ has the 
following cyclic block decomposition:
\begin{equation*}
\left[\begin{smallmatrix}
       0 & P_0  & \cdots & 0 \\
       \vdots  & \ddots  & \ddots & \vdots  \\
       0 &  \cdots & 0 & P_{r-2} \\
       P_{r-1}  & \cdots & 0 & 0
     \end{smallmatrix} \right]
\end{equation*}
Where the rows of $P_{\ell}$ in this matrix decomposition are indexed by $\Omega_{\ell}$ for all $0 \leq \ell < r$.
\end{theorem}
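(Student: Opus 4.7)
The plan is to split the theorem into three pieces: that $(\Omega_\ell)_{\ell=0}^{r-1}$ is a partition of $\Omega$, that $P^{(nr+\ell)}_{\omega j} > 0$ eventually in $n$ for each $j \in \Omega_\ell$, and that $P$ has the displayed cyclic block form. The chief input is the classical fact that an additive subsemigroup $S \subseteq \nn$ with $\gcd S = r$ contains every sufficiently large multiple of $r$.

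First I would fix $j \in \Omega$ and prove that all lengths of paths from $\omega$ to $j$ lie in a single residue class modulo $r$. By irreducibility there are $n, m \in \nn$ with $P^{(n)}_{\omega j} > 0$ and $P^{(m)}_{j \omega} > 0$, so $P^{(n+m)}_{\omega \omega} \geq P^{(n)}_{\omega j} P^{(m)}_{j \omega} > 0$ and hence $r \mid n+m$. If $n'$ is any other path length from $\omega$ to $j$, the same concatenation gives $r \mid n'+m$, so $r \mid n - n'$. This shows $j$ belongs to exactly one $\Omega_\ell$, proving both that the $\Omega_\ell$ cover $\Omega$ and that they are pairwise disjoint.

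Next, let $S = \{n \in \nn : P^{(n)}_{\omega\omega} > 0\}$. Concatenation of loops makes $S$ closed under addition, and by definition of the period $\gcd S = r$. The number-theoretic lemma cited above then furnishes an integer $N_0$ with $nr \in S$ for all $n \geq N_0$. Given $j \in \Omega_\ell$, fix any path from $\omega$ to $j$ of length $n_0 = k_0 r + \ell$ with $P^{(n_0)}_{\omega j} > 0$. For every $n \geq N(j) := N_0 + k_0$ we then have
\[
P^{(nr+\ell)}_{\omega j} \;\geq\; P^{((n-k_0)r)}_{\omega\omega} \, P^{(n_0)}_{\omega j} \;>\; 0,
\]
which is the claimed eventual positivity.

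Finally, to produce the cyclic block decomposition I would show that a single step of $P$ carries $\Omega_\ell$ into $\Omega_{\ell+1 \bmod r}$: if $i \in \Omega_\ell$ and $P_{ij}>0$, then concatenating a path $\omega \to i$ of length $\equiv \ell \pmod r$ with the edge $i \to j$ exhibits a path $\omega \to j$ of length $\equiv \ell + 1 \pmod r$, forcing $j \in \Omega_{\ell+1 \bmod r}$. Enumerating the states of $\Omega$ block by block in the order $\Omega_0, \Omega_1, \dots, \Omega_{r-1}$ then puts the nonzero entries of $P$ exactly on the off-diagonal blocks indicated in the displayed matrix, with $P_\ell$ recording the transitions from $\Omega_\ell$ to $\Omega_{\ell+1 \bmod r}$. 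The only subtle step is the first one: the well-definedness of $\Omega_\ell$ hinges on showing that \emph{every} path length from $\omega$ to $j$ shares the same residue modulo $r$, and once that is established the remaining assertions follow by direct bookkeeping.
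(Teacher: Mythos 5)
Your proof is correct: the residue-class argument for well-definedness of the $\Omega_\ell$, the numerical-semigroup lemma for the eventual positivity of $P^{(nr+\ell)}_{\omega j}$, and the one-step shift $\Omega_\ell \to \Omega_{\ell+1 \bmod r}$ for the block form are exactly the standard ingredients. The paper itself gives no proof of this theorem, deferring it to Chung and Feller, and your argument is essentially the one found in those references, so there is nothing to compare beyond noting that it is complete and sound.
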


\begin{remark}
We emphasize that the columns of $P_{\ell}$ in the above matrix decomposition are indexed by $\Omega_{s(\ell)}$, where $s(\ell) = \ell+1 (\mod r)$. Further, if $P$ is finite $d\times d$, then $P_0, ... ,P_{r-1}$ are $\frac{d}{r} \times \frac{d}{r}$.
\end{remark}

We note that the cyclic decomposition is independent of our initial choice of $\omega$ in the sense that
for $P$ irreducible with period $r$ and cyclic decomposition $\St_0,..., \St_{r-1}$ given
by the previous theorem, if we were to pick a different $\omega' \in \Omega$ and a new partition $\Omega'_0,... \Omega'_{r-1}$ induced by it, a cyclic permutation of this partition would yield our original partition $\Omega_0,...,\Omega_{r-1}$. Furthermore, if $i\in \St_{l_1}$ and $j\in \St_{l_2}$ are two states, let $0 \leq \ell < r$ be such that
$\ell \equiv l_2 - l_1  \mod r$. Now if $P^{(k)}_{ij} > 0$ then one must have $k=mr + \ell$ for some $m\in \mathbb{N}$, and there exists some $m_0\in \mathbb{N}$ such that for all $m \geq m_0$ one has $P^{(mr+\ell)}_{ij} > 0$. (See \cite[Part I, Section 3, Theorems 3 \& 4]{Chung})

Recall that an irreducible stochastic $P$ is positive-recurrent if and only if it possesses a stationary distribution, i.e.
 a vector $\pi \in \ell^1(\St)$ 
such that $\pi_j \geq 0$ for all
$j\in\St$, $\sum_i \pi_i = 1$ and $\pi_j = \sum_i \pi_i P_{ij}$ for all $j\in \St$.
For the proof of the following theorem, see 
\cite[Theorem 6.7.2]{DUR}. 

\begin{theorem} \label{theorem:convergence-theorem-positive-recurrent}
(Convergence theorem for positive-recurrent irreducible matrices) \\
Let $P$ be an irreducible positive-recurrent stochastic matrix with stationary distribution $\pi$ and period $r\geq 1$. Let $\St_0,...,\St_{r-1}$ be a cyclic decomposition of $\Omega$ with respect to $P$ as in Theorem~\ref{theorem:cyclic-graph-decomposition}. Given $i\in \St_{l_{1}}$ and $j\in \St_{l_{2}}$, let $0 \leq \ell < r$ be
such that $\ell \equiv (l_2-l_1) \mod r$. Then
$$ \lim_{m\rightarrow \infty}P^{(mr+\ell)}_{ij} = \pi_jr.$$
\end{theorem}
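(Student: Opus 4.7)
The plan is to reduce the periodic case to the aperiodic convergence theorem by passing to $Q := P^r$. By Theorem~\ref{theorem:cyclic-graph-decomposition}, $Q$ is block-diagonal with blocks $Q_k := P^r|_{\Omega_k \times \Omega_k}$ for $0 \leq k < r$. Each $Q_k$ is irreducible on $\Omega_k$ (since $\Omega_k$ is a single communication class under paths of length divisible by $r$), aperiodic (because the return times of $i\in\Omega_k$ under $P$ generate $r\nn$, so those under $Q_k$ generate $\nn$), and positive-recurrent (inherited from the existence of a finite invariant measure, constructed next). The aim is then to apply the aperiodic convergence theorem to each $Q_k$ separately and reassemble.

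First I would identify the stationary distribution of each $Q_k$. Using that $P_{ij}>0$ forces $i \in \Omega_{k-1}$ whenever $j \in \Omega_k$, stationarity of $\pi$ under $P$ gives
\[
\pi(\Omega_k) \;=\; \sum_{j \in \Omega_k}\sum_{i}\pi_i P_{ij} \;=\; \sum_{i\in\Omega_{k-1}} \pi_i \sum_{j\in\Omega_k} P_{ij} \;=\; \pi(\Omega_{k-1}),
\]
since $\sum_{j\in\Omega_k}P_{ij} = \sum_j P_{ij} = 1$ for $i\in\Omega_{k-1}$. Hence all $\pi(\Omega_k)$ are equal, and since they sum to $1$, each equals $1/r$. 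Therefore $\pi^{(k)} := r\,\pi|_{\Omega_k}$ is the unique stationary probability distribution for $Q_k$, and the aperiodic convergence theorem (the case $r=1$ of the statement, cf.~\cite[Thm.~6.7.2]{DUR}) yields $(Q_k^m)_{pj} \to r\pi_j$ for all $p,j \in \Omega_k$.

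The final step is assembly. For $i \in \Omega_{l_1}$, $j \in \Omega_{l_2}$, and $\ell \equiv l_2 - l_1 \mod r$, the Chapman--Kolmogorov identity reads
\[
P^{(mr+\ell)}_{ij} \;=\; \sum_{k \in \Omega} P^{(\ell)}_{ik}\, P^{(mr)}_{kj}.
\]
The cyclic block structure forces $P^{(\ell)}_{ik}=0$ unless $k\in\Omega_{l_1+\ell}=\Omega_{l_2}$, and on $\Omega_{l_2}$ we have $P^{(mr)}_{kj} = (Q_{l_2}^m)_{kj}$. Passing $m\to\infty$ inside the sum, which is justified by dominated convergence since $\{P^{(\ell)}_{ik}\}_k$ is a summable probability vector dominating $P^{(\ell)}_{ik}P^{(mr)}_{kj}\le P^{(\ell)}_{ik}$, gives
\[
\lim_{m\to\infty} P^{(mr+\ell)}_{ij} \;=\; r\pi_j \sum_{k\in\Omega_{l_2}} P^{(\ell)}_{ik} \;=\; r\pi_j,
\]
where the last equality uses that $P^{(\ell)}_{i\cdot}$ is supported on $\Omega_{l_2}$ and sums to $1$. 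The main delicate point is precisely the exchange of limit and infinite sum in the countable state case; dominated convergence handles this cleanly, and everything else reduces to routine bookkeeping on the cyclic classes together with the classical aperiodic limit theorem.
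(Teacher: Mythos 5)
Your proposal is correct. Note that the paper does not prove this statement at all: it is quoted as a classical result with a pointer to \cite[Theorem 6.7.2]{DUR}, so there is no in-paper argument to compare against. Your reduction of the periodic case to the aperiodic convergence theorem --- passing to $Q=P^r$, checking that each diagonal block $Q_k$ is irreducible, aperiodic and positive-recurrent, identifying its stationary distribution as $r\,\pi|_{\Omega_k}$ via the observation that $\pi(\Omega_0)=\dots=\pi(\Omega_{r-1})=1/r$, and then reassembling through Chapman--Kolmogorov with dominated convergence over the countable state space --- is the standard textbook route and all the steps check out. The only dependency you should make explicit is that the whole argument rests on the aperiodic ($r=1$) convergence theorem being available as an independently proven black box (e.g.\ by coupling); granting that, the proof is complete.
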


\begin{defi}
Let $P$ be a stochastic matrix over $\St$. $i\in \St$ is said to be \emph{amenable} in $P$ if $\limsup_{n\rightarrow \infty} \sqrt[n]{P_{ii}^{(n)}} = 1$.
$P$ is said to be amenable if all states in $\St$ are amenable in $P$.
\end{defi}

The proof of the following fact is well-known and will be omitted.

\begin{lemma} \label{lemma-rec-iterate}
Let $P$ be a recurrent irreducible stochastic matrix, then $P$ is amenable and for every $n\in \mathbb{N}$ we have that $P^n$ is recurrent.
\end{lemma}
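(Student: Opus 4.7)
The plan is to handle the two assertions separately. Amenability is immediate from recurrence: since $\sum_n P^{(n)}_{ii} = \infty$ and $P^{(n)}_{ii} \leq 1$, the root test rules out $\limsup_n \sqrt[n]{P^{(n)}_{ii}} < 1$, while the trivial bound $P^{(n)}_{ii} \leq 1$ rules out values exceeding $1$. For the recurrence of $P^n$, the natural first move is to pass to the aperiodic case via the period. Let $r$ be the common period of $P$. By Theorem~\ref{theorem:cyclic-graph-decomposition}, the cyclic class $\St_\ell$ containing $i$ is invariant under $P^r$, and $Q := P^r|_{\St_\ell \times \St_\ell}$ is irreducible and aperiodic; it is also recurrent at $i$ because $\sum_k Q^{(k)}_{ii} = \sum_k P^{(rk)}_{ii} = \sum_m P^{(m)}_{ii} = \infty$ (the terms with $r \nmid m$ vanishing by periodicity). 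Setting $d = \gcd(n,r)$ and $L = \operatorname{lcm}(n,r) = rm$, only summands with $(r/d) \mid k$ contribute to $\sum_k P^{(nk)}_{ii}$, and reindexing $k = (r/d)\,j$ gives
\[
\sum_k (P^n)^{(k)}_{ii} \;=\; \sum_j P^{(Lj)}_{ii} \;=\; \sum_j Q^{(mj)}_{ii},
\]
so the problem reduces to proving recurrence of $Q^m$ for the irreducible, aperiodic, recurrent matrix $Q$ and every $m \geq 1$.

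For this aperiodic case I would invoke the classical generating-function machinery. Let $u_n := Q^{(n)}_{ii}$ and $f_n := F^{(n)}_{ii}$ (first-return probabilities), and define $G(z) := \sum_n u_n z^n$ and $F(z) := \sum_n f_n z^n$ on the open unit disk. The renewal identity $G(z)\bigl(1 - F(z)\bigr) = 1$ together with $F(1) = \sum_n f_n = 1$ (recurrence) gives $G(w) \to \infty$ as $w \to 1^{-}$. If $\zeta$ denotes a primitive $m$-th root of unity, the roots-of-unity filter yields
\[
\sum_k u_{mk} w^{mk} \;=\; \frac{1}{m} \sum_{j=0}^{m-1} G(\zeta^{j} w).
\]
For $j \in \{1, \ldots, m-1\}$, aperiodicity of $Q$ forces $F(\zeta^{j}) \neq 1$: indeed $F(\zeta^{j}) = 1$ would require $\zeta^{jn} = 1$ for every $n$ with $f_n > 0$, hence $m/\gcd(j,m)$ would have to divide every first-return length; but the gcd of the set of first-return lengths coincides with the period ($=1$) of $Q$, forcing $m \mid j$. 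Abel's theorem applied to the absolutely convergent series $\sum_n f_n \zeta^{jn}$ now gives $F(\zeta^j w) \to F(\zeta^j)$ with $1 - F(\zeta^j) \neq 0$, whence $G(\zeta^j w)$ stays bounded as $w \to 1^{-}$. Since the $j = 0$ term blows up, the left-hand side diverges, and a final Abel-theorem step yields $\sum_k u_{mk} = \infty$, proving recurrence of $Q^m$ at $i$.

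The main obstacle is the aperiodic step, specifically the non-vanishing $F(\zeta^{j}) \neq 1$ for all non-trivial $m$-th roots of unity. This hinges on the identification of the gcd of first-return lengths with the period of $Q$, and is the precise place where the reduction to the aperiodic subchain $Q = P^r|_{\St_\ell}$ pays off; the remainder of the argument is routine algebra of generating functions and an application of Abel's theorem.
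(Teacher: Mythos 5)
The paper omits the proof of this lemma entirely (``The proof of the following fact is well-known and will be omitted''), so there is no argument to compare against; your job was to supply the standard one, and you have done so correctly. Both halves check out: the root-test argument for amenability, the reduction via the cyclic decomposition to an aperiodic recurrent chain $Q$ with the $\gcd$/$\operatorname{lcm}$ bookkeeping handled correctly, and the renewal-equation/roots-of-unity-filter argument, whose one delicate point --- that $F(\zeta^j)\neq 1$ for nontrivial $m$-th roots of unity because the $\gcd$ of the first-return times equals the period --- you identify and justify properly.
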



\subsection*{Hilbert modules} 
We assume that the reader is familiar with the basic theory of Hilbert C*-modules, which can be found in \cite{Pasc, Lance, Manu}.
We only give a quick summary of basic notions and terminology 
to clarify our conventions.

As usual inner product modules are right modules. 
The \emph{dual module} of an inner product module $E$ over $\mathcal{A}$ is the set of all bounded $\mathcal{A}$-module maps from $E$ to $\mathcal{A}$, and it is denoted by $E'$.
A Hilbert C*-module is called \emph{self-dual} if the canonical embedding of $E$ into $E'$ is surjective.  

\begin{defi}
Let $\vnM$ be a W*-algebra let $E$ be a Hilbert C*-module over $\vnM$. The \emph{$\sigma$-topology} on $E$ is defined by the functionals
$f(\cdot) = \sum_{n=1}^{\infty}w_n(\la \xi_n , \cdot \ra) $
where $\xi_n \in E$ and $w_n \in \vnM_{*}$ such that $\sum_{n=1}^\infty ||w_n|| \, ||\xi_n|| < \infty$.
\end{defi}

Hilbert C*-modules over a W*-algebra will be called Hilbert W*-modules. If $E$ is a self-dual W*-module, then it is a dual 
Banach space (see \cite{Pasc}), and the associated weak-* topology  
coincides with its $\sigma$-topology. 

If $E$ is an inner product module over a W*-algebra $\vnM$, then the inner product module structure of $E$
can be naturally extended to $E'$, which makes $E'$ into
a self-dual Hilbert W*-module over $\vnM$. In this case we will refer to $E'$ as the \emph{self-dual extension of $E$}.
Furthermore, 
the canonical embedding of $E$ into $E'$ maps onto a dense subset
in the $\sigma$-topology of $E'$, so that $E$ is self dual if and only if it is $\sigma$-topology closed in $E'$.

For $E$ and $F$ Hilbert W*-modules, let $\mathcal{L}(E,F)$ denote the set of adjointable maps from $E$ to $F$. If $E$ and $F$ are \emph{self-dual} W*-modules, then all bounded $\vnM$-module maps from $E$ to $F$ are adjointable. It also 
turns out that bounded module maps between any two inner product 
modules over a W*-algebra behave well with respect to self dual 
completions, as the following proposition states.

\begin{proposition} \label{proposition:extension-self-dual}
Let $E$ and $F$ be inner-product modules over a W*-algebra $\vnM$, and let $T:E\rightarrow F$ be a bounded module map. Then $T$ has a unique extension to a bounded module map $\tilde{T} : E' \rightarrow F'$, and $||\tilde{T}|| = ||T||$. If $E=F$, then the map $T \rightarrow \tilde{T}$, restricted to the algebra of adjointable operators on $E$, is a faithful *-homomorphism from $\mathcal{L}(E)$ to $\mathcal{L}(E')$.
\end{proposition}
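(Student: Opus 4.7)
The plan is to construct $\tilde T$ by reducing to an extension theorem for bounded module maps into self-dual Hilbert W*-modules (Paschke's extension theorem for the self-dual completion), and then derive the remaining assertions from uniqueness. Specifically, I would first compose $T : E \to F$ with the canonical embedding $\iota_F : F \hookrightarrow F'$ to obtain a bounded right $\vnM$-linear map $\iota_F \circ T : E \to F'$ whose codomain is self-dual. Paschke's theorem then yields a unique bounded right $\vnM$-linear extension $\tilde T : E' \to F'$ satisfying $\|\tilde T\| = \|\iota_F \circ T\| = \|T\|$, which takes care of existence, uniqueness, norm preservation and the extension property in one stroke; the extension property holds because $\tilde T \circ \iota_E$ and $\iota_F \circ T$ agree on $E$ by construction.

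For the case $E = F$ and $T \in \mathcal{L}(E)$, adjointability of $\tilde T$ as an operator on $E'$ is automatic, since bounded $\vnM$-module maps between self-dual W*-modules are adjointable. To see that the assignment $T \mapsto \tilde T$ is a $*$-homomorphism, I would check multiplicativity and adjoint-preservation separately. Multiplicativity $\widetilde{TS} = \tilde T \tilde S$ follows directly from uniqueness of the extension: both sides are bounded extensions of $TS$. For adjoint-preservation $\widetilde{T^{*}} = (\tilde T)^{*}$, the plan is to verify $\langle \tilde T \varphi, \psi \rangle_{E'} = \langle \varphi, \widetilde{T^{*}} \psi \rangle_{E'}$ first on $\iota_E(E) \times \iota_E(E)$, where it reduces to the original adjoint identity $\langle T\xi, \eta \rangle = \langle \xi, T^{*} \eta \rangle$, and then extend to all of $E' \times E'$ using the $\sigma$-density of $\iota_E(E)$ in $E'$ together with the separate $\sigma$-continuity of the inner product on $E'$. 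Faithfulness is then immediate: if $\tilde T = 0$, then $T = \tilde T \circ \iota_E = 0$.

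The main obstacle, if one prefers to avoid invoking Paschke as a black box, lies in the construction step and the norm identity $\|\tilde T\| = \|T\|$. A direct approach would, for each $\varphi \in E'$, produce a norm-bounded net $\xi_\alpha \in E$ with $\|\xi_\alpha\| \leq \|\varphi\|$ and $\iota_E(\xi_\alpha) \to \varphi$ in the $\sigma$-topology; the production of such a \emph{norm-bounded} approximating net is the delicate point and requires a Kaplansky-type density argument inside the self-dual completion. One would then define $\tilde T(\varphi)$ as a $\sigma$-limit of $\iota_F(T \xi_\alpha)$, using that on the self-dual module $F'$ the $\sigma$-topology agrees with the weak-$*$ topology (so Banach-Alaoglu provides a convergent subnet in the ball of radius $\|T\| \|\varphi\|$), check independence of the approximating net by pairing against elements of $\vnM_{*}$, and obtain $\|\tilde T(\varphi)\| \leq \|T\| \|\varphi\|$ from weak-$*$ lower semicontinuity of the norm on $F'$; the reverse inequality is trivial since $\tilde T$ extends $T$.
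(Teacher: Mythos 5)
Your argument is essentially correct, but you should know that the paper offers no proof of this proposition at all: it is stated as a recalled preliminary, with the reader referred to the standard sources (the result is Paschke's Corollary 3.7 in \cite{Pasc}), so your ``black box'' route coincides with the paper's implicit treatment. Two caveats. First, the theorem you invoke in your main construction is, up to the harmless composition with $\iota_F$, the very statement being proved; what you are really using is the universal property of the self-dual completion (unique extension of a bounded module map from $E$ into a \emph{self-dual} target), which is legitimate as a reduction but carries all of the content, so the substance of a self-contained proof lives in your third paragraph. There your outline is the right one, and you correctly isolate the delicate point (a bounded $\sigma$-approximating net, which Paschke does supply: the unit ball of $E$ is $\sigma$-dense in the unit ball of $E'$); note however that checking independence of the limit by pairing against functionals $w(\la \eta,\cdot\ra)$ requires pairing against elements $\theta_\eta=\la \eta, T(\cdot)\ra$ of $E'$ that need not lie in $E$, which is exactly the kind of separate $\sigma$-continuity issue you also gloss over in the adjoint step. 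Second, that adjoint step can be done without any limiting argument: since $\la \iota_E(\xi),\psi\ra=\psi(\xi)$, pairing against $\iota_E(E)$ already separates points of $E'$, so it suffices to verify $\la \iota_E(\xi), (\tilde T)^{*}\psi\ra=\la \iota_E(\xi),\widetilde{T^{*}}\psi\ra$ for $\xi\in E$ and arbitrary $\psi\in E'$, and both sides equal $\psi(T\xi)$ directly. With these points tightened, the proof is complete.
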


\begin{defi}
If $E$ is a Hilbert C*-module over $\mathcal{B}$, and $\mathcal{A}$ is another C*-algebra, then $E$ is called a \emph{C*-correspondence} from $\mathcal{A}$ to $\mathcal{B}$ if $E$ is also a left $\mathcal{A}$-module such that the left action is determined by a *-homomorphism $\phi : \mathcal{A} \rightarrow \mathcal{L}(E)$. In the case when $\mathcal{A}=\mathcal{B}$, $E$ is called a \emph{C*-correspondence} over $\mathcal{A}$. If $\vnN$ and $\vnM$ are W*-algebras then $E$ is called a Hilbert W*-correspondence from $\vnN$ to $\vnM$ if in addition $E$ is a self-dual module over $\vnM$ and the left action $\phi$ of $\vnN$ is normal.
\end{defi}

A key notion of C* and W*-correspondences is the internal tensor product. If $E$ is a C*-correspondence from $\mathcal{A}$ to $\mathcal{B}$ with left action $\phi$, and $F$ is a C*-correspondence from $\mathcal{B}$ to $\mathcal{C}$ with left action $\psi$, then on the algebraic tensor product $E\otimes_{alg}F$ one defines a $\mathcal{C}$-valued pre-inner product satisfying $\la x_1 \otimes y_1 , x_2 \otimes y_2 \ra = \la y_1 , \psi(\la x_1 , x_2 \ra) y_2 \ra$ on simple tensors. The usual quotient and completion process yields the internal Hilbert C*-module tensor product of $E$ and $F$, denoted by $E\overline{\otimes}F$ or $E\overline{\otimes}_{\psi}F$, which is a C*-correspondence from $\mathcal{A}$ to $\mathcal{C}$. In the case where $\mathcal{A}$, $\mathcal{B}$ and $\mathcal{C}$ are W*-algebras, taking the self dual completion yields the self-dual tensor product denoted also by $E\otimes F$ or $E\otimes_{\psi}F$, which is a W*-correspondence from $\mathcal{A}$ to $\mathcal{C}$.

The notion of self-dual direct sums of Hilbert C*-modules over W*-algebras was developed by Paschke.

\begin{defi}
Let $\{ E_i \}_{i\in I}$ be a family of self-dual W*-modules over $\vnM$. The ultraweak direct sum of $\{ E_i \}_{i\in I}$ is the subset $X$ of the Cartesian product of $\{ E_i \}_{i\in I}$ such that $\{ x_i \}$ is in $X$ if the sum $\sum_{i\in I}\la x_i , x_i \ra$ converges ultraweakly. The inner product on $X$ is defined to be $\la \{x_i\} , \{y_i\} \ra = \sum_{i\in I}\la x_i , y_i \ra$, where the sum converges ultraweakly in $\vnM$. This direct sum is denoted by $\oplus^{uw}_{i}E_i$ or by $\oplus_{i}E_i$ when the context is that of self-dual modules. 
\end{defi}

\subsection*{Base change}
Suppose that $E$ is a Hilbert 
C*-module (correspondence) over $\mathcal{A}$
and $\rho$ is 
a *-automorphism of $\mathcal{A}$. Then one can define
a new C*-module (correspondence) $E^\rho$ over $\mathcal{A}$. As a set, $E^\rho = E$, but  its operations
are defined as follows: 
 $ \xi \cdot_\rho a =  \xi \cdot \rho(a) $
(also $a \cdot_{\rho} \xi = \rho(a) \cdot \xi$ for correspondences) and $\la \xi,\eta \ra_{\rho} = \rho^{-1}(\la \xi,\eta \ra)$ for all $\xi, \eta \in E$ and $a\in \mathcal{A}$. 
In algebra, this operation on modules
is sometimes called a change of rings or base change.

\begin{defi}
Let $E$ and $F$ be two C*-modules over $\mathcal{A}$, and
let $\rho : \mathcal{A} \rightarrow \mathcal{A}$ be a *-
automorphism. We will say that a bounded linear map $V: E \to F$ 
is a \emph{$\rho$-module ($\rho$-correspondence) morphism} if 
$V: E \to F^\rho$ is 
an $\mathcal{A}$-linear ($\mathcal{A}$-correspondence)
map, i.e. for all $a \in \mathcal{A}$ and $\xi \in E$ one has 
$V(\xi a) = V(\xi) \rho(a)$ (also $V(a \xi) = \rho(a) V(\xi)$ for 
correspondences). We will say that $V$ is
\emph{$\rho$-adjointable} if $V$ is adjointable as a map 
$E \to F^\rho$,
and we will denote by $V^{(*,\rho)}$ its adjoint, i.e. 
$V^{(*,\rho)}:F \rightarrow E$ satisfies
$\la \xi , V(\eta) \ra_{\rho} =
\rho^{-1} (\la \xi ,V(\eta) \ra) = \la V^{(*,\rho)}(\xi), \eta \ra$
 for all $\xi \in F, \eta \in E$.
\end{defi}

 Note that a $\rho$-adjointable $V$ 
must be a bounded $\rho$-module morphism and $V^{(*,\rho)}$ 
is a $\rho^{-1}$-adjointable map with 
$(V^{(*,\rho)})^{(*,\rho^{-1})} = V$. Furthermore, if $E$, $F$ and $G$ are C*-modules over $\mathcal{A}$ and we 
have $V: E\rightarrow F$ a $\rho$-module/correspondence 
morphism and $W: F \rightarrow G$ a $\tau$-module/correspondence morphism, then $W\circ V : E 
\rightarrow G$ is a $(\tau \circ \rho)$-
module/correspondence morphism respectively. Further, if $V$ 
is $\rho$-adjointable and $W$ is $\tau$-adjointable, then 
$W\circ V$ is $(\tau \circ \rho)$-adjointable with $(W\circ 
V)^{(*, \tau \circ \rho)} = V^{(*,\rho)} \circ W^{(*,\tau)}$.

Furthermore, $\sigma$-topology continuity is automatic for $\rho$-adjointable maps. Indeed, suppose that $V : E \rightarrow F$ is a $\rho$-adjointable map, and let 
$\{ \eta_{\alpha} \}_{\alpha}$ be a net in $E$ converging in the $\sigma$-topology to $\eta$. Let $\xi_n \in F$ and $w_n \in \vnM_*$ be such that $\sum||w_n|| \cdot ||\xi_n|| < \infty$. Then we have that
$$
\sum_{n=1}^{\infty}w_n ( \la \xi_n, V(\eta_{\alpha} - \eta) \ra) = \sum_{n=1}^{\infty}(w_n \circ \rho)( \la V^{(*,\rho)}(\xi_n), \eta_{\alpha} - \eta \ra)
$$
And since $\sum ||w_n \circ \rho || \cdot ||V^{\rho}(\xi_n)|| \leq ||V^{\rho}|| \sum||w_n|| \cdot ||\xi_n|| < \infty$, we have convergence  of the net $(V\eta_\alpha)$ 
to $V\eta$ in the $\sigma$-topology in $F$.

We also note that the identity map $\iota_\rho: (E^\rho)'\to 
E'$ is a 
$\rho$-module isometric isomorphism.  It follows that if $E$ is self-dual, 
then the same holds for $E^\rho$. It also leads to the 
following fact.

\begin{proposition} \label{proposition:extension-rho-self-dual}
Let $E$ and $F$ be Hilbert W*-modules over a W*-algebra $\vnM$,
and let $\rho$ be a *-automorphism of $\vnM$. 
Suppose that $V: E\rightarrow F$ is a $\rho$-module morphism. Then $V$ has a unique extension to a bounded $\rho$-module morphism $\overline{V}: E'\rightarrow F'$, and $||V|| = ||\overline{V}||$.
\end{proposition}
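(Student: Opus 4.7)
The plan is to reduce Proposition~\ref{proposition:extension-rho-self-dual} to the already-established Proposition~\ref{proposition:extension-self-dual} via the base change construction, exploiting the identification $\iota_\rho$ noted just above the proposition. First I would reinterpret the hypothesis: by the very definition of a $\rho$-module morphism, a bounded map $V:E\to F$ that is a $\rho$-module morphism is the same data as a bounded $\vnM$-module map $V:E\to F^\rho$. A short computation confirms that the C*-norms agree under this reinterpretation: for $\eta\in F$ one has
\[
\|\eta\|_{F^\rho}^2=\|\la\eta,\eta\ra_\rho\|=\|\rho^{-1}(\la\eta,\eta\ra)\|=\|\la\eta,\eta\ra\|=\|\eta\|_F^2
\]
since $\rho^{-1}$ is an isometric $*$-automorphism of $\vnM$, so that $\|V\|_{E\to F^\rho}=\|V\|_{E\to F}$.

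Second, I would invoke Proposition~\ref{proposition:extension-self-dual} applied to the $\vnM$-module map $V:E\to F^\rho$ to produce a unique bounded $\vnM$-module extension $\tilde V:E'\to(F^\rho)'$ satisfying $\|\tilde V\|=\|V\|$. I would then set $\overline V:=\iota_\rho\circ\tilde V:E'\to F'$. Because $\iota_\rho$ is a $\rho$-module morphism and $\tilde V$ is an $\vnM$-module map, the composition rule for $\rho$-morphisms recorded earlier in the excerpt gives that $\overline V$ is a $\rho$-module morphism; because $\iota_\rho$ is isometric, $\|\overline V\|=\|\tilde V\|=\|V\|$; and because $\iota_\rho$ restricts to the identity on the common underlying set of $F$, the map $\overline V$ extends $V$.

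Third, for uniqueness, if $\overline W:E'\to F'$ is another bounded $\rho$-module extension of $V$, then $\iota_\rho^{-1}\circ\overline W:E'\to(F^\rho)'$ is a bounded $\vnM$-module extension of $V:E\to F^\rho$ and so must equal $\tilde V$ by the uniqueness clause of Proposition~\ref{proposition:extension-self-dual}, forcing $\overline W=\overline V$. The only substantive point in the whole argument is the careful bookkeeping of the twisted module structures across the base change, in particular the equality of norms on $F$ and $F^\rho$ recorded above; once that is in hand the argument is essentially formal, and I would not expect any serious obstacle.
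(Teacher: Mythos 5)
Your proposal is correct and follows essentially the same route as the paper: reinterpret $V$ as a module map $E\to F^\rho$, apply Proposition~\ref{proposition:extension-self-dual} to get $\tilde V:E'\to(F^\rho)'$, and compose with the isometric identification $\iota_\rho$ to obtain $\overline V$, with uniqueness and the norm equality following from the bijectivity and isometry of $\iota_\rho$. The extra details you supply (the norm computation on $F^\rho$ and the explicit uniqueness argument) are consistent with, and slightly more explicit than, the paper's version.
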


\begin{proof}
Using Proposition \ref{proposition:extension-self-dual} we 
obtain a unique W*-module map $\tilde{V} : E' \rightarrow 
(F^{\rho})'$. By composing with $\iota_\rho$ described
in the  paragraph preceding the propostion, we obtain
that $\overline{V} = \iota_\rho\circ \tilde{V}$ is a $\rho$-module morphism from $E'$ to $F'$ extending $V$. Since $\iota_\rho$
is a bijection, we also obtain uniqueness of the extension.
The norm condition holds because $\iota_\rho$ is isometric.
\end{proof}

\begin{defi} \label{definition:rho-coisometric-unitary}
Let $E$ and $F$ be Hilbert W*-modules over $\vnM$ and let 
$\rho :\vnM \rightarrow \vnM$ be a *-automorphism. We say that a $\rho$-adjointable map $W: E\rightarrow F$ is a \emph{$\rho$-coisometry} if $W^{(*,\rho)}$ is an isometry,
and we will say that $W$ is a $\rho$-unitary if it it is an isometric surjective $\rho$-module morphism.
\end{defi}

We observe that if  $U$ is a  $\rho$-unitary then $U^{-1}$ 
is a $\rho^{-1}$-module morphism and $U^{(*,\rho)} = 
U^{-1}$. Further note that a $\rho$-module map $U$ is a 
$\rho$-unitary if and only if it is a $\rho$-adjointable 
module map satisfying $UU^{(*,\rho)} = Id_{F}$ and $U^{(*,
\rho)}U = Id_{E}$. This follows from the analogous and 
well-known theorem for ($Id$-)unitaries, and the preceding 
discussion.

\subsection*{Subproduct systems}

\begin{defi}
Let $\vnM$ be a W* algebra, let
$X=\{X_n\}_{n\in \nn}$ be a family of 
of Hilbert W*-correspondences over $\vnM$ and let 
$U = \{U_{n,m}:X_n \otimes X_m \rightarrow X_{n+m}\}_{n,m \in\nn}$ 
be a family of maps. 
We will say that  $(X,U)$ is a \emph{subproduct system  over $\vnM$}
(and over the semigroup $\mathbb{N}$) if the following conditions are satisfied:
\begin{enumerate}
\item 
$X_0 = \vnM$
\item $ U_{n,m}$ is a coisometric mapping of W*-correspondences over $\vnM$
for every $n,m \in \nn$

\item The maps $U_{0,n}$ and $U_{n,0}$ are given by the left and right actions of $\vnM$ on $X_n$ respectively, and for every
$n,m \in \nn$ we have the following identity:
$$ U_{n+m,p}(U_{n,m} \otimes I_{X_p}) = U_{n,m+p}(I_{X_n}\otimes U_{m,p}) $$
\end{enumerate}
In case the maps $U_{n,m}$ are unitaries, we say that $X$ is a \emph{product system}.
\end{defi}

When there is no ambiguity, we will suppress the reference
to the family $U$ of multiplication maps, and refer 
simply to a subproduct system $X$.

\begin{example} \label{regular-product-system}
If $E$ is a W*-correspondence over $\vnM$ such that $\vnM \cdot E = E$ (essential), the \emph{product} system $X^E$ over $\mathbb{N}$ defined by $X^E_n = E^{\otimes n}$ where $U_{n,m}$ is the natural identification between $E^{\otimes n}\otimes E^{\otimes m}$ and $E^{\otimes (n+m)}$, is obviously a subproduct system. We call it the \emph{full} product system.
\end{example}

\begin{defi} \label{definition:subproduct-direct-sum}
Let $(X,U^X)$ and $(Y,U^Y)$ be subproduct systems over $\mathcal{M}$ and $\mathcal{N}$ respectively. We define a subproduct system $X\oplus Y$ over $\mathcal{M} \oplus \mathcal{N}$, which
we will call the direct sum of $X$ and $Y$.
\begin{enumerate}
\item
The $n$-th fiber W*-correspondence is given by $(X\oplus Y )_n := X_n \oplus Y_n$ with left and right multiplication of $\mathcal{M} \oplus \mathcal{N}$ given by $(m\oplus n) \cdot  (\xi \oplus \eta) \cdot (m' \oplus n') = m\xi m' \oplus n\eta n'$, and inner product given by 
$ \la \xi \oplus \eta , \xi' \oplus \eta' \ra = \la \xi , \xi' \ra \oplus \la \eta, \eta' \ra$.
\item
The subproduct maps $U^{X\oplus Y}_{n,m}$ are defined by $$U^{X\oplus Y}_{n,m}((\xi_n \oplus \eta_n) \otimes (\xi_m \oplus \eta_m)) = U^X_{n,m}(\xi_n \otimes \eta_n) \oplus U^Y_{n,m}(\xi_m \otimes \eta_m)$$
\end{enumerate}
\end{defi}

\begin{defi}
Let $(X,U^X)$ and $(Y,U^Y)$ be two subproduct systems over a W*-algebra $\vnM$. A family $V=\{V_{n}\}$ of maps $V_n : X_n \rightarrow Y_n$ is called a \emph{morphism} of subproduct systems if 

\begin{enumerate}
\item
$\rho = V_0: X_0 \rightarrow Y_0$ is a *-automorphism, 

\item
For all $n \neq 0$ the map $V_n$ is a $\rho$-coisometric $\rho$-correspondence morphism.

\item
For all $n,m \in \nn$ the following identity hold:
$$ V_{n+m} \circ U^X_{n,m} = U^Y_{n,m} \circ (V_n \otimes V_m) $$
\end{enumerate}

When the family $\{ V_n \}$ is a family of $\rho$-unitaries, we say that $X$ and $Y$ are (unitarily) isomorphic via $\rho = V_0$ and write $X \cong_{\rho} Y$.

\end{defi}

In \cite{SS} the notion of isomorphism of subproduct systems was defined so that the map $V_0$ is the identity on $\vnM$, yet the above variation will be more convenient for our purposes.

We now describe the general construction of \AS\ subproduct systems.

Let $\vnM$ be a W*-algebra, and let 
$\theta$ be a completely positive contractive and normal map on $\vnM$. If $\rho : \vnM \rightarrow B(H)$ is a normal representation, then we can define a new normal representation
of $\vnM$, which we will call a dilation of $\theta$ via $\rho$, as follows. We define $\vnM \otimes_\theta H$ to 
be the
Hausdorff completion of the algebraic tensor product $\vnM \otimes H$ with respect to the sesquilinear form defined
by
$$ 
\la T_1\otimes h_1 , T_2 \otimes h_2 \ra = \la h_1 , \rho(\theta(T_1^*T_2))h_2 \ra , \qquad \text{for all } T_1, T_2 \in \vnM, h_1, h_2 \in H.
$$
Complete positivity of $\theta$ ensures that this sesquilinear
form is positive semidefinite. We define a normal representation
$\pi_\theta$ 
of $\vnM$ on $\vnM \otimes_\theta H$ by 
$\pi_\theta(S) (T \otimes h)
= ST \otimes h$.  Moreover, the map
$W_\theta: H \to \vnM \otimes H$ given by $W_\theta(h) = I \otimes h$ is a contraction which satisfies
$$
\rho(\theta(T)) = W_{\theta}^*\pi_{\theta}(T)W_{\theta}.
$$
One easily checks that $\vnM \otimes_{\theta} H$ is minimal in the sense that it is the smallest subspace of $\vnM \otimes_{\theta} H$ containing $W_{\theta}H$ and reducing $\pi_{\theta}$. It follows that if $(\pi, K , W)$
is a triple where $\pi$ is a representation of $\vnM$ on 
$K$ such that $\rho(\theta(T)) = W^*\pi(T)W$ for all $T\in \vnM$ and $K$ is minimal, then there is a unitary $U: K \to \vnM \otimes_\theta H$
such that $U$ implements a unitary equivalence between $\pi$ and $\pi_\theta$ and 
$UW = W_\theta U$.  We also define the associated
intertwiner space 
$$
\mathcal{L}_{\vnM}(H, \vnM \otimes_{\theta}H) : = \{ \ X \in B(H,\vnM \otimes_{\theta}H) \ | \ X\rho(T) = \pi_{\theta}(T)X \ \ \forall T\in \vnM  \ \}.
$$

Suppose now that $\vnM$ is a von Neumann subalgebra of 
$B(H)$ and $\rho$ is the inclusion map. In this case 
the triple $(\pi_\theta, \vnM \otimes_\theta H, W_\theta)$
will be called the minimal Stinespring dilation of $\theta$. 
In \cite{MS}, the intertwiner space
$ \mathcal{L}_{\vnM}(H, \vnM \otimes_{\theta}H)$
was shown to be a W*-correspondence over $\vnM'$, 
with left and right actions given by $S\cdot X = (I\otimes S) 
\circ X$ and $X\cdot S = X\circ S$ for $S \in \vnM '$ and $X 
\in \mathcal{L}_{\vnM}(H, \vnM \otimes_{\theta}H)$, and the 
$\vnM '$-valued inner product is given by $\la X , Y \ra = 
X^*Y$, for $X,Y \in \mathcal{L}_{\vnM}(H, \vnM 
\otimes_{\theta}H)$. 

Now let $\phi$ be another completely positive contractive normal map on $\vnM$. Then we obtain a representation
$\pi_{\theta, \phi}$ of $\vnM$ on  $\vnM \otimes_{\phi} (\vnM \otimes_{\theta}H)$ via dilation using $\pi_\theta$, and
in an analogous way we have that
the $\vnM'$-correspondence intertwiner space 
$\mathcal{L}_{\vnM}(H, \vnM \otimes_{\phi} (\vnM \otimes_{\theta}H))$,
between the identity representation of $\vnM$ and $\pi_{\theta,\phi}$.

\begin{defi} \label{subproduct-prelim}
Let $\theta$ be a normal completely positive map on a W*-algebra $\vnM$. Let 
$$
\mathcal{L}^\theta(n) = \mathcal{L}_{\vnM}(H, \vnM \otimes_{\theta^n}H) \quad \text{and} \quad
\mathcal{L}^\theta(n,m)  = \mathcal{L}_{\vnM}(H, \vnM \otimes _{\theta^n} \vnM \otimes_{\theta^m} H), \qquad n,m \in\nn.
$$
The \AS\ subproduct system of $\theta$ is defined as follows:
\begin{enumerate}
\item
The fibers are $(\mathcal{L}^\theta(n))_{n \in\nn}$ with the aforementioned 
W*-correspondence structure over $\vnM'$.
\item 
The subproduct maps $U^{\theta}_{n,m}: \mathcal{L}^\theta(n) \otimes \mathcal{L}^\theta(m) \rightarrow \mathcal{L}^\theta(n+m)$ are defined by $U^{\theta}_{n,m} = V_{n,m}^*\Psi_{n,m}$ where,
\begin{enumerate}
\item $V_{n,m} : \mathcal{L}^\theta(n+m) \to 
\mathcal{L}^\theta(m,n)$
is defined by $V_{n,m}(X) = \Gamma_{n,m} \circ X$ where $\Gamma_{n,m} : \vnM \otimes_{\theta_{n+m}}H \rightarrow \vnM \otimes_{\theta_m} (\vnM \otimes_{\theta_n}H)$ is defined to be $\Gamma_{n,m}(S\otimes_{n+m} h) = S \otimes_m I \otimes_n h$.
\item
$\Psi_{n,m}: 
\mathcal{L}^\theta(n) \otimes \mathcal{L}^\theta(m)
\to \mathcal{L}^\theta(m,n)$
is defined by $\Psi_{n,m}(X\otimes Y) = (I \otimes X)Y$.
\end{enumerate}
\end{enumerate}
We denote this subproduct system by $(\mathcal{L}^{\theta}, U^{\theta})$.
\end{defi}

It was proven in \cite{SS} that this is indeed a subproduct system. This fact relies on the work in \cite{MS},
where it was shown that $V_{n,m}$ is an isometric correspondence morphism, and that $\Psi_{n,m}$ is a correspondence isomorphism.

A fundamental property of \AS\ subproduct systems is that every subproduct system is (Id-)isomorphic to an \AS\ subproduct system of a cp-semigroup (see \cite[Corollary 2.10]{SS}).


\section{Subproduct systems arising from stochastic matrices}\label{section:subproduct-systems-stochastic}

Let $\St$ be a countable set, and let $\vnMaS$ be the von Neumann algebra of bounded sequences indexed by $\St$ acting on the Hilbert space 
$\ell^2(\Omega)$. Let us denote by $\{e_i\}_{i \in \St}$ the canonical 
orthogonal basis for $\ell^{2}(\St)$, and by $\{p_j\}_{j\in \St}$ the collection 
of rank one pairwise perpendicular projections in $\ell^{\infty}(\St)$ defined by $p_j(e_i) = \delta_{ij}e_i$.

In this section we will compute the \AS\ subproduct system of the $cp$-semigroup generated by a single unital positive normal map on the von-Neumann algebra $\vnMaS$. It is easy to see that such a map is determined uniquely by a stochastic matrix on $\Omega$.
This simple observation will be used repeatedly, hence we record it here for
emphasis. We omit the straightfoward proof.

\begin{proposition} \label{corresp-cp-stoch}
There is a 1-1 correspondence between unital positive normal maps $\theta : \vnMaS \rightarrow \vnMaS$ and stochastic matrices $P$ over $\Omega$, where the relationship is given by 
$$\langle e_i , \theta(p_j)e_i \rangle = P_{ij}$$
The map just defined sends the composition of unital positive normal maps into the product of their respective stochastic matrices.
\end{proposition}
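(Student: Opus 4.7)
The plan is to set up the correspondence in both directions, verify it is well-defined, and then check that composition is sent to matrix multiplication. The key structural fact I will repeatedly use is that $\vnMaS$ is a maximal abelian von Neumann subalgebra of $B(\ell^2(\St))$, the minimal projections $\{p_j\}_{j \in \St}$ generate $\vnMaS$ ultraweakly via $\sum_j p_j = I$, and for any $a \in \vnMaS$ one has $a = \sum_i \la e_i, a e_i\ra p_i$ with ultraweak convergence.

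First, given a unital positive normal map $\theta$, define $P_{ij} = \la e_i, \theta(p_j) e_i\ra$. Positivity of $\theta$ and $p_j$ gives $P_{ij} \geq 0$. For the row-sum condition, since $\sum_j p_j = I$ ultraweakly and $\theta$ is normal and unital, $\sum_j \theta(p_j) = \theta(I) = I$ ultraweakly, and evaluating diagonal matrix coefficients at $e_i$ gives $\sum_j P_{ij} = 1$. So $P$ is stochastic. Injectivity of the assignment $\theta \mapsto P$ follows because $\theta(p_j) \in \vnMaS$ is diagonal, hence completely determined by its diagonal entries $(P_{ij})_i$, and because $\theta$ is normal it is determined by its values on $\{p_j\}$ via the ultraweak expansion of any $a \in \vnMaS$ mentioned above.

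Conversely, given a stochastic matrix $P$, define $\theta_P(a) = \sum_i \bigl(\sum_j P_{ij} \la e_j, a e_j \ra\bigr) p_i$ for $a \in \vnMaS$. The inner sum converges absolutely because $\sum_j P_{ij} = 1$ and $|\la e_j, a e_j\ra| \leq \|a\|$, and the outer sum converges ultraweakly since its coefficients are uniformly bounded by $\|a\|$, so $\theta_P$ maps into $\vnMaS$ with $\|\theta_P\| \leq 1$. Unitality is immediate from $\sum_j P_{ij} = 1$, positivity is clear since $a \geq 0$ forces $\la e_j, a e_j\ra \geq 0$, and normality follows by a routine dominated-convergence argument applied coordinate-by-coordinate using that $a^{(\alpha)} \to a$ ultraweakly implies $\la e_j, a^{(\alpha)} e_j\ra \to \la e_j, a e_j\ra$ with uniformly bounded norms. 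A direct computation gives $\la e_i, \theta_P(p_j) e_i\ra = P_{ij}$, so the two assignments are mutually inverse.

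Finally, for composition, if $\theta$ and $\phi$ correspond to $P$ and $Q$ respectively, then using normality of $\theta$ together with the ultraweak expansion $\phi(p_k) = \sum_j Q_{jk} p_j$, one has $\la e_i, (\theta \circ \phi)(p_k) e_i\ra = \sum_j Q_{jk} \la e_i, \theta(p_j) e_i\ra = \sum_j P_{ij} Q_{jk} = (PQ)_{ik}$, as required. No real obstacle arises in any step; the only point requiring a modicum of care is the ultraweak convergence argument in the definition of $\theta_P$ and its normality, and these are routine given that $\vnMaS$ is abelian and its ultraweak topology is controlled by diagonal coefficients.
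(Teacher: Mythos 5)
Your proof is correct. The paper explicitly omits the proof of this proposition as ``straightforward,'' and your argument --- reading off $P_{ij}=\langle e_i,\theta(p_j)e_i\rangle$ from positivity, unitality and normality, reconstructing $\theta$ from $P$ via the ultraweak expansion $a=\sum_j \langle e_j, a e_j\rangle\, p_j$, and checking multiplicativity of the correspondence by the same expansion applied to $\phi(p_k)$ --- is precisely the standard argument the authors had in mind, with all convergence details correctly handled.
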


Of course representations of stochastic matrices on $\Omega$ are 
dependent on its enumerations, hence permutations of $\Omega$, 
or alternatively  *-automorphisms of $\vnMaS$,
 will play a role in the continuation.   Recall that the *-automorphisms of 
$\vnMaS$ are in 1-1 correspondence with the permutations of $\St$, 
because minimal projections must be sent to minimal projections
via *-automorphisms. We can associate to every permutation  $\sigma : \St \rightarrow \St$  the automorphism $\rho_{\sigma} : \vnMaS   \rightarrow \vnMaS$  given by $\rho_\sigma(f) = f \circ \sigma^{-1}$. The inverse map is obtained
as follows: if $\rho$ is a *-automorphism, then 
for every $j \in \Omega$ there exists a unique
 $\sigma_\rho(j) \in \Omega$
such that $\rho(p_j) = p_{\sigma_\rho(j)}$ and $\sigma_\rho$ is a well-defined
permutation.

\begin{notation}
We denote by $*$ the Schur(entrywise) multiplication of matrices $A = [a_{ij}]$ and $B = [b_{lk}]$ given by $A * B =[a_{ij}b_{ij}]$, and let $\Diag$ be the map on matrices
given by $\Diag([a_{ij}]) = [\delta_{ij} a_{ij}]$.
\end{notation}

\begin{notation}
Let $P$ and $Q$ be non-negative matrices indexed by $\St$. Define the set $E(P) : = \{ \ (i,j) \ | \ P_{ij}>0 \ \}$ which is the collection of edges in the weighted directed graph defined by $P$, and the set $E(P,Q) : = \{ \ (i,j,k) \ | \ P_{ij}Q_{jk} > 0\}$.  Also denote by $\sqrt{P}$ and $P^{\flat}$ the matrices with $(i,k)$-th entry given by
$$ 
(\sqrt{P})_{ik} := \sqrt{P_{ik}}, 
\qquad \text{and} \qquad
(P^{\flat})_{ik} := 
\begin{cases} 
  (P_{ik})^{-1},  & \mbox{if }(i,k) \in E(P) \\
  0, & \mbox{else} 
\end{cases}
$$
\end{notation}

\begin{theorem}\label{theorem:subproduct-computation}
Let $P$ be a stochastic matrix over a state space $\St$, and 
let $\theta$ be the unital positive normal map associated to 
$P$ by the previous proposition. The \AS\ subproduct system 
associated to $\theta$ is naturally (Id-)isomorphic to the 
following subproduct system, which 
will be denoted by $Arv(\theta)$ or $Arv(P)$:
\begin{enumerate}
\item
The $n$-th fiber is a W*-correspondence over 
$\vnMaS$, given by 
$$
Arv(P)_n = \{ \ [a_{ij}] \ | \ a_{ij}= 0 \ \forall \ (i,j)\notin E(P^n) \ , \ \sup_{j\in \St} \sum_{i\in \St}|a_{ij}|^2 < \infty \ \}
$$
where left and right actions are given  
by multiplication as diagonal matrices.
Given $A, B \in Arv(P)_n$, their W*-correspondence 
inner-product is given by
$$
\la A, B \ra = \Diag(A^* B).
$$

\item
The subproduct maps are given by
$$U_{n,m}(A \otimes B) = (\sqrt{P^{n+m}})^{\flat}*\big[(\sqrt{P^n} * A)\cdot(\sqrt{P^m} * B)\big] $$
for $n \neq 0$ and $m \neq 0$, $A\in Arv(P)_n$ and $B\in Arv(P)_m$. The maps $U_{0,n}$ and $U_{m,0}$ are given by left and right multiplication by elements of $\vnMaS$ respectively, considered as diagonal matrices.
\end{enumerate}
We call this presentation of $Arv(P)$ the standard presentation of the \AS\ subproduct system associated to the stochastic matrix $P$.
\end{theorem}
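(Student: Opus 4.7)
The plan is to compute the Arveson-Stinespring subproduct system of $\theta$ explicitly by exhibiting orthonormal bases on each Stinespring dilation and tracking coefficients through the structure maps. To begin, I would unpack $\vnM \otimes_{\theta^n} H$, where $H = \ell^2(\Omega)$ and $\vnM = \vnMaS$. A direct computation with the defining sesquilinear form gives $\la p_j \otimes e_i, p_k \otimes e_l \ra = \delta_{jk} \delta_{il} P^{(n)}_{ij}$, so the normalized vectors $g^{(n)}_{ij} := (P^{(n)}_{ij})^{-1/2}\, p_j \otimes e_i$, indexed over $(i,j) \in E(P^n)$, form an orthonormal basis, and $\pi_{\theta^n}(p_k) g^{(n)}_{ij} = \delta_{kj}\, g^{(n)}_{ij}$. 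Minimality of this dilation is then immediate from $\pi_{\theta^n}(p_k) W_{\theta^n}(e_i) = p_k \otimes e_i$.

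Next, I would identify $\mathcal{L}^\theta(n)$ with $Arv(P)_n$. For $X \in \mathcal{L}^\theta(n)$, the intertwining relation forces $X e_j$ to lie in the range of $\pi_{\theta^n}(p_j)$, so $X e_j = \sum_i A_{ij}\, g^{(n)}_{ij}$ defines a matrix $A$ supported on $E(P^n)$, and $\|X\|^2 = \sup_j \|X e_j\|^2 = \sup_j \sum_i |A_{ij}|^2$ matches the norm condition exactly. Compatibility with the $\vnM'$-correspondence structure is then routine in the abelian setting $\vnM' = \vnM$: the right action gives $(X \cdot p_k) e_j = \delta_{jk} X e_j$, corresponding to $A \mapsto A p_k$; the left action uses $(I \otimes p_k) g^{(n)}_{ij} = \delta_{ik} g^{(n)}_{ij}$ to give $p_k \cdot X \leftrightarrow p_k A$; and $\la X, Y \ra = X^* Y$ is a diagonal operator on $H$ whose $j$-th entry is $\sum_i \overline{A_{ij}} B_{ij}$, matching $\Diag(A^* B)$.

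The heart of the argument is the verification of the Schur-product formula for $U^\theta_{n,m} = V^*_{n,m} \Psi_{n,m}$. A parallel inner-product computation shows that the vectors $h_{kij} := (P^{(m)}_{ij})^{-1/2}(P^{(n)}_{ki})^{-1/2}\, p_j \otimes (p_i \otimes e_k)$, indexed over $(k,i) \in E(P^n)$ and $(i,j) \in E(P^m)$, form an orthonormal basis of $\vnM \otimes_{\theta^m}(\vnM \otimes_{\theta^n} H)$. Expanding $\Psi_{n,m}(X \otimes Y) e_j = (I \otimes X)(Y e_j)$ in this basis yields coefficient $A_{ki} B_{ij}$ at $h_{kij}$. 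Using $I = \sum_i p_i$ ultraweakly, together with continuity of the tensor product in the $\sigma$-topology, one computes $\Gamma_{n,m}(p_j \otimes e_k) = \sum_i \sqrt{P^{(n)}_{ki} P^{(m)}_{ij}}\, h_{kij}$, so $V_{n,m}(Z) e_j$ has coefficient $C_{kj}\sqrt{P^{(n)}_{ki} P^{(m)}_{ij}/P^{(n+m)}_{kj}}$ at $h_{kij}$. Since $V_{n,m}$ is isometric by~\cite{MS}, equating diagonal inner products $\la V_{n,m}(Z'), \Psi_{n,m}(X \otimes Y) \ra = \la Z', V^*_{n,m}\Psi_{n,m}(X \otimes Y)\ra$ for every $Z'$ yields
\[
C'_{kj} = (P^{(n+m)}_{kj})^{-1/2} \sum_i \sqrt{P^{(n)}_{ki} P^{(m)}_{ij}}\, A_{ki} B_{ij},
\]
which is exactly the $(k,j)$-entry of $(\sqrt{P^{n+m}})^{\flat} * \bigl[(\sqrt{P^n} * A)(\sqrt{P^m} * B)\bigr]$. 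I expect the main obstacle to lie precisely here: matching the two layers of normalization against the single normalization at level $n+m$, handling the ultraweakly convergent decomposition $I = \sum_i p_i$ carefully through $\Gamma_{n,m}$, and recognizing the factor $\sqrt{P^{(n)}_{ki} P^{(m)}_{ij}/P^{(n+m)}_{kj}}$ as the square root of a conditional probability through an intermediate state, which is what makes the Schur-product formula emerge naturally.
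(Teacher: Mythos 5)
Your proposal is correct and follows essentially the same route as the paper: both compute orthogonal bases of the dilation spaces $\vnM \otimes_{\theta^n} H$ and $\vnM \otimes_{\theta^m} \vnM \otimes_{\theta^n} H$, identify intertwiners with matrices supported on $E(P^n)$, and push $\Psi_{n,m}$ and $\Gamma_{n,m}$ through in coordinates to obtain the Schur-product formula. The only cosmetic differences are that you normalize the basis vectors at the outset, landing directly on $Arv(P)_n$ instead of passing through the intermediate system $\ee$ and renormalizing by $\sqrt{P^n}\,*\,(\cdot)$ at the end, and that you extract $V_{n,m}^*$ from the adjoint relation rather than from the explicit projection $\Gamma_{n,m}^*\Gamma_{n,m}$.
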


\begin{proof}
For the computation of the $n$-th fibers, we fix an $n\in \nn$. We will follow the notation and construction described in Definition~\ref{subproduct-prelim}. For convenience
we will write $\LL(n)$ instead of $\LL^\theta(n)$.


Let us denote $H = \ell^{2}(\St)$, and consider
 the canonical inclusion of $\vnMaS$ into $B(\ell^2(\St))$. Notice that
the set $\{ p_j \otimes e_i \}_{(i,j) \in E(P^n)}$ constitutes
an orthogonal set in $ \vnMaS \otimes_{\theta^n}H$
since for $i,j, k, \ell \in \nn$,
$$ 
\langle p_k\otimes e_\ell , p_j\otimes e_i \rangle =
\langle e_\ell , \theta^n(p_k^*p_j)e_i \rangle = 
 \delta_{kj} \delta_{i\ell}\; \langle e_i , \theta^n(p_j)e_i \rangle = \delta_{kj} \delta_{i\ell}\; P^{(n)}_{ij}.
$$
Furthermore, it is straightforward to check that $\{ p_j \otimes e_i \}_{(i,j)\in E(P^n)}$ is in fact an orthogonal \emph{basis} for $\vnMaS \otimes_{\theta^n}H$.


We now show that $\mathcal{L}(n)$ and $Arv(P)_n$
are isomorphic as correspondences over $\vnM$.  Let $X\in \mathcal{L}(n)$. Then
there exist unique scalars $(a_{ijk})_{i,j,k\in\St}$ such that 
$a_{ijk} = 0$ for $(i,j) \not\in E(P^n)$ and 
for all $k\in \St$,
$$ 
X(e_k) = \sum_{(i,j)\in E(P^n)}a_{ijk}p_j\otimes e_i. 
$$
Since  $X \in \LL(n)$, it is a continuous linear map satisfying 
$(T\otimes I) X = X T$ for all $T \in \vnMaS$ (see definition \ref{subproduct-prelim}). On the other hand, if $T = (c_k)_{k\in \St}\in \vnMaS$, then
\begin{align*} 
XT(e_k) & =  \sum_{(i,j)\in E(P^n)}a_{ijk}c_kp_j\otimes e_i \\
 (T\otimes I)X(e_k) & =  \sum_{(i,j)\in E(P^n)}a_{ijk}c_jp_j\otimes e_i
\end{align*}
Thus by uniqueness of representation, we must have for $j \neq k$ that $a_{ijk}=0$.  Hence, if we define $a_{ij} :=a_{ijj}$, and denote $A = [a_{ij}]$, we obtain,
$$
X(e_j) = \sum_{i \ : \ (i,j) \in E(P^n)} a_{ij} p_j \otimes e_i = p_j\otimes Ae_j
$$
where $a_{ij} = 0$ for $(i,j) \notin E(P^n)$, and $Ae_j = (a_{ij})_{i\in \St} \in \ell^2(\St)$ for each $j\in \St$.

The boundedness condition on $X$ ensures that 
$$ ||X|| \geq ||X(e_j)|| = || p_j \otimes Ae_j|| $$
Since $X(e_j) \perp X(e_{j'})$ for all $j\neq j'$, we also have by Pythagoras that for every $b  \in \ell^2(\St)$:
$$ ||X(b)||^2 = ||\sum_{j\in \St}b_jX(e_j) ||^2 = \sum_{j\in \St}|b_j|^2||X(e_j)||^2 \leq ||b||_2^2 \;\sup_{j\in \St}||p_j \otimes Ae_j||^2 $$
Thus $||X|| = \sup_{j\in \St}||p_j\otimes Ae_j||$ and,
$$ ||p_j\otimes Ae_j||^2 = \la p_j\otimes Ae_j,p_j\otimes Ae_j \ra = \sum_{i\in \St}|a_{ij}|^2\la e_i,\theta^n(p_j)e_i \ra = \sum_{i\in \St}|a_{ij}|^2P^{(n)}_{ij}.
$$

In this way each $X\in \mathcal{L}(n)$ has a unique matrix $A=[a_{ij}]$ in the set $\ee(n)$ of matrices indexed
by $\Omega$  satisfying  $a_{ij} = 0$ for $(i,j) \notin E(P^n)$ and $\sup_j\sum_i|a_{ij}|^2P^{(n)}_{ij} < \infty$. Conversely, it is easy to see that any matrix in $\ee(n)$ is obtained in this fashion, and this implements 
a bijection of $\ee(n)$ with $\mathcal{L}(n)$.  
In the remainder, we will denote by
$X_A = X_{[a_{ij}]} = X^{(n)}_A = X^{(n)}_{[a_{ij}]}$ the unique element associated to
$A \in \ee(P)_n$  determined by 
the identity
$$
X_A(e_k) = p_k \otimes A e_k, \qquad k\in\nn.
$$

A brief computation shows that the left and right actions are given by $T \cdot X_{A} = (I\otimes T) \circ X_{A} = X_{T \cdot A} $ and $X_{A} \cdot T = X_{A \cdot T}$, where $T\in \vnMaS$ is thought of as a diagonal matrix when multiplied with the matrix $A$. Furthermore, given $A, B \in Arv(P)_n$, 
notice that $\la X_A, X_B \ra = X_A^*X_B$ is a an element
of $\vnMaS$ hence a diagonal matrix. A direct computation 
shows that for every $k\in \St$
$$ 
X_A^*X_B e_k = X^*_{A}(p_k\otimes Be_k) = \la p_k\otimes 
Ae_k,\; p_k\otimes Be_k \ra \cdot e_k
$$
Hence, 
\begin{align*}
(\la X_{A}, X_{B} \ra)_{jj} 
& =\la p_j\otimes A e_j , 	p_j\otimes B e_j \ra
	= \big\la Ae_j , 		
	\theta^n(p_j)(Be_j) \big\ra \\
&= 	\sum_{i\in \St}\overline{a_{ij}} 
	P^{(n)}_{ij}b_{ij} = \Big( \Diag((\sqrt{P^n} * A)^*(\sqrt{P^n} * B)) \Big)_{jj}
\end{align*}
and it follows that $\la X_A, X_B\ra = \Diag((\sqrt{P^n} * A)^*(\sqrt{P^n} * B))$. Establishing the fiberwise correspondence inner product for $\ee(n)$ We denote the family $\ee = \{ \ee(n) \}_{n\in \nn}$.


Let us now focus on the subproduct maps. For that purpose, fix $n,m \in \nn$. Let us observe
that in analogy with the situation above, the set
$\{p_k \otimes p_j \otimes e_i\}_{(i,j,k) \in E(P^n,P^m)}$ is is an orthogonal basis for $\vnMaS\otimes_{\theta^m}\vnMaS\otimes_{\theta^n}H$,
since for all $i,j,k,i',j',k'$,
\begin{align*} 
\la p_k\otimes p_j \otimes e_i , p_{k'} \otimes p_{j'} \otimes e_{i'} \ra & = 
\delta_{ii'} \delta_{jj'} \delta_{kk'} \; \la e_i , \theta^n(p_j\theta^n(p_k))e_i \ra = 
\delta_{ii'} \delta_{jj'} \delta_{kk'} \; \la e_i, \theta^n(p_j)e_i \ra P^{(m)}_{jk} \\
& = \delta_{ii'} \delta_{jj'} \delta_{kk'} \; P^{(n)}_{ij}P^{(m)}_{jk}.
\end{align*} 
Furthermore, given $Y \in \mathcal{L}(m,n)$, by a computation
analogous to the one above involving the intertwiner condition,
there exist scalars $c_{ijk}$ for $(i,j,k) \in E(P^n, P^m)$ such that for each $k \in \Omega$
$$ 
Y(e_k) = \sum_{(i,j,k) \in E(P^n, P^m)} c_{ijk} \; p_k\otimes p_j \otimes e_i 
$$
We may also define $c_{ijk} = 0$ for $(i,j,k) \notin E(P^n,P^m)$. Furthermore, the norm of $Y$ given by $\|Y\| = \sup_{k\in \St}\|Y(e_k)\|$. We denote such $Y$ as $Y = Y_{[c_{ijk}]} = Y^{(m,n)}_{[c_{ijk}]}$ where $c_{ijk} = 0$ for $(i,j,k) \notin E(P^n,P^m)$.

One can similarly compute $Y^*_{[c_{ijk}]}$ and the inner product on $\mathcal{L}(m,n)$ which would make it into a W* correspondence along with the usual left and right actions.

As we shall see, these computations are unnecessary for the computation of our subproduct system.


Define $V_{n,m} : \mathcal{L}(n+m) \rightarrow \mathcal{L}(m,n)$
by the usual formula $V_{n,m}(X) = \Gamma_{n,m} \circ X$, where $\Gamma_{n,m}: \vnMaS \otimes_{\theta^{n+m}}H \rightarrow \vnMaS \otimes_{\theta^m} \vnMaS \otimes_{\theta^n}H$ is defined by $\Gamma_{n,m}(a\otimes h) = a\otimes I \otimes h$.

It is evident that $V_{n,m}^*(X) = \Gamma_{n,m}^* \circ X$ so in order to compute $V_{n,m}^*$, all one needs to do is compute $\Gamma_{n,m}^*$. So indeed, we compute $\Gamma_{n,m}^*$ by computing the projection $Q = \Gamma^*_{n,m}\Gamma_{n,m}$ onto the image of $\Gamma_{n,m}$ which is exactly $\vnMaS \otimes _{\theta^m}I \otimes_{\theta^n} H$ that has $\{p_k \otimes I \otimes e_i \}_{(i,k) \in E(P^{n+m})}$ as an orthogonal basis. In fact, as Hilbert spaces with the corresponding bases, $\vnMaS \otimes_{\theta^m} I \otimes_{\theta^n} H \cong \vnMaS \otimes_{\theta^{n+m}} H$ via $\Gamma^*_{n,m}$.

We run the aformentioned computation. Indeed,
$$ Q(p_k \otimes p_j \otimes (c_{ijk})_i) = \sum_{i \ : \ (i,k) \in E(P^{n+m})}\frac{\la p_k\otimes I \otimes e_i, p_k \otimes p_j \otimes (c_{ijk})_i \ra}{||p_k\otimes I \otimes e_i||^2}p_k \otimes I \otimes e_i $$
And since, $\la p_k\otimes I \otimes e_i, p_k \otimes p_j \otimes (c_{ijk})_i \ra = c_{ijk}P^{(n)}_{ij}P^{(m)}_{jk}$ and $||p_k\otimes I \otimes e_i||^2 = ||p_k\otimes e_i||^2 = P^{(n+m)}_{ik}$
We obtain that,
$$ Q\big(\sum_{j\in \St}p_k \otimes p_j \otimes (c_{ijk})_i\big) = \sum_{i \ : \ (i,k) \in E(P^{n+m})}\frac{\sum_{j\in \St}c_{ijk}P^{(n)}_{ij}P^{(m)}_{jk}}{P^{(n+m)}_{ik}}p_k \otimes I \otimes e_i $$
Now, since $\Gamma_{n,m}^* = \Gamma_{n,m}^* Q$, we have that
$$ \Gamma_{n,m}^*\big(\sum_{j\in \St}p_k \otimes p_j \otimes (c_{ijk})_i \big) = \sum_{i \ : \ (i,k) \in E(P^{n+m})}\frac{\sum_{j\in \St}c_{ijk}P^{(n)}_{ij}P^{(m)}_{jk}}{P^{(n+m)}_{ik}}p_k \otimes e_i $$
\\
We define the usual $\Psi: \mathcal{L}(n) \otimes \mathcal{L}(m) \rightarrow \mathcal{L}(m,n)$ by $\Psi_{n,m}(X^{(n)}_A\otimes X^{(m)}_B) = (I\otimes X^{(n)}_A)\circ X^{(m)}_B$ and obtain by a simple computation that for $A= [a_{ij}] \in \ee(n)$ and $B = [b_{lk}]\in \ee(m)$,
$$ \Psi_{n,m}(X^{(n)}_{[a_{ij}]}\otimes X^{(m)}_{[b_{lk}]}) = Y^{(m,n)}_{[a_{ij}b_{jk}]} $$
So the multiplication maps are $U_{n,m}: \mathcal{L}(n) \otimes \mathcal{L}(m) \rightarrow \mathcal{L}(n+m)$ given by $U_{n,m} = V_{n,m}^* \Psi_{n,m}$ and we obtain:
$$ U_{n,m}(X^{(n)}_{[a_{ij}]}\otimes X^{(n)}_{[b_{lk}]})(e_k) = (\Gamma_{n,m}^* \circ Y^{(m,n)}_{[a_{ij}b_{jk}]})(e_k) = $$
$$ \sum_{i \ : \ (i,k) \in E(P^{n+m})}\frac{\sum_{j\in \St}a_{ij}P^{(n)}_{ij}b_{jk}P^{(m)}_{jk}}{P^{(n+m)}_{ik}}p_k \otimes e_i $$
In other words,
$$ U_{n,m}(X^{(n)}_{A}\otimes X^{(m)}_{B}) = X^{(n+m)}_{(P^{n+m})^{\flat}*\big[(P^n * A)\cdot(P^m * B)\big]} $$
Defining $\widetilde{U}_{n,m}: \ee(n) \otimes \ee(m) \rightarrow \ee(n+m)$ by the rule 
$$\widetilde{U}_{n,m}(A\otimes B) = (P^{n+m})^{\flat}*\big[(P^n * A)\cdot(P^m * B)\big]$$
Yields that $\big(\ee, \widetilde{U} \big)$ is a subproduct system naturally isomorphic to $\big( \mathcal{L}, U \big)$ via the fiberwise map $A \mapsto X_A$ (for $A\in \ee(n)$).
Now a computation yields that the (Id-)isomorphism $V: \ee \rightarrow Arv(P)$ defined fiberwise by $V_n(A) = \sqrt{P^n}*A$ imposes a structure of a subproduct system on $Arv(P)$, with the aformentioned subproduct maps in the theorem, induced from that of $(\ee, \widetilde{U})$ .
\end{proof}

Note that the structure of the W*-correspondences in a standard presentation depends only on the \emph{graph} structure of the stochastic matrix. Information on the weighted graph is contained only in the subproduct maps.

Recall Definition \ref{definition:subproduct-direct-sum} and Theorem \ref{theorem:graph-decomposition}.

\begin{proposition} \label{proposition:essential-is-direct-sum}
Let $P$ be a finite and essential stochastic matrix over $\St$. Assume that $P$ decomposes into block diagonal form with irreducible stochastic blocks $P(1),..., P(\ell)$, and that $\St(1), ..., \St(\ell)$ are the state sets corresponding to the rows of $P(1),..., P(\ell)$ respectively. Then $Arv(P(k))$ is a subproduct system over $\ell^{\infty}(\St_k)$ for every $1\leq k \leq \ell$, and $Arv(P)$ is canonically Id-isomorphic to $Arv(P(1))\oplus ... \oplus Arv(P(\ell))$, when identifying $\ell^{\infty}(\St(1)) \oplus ... \oplus \ell^{\infty}(\St(\ell))$ with $\vnMaS$ in the natural way.
\end{proposition}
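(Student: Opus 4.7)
The plan is to exploit the fact that all of the structural data of $Arv(P)$ in the standard presentation of Theorem~\ref{theorem:subproduct-computation}, namely the support set $E(P^n)$, the inner product $\langle A,B \rangle = \Diag(A^*B)$, the left and right actions, and the subproduct maps, are built from operations (matrix multiplication, Schur product, $(\cdot)^\flat$, $\sqrt{\cdot}$, $\Diag$) that respect the block decomposition of a block diagonal matrix. First I would note that since $P$ is block diagonal with blocks $P(1),\ldots,P(\ell)$, the same holds for $P^n$, with blocks $P(k)^n$, and therefore
\[
E(P^n) = \bigsqcup_{k=1}^{\ell} E(P(k)^n) \subseteq \bigsqcup_{k=1}^{\ell} \Omega(k)\times\Omega(k).
\]
In particular, each $P(k)$ is itself a (finite, irreducible) stochastic matrix on $\Omega(k)$, so $Arv(P(k))$ is a well-defined subproduct system over $\ell^\infty(\Omega(k))$.

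Next, for each $n \in \mathbb{N}$ I would define the candidate isomorphism $\Phi_n : Arv(P)_n \to \bigoplus_{k=1}^\ell Arv(P(k))_n$ by restriction of matrices to diagonal blocks,
\[
\Phi_n(A) = \bigl(A|_{\Omega(k)\times\Omega(k)}\bigr)_{k=1}^\ell.
\]
The support condition $a_{ij}=0$ for $(i,j)\notin E(P^n)$ combined with the block diagonal form of $E(P^n)$ shows that $A$ is entirely determined by its diagonal blocks and conversely, so each $\Phi_n$ is a bijection onto the direct sum described in Definition~\ref{definition:subproduct-direct-sum}. Under the natural identification $\vnMaS \cong \bigoplus_k \ell^\infty(\Omega(k))$, diagonal matrices decompose accordingly, so $\Phi_n$ intertwines the left and right actions. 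The inner product is equally straightforward: for $A,B \in Arv(P)_n$ the product $A^*B$ is again block diagonal (since $A$ and $B$ are), hence
\[
\Diag(A^*B) = \bigoplus_{k=1}^{\ell} \Diag(A_k^* B_k),
\]
which matches the inner product of $\bigoplus_k Arv(P(k))_n$.

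Finally, I would verify compatibility with the subproduct maps. Given $A \in Arv(P)_n$ and $B \in Arv(P)_m$, the matrices $\sqrt{P^n}*A$ and $\sqrt{P^m}*B$ are block diagonal, so their product $(\sqrt{P^n}*A)(\sqrt{P^m}*B)$ is block diagonal with $k$-th block $(\sqrt{P(k)^n}*A_k)(\sqrt{P(k)^m}*B_k)$. Schur-multiplying by $(\sqrt{P^{n+m}})^\flat$, which is also block diagonal, preserves this decomposition, giving
\[
\Phi_{n+m}\bigl(U_{n,m}^P(A\otimes B)\bigr) = \bigoplus_{k=1}^\ell U_{n,m}^{P(k)}(A_k \otimes B_k),
\]
so $\Phi=(\Phi_n)_n$ is a morphism of subproduct systems (with $\Phi_0 = \mathrm{Id}$ under the identification of W*-algebras). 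All the steps are essentially bookkeeping, and I do not expect a serious obstacle; the only point requiring a small amount of care is to make sure that the identification $\ell^\infty(\Omega) \cong \bigoplus_k \ell^\infty(\Omega(k))$ is compatible simultaneously with the $X_0$-fibers, the bimodule actions, and the $n=0$ subproduct maps $U_{0,n}$ and $U_{n,0}$, but these are tautological once the block decomposition is fixed.
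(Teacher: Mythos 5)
Your proposal is correct and takes essentially the same approach as the paper: the paper's proof also observes that every $A \in Arv(P)_n$ decomposes uniquely into diagonal blocks $A(k) \in Arv(P(k))_n$ and that the subproduct maps, being matrix multiplication up to Schur products, preserve this block-diagonal form. Your write-up simply spells out the bookkeeping (support sets, inner products, bimodule actions) that the paper leaves implicit.
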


\begin{proof}
Considering $P$ with the mentioned decomposition, every $A\in Arv(P)_n$ decomposes uniquely to block diagonal form with blocks $A(1), ..., A(\ell)$ along the diagonal with $A(k) \in Arv(P(k))_n$ for all $1\leq k \leq \ell$. Since the subproduct $U^{Arv(P)}$ is matrix multiplication (up to Schur products), the block diagonal form is preserved, and we must have that $Arv(P) \cong_{Id} Arv(P(1))\oplus ... \oplus Arv(P(\ell))$ via the map sending $A$ to $A(1)\oplus ...\oplus A(\ell)$.
\end{proof}

\begin{remark}
There is another construction of subproduct systems from completely positive normal maps called the GNS subproduct system mentioned in \cite[Section 3]{SS}. The GNS subproduct system associated to a stochastic matrix was computed for finite stochastic matrices in \cite{Vis}. In our special case of the concrete von-Neumann algebra $\vnMaS \subseteq B(\ell^{2}(\St))$, both the GNS and \AS\ subproduct systems are over the same von-Neumann algebra, and 
although we do not include the proof here, it turns out that the GNS subproduct system for recurrent $P$ is naturally isomorphic to the \AS\ subproduct system for the \emph{time reversal} of $P$. In some sense, this is a duality
phenomenon in the context of subproduct systems arising from stochastic matrices. We use the word ``duality" in analogy with the well-known duality between GNS and \AS\ \emph{product} systems (see \cite[Remark 3.4]{SS} and \cite{DMS}) and product systems in general (see  \cite{skeide-commutant}).  In any case, given this duality
phenomenon, the choice of which  framework to use in the analysis of the subproduct systems arising from stochastic
matrices
is a matter of convenience. We will proceed with the  framework of \AS\ subproduct systems.
\end{remark}


The main point of the following is to tell exactly when there exists an isomorphism between two \AS\ subproduct systems arising from a stochastic matrix, in terms of the matrices, and to recognize a certain class of stochastic matrices distinguishable by their \AS\ subproduct systems. Recall the 1-1 correspondence between *-automorphisms of $\vnMaS$ and permutations of $\St$ preceeding Theorem \ref{theorem:subproduct-computation}.

\begin{notation}
Let $P$ and $Q$ be stochastic matrices. If for a permutation $\sigma$ we have $P\sim_{\sigma}Q$ and for all $(i,j,k) \in E(P^n,P^m)$ we have
\begin{equation} \label{eq:subproduct-matirx-equiv}
\frac{P^{(n)}_{ij} \cdot P^{(m)}_{jk} }{P^{(n+m)}_{ik}} = 
\frac{ Q^{(n)}_{\sigma(i) \sigma(j)} \cdot Q^{(m)}_{\sigma(j) \sigma(k)}}{Q^{(n+m)}_{\sigma(i) \sigma(k)}} 
\end{equation}
We say that $P$ and $Q$ satisfy equation \eqref{eq:subproduct-matirx-equiv} via $\sigma$.
\end{notation}

\begin{theorem}\label{theorem:subproduct-systems-isomorphism}
Let $P$ and $Q$ be two stochastic matrices. 
\begin{enumerate}
\item
If $Arv(P) \cong_{\rho} Arv(Q)$ then $P$ and $Q$ satisfy equation \eqref{eq:subproduct-matirx-equiv} via $\sigma_{\rho}$.
\item
If $P$ and $Q$ satisfy equation \eqref{eq:subproduct-matirx-equiv} via $\sigma$, then $Arv(\theta) \cong_{\rho_{\sigma}} Arv(\phi)$.
\end{enumerate}
\end{theorem}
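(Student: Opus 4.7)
Both directions reduce to tracking how a $\rho$-unitary morphism $V=\{V_n\}$ acts on the matrix units $E_{ij}$ (the matrix with a single $1$ at position $(i,j)$ and zero elsewhere), which are the natural spanning elements of the fibers $Arv(P)_n$.

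For part (1), write $\sigma=\sigma_\rho$. For any $(i,j)\in E(P^n)$, the matrix unit $E_{ij}$ lies in $Arv(P)_n$ and satisfies $p_i E_{ij} p_j = E_{ij}$. Since $V_n$ is a $\rho$-correspondence morphism,
\[
V_n(E_{ij}) \;=\; \rho(p_i)\,V_n(E_{ij})\,\rho(p_j) \;=\; p_{\sigma(i)}\,V_n(E_{ij})\,p_{\sigma(j)},
\]
so $V_n(E_{ij}) = c^{(n)}_{ij}\,E_{\sigma(i)\sigma(j)}$ for some scalar $c^{(n)}_{ij}$. The identity $\langle E_{ij},E_{ij}\rangle = p_j$ together with the $\rho$-isometry condition $\rho^{-1}(\langle V_n(E_{ij}),V_n(E_{ij})\rangle) = \langle E_{ij},E_{ij}\rangle$ forces $|c^{(n)}_{ij}|=1$, and since $V_n$ is surjective, applying the same analysis to $V_n^{-1}$ yields $E(P^n)=\sigma^{-1}(E(Q^n))$ for every $n$; the case $n=1$ recovers $P\sim_\sigma Q$.

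The equation is then obtained by applying the subproduct compatibility $V_{n+m}\circ U^P_{n,m} = U^Q_{n,m}\circ(V_n\otimes V_m)$ to the simple tensor $E_{ij}\otimes E_{jk}$ for $(i,j,k)\in E(P^n,P^m)$. Using the explicit formula of Theorem~\ref{theorem:subproduct-computation}, a short computation shows
\[
U^P_{n,m}(E_{ij}\otimes E_{jk}) \;=\; \sqrt{P^{(n)}_{ij}P^{(m)}_{jk}/P^{(n+m)}_{ik}}\;E_{ik},
\]
and analogously for $Q$. Both sides of the compatibility produce scalar multiples of $E_{\sigma(i)\sigma(k)}$, with the left-hand scalar having modulus $\sqrt{P^{(n)}_{ij}P^{(m)}_{jk}/P^{(n+m)}_{ik}}$ and the right-hand scalar having modulus $\sqrt{Q^{(n)}_{\sigma(i)\sigma(j)}Q^{(m)}_{\sigma(j)\sigma(k)}/Q^{(n+m)}_{\sigma(i)\sigma(k)}}$ (using $|c^{(n)}_{ij}|=1$). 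Squaring yields~\eqref{eq:subproduct-matirx-equiv}.

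For part (2), assume $P\sim_\sigma Q$ and the identity. A short induction along paths shows that $P\sim_\sigma Q$ already implies $E(P^n)=\sigma^{-1}(E(Q^n))$ for every $n$, so that the entrywise permutation $V_n:Arv(P)_n\to Arv(Q)_n$ defined by $V_n(A)_{\sigma(i)\sigma(j)}=A_{ij}$ is a well-defined bijection for each $n$, with $V_0=\rho_\sigma$. A direct check shows $V_n$ is $\rho_\sigma$-linear on both sides and that $\langle V_nA,V_nB\rangle = \rho_\sigma(\langle A,B\rangle)$, so that $V_n$ is a $\rho_\sigma$-unitary correspondence morphism. To verify $V_{n+m}\circ U^P_{n,m} = U^Q_{n,m}\circ(V_n\otimes V_m)$, one compares the $(\sigma(i),\sigma(k))$-entry of each side on a simple tensor $A\otimes B$: the explicit formula for $U$ reduces the desired equality, termwise in the sum over $j$, to precisely the hypothesized identity for $(i,j,k)\in E(P^n,P^m)$.

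The principal obstacle is bookkeeping rather than conceptual. One must carefully keep track of the distinction between a $\rho$-module morphism and an ordinary one—in particular, noting that the native inner product satisfies $\langle V_nA,V_nB\rangle = \rho_\sigma(\langle A,B\rangle)$, so that the $\rho_\sigma$-corrected inner product on $Arv(Q)^{\rho_\sigma}$ is indeed preserved—and maintain the permutation conventions consistently throughout. Since the $V_n$ and the subproduct maps are all $\sigma$-topology continuous (via Proposition~\ref{proposition:extension-rho-self-dual}) and the finite-support matrix units are $\sigma$-dense in each fiber, the verifications on matrix units suffice to determine the maps on the whole fibers.
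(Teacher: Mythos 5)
Your proposal is correct and follows essentially the same route as the paper: in both directions it reduces the problem to the action of the $\rho$-unitary family on matrix units $E_{ij}$, extracts unimodular scalars $c^{(n)}_{ij}$ from the $\rho$-correspondence and isometry conditions, and converts the subproduct compatibility on $E_{ij}\otimes E_{jk}$ into equation \eqref{eq:subproduct-matirx-equiv} via the explicit formula for $U_{n,m}$, while part (2) uses the same entrywise permutation $V_n(A)=R_\sigma A R_\sigma^{-1}$. Your added remarks on $E(P^n)=\sigma^{-1}(E(Q^n))$ and on $\sigma$-density of matrix units in the infinite-state case are harmless elaborations of points the paper treats as immediate.
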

\begin{proof}
When $\sigma$ is a permutation of $\St$, we denote for brevity, $i' = \sigma(i)$.

$(1)$: Assume $V : Arv(P) \rightarrow Arv(Q)$ is a given (unitary) isomorphism of the subproduct systems. Then $\rho = V_0 : \vnMaS \rightarrow \vnMaS$ is induced by a permutation $\sigma = \sigma_{\rho}$. Now for all $(i,j)\in E(P^n)$ denote
$E_{ij}$ to be the element in $Arv(P)_n$ which is $1$ at $(i,j)$ and zero otherwise, and define $E_{i'j'}$ similarly in $Arv(Q)_n$. Due to $V_n$ being a $\rho$-correspondence morphism, we have,
$$
V_n(E_{ij})  = V_n(p_i E_{ij} p_j) = \rho(p_i) V_n(E_{ij}) \rho(p_j) = p_{i'}V_n(E_{ij}) p_{j'}
$$
So we must have that $V_n(E_{ij}) = b_{ij}^{(n)} \cdot E_{i'j'}$ for some $b_{ij}^{(n)} \in \mathbb{C}$. 
Due to $V_n$ being isometric we have that,
$$1 = ||\la E_{ij}, E_{ij} \ra || = ||E_{ij}||^2 = ||b_{ij}^{(n)}\cdot E_{i'j'}||^2 = ||\la b_{ij}^{(n)} \cdot E_{i'j'}, b_{ij}^{(n)} \cdot E_{i'j'} \ra || = |b_{ij}^{(n)}|^2 $$ 
By the formula for the subproducts maps in $Arv(P)$ and $Arv(Q)$ we have that,
$$ U^P_{n,m}(E_{ij} \otimes E_{jk}) = \sqrt{\frac{P_{ij}^{(n)} P_{jk}^{(m)}}{P_{ik}^{(n+m)}}} \cdot E_{ik} \ , \ \ 
U^Q_{n,m}(E_{i'j'} \otimes E_{j'k'}) = \sqrt{\frac{Q_{i'j'}^{(n)} Q_{j'k'}^{(m)}}{Q_{i'k'}^{(n+m)}}} \cdot E_{i'k'}
$$
But since $V$ is preserves subproducts, we obtain,
$$ b_{ik}^{(n+m)} \sqrt{\frac{P_{ij}^{(n)} P_{jk}^{(m)}}{P_{ik}^{(n+m)}}} = b_{ij}^{(n)}b_{jk}^{(m)} 
\sqrt{\frac{Q_{i'j'}^{(n)} Q_{j'k'}^{(m)}}{Q_{i'k'}^{(n+m)}}} $$
So we obtain equation \eqref{eq:subproduct-matirx-equiv} by squaring the absolute value.

$(2)$:
Define $V_0 : \vnMaS \rightarrow \vnMaS$ by $V_0 = \rho_{\sigma} = \rho$, that is, $V_0(p_i) = p_{\sigma(i)}$. Define $V_n : Arv(P)_n \rightarrow Arv(Q)_n$ by the change of variables
$V_n(A) = R_{\sigma}AR^{-1}_{\sigma}$.
We need to show that $V_n$ is a $\rho$-unitary between the two W*-correspondences, and preserves the subproduct maps.

Indeed, it is immediate that $V_n$ is a $\rho$-unitary since it preserves the inner product via $\rho$ and the left and right actions via $\rho$.
Thus we use equation \eqref{eq:subproduct-matirx-equiv} to show that $V_n$ preserves subproducts maps. Let $A \in Arv(P)_n$ and $B \in Arv(P)_m$. Then,
$$
U_{n,m}^Q(V_n(A) \otimes V_m(B)) = (\sqrt{Q^{n+m}})^{\flat}*\big[(\sqrt{Q^n} * R_{\sigma}AR^{-1}_{\sigma})\cdot(\sqrt{Q^m} * R_{\sigma}BR^{-1}_{\sigma})\big] = $$
$$ R_{\sigma} \Bigg[ (R^{-1}_{\sigma} (\sqrt{Q^{n+m}})^{\flat} R_{\sigma})*\Big\{ \big[(R^{-1}_{\sigma}\sqrt{Q^n}R_{\sigma}) * A \big]\cdot \big[(R^{-1}_{\sigma}\sqrt{Q^m}R_{\sigma}) * B \big]\Big\} \Bigg] R^{-1}_{\sigma}
= 
$$
$$
R_{\sigma} \Big[ (\sqrt{P^{n+m}})^{\flat}*\big\{(\sqrt{P^n} * A)\cdot(\sqrt{P^m} * B)\big\} \Big] R^{-1}_{\sigma}= V_{n+m}U_{n,m}^P (A \otimes B) 
$$
Where the second last equality is due to equation \eqref{eq:subproduct-matirx-equiv} being satisfied with respect to $\sigma$.

\end{proof}

\begin{example}
\AS\ subproduct systems are unable to distinguish all finite stochastic matrices.
Define for $r \in (0,1)$ the stochastic matrix over $\St = \{1 , 2 , 3 \} $:
		 $$ P(r) = \begin{bmatrix}
        0 & r & 1-r           \\[0.3em]
        0 & 1           & 0 \\[0.3em]
        0           & 0 & 1
      \end{bmatrix} $$
Note that for all $r, q\in (0, 1)$ the matrices $P(r)$ and $P(q)$ are graph isomorphic via $\sigma = Id_{\St}$ and that $P(r) = P(r)^n$ for all $n\geq 1$. Thus, to check that equation \eqref{eq:subproduct-matirx-equiv} is satisfied with respect to $\sigma = Id_{\St}$, we need only show that for $(i,j,k) \in E(P(r),P(r)) = E(P(q),P(q))$ we have that
$$
\frac{P(r)_{ij} \cdot P(r)_{jk} }{P(r)_{ik}} = 
\frac{ P(q)_{i j} \cdot P(q)_{jk}}{P(q)_{i k}}
$$
Which is readily seen to be satisfied for any $(i,j,k) \in E(P,P)$ and $r, q \in (0,1)$. So \AS\ subproduct system can't distinguish a continuous family of finite reducible stochastic matrices, since for $r\neq q$ in $(0,\frac{1}{2}]$ we have that $P(r)$ and $ P(q)$ are not isomorphic (via any $\sigma$) due to the matrices having different sets of probabilities.
\end{example}

Note that in the last example, $i = 1$ is inessential.

\begin{example}
\AS\ subproduct systems cannot distinguish all infinite irreducible stochastic matrices.\\
Take $r\in (0,1)$ and denote by $P(r)$ the matrix over $\St = \mathbb{Z}$ with entries 
$P(r)_{i(i+1)} = r \ , \ P(r)_{i(i-1)} = 1-r $, and all other entries zero. 
Then one can check that any two paths of the same length $\ell \geq 2$ both starting at $i$ and ending at $k$ have the same probabilities. This means that, if $\gamma$ and $\gamma'$ are two such paths, then 
$$
\Pi_{m=1}^{\ell}P(r)_{\gamma(m)\gamma(m+1)} = \Pi_{m=1}^{\ell}P(r)_{\gamma'(m)\gamma'(m+1)}
$$
Thus, if we sum over all paths of length $n+m$ starting at $i$ and ending at $k$ on the right, and on all paths of length $n+m$ starting at $i$, ending at $k$ and passing through $j$ on the left, such that $(i,j,k) \in E(P^n,P^m)$, we obtain that
$$
d_{n,m} \cdot P(r)^n_{ij}P(r)^m_{jk} = P(r)^{n+m}_{ik}
$$
Where $d_{n,m}$ is the number of $j' \in \nn$ such that $(i,j',k) \in E(P^n,P^m)$ (which is independent of $r$). Thus equation \eqref{eq:subproduct-matirx-equiv} holds for $P(r)$ and $P(q)$ for all $r, q\in (0,1)$, and we must have that $Arv(P(r)) \cong_{Id} Arv(P(q))$. This means that \AS\ subproduct systems cannot distinguish a continuous family of irreducible stochastic matrices, since for different $r \in (0,\frac{1}{2}]$, the associated matrices $P(r)$ are not isomorphic (via any $\sigma$), as they have different sets of probabilities. Further note that if you take $r < \frac{1}{2}$ then $P(r)$ is transient while $P(\frac{1}{2})$ is recurrent.
\end{example}

So a question arises, what known classes of stochastic matrices can \AS\ subproduct systems distinguish up to isomorphism?

\begin{theorem} \label{theorem:telling-apart}
Let $P$ and $Q$ be \emph{recurrent} stochastic matrices. If $Arv(P) \cong_{\rho} Arv(Q)$ then $P \cong_{\sigma_{\rho}} Q$.
\end{theorem}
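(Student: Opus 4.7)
The plan is to combine the cocycle identity from Theorem~\ref{theorem:subproduct-systems-isomorphism}(1) with the recurrence hypothesis to force equality of individual matrix entries. Set $\sigma := \sigma_\rho$. By that theorem, $P \sim_\sigma Q$ and equation~\eqref{eq:subproduct-matirx-equiv} holds. Whenever $P^{(n)}_{ij}>0$ the denominator $Q^{(n)}_{\sigma(i)\sigma(j)}$ is also positive (applying the graph isomorphism to paths), so we may define
$$
v_{ij}^{(n)} \;:=\; \frac{P^{(n)}_{ij}}{Q^{(n)}_{\sigma(i)\sigma(j)}},
$$
and then equation~\eqref{eq:subproduct-matirx-equiv} becomes the multiplicative cocycle $v_{ij}^{(n)}\, v_{jk}^{(m)} = v_{ik}^{(n+m)}$. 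Every such relation involves only states in a single communicating class of $P$, and $\sigma$ maps $P$-classes to $Q$-classes, so it suffices to verify $P_{ij} = Q_{\sigma(i)\sigma(j)}$ inside each irreducible component; we therefore assume $P$ (and hence $Q$) is irreducible recurrent.

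The first step is to show $v_{ii}^{(n)} = 1$ for $n$ large in the subsemigroup $S_i := \{n : P^{(n)}_{ii}>0\}$, which consists of multiples of the period $r$ beyond some threshold. On $S_i$, the sequence $\alpha_n := v_{ii}^{(n)}$ satisfies $\alpha_{n+m} = \alpha_n \alpha_m$; a Cauchy functional equation argument (after taking logarithms and parametrising $S_i$) yields $\alpha_n = \beta^n$ eventually, for some $\beta > 0$. If $\beta > 1$, then $Q^{(n)}_{\sigma(i)\sigma(i)} \leq \beta^{-n}$ on $S_i$, contradicting $\sum_n Q^{(n)}_{\sigma(i)\sigma(i)} = \infty$; symmetrically $\beta < 1$ contradicts recurrence of $P$. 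Hence $\beta = 1$ and $v_{ii}^{(n)} = 1$ for $n$ large in $S_i$. Substituting $v_{jj}^{(m)} = 1$ into $v_{ij}^{(n)}\, v_{jj}^{(m)} = v_{ij}^{(n+m)}$ gives $v_{ij}^{(n+m)} = v_{ij}^{(n)}$ for $m$ large in $S_j$, which forces $v_{ij}^{(n)}$ to stabilize along its admissible progression to a positive constant $w_{ij}$. Passing to the limit in the cocycle yields $w_{ij}\, w_{jk} = w_{ik}$ and $w_{ii} = 1$, so fixing a base point $i_0$ and setting $a_i := w_{i,i_0}$ exhibits $w$ as a coboundary $w_{ij} = a_i/a_j$ with $a : \St \to (0,\infty)$.

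The final step eliminates the coboundary using stochasticity. Summing the identity $P^{(n)}_{ij} = (a_i/a_j)\, Q^{(n)}_{\sigma(i)\sigma(j)}$ over $j$ and using $\sum_j Q^{(n)}_{\sigma(i)\sigma(j)} = 1$ gives $(P^n a)(i) = a_i$ for all $i$ and all $n \geq N$. Comparing $P^{n+1}a = P(P^n a) = Pa$ with $P^{n+1}a = a$ yields $Pa = a$, so $a$ is a positive, finite-valued harmonic function on the irreducible recurrent chain $P$. By the classical fact that every positive harmonic function on an irreducible recurrent Markov chain is constant (e.g., via optional stopping on the nonnegative martingale $a(X_n)$ at an almost-surely finite hitting time), $a$ is constant; thus $w_{ij} \equiv 1$, and $P^{(n)}_{ij} = Q^{(n)}_{\sigma(i)\sigma(j)}$ for all $n$ large in the admissible progression. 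To descend to $n=1$, substitute into $v_{ij}^{(1)}\, v_{jj}^{(m)} = v_{ij}^{(m+1)}$ with $m$ a large multiple of $r$ (so $P^{(m)}_{jj}>0$, while $P_{ij}>0$ forces $P^{(m+1)}_{ij}>0$ by Theorem~\ref{theorem:cyclic-graph-decomposition}): both right-hand factors equal $1$, giving $v_{ij}^{(1)} = 1$, i.e., $P_{ij} = Q_{\sigma(i)\sigma(j)}$. The case $P_{ij}=0$ is covered by $P \sim_\sigma Q$. The main obstacle is ruling out the coboundary $w_{ij} = a_i/a_j$; this is precisely where recurrence is indispensable (both in forcing $\beta = 1$ and in making $a$ constant), consistent with the preceding examples showing the theorem fails without recurrence.
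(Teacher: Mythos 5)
Your proof is correct, and it takes a genuinely different route from the paper's at both of the main steps, even though both arguments launch from the same cocycle identity \eqref{eq:subproduct-matirx-equiv} supplied by Theorem~\ref{theorem:subproduct-systems-isomorphism}(1) and perform the same reduction to irreducible blocks. For the diagonal entries, the paper first passes to $P^n$ to reach the aperiodic case, telescopes the cocycle into $(P_{ii})^M/P^{(M)}_{ii} = (Q_{i'i'})^M/Q^{(M)}_{i'i'}$, and extracts $M$-th roots using amenability (Lemma~\ref{lemma-rec-iterate}); you instead note that $n\mapsto v_{ii}^{(n)}$ is exactly multiplicative on the return semigroup $S_i$, hence of the form $\beta^n$ there, and rule out $\beta\neq 1$ directly against divergence of the return series of $P$ or of $Q$ --- this avoids both the amenability lemma and the aperiodicity reduction. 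For the off-diagonal entries, the paper sums $P_{ij}P^{(m)}_{ji}=Q_{i'j'}Q^{(m)}_{j'i'}$ over $m$ and invokes Doeblin's ratio limit theorem, whereas you recognize the stabilized ratios $w_{ij}$ as a multiplicative coboundary $a_i/a_j$, deduce $Pa=a$ from stochasticity of the rows, and invoke constancy of positive harmonic functions on an irreducible recurrent chain; this trades one classical limit theorem for a classical potential-theoretic fact and makes it quite transparent where recurrence enters (it is used exactly twice in each proof, as the counterexamples preceding the theorem show it must be). One small point worth tightening: in the infinite-state case the summation $\sum_j P^{(n)}_{ij}a_j = a_i$ at a \emph{fixed} $n$ requires $v_{ij}^{(n)}=w_{ij}$ to hold at that $n$ for \emph{all} $j$ simultaneously, so ``stabilizes for large $n$'' (with a threshold possibly depending on $j$) is not quite enough as stated; but your own relations $v_{ij}^{(n)}v_{jj}^{(m)}=v_{ij}^{(n+m)}$ together with $v_{jj}\equiv 1$ on $S_j$ already give $v_{ij}^{(n)}=w_{ij}$ for \emph{every} admissible $n$ (connect any two admissible exponents through a common larger one), which closes this gap and also renders your separate descent to $n=1$ unnecessary.
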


\begin{proof}
Take $\sigma = \sigma_{\rho}$ to be the permutation on $\St$ which induces the graph isomorphism between $P$ and $Q$ satsifying equation \eqref{eq:subproduct-matirx-equiv}. We denote for brevity, $i' = \sigma(i)$. First assume that $P$ and $Q$ are both irreducible and assume also that $P$ is $r$-periodic.

We now show that $P^{(n)}_{ii} = Q^{(n)}_{i'i'}$ for every $n \in \mathbb{N}$ and $i \in \St$.
Indeed, if $P_{ii}^{(n)} = 0$ then $Q^{(n)}_{i'i'} =0$ due to the graph isomorphism. Now if $P_{ii}^{(n)}  > 0$ then again $Q^{(n)}_{i'i'} >0$ due to the graph isomorphism, and $i$ and $i'$ have the same period $r$ in $P$ and $Q$ respectively, again due to the graph isomorphism. Note that one has that $Arv(P^{n}) \cong_{\rho} Arv(Q^{n})$, that $P^n$ and $Q^n$ are aperiodic (since $n=rn'$ for some $n'\in \mathbb{N}$), and that both $P^n$ and $Q^n$ are recurrent (due to Lemma \ref{lemma-rec-iterate}). This reduces the problem to showing that $P_{ii} = Q_{i'i'}$ where $P$ is replaced by $P^n$ and $Q$ is replaced by $Q^n$.
Thus, for all $i \in \St$ we have
$$
 \frac{P_{ii} \cdot P^{(m)}_{ii} }{P^{(m+1)}_{ii}} = 
\frac{ Q_{i' i'} \cdot Q^{(m)}_{i' i'}}{Q^{(m+1)}_{i' i'}}
$$
Where these expressions are always well defined. It follows that,
$$ \frac{ (P_{ii})^M}{P^{(M)}_{ii}} = \prod_{m=1}^{M-1} \frac{P_{ii}P_{ii}^{(m)}}{P_{ii}^{(m+1)}} = \prod_{m=1}^{M-1} \frac{Q_{i'i'}Q_{i'i'}^{(m)}}{Q_{i'i'}^{(m+1)}} = \frac{(Q_{i'i'})^M}{Q^{(M)}_{i'i'}} $$
Since $P$ and $Q$ are recurrent chains, they must be amenable. Thus by taking an $M$-th root and a limit in $M$, we obtain that $P_{ii} = Q_{i'i'}$.

Now by taking $i=k$ in equation \eqref{eq:subproduct-matirx-equiv} and $n=1$ we obtain for all $i,j \in \St$
\begin{equation*}
P_{ij} P^{(m)}_{ji} = 
Q_{i' j'}Q^{(m)}_{j' i'} 
\end{equation*}
By taking sums over $m$ we obtain
\begin{equation*}
P_{ij} \sum_{m=1}^MP^{(m)}_{ji} = Q_{i' j'} \sum_{m=1}^MQ^{(m)}_{j' i'} 
\end{equation*}
Now since $P_{ii}^{(m)} = Q_{i'i'}^{(m)}$ for all $m \in \mathbb{N}$, we must have that,
\begin{equation}\label{eq:before-limit}
P_{ij} \cdot \frac{\sum_{m=1}^MP^{(m)}_{ji}}{\sum_{m=1}^MP^{(m)}_{ii}} = Q_{i' j'} \cdot \frac{\sum_{m=1}^MQ^{(m)}_{j' i'}}{\sum_{m=1}^MQ^{(m)}_{i' i'}}
\end{equation}

Now if $P$ is a recurrent irreducible stochastic matrix, by 
Doeblin's ratio limit theorem (see Part I, Section 9, Theorem 5 in \cite{Chung})
we have for $P$ (and also for $Q$) that for any three states $i,j,k \in \St$,
$$ 
\lim_{M\rightarrow \infty} \frac{\sum_{m=1}^MP^{(m)}_{ij}}{\sum_{m=1}^MP^{(m)}_{kj}} = 1 
$$
Thus, by taking $M \to \infty$ in equation \eqref{eq:before-limit} and using Doeblin's theorem for both $P$ and $Q$, 
we must have that $P_{ij} = Q_{i' j'}$.

Now, for general (reducible) recurrent stochastic matrices $P$ and $Q$, Theorem \ref{theorem:graph-decomposition} 
enables us to decompose $P$ into $P(1),P(2),...$ in block diagonal form, which induces a decomposition of $Q$ into $Q(1),Q(2),...$ in block diagonal form via the graph isomorphism $\sigma$, such that for all $0<k\in \mathbb{N}$ we have $Arv(P(k)) \cong_{\rho_{k}} Arv(Q(k))$ for some *-automorphism $\rho_{k}$, since equation \eqref{eq:subproduct-matirx-equiv} is satisfied via $\sigma_{\rho}$ (restricted to the appropriate set of indices in $\St$) for the pairs $P(k)$ and $Q(k)$ for every $0<k\in \mathbb{N}$, and thus a reduction is made to the general case.
\end{proof}


\section{Cuntz-Pimsner algebras for subproduct systems}
\label{section:cuntz-pimsner}

We describe the construction of Toeplitz and Cuntz-Pimsner algebras arising from subproduct systems  (see \cite{V, Vis}). Let $X = (X_n)_{n\in \mathbb{N}}$ be a subproduct system. We use the following notations throughout this work. The $X$-Fock correspondence is the W*-correspondence (weak) direct sum of the fibers of the subproduct system:
\begin{equation*}
\mathcal{F}_X := \bigoplus_{n\in \mathbb{N}}X_n 
\end{equation*}

Denote by $Q_n \in \mathcal{L}(\mathcal{F}_X)$ the projection of $\mathcal{F}_X$ onto the $n$th fiber $X_n$. Define $Q_{[0,n]} = Q_0 + Q_1 + ... + Q_n$ and $Q_{[n,\infty)} = Id - Q_{[0,n-1]}$.

The $X$-shifts are the operators $S^{(n)}_{\xi} \in \mathcal{L}(\mathcal{F}_X)$ for $n\in \mathbb{N}$, $\xi \in X_n$ given by 
$$ S^{(n)}_{\xi}(\eta) : = U_{n,m}(\xi \otimes \eta) $$ 
For $m \in \mathbb{N}$, $\eta \in X_m$. Since $\mathcal{F}_X$ is a W*-correspondence, $S^{(n)}_{\xi}$ are adjointable.

\begin{defi}
The Toeplitz algebra $\mathcal{T}(X)$ is the C*-subalgebra of $\mathcal{L}(\mathcal{F}_X)$ generated by all $X$-shifts and $\vnM$,
$$ 
\mathcal{T}(X) : = C^*(\vnM \cup \{ \ S^{(n)}_{\xi} \ | \ \xi \in X_n, \ n\in \mathbb{N} \ \})
$$
\end{defi}

\begin{remark}
One can show that $S^{(n)}_{\xi}$ are adjointable even if one takes the C*-correspondence direct sum for the Fock direct sum correspondence, and due to proposition \ref{proposition:extension-self-dual}, one obtains the same Toeplitz algebra as before.
\end{remark}

The algebra $\mathcal{L}(\mathcal{F}_X)$ admits a natural action $\alpha$ of the unit circle $\mathbb{T}\subseteq \mathbb{C}$, called the gauge action, defined by $\alpha_{\lambda}(T) = W_{\lambda}TW_{\lambda}^*$ for all $\lambda \in \mathbb{T}$ where $W_{\lambda}: \mathcal{F}_X \rightarrow \mathcal{F}_X$ is the unitary defined by 
$$W_{\lambda}(\oplus_{n\in \mathbb{N}} \xi_n) = \oplus_{n\in \mathbb{N}} \lambda^n \xi_n$$
Since $\alpha_{\lambda}(S^{(n)}_{\xi}) = S^{(n)}_{\lambda^n \xi}$, it follows that the Toeplitz algebra is an $\alpha$-invariant closed C*-subalgebra, so the circle action can be restricted to a circle action on the Toeplitz algebra. One then shows that for every $S\in \mathcal{T}(X)$, the function $f(\lambda) = \alpha_{\lambda}(S)$ is norm continuous. This enables the definition of a conditional expectation $\Phi$ defined by
$$ \Phi(S) =\int_{\mathbb{T}}\alpha_{\lambda}(S)d \lambda $$
Where $d \lambda$ is the normalized Haar measure on $\mathbb{T}$.

Let $\{ k_n \}_{n=1}^{\infty}$ denote Fejer's kernel function defined for $\lambda \in \mathbb{T}$ by 
$$k_n(\lambda) = \sum_{j=-n}^n\big(1 - \frac{|j|}{n+1}\big) \lambda^j$$
Note that for $S\in \mathcal{T}(X)$, the existence of the canonical conditional expectation $\Phi$ permits the definition of Fourier coefficients for an element $S \in \mathcal{T}(X)$ by
$$\Phi_n(S) = \int_{\mathbb{T}} \alpha_{\lambda}(S)\lambda^{-n}d \lambda$$
Then define the Cesaro sums,
$$ \sigma_n(S) := \sum_{j = -n}^n\big(1 - \frac{|j|}{n+1}\big) \Phi_j(S) =
\int_{\mathbb{T}} \sum_{j = -n}^n\big(1 - \frac{|j|}{n+1}\big) \alpha_{\lambda}(S)\lambda^{-j}d \lambda
= \int_{\mathbb{T}} \alpha_{\lambda}(S)k_n(\lambda)d \lambda
$$

\begin{proposition}
For every $S\in \mathcal{T}(X)$, the Cesaro sums $\sigma_n(S)$ converge in norm to $S$.
\end{proposition}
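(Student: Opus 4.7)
The plan is to run the classical Fejér kernel argument in the Banach-space-valued setting, using the three standard properties of Fejér's kernel: (i) $k_n \geq 0$, (ii) $\int_{\mathbb{T}} k_n(\lambda)\, d\lambda = 1$, and (iii) for every $\delta > 0$, $k_n \to 0$ uniformly on the set $\{\lambda \in \mathbb{T} : |\lambda - 1| \geq \delta\}$.

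First I would fix $S \in \mathcal{T}(X)$ and use property (ii) to write
\begin{equation*}
\sigma_n(S) - S = \int_{\mathbb{T}} \bigl( \alpha_\lambda(S) - S \bigr) k_n(\lambda)\, d\lambda.
\end{equation*}
Since $k_n \geq 0$, the triangle inequality for the Bochner integral yields
\begin{equation*}
\| \sigma_n(S) - S \| \leq \int_{\mathbb{T}} \| \alpha_\lambda(S) - S \|\, k_n(\lambda)\, d\lambda.
\end{equation*}
This is where the preceding remark about norm continuity of $\lambda \mapsto \alpha_\lambda(S)$ enters in an essential way: by compactness of $\mathbb{T}$, the function $\lambda \mapsto \|\alpha_\lambda(S) - S\|$ is uniformly continuous with value $0$ at $\lambda = 1$.

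Given $\varepsilon > 0$, I would choose $\delta > 0$ so that $\|\alpha_\lambda(S) - S\| < \varepsilon$ whenever $|\lambda - 1| < \delta$, and then split the integral over $\mathbb{T}$ into the arc $A_\delta = \{|\lambda - 1| < \delta\}$ and its complement $A_\delta^c$. On $A_\delta$ the integrand is bounded by $\varepsilon\, k_n(\lambda)$, contributing at most $\varepsilon$ by property (ii). On $A_\delta^c$ we use the crude bound $\|\alpha_\lambda(S) - S\| \leq 2\|S\|$, and property (iii) guarantees that $\int_{A_\delta^c} k_n(\lambda)\, d\lambda \to 0$ as $n \to \infty$. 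Hence $\limsup_n \|\sigma_n(S) - S\| \leq \varepsilon$, and since $\varepsilon$ was arbitrary, $\sigma_n(S) \to S$ in norm.

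The only non-routine step is verifying norm continuity of $\lambda \mapsto \alpha_\lambda(S)$, but this is already asserted in the discussion preceding the proposition (one checks it on the generators $m \in \vnM$ and $S_\xi^{(n)}$, where $\alpha_\lambda(S_\xi^{(n)}) = S_{\lambda^n \xi}^{(n)}$ depends norm-continuously on $\lambda$, and then extends by a standard $\varepsilon/3$ argument using the fact that $\|\alpha_\lambda\| = 1$). Everything else is formal manipulation of the operator-valued integral, which is justified because $\lambda \mapsto \alpha_\lambda(S)$ is a norm-continuous map from the compact group $\mathbb{T}$ into $\mathcal{T}(X)$ and so Bochner integrable.
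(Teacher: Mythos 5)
Your proof is correct and is essentially the same argument the paper intends: the paper simply defers to Theorem VIII.2.2 of Davidson's \emph{C*-algebras by example}, whose proof is exactly this Fej\'er-kernel computation (positivity, normalization, and concentration of $k_n$, combined with norm continuity of $\lambda \mapsto \alpha_\lambda(S)$, which the paper asserts in the preceding discussion). No discrepancies to report.
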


The proof is an easy adaptation of the proof of Theorem VIII.2.2 in \cite{Dav}.

\begin{defi}
For each $k \in \mathbb{Z}$ the $k$-th spectral subspace for $\alpha$ is defined by 

$$\mathcal{T}(X)_k = \{ \ T \in \mathcal{T}(X) \ | \ \alpha_{\lambda}(T) = \lambda^kT  \ \} $$

\end{defi}

\begin{defi}
Let $X = (X_n)_{n\in \mathbb{N}}$ be a subproduct system. A monomial in $\mathcal{T}(X)$ is a composition of finitely many of the operators $S^{(n)}_{\xi}$ and their adjoints. Every such operator can be written as $\Pi_{i = 1}^t S^{(m_i)*}_{\xi_i}S^{(n_i)}_{\zeta_i}$ for suitable $0<t\in \mathbb{N}$, and $n_i, m_i \in \mathbb{N}$, $\xi_i \in X_{m_i}$, $\zeta_i \in X_{n_i}$. A monomial of this form is said to be of degree $\sum_{i=1}^t(n_i - m_i)$. For $k\in \mathbb{Z}$ define $\mathcal{T}_k(X)$ to be the closure of all homogeneous polynomials of degree $k$. Note that $\mathcal{T}_0(X)$ is a C*-subalgebra of $\mathcal{T}(X)$.
\end{defi}

The next corollary gives us a characterization of the graded structure of Toeplitz algebras, in terms of the natural circle action.

\begin{corollary}
Let $X$ be a subproduct system. Then, $\mathcal{T}_k(X) = \mathcal{T}(X)_k$.
\end{corollary}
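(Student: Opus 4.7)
My plan is to prove the two inclusions $\mathcal{T}_k(X) \subseteq \mathcal{T}(X)_k$ and $\mathcal{T}(X)_k \subseteq \mathcal{T}_k(X)$ separately, using the gauge action on monomials and the Fourier coefficient maps $\Phi_j$.

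For the inclusion $\mathcal{T}_k(X) \subseteq \mathcal{T}(X)_k$, I would first check the behavior of the gauge action on monomials. Since $\alpha_\lambda(S^{(n)}_\xi) = W_\lambda S^{(n)}_\xi W_\lambda^* = \lambda^n S^{(n)}_\xi$ and consequently $\alpha_\lambda(S^{(m)\,*}_\xi) = \lambda^{-m} S^{(m)\,*}_\xi$, any monomial $\prod_{i=1}^t S^{(m_i)\,*}_{\xi_i} S^{(n_i)}_{\zeta_i}$ of degree $\sum (n_i-m_i) = k$ is multiplied by $\lambda^k$ under $\alpha_\lambda$. Hence every homogeneous polynomial of degree $k$ lies in the spectral subspace $\mathcal{T}(X)_k$. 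Since $\mathcal{T}(X)_k$ is norm-closed (it is the intersection of the zero sets of the continuous maps $T \mapsto \alpha_\lambda(T) - \lambda^k T$), its closure $\mathcal{T}_k(X)$ is contained in it.

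For the reverse inclusion, the key observation is that for any $T \in \mathcal{T}(X)$ one has $\Phi_k(T) \in \mathcal{T}_k(X)$. Indeed, $\Phi_k$ is a contractive linear map, and on a monomial $M$ of degree $j$ a direct computation gives $\Phi_k(M) = \int_{\mathbb{T}} \lambda^{j-k} M \, d\lambda = \delta_{jk} M$. Thus for any polynomial $P$ in the generators (i.e.\ a finite linear combination of monomials), $\Phi_k(P)$ is exactly the sum of its degree-$k$ monomial summands, which lies in $\mathcal{T}_k(X)$ by definition. Since polynomials are dense in $\mathcal{T}(X)$ (the generators are $\vnM \cong \{S^{(0)}_m\}$ and the shifts $S^{(n)}_\xi$, so products and their adjoints span a dense subalgebra) and $\Phi_k$ is continuous, $\Phi_k(T) \in \mathcal{T}_k(X)$ for every $T \in \mathcal{T}(X)$.

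Now for $T \in \mathcal{T}(X)_k$, the identity $\alpha_\lambda(T) = \lambda^k T$ immediately yields $\Phi_j(T) = T \int_{\mathbb{T}} \lambda^{k-j} d\lambda = \delta_{jk} T$, so $\Phi_k(T) = T$. Combining with the previous step gives $T \in \mathcal{T}_k(X)$, completing the proof. There is no real obstacle here; the argument is the standard Fourier analysis on the gauge-action circle, and the only point to verify carefully is density of polynomials in $\mathcal{T}(X)$, which follows from the very definition of $\mathcal{T}(X)$ as the C*-algebra generated by $\vnM$ and the shifts. One could alternatively bypass $\Phi_k$ entirely by using the stated Cesaro convergence: approximate $T \in \mathcal{T}(X)_k$ by a polynomial $P$ within $\varepsilon$, then observe that $\sigma_n(P) \to P$ through homogeneous-polynomial sums, but the $\Phi_k$ route is cleaner.
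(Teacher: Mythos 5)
Your proof is correct and takes essentially the same approach as the paper: both directions rest on the gauge action together with the Fourier coefficient map $\Phi_k$, its continuity, the fact that it sends a polynomial to its degree-$k$ homogeneous part, and the density of polynomials in $\mathcal{T}(X)$. Your direct observation that $\Phi_k(T)=T$ for $T\in\mathcal{T}(X)_k$ even streamlines the paper's argument slightly, which reaches the same conclusion by a small detour through the Cesaro sums $\sigma_n$.
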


\begin{proof}
It is trivial to show that $\mathcal{T}_k(X) \subseteq \mathcal{T}(X)_k$. To show the reverse inclusion, take $T \in \mathcal{T}(X)_k$, and let $T_n$ be a sequence of polynomials in $\mathcal{T}(X)$ converging to $T$ in norm. Since
$$
\| T - \Phi_k(\sigma_n(T_n)) \| \leq  \| T - \sigma_n(T) \| + \| \Phi_k(\sigma_n(T)) - \Phi_k(\sigma_n(T_n)) \| \leq \| T - \sigma_n(T) \| + \| T - T_n \|
$$
We must have that $T$ is a norm limit of homogeneous polynomials of degree $k$, and we are done.
\end{proof}

A closed subspace $M\leq \mathcal{T}(X)$ is invariant under $\alpha$ or simply \emph{invariant} if for all $\lambda \in \mathbb{T}$ one has $\alpha_{\lambda}(M) \subseteq M$, along with the previous proposition we obtain:

\begin{corollary} \label{corollary:invariant-subspace}
Let $M\leq \mathcal{T}(X)$ be a closed invariant subspace. Then $M$ is the closed linear span of homogeneous polynomials in $M$. That is, $M = \overline{Sp} \{\bigcup \big(M \cap \mathcal{T}_k(X)\big) \}$.
\end{corollary}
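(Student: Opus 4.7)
The plan is to prove both inclusions of the identity $M = \overline{\Span}\bigl\{\bigcup_k (M \cap \toeplitz_k(X))\bigr\}$. The containment $\overline{\Span}\bigl\{\bigcup_k (M \cap \toeplitz_k(X))\bigr\} \subseteq M$ is automatic, since each $M \cap \toeplitz_k(X)$ sits inside $M$, and $M$ is a closed subspace. The work lies entirely in the reverse inclusion.

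Fix $S \in M$. The strategy is to exhibit $S$ as a norm limit of finite linear combinations of elements of $\bigcup_k (M \cap \toeplitz_k(X))$, using the Fejer-Cesaro machinery already set up. The key lemma to establish along the way is that $\Phi_k(S) \in M \cap \toeplitz_k(X)$ for every $k \in \mathbb{Z}$. For the ``$\toeplitz_k(X)$'' half, a routine change of variables using translation invariance of Haar measure gives $\alpha_\mu(\Phi_k(S)) = \mu^k \Phi_k(S)$, so $\Phi_k(S) \in \toeplitz(X)_k$, and the identification $\toeplitz(X)_k = \toeplitz_k(X)$ from the preceding corollary finishes the job. For the ``$M$'' half, I would argue as follows: the function $\lambda \mapsto \alpha_\lambda(S)\lambda^{-k}$ is norm continuous on $\torus$ (the norm continuity of $\lambda \mapsto \alpha_\lambda(S)$ is stated explicitly in the excerpt), so the integral $\Phi_k(S) = \int_{\torus} \alpha_\lambda(S) \lambda^{-k}\, d\lambda$ is the norm limit of Riemann sums of the form $\sum_i c_i \alpha_{\lambda_i}(S)$ with $c_i \in \cc$ and $\lambda_i \in \torus$. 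Each such Riemann sum lies in $M$, because $M$ is linear and $\alpha_{\lambda_i}(M) \subseteq M$ by invariance. Since $M$ is norm closed, the limit $\Phi_k(S)$ lies in $M$ as well.

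Once this is in hand, the proof is assembled in one line. The Cesaro sum
\[
\sigma_n(S) = \sum_{j=-n}^n \Bigl(1 - \tfrac{|j|}{n+1}\Bigr) \Phi_j(S)
\]
is a finite linear combination of elements of $\bigcup_k (M \cap \toeplitz_k(X))$, and hence lies in $\Span\bigl\{\bigcup_k (M \cap \toeplitz_k(X))\bigr\}$. By the preceding proposition, $\sigma_n(S) \to S$ in norm, so $S$ belongs to the closed linear span, completing the proof.

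The only non-formal point is the verification that $\Phi_k(S) \in M$, which I expect to be the main (but still minor) obstacle. Beyond that, the result is a direct assembly of the norm-continuity of the gauge action, the closedness and $\alpha$-invariance of $M$, the spectral identification $\toeplitz(X)_k = \toeplitz_k(X)$, and the norm convergence of the Cesaro sums.
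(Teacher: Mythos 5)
Your proposal is correct and follows essentially the same route as the paper: show $\Phi_k(S) \in M \cap \mathcal{T}_k(X)$ and then invoke the norm convergence of the Cesaro sums $\sigma_n(S) \to S$. The paper's proof is just a terser version of yours (it asserts $\Phi_k(S) \in M \cap \mathcal{T}_k(X)$ without spelling out the Riemann-sum argument, which you supply correctly).
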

\begin{proof}
Let $S \in M$. Then for every $k\in \mathbb{Z}$ denote $\Phi_k(S): = \int_{\mathbb{T}}\alpha_{\lambda}(S)\lambda^{-k}d \lambda$ which is in $M \cap \mathcal{T}_k(X)$. Since $\sigma_n(S)$ is a linear combination of $\{ \Phi_k(S) \}_{k= -n}^{n}$, we are done.
\end{proof}

\begin{proposition}
Define a subset $\mathcal{J} \subseteq \mathcal{L}(\mathcal{F}_X)$ by
$$ \mathcal{J} = \{ \ T\in \mathcal{L}(\mathcal{F}_X) \ | \ \lim_{n \rightarrow \infty}||TQ_n|| = 0 \ \} $$
Then $\mathcal{J}$ is a closed left invariant ideal in $\mathcal{L}(\mathcal{F}_X)$.
\end{proposition}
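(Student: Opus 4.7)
The plan is to verify the three defining properties directly from the formula $\mathcal{J} = \{ T \in \mathcal{L}(\mathcal{F}_X) : \|TQ_n\| \to 0\}$, with the main ingredient being the elementary bound $\|Q_n\| \le 1$.

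First I would show $\mathcal{J}$ is a linear subspace. For $T_1, T_2 \in \mathcal{J}$ and scalars $a,b \in \mathbb{C}$, the triangle inequality yields
\[
\|(aT_1 + bT_2)Q_n\| \le |a|\,\|T_1 Q_n\| + |b|\,\|T_2 Q_n\| \longrightarrow 0.
\]

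Next I would establish norm-closedness. Let $T_k \in \mathcal{J}$ with $T_k \to T$ in norm, and fix $\varepsilon > 0$. Choose $k$ with $\|T - T_k\| < \varepsilon/2$, and then $N$ with $\|T_k Q_n\| < \varepsilon/2$ for $n \ge N$. Since $\|Q_n\| \le 1$, for $n \ge N$ one has
\[
\|T Q_n\| \le \|(T - T_k)Q_n\| + \|T_k Q_n\| < \varepsilon,
\]
so $T \in \mathcal{J}$.

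For the left ideal property, let $T \in \mathcal{J}$ and $S \in \mathcal{L}(\mathcal{F}_X)$. Then
\[
\|S T Q_n\| \le \|S\|\,\|T Q_n\| \longrightarrow 0,
\]
so $ST \in \mathcal{J}$. In case ``invariant'' is intended to refer to the gauge action $\alpha$, I would also note that each $Q_n$ commutes with $W_\lambda$ (since $W_\lambda$ acts as the scalar $\lambda^n$ on the fiber $X_n$, which is precisely the range of $Q_n$); consequently $\alpha_\lambda(T) Q_n = W_\lambda T Q_n W_\lambda^*$, which has the same norm as $T Q_n$, so $\alpha_\lambda(\mathcal{J}) \subseteq \mathcal{J}$ for every $\lambda \in \mathbb{T}$.

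There is no genuine obstacle here; each step is a one-line estimate relying only on $\|Q_n\| \le 1$ and the unitarity of $W_\lambda$. The only point worth double-checking is that $\mathcal{J}$ need not be a \emph{right} ideal in general, which justifies why the proposition only asserts the left-ideal property.
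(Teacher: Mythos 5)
Your proof is correct and follows essentially the same route as the paper: the same $\varepsilon$-argument for closedness using $\|Q_n\|\le 1$, the same one-line estimate for the left-ideal property, and the same observation that $W_\lambda$ commutes with each $Q_n$ to get gauge invariance. Nothing further is needed.
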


\begin{proof}
$\mathcal{J}$ is obviously a left ideal, and is closed in norm. Indeed, take $S\in \bar{\mathcal{J}}$, and $\epsilon > 0$, choose $T\in \mathcal{J}$ such that $||S - T|| < \epsilon$. Thus we have that $||SQ_n - TQ_n|| < \epsilon$ for every $n\in \mathbb{N}$, so we must have that also $||SQ_n|| \rightarrow_{n\rightarrow \infty} 0$.
Now, if $T \in \mathcal{J}$, we note that for each $\lambda \in \mathbb{T}$ and each $m\in \mathbb{N}$ $W_{\lambda}Q_m = Q_mW_{\lambda}$ and thus,
$$|| \alpha_{\lambda}(T)Q_m|| = ||W_{\lambda}TW_{\lambda}^*Q_m|| = ||W_{\lambda}TQ_mW_{\lambda}^*|| = || TQ_m || \underset{m\rightarrow \infty}{\longrightarrow} 0 $$
\end{proof}

\begin{notation}
Let $X$ be a subproduct system, and let $\mathcal{A}$ be an invariant C*-subalgebra of $\mathcal{L}(\mathcal{F}_X)$. Let $\mathcal{J}(\mathcal{A}): = \mathcal{A} \cap \mathcal{J}$, which is an invariant closed left ideal in $\mathcal{A}$.
\end{notation}

\begin{proposition}
$\mathcal{J}(\mathcal{T}(X))$ is a \emph{two} sided closed invariant ideal in $\mathcal{T}(X)$ and one has
$$ \mathcal{J}(\mathcal{T}(X)) = \{ T \in \mathcal{T}(X) \ | \ \lim_{n\rightarrow \infty}||TQ_{[n,\infty)}|| = 0 \ \} $$ 
\end{proposition}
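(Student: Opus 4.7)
The plan is to first prove the alternative characterization $\mathcal{J}(\mathcal{T}(X)) = \{T \in \mathcal{T}(X) : \lim_n \|TQ_{[n,\infty)}\| = 0\}$, from which two-sidedness will follow readily. The inclusion $\supseteq$ is immediate, since $Q_n = Q_{[n,\infty)}Q_n$ gives $\|TQ_n\| \leq \|TQ_{[n,\infty)}\|$. For the reverse inclusion I would exploit the circle action together with the Cesaro sums $\sigma_N(T)$ converging to $T$ in norm, as established in the Proposition above.

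The first observation is that since $W_\lambda Q_m = Q_m W_\lambda$, we have $\|\alpha_\lambda(T)Q_m\| = \|TQ_m\|$ for every $\lambda \in \mathbb{T}$, and integrating against $\lambda^{-k}$ yields $\|\Phi_k(T)Q_m\| \leq \|TQ_m\|$. Hence each Fourier component $\Phi_k(T)$ also lies in $\mathcal{J}$. The geometric heart of the argument is that any $S \in \mathcal{T}_k(X)$ shifts fibers rigidly, $SX_j \subseteq X_{j+k}$, so the ranges of $SQ_j$ are pairwise orthogonal as $j$ varies; a Pythagorean estimate on $\xi = \sum_{j \geq n}\xi_j \in Q_{[n,\infty)}\mathcal{F}_X$ then gives $\|SQ_{[n,\infty)}\| \leq \sup_{j \geq n}\|SQ_j\|$. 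Applying this to $S = \Phi_k(T)$ yields $\|\Phi_k(T)Q_{[n,\infty)}\| \leq \sup_{j \geq n}\|TQ_j\|$ uniformly in $k$. Given $\varepsilon > 0$, choose $N$ so that $\|T - \sigma_N(T)\| < \varepsilon/2$; since $\sigma_N(T)$ is a finite combination of at most $2N+1$ spectral components, $\|\sigma_N(T)Q_{[n,\infty)}\| \leq (2N+1)\sup_{j \geq n}\|TQ_j\| \to 0$ as $n \to \infty$, and so $\limsup_n \|TQ_{[n,\infty)}\| \leq \varepsilon/2$, completing the characterization.

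Two-sidedness now follows easily. The left-ideal property is part of the preceding proposition. For the right-ideal property, given $T \in \mathcal{J}(\mathcal{T}(X))$ and $S \in \mathcal{T}(X)$, I would approximate $S$ by a polynomial $P$ in the generators of $\mathcal{T}(X)$ with $\|S - P\| < \varepsilon/(1+\|T\|)$. Each monomial of $P$ has a well-defined integer degree, so letting $d$ denote the minimum degree appearing in $P$ gives $PQ_{[n,\infty)}\mathcal{F}_X \subseteq Q_{[n+d,\infty)}\mathcal{F}_X$, and hence $\|TPQ_{[n,\infty)}\| \leq \|TQ_{[n+d,\infty)}\|\,\|P\| \to 0$ by the characterization just proven. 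Combining this with the approximation bound gives $\limsup_n \|TSQ_{[n,\infty)}\| \leq \varepsilon$, so $TS \in \mathcal{J}(\mathcal{T}(X))$. The main obstacle is the forward direction of the characterization, since the hypothesis $\|TQ_n\|\to 0$ gives only pointwise decay on individual fibers; it is the rigid shift behavior on the spectral subspaces, together with Cesaro norm approximation via the gauge action, that bridges this to decay of the entire tail projection.
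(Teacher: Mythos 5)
Your argument is correct, but it is genuinely different from what the paper does: the paper disposes of this proposition in one line by citing Viselter (the left-ideal property is \cite[Theorem 2.5]{Vis} and the set equality is \cite[Corollary 2.7]{Vis}), whereas you give a complete self-contained proof. Your route is sound at every step: the estimate $\|\Phi_k(T)Q_m\| \leq \|TQ_m\|$ follows from $W_\lambda Q_m = Q_m W_\lambda$ and the triangle inequality for the Bochner integral; the Pythagorean bound $\|SQ_{[n,\infty)}\| \leq \sup_{j\geq n}\|SQ_j\|$ for homogeneous $S$ is legitimate because $\langle S\xi_j, S\xi_j\rangle \leq \|SQ_j\|^2\langle \xi_j,\xi_j\rangle$ and the images $SX_j \subseteq X_{j+k}$ sit in mutually orthogonal fibers (this is in fact the same observation the paper exploits later, in the second half of Proposition~\ref{bound-norm-cuntz}); and the passage from the finitely many spectral components of $\sigma_N(T)$ to $T$ itself via the Ces\`aro convergence is exactly the right way to upgrade fiberwise decay $\|TQ_n\|\to 0$ to tail decay $\|TQ_{[n,\infty)}\|\to 0$. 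Your right-ideal argument, approximating $S$ by a polynomial whose monomials have well-defined degrees bounded below and using $PQ_{[n,\infty)} = Q_{[n+d,\infty)}PQ_{[n,\infty)}$, is also correct (note only that $d$ may be negative, which is harmless since $n+d\to\infty$). What the paper's citation buys is brevity; what your proof buys is a transparent, reusable mechanism --- the combination of gauge-invariance of $\mathcal{J}$, the sup formula on spectral subspaces, and Fej\'er summation --- which is essentially the content of the cited results of Viselter reconstructed from scratch.
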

\begin{proof}
The fact that $\mathcal{J}(\mathcal{T}(X))$ is a left ideal is shown in \cite[Theorem 2.5]{Vis}, and the equality of sets above follows from \cite[Corollary 2.7]{Vis}.
\end{proof}

\begin{defi}
Let $X = (X_n)_{\mathbb{N}}$ be a subproduct system, and let $\mathcal{T}(X)$ be its corresponding Toeplitz algebra. We define the Cuntz ideal of $\mathcal{T}(X)$ to be $\mathcal{J}(\mathcal{T}(X)) = \mathcal{J} \cap \mathcal{T}(X)$. The Cuntz-Pimsner algebra of $X$ is then defined to be $\mathcal{O}(X) : = \mathcal{T}(X) / \mathcal{J}(\mathcal{T}(X))$.
\end{defi}

Going back to the full product system in example \ref{regular-product-system}, we note that the Toeplitz and Cuntz-Pimsner algebras of $X_E$ are the usual Toeplitz and Cuntz-Pimsner algebras of the W*-correspondence $E$, in the sense of Katsura~\cite{Kats}.

See \cite[Example 3.6 and Remark 3.7]{Vis} for some motivation as to why Viselter defined the Cuntz-Pimsner algebra in this way for subproduct systems.

We denote by $\overline{T}$ the image of $T\in \mathcal{T}(X)$ under the canonical quotient from $\mathcal{T}(X)$ onto $\mathcal{O}(X)$.

\begin{proposition} \label{bound-norm-cuntz}
If $Q_n \in\mathcal{T}(X)$ for all $n\in \mathbb{N}$, then $$||\overline{T}||_{\mathcal{O}(X)} = \lim_{n\rightarrow \infty}||TQ_{[n,\infty)}||_{\mathcal{T}(X)}$$
Further, if $T\in \mathcal{T}(X)_k$ then,
$$
||\overline{T}||_{\mathcal{O}(X)} = \limsup_{n\rightarrow \infty}||TQ_n||_{\mathcal{T}(X)}$$
 \end{proposition}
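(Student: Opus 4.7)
The plan is to prove the first equality by the standard ``distance to a compact-like ideal'' argument, and then derive the second from the fact that graded elements have orthogonal ranges on different fibers.

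First, observe that the hypothesis $Q_n\in\mathcal{T}(X)$ implies $Q_{[0,n-1]}\in\mathcal{T}(X)$ for every $n$, and consequently $TQ_{[0,n-1]}\in\mathcal{J}(\mathcal{T}(X))$ since $TQ_{[0,n-1]}Q_{[m,\infty)}=0$ for $m\geq n$. Therefore, for every $n$,
\[
\|\overline{T}\|_{\mathcal{O}(X)} \leq \|T - TQ_{[0,n-1]}\|_{\mathcal{T}(X)} = \|TQ_{[n,\infty)}\|_{\mathcal{T}(X)}.
\]
Since $Q_{[n+1,\infty)}\leq Q_{[n,\infty)}$ as projections, the sequence $\|TQ_{[n,\infty)}\|$ is non-increasing, hence convergent, and we obtain $\|\overline{T}\|\leq\lim_n\|TQ_{[n,\infty)}\|$.

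For the reverse inequality, let $J\in\mathcal{J}(\mathcal{T}(X))$ be arbitrary. By the characterization of $\mathcal{J}(\mathcal{T}(X))$ established above, $\|JQ_{[n,\infty)}\|\to 0$ as $n\to\infty$. Since $Q_{[n,\infty)}$ is a contraction,
\[
\|T+J\| \geq \|(T+J)Q_{[n,\infty)}\| \geq \|TQ_{[n,\infty)}\| - \|JQ_{[n,\infty)}\|.
\]
Letting $n\to\infty$ gives $\|T+J\|\geq \lim_n\|TQ_{[n,\infty)}\|$, and taking the infimum over $J\in\mathcal{J}(\mathcal{T}(X))$ yields the first formula.

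For the second assertion, the key point is that every $T\in\mathcal{T}(X)_k$ shifts the Fock grading by $k$, i.e.\ $TQ_m = Q_{m+k}TQ_m$ (interpreting $Q_j=0$ for $j<0$). This holds for any monomial of degree $k$ by direct inspection, extends to polynomials in $\mathcal{T}_k(X)$ by linearity, and then to all of $\mathcal{T}(X)_k$ because $\mathcal{T}(X)_k=\mathcal{T}_k(X)$ by the corollary preceding this proposition. Consequently, the operators $\{TQ_m\}_{m\geq n}$ have pairwise orthogonal ranges, so
\[
\|TQ_{[n,\infty)}\|^2 = \sup\Bigl\{\sum_{m\geq n}\|TQ_m\xi\|^2 : \|\xi\|\leq 1\Bigr\} = \sup_{m\geq n}\|TQ_m\|^2,
\]
where the last equality uses that the $Q_m$ are themselves mutually orthogonal. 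Passing to the limit,
\[
\lim_{n\to\infty}\|TQ_{[n,\infty)}\| = \lim_{n\to\infty}\sup_{m\geq n}\|TQ_m\| = \limsup_{n\to\infty}\|TQ_n\|,
\]
which, combined with the first formula, yields the second.

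The only non-routine step is the grading-shift identity $TQ_m=Q_{m+k}TQ_m$ for $T\in\mathcal{T}(X)_k$, which relies on the identification of analytic spectral subspaces with closures of homogeneous polynomials; fortunately this is already supplied by the corollary immediately preceding the proposition.
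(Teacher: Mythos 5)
Your proof is correct and follows essentially the same route as the paper's: the first formula is obtained from the distance to the Cuntz ideal, using $TQ_{[0,n-1]}\in\mathcal{J}(\mathcal{T}(X))$ together with the characterization $\mathcal{J}(\mathcal{T}(X))=\{J : \|JQ_{[n,\infty)}\|\to 0\}$, and the second from the observation that an element of $\mathcal{T}(X)_k$ maps $X_m$ into $X_{m+k}$, whence $\|TQ_{[n,\infty)}\|=\sup_{m\geq n}\|TQ_m\|$. The only caveat is that your intermediate identity $\|TQ_{[n,\infty)}\|^2=\sup_{\|\xi\|\leq 1}\sum_{m\geq n}\|TQ_m\xi\|^2$ is a Hilbert-space Pythagoras formula that does not literally hold in the W*-module $\mathcal{F}_X$ (one gets $\|\sum_{m\geq n}\langle T\xi_m,T\xi_m\rangle\|$ rather than $\sum_{m\geq n}\|T\xi_m\|^2$), but the desired conclusion still follows from the standard estimate $\langle T\xi_m,T\xi_m\rangle\leq\|TQ_m\|^2\langle\xi_m,\xi_m\rangle$ together with orthogonality of the fibers, so this is only a cosmetic repair.
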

\begin{proof}
Indeed, for every $\epsilon > 0$ there exists $Q\in \mathcal{J}(\mathcal{T}(X))$ such that 
$$||\overline{T}|| \geq ||T + Q|| - \epsilon \geq \limsup_{n\rightarrow \infty}||(T+Q)Q_{[n,\infty)}|| - \epsilon \geq $$
$$ \limsup_{n\rightarrow \infty}||TQ_{[n,\infty)}|| - \limsup_{n\rightarrow \infty}||QQ_{[n,\infty)}|| - \epsilon = \limsup_{n\rightarrow \infty}||TQ_{[n,\infty)}|| - \epsilon$$
Thus we have that $||\overline{T}|| \geq \limsup_{n\rightarrow \infty}||TQ_{[n,\infty)}||$. We also have $||\overline{T}|| = \inf_{Q\in \mathcal{J}(\mathcal{T}(X))}||T + Q||$, so by taking $Q = - T Q_{[0,n]} \in \mathcal{J}(\mathcal{T}(X))$ we obtain that $||\overline{T}|| \leq \liminf_{n\rightarrow \infty}||TQ_{[n,\infty)}||$.

For the second equality, note that if $T\in \mathcal{T}(X)_k$ for $k\in \mathbb{Z}$ then for $n \geq k$ we have $T \upharpoonright _{X_n} : X_n \rightarrow X_{n+k}$ and thus $||TQ_{[n,\infty)}||_{\mathcal{T}(X)} = \sup_{m\geq n}||TQ_m||_{\mathcal{T}(X)} $

\end{proof}

\begin{remark} \label{cuntz-homogeneous-action}
Note that the circle action on $\mathcal{T}(X)$ passes naturally to $\mathcal{O}(X)$ due to $\mathcal{J}(\mathcal{T}(X))$ being invariant. This means that we still have $\mathcal{O}_k(X) = \mathcal{O}(X)_k$ where these sets are defined to be the images of $\mathcal{T}_k(X)$ and $\mathcal{T}(X)_k$ under the quotient map from $\mathcal{T}(X)$ to $\mathcal{O}(X)$.
\end{remark}

The following is a useful proposition used to establish *-isomorphisms between C*-algebras with a gauge action, it is a special case of Proposition 4.5.1 in \cite{BO} for when the compact group is the torus $\mathbb{T}$.

\begin{proposition}\label{proposition:injectivity-fixed-point-alg}
Let $\mathcal{A}$ and $\mathcal{B}$ be two C*-algebras with gauge (torus) actions $\alpha$ and $\beta$ respectively. Assume further that $\pi : \mathcal{A} \rightarrow \mathcal{B}$ is a *-homomorphism satisfying for every $a \in \mathcal{A}$ and $\lambda \in \mathbb{T}$ that $\beta_{\lambda}(\pi(a)) = \pi( \alpha_{\lambda}(a))$. Then $\pi$ is injective if and only if it is injective on the fixed point algebra $\mathcal{A}_0 : = \{ \ a\in \mathcal{A} \ | \ \alpha_{\lambda}(a) = a  \ \}$.
\end{proposition}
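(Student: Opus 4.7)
The proof should follow the standard ``gauge-invariant uniqueness'' template. The only-if direction is trivial, since $\mathcal{A}_0$ is a C*-subalgebra of $\mathcal{A}$ and restriction preserves injectivity. All the work is in the converse.

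For the nontrivial direction I would construct, on each of the two algebras, the natural conditional expectation onto the fixed-point algebra by averaging with respect to the normalized Haar measure on $\mathbb{T}$. Concretely, define
\begin{equation*}
E(a) = \int_{\mathbb{T}} \alpha_\lambda(a)\, d\lambda, \qquad
F(b) = \int_{\mathbb{T}} \beta_\lambda(b)\, d\lambda.
\end{equation*}
Strong continuity of the gauge actions (which is the very hypothesis used earlier in the paper to produce the Cesaro sums) makes these Bochner integrals well defined, and a standard argument shows that $E$ is a contractive completely positive projection onto $\mathcal{A}_0$, and similarly for $F$. The intertwining hypothesis $\beta_\lambda \circ \pi = \pi \circ \alpha_\lambda$ immediately yields the equivariance identity $\pi \circ E = F \circ \pi$ by pulling $\pi$ through the integral.

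The crucial analytic point is that $E$ is \emph{faithful} on positive elements, i.e.\ $E(a^*a) = 0$ implies $a=0$. This is where the compactness of $\mathbb{T}$ and the strong continuity of $\alpha$ enter: if $E(a^*a)=0$ then for every state $\varphi$ on $\mathcal{A}$ the nonnegative continuous function $\lambda\mapsto \varphi(\alpha_\lambda(a^*a))$ has zero integral, hence vanishes identically, and by varying $\varphi$ we get $\alpha_\lambda(a^*a)=0$ for all $\lambda$, in particular for $\lambda=1$, forcing $a=0$. I expect this faithfulness verification to be the only step that requires genuine care.

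With these tools the proof wraps up quickly: suppose $\pi$ is injective on $\mathcal{A}_0$ and $\pi(a)=0$. Then $\pi(a^*a)=0$, so $F(\pi(a^*a))=0$, and by equivariance $\pi(E(a^*a))=0$. Since $E(a^*a)\in \mathcal{A}_0$ and $\pi$ is injective there, $E(a^*a)=0$, and faithfulness of $E$ gives $a=0$. The only subtlety to double-check is that the cited result (Proposition 4.5.1 in \cite{BO}) is stated for general compact groups; specializing to $G=\mathbb{T}$ the averaging argument above is exactly the case needed here, so no extra hypothesis on $\mathcal{A}$ or $\mathcal{B}$ is required.
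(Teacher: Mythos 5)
Your proof is correct and is precisely the standard averaging argument (conditional expectation onto the fixed-point algebra, its faithfulness on positives, and equivariance) that underlies Proposition 4.5.1 of Brown--Ozawa, which the paper simply cites without giving its own proof. All steps check out, including the faithfulness verification via states and the observation that $E(a^*a)\in\mathcal{A}_0$ by translation invariance of Haar measure.
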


We now relate the theory of finite direct sums of subproduct systems, to that of finite direct sums of Toeplitz and Cuntz algebras.

Let $X$ and $Y$ be subproduct systems over $\mathcal{M}$ and $\mathcal{N}$ respectively. Define the unitary element $W_{X,Y} : \mathcal{F}_{X\oplus Y} \rightarrow \mathcal{F}_X \oplus \mathcal{F}_Y$ by $W_{X,Y}(\oplus_{n=0}^{\infty}(\xi_n \oplus \eta_n)) = (\oplus_{n=0}^{\infty}\xi_n) \oplus (\oplus_{n=0}^{\infty}\eta_n)$ for $\xi_n \in X_n$ and $\eta_n \in Y_n$. Further note that we have the natural inclusion $\mathcal{T}(X) \oplus \mathcal{T}(Y) \subseteq \mathcal{L}(\mathcal{F}_X \oplus \mathcal{F}_Y)$.

We omit the straightforward proof of the following proposition.

\begin{proposition} \label{proposition:essential-cuntz-direct-sum}
Let $(X,U^X)$ and $(Y,U^Y)$ be subproduct systems over $\mathcal{M}$ and $\mathcal{N}$ respectively. Then the *-isomorphism $Ad_{W_{X,Y}}: \mathcal{L}(\mathcal{F}_{X\oplus Y})\rightarrow \mathcal{L}(\mathcal{F}_X \oplus \mathcal{F}_Y)$ given by $Ad_{W_{X,Y}}(T) = W_{X,Y}T W^*_{X,Y}$ restricted to $\mathcal{T}(X\oplus Y)$, induces a *-isomorphism $\pi : \mathcal{T}(X\oplus Y) \rightarrow \mathcal{T}(X) \oplus \mathcal{T}(Y)$ such that $\pi(\mathcal{J}(\mathcal{T}(X\oplus Y))) = \mathcal{J}(\mathcal{T}(X)) \oplus \mathcal{J}(\mathcal{T}(Y))$. In particular $\mathcal{O}(X\oplus Y) \cong \mathcal{O}(X) \oplus \mathcal{O}(Y)$.
\end{proposition}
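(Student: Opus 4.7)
The plan is to verify the proposition in three stages: first establish that conjugation by $W_{X,Y}$ implements a $*$-isomorphism at the level of Toeplitz algebras, then check that this isomorphism sends the Cuntz ideal of $\mathcal{T}(X \oplus Y)$ onto $\mathcal{J}(\mathcal{T}(X)) \oplus \mathcal{J}(\mathcal{T}(Y))$, and finally pass to quotients.

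First I would verify that $W_{X,Y}$ is a unitary $(\mathcal{M} \oplus \mathcal{N})$-correspondence map between $\mathcal{F}_{X \oplus Y}$ and $\mathcal{F}_X \oplus \mathcal{F}_Y$, by using the definition of the inner product on $(X \oplus Y)_n = X_n \oplus Y_n$ as the orthogonal sum of the fiberwise inner products. The unitarity follows directly from fiberwise computation, and bimodule linearity follows from the definition of the actions of $\mathcal{M} \oplus \mathcal{N}$ on the direct-sum fibers. Consequently $\operatorname{Ad}_{W_{X,Y}}$ is a $*$-isomorphism between $\mathcal{L}(\mathcal{F}_{X \oplus Y})$ and $\mathcal{L}(\mathcal{F}_X \oplus \mathcal{F}_Y)$.

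Next I would show that $\operatorname{Ad}_{W_{X,Y}}$ carries the generators of $\mathcal{T}(X \oplus Y)$ into $\mathcal{T}(X) \oplus \mathcal{T}(Y)$ and that the image is precisely this subalgebra. Using the definition of the subproduct maps on $X \oplus Y$ (Definition~\ref{definition:subproduct-direct-sum}), a direct calculation gives
\begin{equation*}
W_{X,Y} \, S^{(n)}_{\xi \oplus \eta} \, W_{X,Y}^* \;=\; S^{(n)}_{\xi} \oplus S^{(n)}_{\eta},
\end{equation*}
and similarly $W_{X,Y} (m \oplus n) W_{X,Y}^* = m \oplus n$ viewed as an element of $\mathcal{L}(\mathcal{F}_X) \oplus \mathcal{L}(\mathcal{F}_Y)$. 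Thus the image lies in $\mathcal{T}(X) \oplus \mathcal{T}(Y)$. To see surjectivity, note that $1_\mathcal{M} \oplus 0$ and $0 \oplus 1_\mathcal{N}$ lie in $\mathcal{T}(X \oplus Y)$, so taking $\eta = 0$ (resp.\ $\xi = 0$) recovers all generators $S^{(n)}_\xi \oplus 0$ (resp.\ $0 \oplus S^{(n)}_\eta$), and these together generate $\mathcal{T}(X) \oplus \mathcal{T}(Y)$. Call the resulting isomorphism $\pi$.

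For the ideal statement, the key observation is that under $\operatorname{Ad}_{W_{X,Y}}$ the Fock projection $Q_n^{X \oplus Y}$ corresponds to $Q_n^X \oplus Q_n^Y$, hence $Q_{[n,\infty)}^{X \oplus Y}$ corresponds to $Q_{[n,\infty)}^X \oplus Q_{[n,\infty)}^Y$. For $(A, B) \in \mathcal{T}(X) \oplus \mathcal{T}(Y)$,
\begin{equation*}
\bigl\| (A,B)\, (Q_{[n,\infty)}^X \oplus Q_{[n,\infty)}^Y) \bigr\| \;=\; \max\bigl(\| A Q_{[n,\infty)}^X \|, \, \| B Q_{[n,\infty)}^Y \|\bigr),
\end{equation*}
which tends to $0$ if and only if both $A \in \mathcal{J}(\mathcal{T}(X))$ and $B \in \mathcal{J}(\mathcal{T}(Y))$. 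Combined with the characterization of $\mathcal{J}(\mathcal{T}(X \oplus Y))$ via $\|T Q_{[n,\infty)}^{X \oplus Y}\| \to 0$, this yields $\pi(\mathcal{J}(\mathcal{T}(X \oplus Y))) = \mathcal{J}(\mathcal{T}(X)) \oplus \mathcal{J}(\mathcal{T}(Y))$. Passing to quotients produces the desired $*$-isomorphism $\mathcal{O}(X \oplus Y) \cong \mathcal{O}(X) \oplus \mathcal{O}(Y)$.

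I do not foresee any genuine obstacle: the proof is essentially bookkeeping. The only step requiring a small amount of care is the surjectivity of $\pi$ onto $\mathcal{T}(X) \oplus \mathcal{T}(Y)$, where one must verify that the diagonal projections $1_\mathcal{M} \oplus 0$ and $0 \oplus 1_\mathcal{N}$ belong to $\mathcal{T}(X \oplus Y)$ so that the one-sided shifts $S^{(n)}_\xi \oplus 0$ and $0 \oplus S^{(n)}_\eta$ are themselves in the image; but this is immediate since $X_0 \oplus Y_0 = \mathcal{M} \oplus \mathcal{N}$ embeds in $\mathcal{T}(X \oplus Y)$ by construction.
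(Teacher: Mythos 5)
Your proof is correct, and since the paper explicitly omits the proof of this proposition as straightforward, your argument supplies exactly the bookkeeping the authors had in mind: unitarity of $W_{X,Y}$, the identity $W_{X,Y}S^{(n)}_{\xi\oplus\eta}W_{X,Y}^{*}=S^{(n)}_{\xi}\oplus S^{(n)}_{\eta}$, surjectivity via the central projections $1_{\mathcal{M}}\oplus 0$ and $0\oplus 1_{\mathcal{N}}$, and the $\max$-norm computation identifying the Cuntz ideals. No gaps.
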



\section{Cuntz-Pimsner algebras of subproduct systems arising from finite essential stochastic matrices}\label{section:cuntz-pimsner-stochastic}

In this section we compute the Cuntz-Pimsner algebras of subproduct systems arising from stochastic matrices, in the sense defined above, where $\St$ is finite and $P$ essential. We sometimes denote $\mathcal{T}(P) : = \mathcal{T}(Arv(P))$ and $\mathcal{O}(P) : = \mathcal{O}(Arv(P))$.

\begin{remark} \label{remark:recurrence-finite}
In the case of finite matrices, a matrix $P$ is recurrent if and only if it is positive-recurrent if and only if it is essential. In particular, finite irreducible stochastic matrices, are positive-recurrent.
\end{remark}

Note that the subproduct system associated to a finite stochastic matrix in standard presentation has no boundedness condition on elements of the fibers since $P$ is finite. And we also have that $\Diag : M_d(\mathbb{C}) \rightarrow \vnMaS \cong \mathbb{C}^d$ is a faithful conditional expectation.

We look at the shift operators of $Arv(P)$. For every $n\in \mathbb{N}$ and $A \in Arv(P)_n$ we denote by
$S^{(n)}_{A}$ the operator defined for every $B\in Arv(P)_m$ for $m>0$ by
$$ 
S^{(n)}_{A}(B) = U_{n,m}(A \otimes B) 
= (\sqrt{P^{n+m}})^{\flat}*\big[(\sqrt{P^n} * A)\cdot(\sqrt{P^m} * B)\big]
$$
Where for $m=0$ it is simply the right multiplication of $A \in Arv(P)_n$ by an element of $\vnMaS$.

The operator $S^{(n)*}_{A}$ for $A \in Arv(P)_n$ is given by the following formula
$$
S^{(n)*}_{A}(B) = \sqrt{P^m}*\big[(\sqrt{P^n} * A)^* \cdot((\sqrt{P^{n+m}})^{\flat} * B)\big] $$
for $B \in Arv(P)_{n+m}$ and $m>0$. Further satisfying $S^{(n)*}_{A}(B) = 0$ for $B \in Arv(P)_{m}$ where $m < n$, and $S^{(n)*}_{A}(B) = \Diag \big[A^*B \big]$ for $B \in Arv(P)_n$.

The Toeplitz algebra is defined to be $$\mathcal{T}(P): = C^*\Big(M \cup \big\{ \ S^{(n)}_{A} \ | \ n\in \mathbb{N}, \  \ A \in Arv(P)_n \ \big\}\Big)$$
We also recall that $Q_n \in \mathcal{L}(\mathcal{F}_{Arv(P)})$ is the projection onto the $n$-th summand of the Fock module direct sum.
\begin{proposition}
Let $P$ be finite and stochastic. Then $Q_n \in \mathcal{T}(P)$ for every $n\in \mathbb{N}$.
\end{proposition}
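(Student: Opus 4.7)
The plan is to realize each $Q_n$ as a finite polynomial in the generators of $\mathcal{T}(P)$. Because $\St$ is finite, $E(P^n)$ is finite for every $n\in\nn$, and for each $(i,j)\in E(P^n)$ the matrix unit $E_{ij}$ lies in $Arv(P)_n$, so $S^{(n)}_{E_{ij}}\in\mathcal{T}(P)$. A direct computation in the standard presentation yields $\la E_{ij}, A\ra = a_{ij}\, p_j$ for any $A = [a_{kl}] \in Arv(P)_n$, and from this one obtains the Parseval-type reconstruction formula
\begin{equation*}
A \;=\; \sum_{(i,j)\in E(P^n)} E_{ij}\,\la E_{ij}, A\ra, \qquad A \in Arv(P)_n,
\end{equation*}
which will be the main tool of the proof.

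First I would treat $n=0$ by establishing the defect identity
\begin{equation*}
\sum_{(i,j) \in E(P)} S^{(1)}_{E_{ij}}\,S^{(1)*}_{E_{ij}} \;=\; I - Q_0,
\end{equation*}
which places $Q_0$ in $\mathcal{T}(P)$ because $I$ is the image of $1_{\vnMaS} \in \vnMaS \subseteq \mathcal{T}(P)$. On $X_0$ both sides vanish, since $S^{(1)*}_{E_{ij}}$ is zero on $X_0$. For $C \in X_m$ with $m \geq 1$, applying the Parseval identity in the first tensor factor together with the adjointness relation $\la E_{ij} \otimes B, U_{1,m-1}^*(C)\ra = \la B, S^{(1)*}_{E_{ij}}(C)\ra$ yields
\begin{equation*}
U_{1,m-1}^*(C) \;=\; \sum_{(i,j) \in E(P)} E_{ij} \otimes S^{(1)*}_{E_{ij}}(C),
\end{equation*}
and applying the coisometry $U_{1,m-1}$ gives $C = \sum_{ij} S^{(1)}_{E_{ij}} S^{(1)*}_{E_{ij}}(C)$, as required.

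For $n \geq 1$ I would then dress $Q_0$ with level-$n$ shifts via
\begin{equation*}
Q_n \;=\; \sum_{(i,j) \in E(P^n)} S^{(n)}_{E_{ij}}\, Q_0\, S^{(n)*}_{E_{ij}}.
\end{equation*}
For $C \in X_m$ with $m \neq n$ the right side vanishes: if $m < n$ then $S^{(n)*}_{E_{ij}}|_{X_m} = 0$, and if $m > n$ then $S^{(n)*}_{E_{ij}}(C) \in X_{m-n}$ with $m-n \geq 1$, so $Q_0$ kills it. For $C \in X_n$, $Q_0\, S^{(n)*}_{E_{ij}}(C) = \la E_{ij}, C\ra \in \vnMaS$, and $S^{(n)}_{E_{ij}}$ acts on it by right multiplication to produce $E_{ij}\,\la E_{ij}, C\ra$; summing over $(i,j) \in E(P^n)$ reconstructs $C$ by the Parseval identity. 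Thus $Q_n \in \mathcal{T}(P)$ for all $n$. The step requiring the most care is the tensor-factor identity for $U_{1,m-1}^*(C)$, which rests on the Parseval-type reconstruction in $Arv(P)_1$ together with the balancing of the internal tensor product over $\vnMaS$; once this is in place, everything else is a formal consequence of the coisometricity of the subproduct maps.
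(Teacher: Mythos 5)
Your proof is correct, and it rests on the same underlying fact as the paper's: that $\{E_{ij}\}_{(i,j)\in E(P^n)}$ behaves as an orthonormal-type frame for $Arv(P)_n$, so that $\sum_{(i,j)\in E(P^n)}S^{(n)}_{E_{ij}}S^{(n)*}_{E_{ij}}$ is computable in closed form. The packaging differs in two ways. First, the paper proves the single identity $Q_{[n,\infty)} = \sum_{(i,j)\in E(P^n)}S^{(n)}_{E_{ij}}S^{(n)*}_{E_{ij}}$ for \emph{every} $n$ by an explicit entrywise computation on the basis elements $E_{lk}$, and then extracts $Q_0 = I - Q_{[1,\infty)}$ and $Q_n = Q_{[n,\infty)} - Q_{[n+1,\infty)}$ by differencing; you prove only the $n=1$ instance (your defect identity for $Q_0$ is exactly $Q_{[1,\infty)} = \sum S^{(1)}_{E_{ij}}S^{(1)*}_{E_{ij}}$) and then obtain $Q_n$ for $n\geq 1$ by the sandwich formula $Q_n = \sum_{(i,j)\in E(P^n)}S^{(n)}_{E_{ij}}Q_0 S^{(n)*}_{E_{ij}}$, which is a genuinely different (and equally valid) decomposition. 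Second, your verification of the $n=1$ identity is structural rather than computational: writing $S^{(1)}_{E_{ij}} = U_{1,m-1}\circ L_{E_{ij}}$ with $L_{E_{ij}}(B) = E_{ij}\otimes B$, the reconstruction formula gives $\sum L_{E_{ij}}L^*_{E_{ij}} = I$ on $X_1\otimes X_{m-1}$, and coisometricity of $U_{1,m-1}$ finishes the job. This is cleaner and generalizes to any subproduct system whose first fiber admits a finite Parseval frame of this kind, whereas the paper's computation is tied to the standard presentation; the price is that you must know the frame identity $\sum_{(i,j)} E_{ij}\la E_{ij}, A\ra = A$ and be slightly careful that the internal tensor product is balanced over $\vnMaS$ (harmless here since $\St$ is finite, so all fibers are finite-dimensional and self-dual). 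Both routes are complete and correct.
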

\begin{proof}
Let $E_{ij}$ be the matrix with 1 in the $(i,j)$-th coordinate and zeros everywhere else. Then we show directly that 
\begin{equation*}
Q_{[n,\infty)} = \sum_{(i,j) \in E(P^n)}S^{(n)}_{E_{ij}}S^{(n)*}_{E_{ij}} \in \mathcal{T}(P) 
\end{equation*}
Indeed, since $S^{(n)*}_{E_{ij}}(A) = 0$ for all $m< n$ and $A\in Arv(P)_m$, it would suffice to show that the right hand side in the above equation is the identity on $E_{lk} \in Arv(P)_m$ for all $(l,k) \in E(P^m)$ and $m \geq n$. For $E_{lk} \in Arv(P)_m$ such that $m > n$ we have
$$S^{(n)*}_{E_{ij}}(E_{lk}) = \delta_{i,l} \cdot \sqrt{\frac{P^{(n)}_{ij}P^{(m)}_{jk}}{P^{(n+m)}_{lk}}}E_{jk}, \ \ and \ \
S^{(n)}_{E_{ij}}(E_{jk}) = \sqrt{\frac{P^{(n)}_{ij}P^{(m)}_{jk}}{P^{(n+m)}_{ik}}}E_{ik}
$$
So we have that
$$
S^{(n)}_{E_{ij}}S^{(n)*}_{E_{ij}}(E_{lk}) = \delta_{i,l} \cdot \frac{P^{(n)}_{ij}P^{(m)}_{jk}}{P^{(n+m)}_{ik}}E_{lk}
$$
Where $\delta_{i,l} = 1$ if $i=l$ and zero otherwise. Now by taking sums over $i,j \in \St$ and noting that a similar computation works for $m=n$, we obtain our description of $Q_{[n,\infty)}$ above.

Thus, one sees that $Q_0 = I - Q_{[1,\infty)} \in \mathcal{T}(P)$ and $Q_n = Q_{[n,\infty)} - Q_{[n+1,\infty)} \in \mathcal{T}(P)$.
\end{proof}

\begin{defi}
We define for every $A \in Arv(P)_n$ the operators 
$$T^{(n)}_{A} : Arv(P)_m \rightarrow Arv(P)_{n+m} \ \ by \ \ T^{(n)}_{A}(B) = S^{(n)}_{(\sqrt{P^n})^{\flat} * A}(B)$$
And
$$W^{(n)}_{A} : Arv(P)_m \rightarrow Arv(P)_{n+m} \ \ by \ \ W^{(n)}_{A}(B) = A\cdot B$$
for $B\in Arv(P)_m$.
\end{defi}

\begin{proposition}
Let $P$ be finite and stochastic.
$T^{(n)}_{A}$ and $W^{(n)}_{A}$ are well-defined bounded operators in $\mathcal{L}(\mathcal{F}_{Arv(P)})$ for every $A \in Arv(P)_n$.
\end{proposition}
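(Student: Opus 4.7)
The plan is to treat the two operators separately, relying crucially on the finiteness of $\St$. First I would handle $T^{(n)}_{A}$, which is the easier case. Since $A \in Arv(P)_n$ is supported on $E(P^n)$ and $(\sqrt{P^n})^{\flat}$ has entries $1/\sqrt{P^{(n)}_{ik}}$ on $E(P^n)$ and zero elsewhere, the Schur product $A' := (\sqrt{P^n})^{\flat} * A$ is a matrix supported on $E(P^n)$ with entries $a_{ik}/\sqrt{P^{(n)}_{ik}}$. Finiteness of $\St$ automatically makes the supremum condition in the definition of $Arv(P)_n$ hold, so $A' \in Arv(P)_n$ and $T^{(n)}_{A} = S^{(n)}_{A'}$ is just a standard shift, already in $\mathcal{L}(\mathcal{F}_{Arv(P)})$ by the general theory of shifts on a Fock correspondence over a W*-algebra.

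For $W^{(n)}_{A}$ I would proceed in three steps: well-definedness, boundedness, and adjointability. Well-definedness amounts to checking that the ordinary matrix product $A\cdot B$ is supported on $E(P^{n+m})$ whenever $A \in Arv(P)_n$ and $B \in Arv(P)_m$; this follows by path concatenation, for if $(AB)_{ik}\neq 0$ then some $j$ gives $(i,j) \in E(P^n)$ and $(j,k) \in E(P^m)$, yielding a path of length $n+m$ from $i$ to $k$ and hence $(i,k) \in E(P^{n+m})$.

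For boundedness, the key observation is that in the standard presentation the module norm is given by $\|B\|^2_{Arv(P)_m} = \|\Diag(B^*B)\|_{\vnMaS} = \sup_k \sum_i |B_{ik}|^2 = \sup_k \|Be_k\|_{\ell^2(\St)}^2$, i.e.\ it coincides with the supremum of the $\ell^2$-norms of the columns of $B$. Viewing $A$ as an ordinary bounded operator on $\ell^2(\St)$, one gets $\|ABe_k\|_{\ell^2} \leq \|A\|_{op}\,\|Be_k\|_{\ell^2}$ for every $k$, hence $\|W^{(n)}_{A}(B)\| \leq \|A\|_{op}\,\|B\|$. Since $\St$ is finite, $\|A\|_{op}<\infty$, and the bound is uniform in $m$, so $W^{(n)}_{A}$ extends to a bounded operator on $\mathcal{F}_{Arv(P)}$ of norm at most $\|A\|_{op}$.

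Finally, for adjointability I would propose as candidate adjoint the map sending $C\in Arv(P)_{n+m}$ to the matrix $A^*C$ with entries outside $E(P^m)$ zeroed out; the same operator-norm estimate shows this is bounded from $Arv(P)_{n+m}$ into $Arv(P)_m$. Verifying the adjoint relation $\la W^{(n)}_{A}(B),C\ra = \la B,(W^{(n)}_{A})^{*}(C)\ra$ is then a direct calculation: since $B\in Arv(P)_m$ is supported on $E(P^m)$, the entries of $A^*C$ outside $E(P^m)$ contribute nothing to $\Diag(B^*A^*C)$. The main subtlety — and the one place a proof of this kind is most easily misstated — is exactly that $A^*C$ need not itself be supported on $E(P^m)$, so the truncation in the definition of $(W^{(n)}_{A})^{*}$ is essential; once this is acknowledged, the verification is entirely routine.
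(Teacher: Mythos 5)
Your proof is correct and follows essentially the same route as the paper: finiteness of $\St$ puts $(\sqrt{P^n})^{\flat} * A$ into $Arv(P)_n$ so that $T^{(n)}_A$ is an ordinary shift, and left multiplication by $A$ is bounded with respect to the column-sup norm $\|B\|^2 = \sup_k \sum_i |b_{ik}|^2$. The only cosmetic differences are that the paper bounds $\|W^{(n)}_A\|$ by the Hilbert--Schmidt norm of $A$ via Cauchy--Schwarz rather than by the operator norm, and that it leaves the support check and the adjoint formula implicit (adjointability being automatic for bounded module maps between self-dual W*-modules), both of which you verify explicitly.
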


\begin{proof}
During this proof, summation is taken over all $i\in \St$ such that $(i,j) \in E(P^n)$, yet we abuse notation and simply write $i\in \St$ for brevity. Assume $A = [a_{ij}]$ and $B = [b_{lk}]$.
$$||T^{(n)}_{A}(B)|| = ||U_{n,m}\Big( \big((\sqrt{P^n})^{\flat} * A\big) \otimes B \Big)|| \leq || (\sqrt{P^n})^{\flat} * A || \cdot || B ||
$$
Since one has, 
$$
|| (\sqrt{P^n})^{\flat} * A ||^2 = \sup_{j\in \St} \sum_{i \in \St}\frac{|a_{ij}|^2}{P^{(n)}_{ij}}
$$
The fact that $P$ is finite entails that $T^{(n)}_{A}$ is bounded and well-defined. We now show well-definededness and boundedness of $W^{(n)}_{A}$. Indeed, we have,
$$||W^{(n)}_{A}(B)||^2 = ||\Big[ \sum_{j\in \St}a_{ij}b_{jk} \Big]||^2 = \sup_k \sum_{i\in \St} \Big| \sum_{j\in \St}a_{ij}b_{jk} \Big|^2 \leq 
$$
$$
\sum_{i\in \St} \Big( \sum_{j\in \St}|a_{ij}|^2 \Big) \sup_k \big( \sum_{l\in \St}|b_{lk}|^2 \big) =
\Bigg( \sum_{i,j\in \St}|a_{ij}|^2 \Bigg) ||B||^2
$$
Where we've used the Cauchy Schwarz inequality in the inequality above.
\end{proof}

The last proposition ensures that in the finite case, we have $$\{ \ T^{(n)}_{A} \ | \ A \in Arv(P)_n, \ n\in \mathbb{N} \ \} = \{ \ S^{(n)}_{A} \ | \ A \in Arv(P)_n, \ n\in \mathbb{N} \ \}$$

Thus,
$$\mathcal{T}(P) = C^*\Big(M \cup \big\{ \ T^{(n)}_{A} \ | \ n\in \mathbb{N}, \  \ A \in Arv(P)_n \ \big\}\Big)$$

We denote 
$$\mathcal{T}^{\infty}(P) = C^*\Big(M \cup \big\{ \ W^{(n)}_{A} \ | \ n\in \mathbb{N}, \  \ A \in Arv(P)_n \ \big\}\Big)$$

Note that by Theorem~\ref{theorem:subproduct-computation} the structure of $\mathcal{T}^{\infty}(P)$ depends only on the graph structure of $P$, and that $\mathcal{T}^{\infty}(P)$ is invariant under the circle action on $\mathcal{L}(\mathcal{F}_{Arv(P)})$ since $\alpha_{\lambda}(W^{(n)}_A) = W^{(n)}_{\lambda^n A}$.

Recall that $\mathcal{J} = \{ \ T\in \mathcal{L}(\mathcal{F}_{Arv(P)}) \ | \ \lim_{n \rightarrow \infty}||TQ_n|| = 0 \ \}$ is an invariant closed left ideal of $\mathcal{L}(\mathcal{F}_{Arv(P)})$.

\begin{proposition} \label{proposition:in-cuntz-ideal}
Let $P$ be a finite irreducible stochastic matrix. For every $n \in \mathbb{N}$ and $A \in Arv(P)_n$ we have $T^{(n)}_{A} - W^{(n)}_{A} \in \mathcal{J}$.
\end{proposition}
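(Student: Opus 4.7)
The plan is to show directly that $\lim_{m\to\infty} \|(T^{(n)}_A - W^{(n)}_A) Q_m\| = 0$, which by definition places the difference in the closed left ideal $\mathcal{J}$. First I would carry out an explicit entry-wise computation of the difference on a typical $B = [b_{jk}] \in Arv(P)_m$. Plugging into the formulas for $T^{(n)}_A$ and $W^{(n)}_A$, and using that $\sqrt{P^n}*(\sqrt{P^n})^{\flat}$ is the indicator of $E(P^n)$ and that $A$ is already supported on $E(P^n)$, one obtains
\[
\bigl(T^{(n)}_A(B) - W^{(n)}_A(B)\bigr)_{ik} \;=\; \sum_{j} a_{ij}\, b_{jk}\left( \frac{\sqrt{P^{(m)}_{jk}}}{\sqrt{P^{(n+m)}_{ik}}} - 1 \right)
\]
on $(i,k)\in E(P^{n+m})$, with the sum running over $j$ such that $(i,j)\in E(P^n)$ and $(j,k)\in E(P^m)$ (both sides vanish outside $E(P^{n+m})$).

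Next I would invoke the structure theory for finite irreducible $P$. By Remark~\ref{remark:recurrence-finite}, $P$ is positive-recurrent; let $r$ be its period, $\pi$ its stationary distribution (all $\pi_k>0$), and $\St_0,\dots,\St_{r-1}$ the cyclic decomposition. For any triple $(i,j,k)$ contributing to the sum, the edge conditions force $m$ and $n+m$ to lie in particular residue classes modulo $r$ compatible with the cyclic classes of $i,j,k$. Theorem~\ref{theorem:convergence-theorem-positive-recurrent} applied twice then yields $P^{(m)}_{jk} \to r\pi_k$ and $P^{(n+m)}_{ik} \to r\pi_k$ as $m\to\infty$ through that residue class, so the quantity in parentheses tends to $0$. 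Finiteness of $\St$ gives only finitely many triples to consider, so
\[
\varepsilon_m \;:=\; \max\left\{\left| \frac{\sqrt{P^{(m)}_{jk}}}{\sqrt{P^{(n+m)}_{ik}}} - 1 \right| \,:\, (i,j)\in E(P^n),\; (j,k)\in E(P^m) \right\}
\]
tends to $0$ as $m\to\infty$ (with the convention that triples giving no contribution at level $m$ contribute $0$).

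Finally I would convert this pointwise control into an operator-norm bound. Applying $|{\cdot}|\leq \varepsilon_m$ to each ratio factor and then Cauchy--Schwarz in $j$, one estimates
\[
\|(T^{(n)}_A - W^{(n)}_A)(B)\|^2 \;=\; \sup_k \sum_i \left| \sum_j a_{ij} b_{jk} \Bigl(\tfrac{\sqrt{P^{(m)}_{jk}}}{\sqrt{P^{(n+m)}_{ik}}} - 1\Bigr) \right|^2 \;\leq\; \varepsilon_m^2 \Bigl(\sum_{i,j} |a_{ij}|^2\Bigr) \|B\|^2,
\]
where the Hilbert--Schmidt-type sum is finite because $\St$ is finite. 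Hence $\|(T^{(n)}_A - W^{(n)}_A)Q_m\| \leq \varepsilon_m \|A\|_{HS} \to 0$, giving $T^{(n)}_A - W^{(n)}_A \in \mathcal{J}$.

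The main obstacle I expect is bookkeeping around periodicity: one must verify that every contributing triple $(i,j,k)$ is compatible with a unique residue class of $m$ for which the two relevant powers of $P$ are strictly positive, and that both $P^{(m)}_{jk}$ and $P^{(n+m)}_{ik}$ converge to the \emph{same} positive limit $r\pi_k$ along that class (so that the ratio actually tends to $1$, not merely remains bounded). Once that compatibility is set up correctly, the Cauchy--Schwarz estimate and the finiteness of $\St$ do the rest routinely.
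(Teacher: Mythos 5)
Your proposal is correct and follows essentially the same route as the paper: the same entrywise computation and Cauchy--Schwarz estimate reducing everything to the scalar quantities $\bigl|\sqrt{P^{(m)}_{jk}/P^{(n+m)}_{ik}}-1\bigr|$, followed by the cyclic decomposition and Theorem~\ref{theorem:convergence-theorem-positive-recurrent} to show both powers converge to $r\pi_k$ along the relevant residue class. The only cosmetic difference is that the paper handles the periodicity bookkeeping by a contradiction/subsequence argument, whereas you argue directly via a maximum over the finitely many triples; the substance is identical.
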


\begin{proof}
Assume $P$ has period $r \geq 1$. Let $A= [a_{ij}]\in Arv(P)_n$ and $B\in Arv(P)_m$. Evaluating $||(T^{(n)}_{A} - W^{(n)}_{A})(B)||^2$ using Cauchy-Schwarz inequality yields
$$
||(T^{(n)}_{A} - W^{(n)}_{A})(B)||^2 \leq \sup_k \Bigg( \sum_{i,j} |a_{ij}|^2c_m(i,j,k) \Bigg) \cdot ||B||^2
$$
Where
\begin{displaymath}   
c_m(i,j,k) = \left\{     
\begin{array}{lr}       
\Big| \sqrt{\frac{P^{(m)}_{jk}}{P^{(n+m)}_{ik}}} - 1 \Big|^2 & : (i,j,k)\in E(P^n,P^m) \\       
0 & : (i,j,k)\notin E(P^n,P^m)     
\end{array}   \right.
\end{displaymath}

So in order to show that $||(T^{(n)}_{A} - W^{(n)}_{A})Q_m|| \underset{m\rightarrow \infty}{\longrightarrow} 0$, due to finiteness of $P$, it would suffice to show that for all $(i,j,k) \in \St^3$ one has $c_m(i,j,k) \underset{m\rightarrow \infty}{\longrightarrow} 0$. 

Decompose $P$ into stochastic matrices $P_0, ..., P_{r-1}$ as in Theorem \ref{theorem:cyclic-graph-decomposition} and let $\St_0,...,\St_{r-1}$ be the state sets corresponding to the rows of $P_0,...,P_{r-1}$ in the decomposition. Choose any $(i,j,k) \in \St^3$ and assume $i\in \St_{\ell}$, $j\in \St_{\ell+\ell_1}$, $k\in \St_{\ell+\ell_1+\ell_2}$. 
Assume by negation that we have a subsequence $\{ c_{m_{\alpha}}(i,j,k) \}$ composed of non-zero entries only, such that $c_{m_{\alpha}}(i,j,k)$ does not tend to $0$ as ${\alpha}\rightarrow \infty$. Then we must have that $(i,j,k) \in E(P^{n,m_{\alpha}})$ for every ${\alpha}\in \mathbb{N}$. Due to r-periodicity of all states in our matrix, we must have that $n = n'r + \ell_1$ and $m_{\alpha} = m'_{\alpha}r + \ell_2$. So we can find arbitrarily large $m'_{\alpha} \in \mathbb{N}$ such that
$$
\Bigg| \sqrt{\frac{P^{(m'_{\alpha}r + \ell_2)}_{jk}}{P^{((n' +m'_{\alpha})r + \ell_1+\ell_2)}}} - 1 \Bigg|^2 \geq \epsilon
$$
For some fixed $\epsilon >0$. This contradicts the fact that 
$P^{(t r + \ell_2)}_{jk} \underset{t \rightarrow \infty}{\longrightarrow} \pi_k r $ and $P^{(t r + \ell_1 + \ell_2)}_{ik} \underset{t \rightarrow \infty}{\longrightarrow} \pi_k r $, which is due to Theorem \ref{theorem:convergence-theorem-positive-recurrent}.
\end{proof}

Recall that if $\mathcal{A}$ is a gauge invariant C*-subalgebra of $\mathcal{L}(\mathcal{F}_{Arv(P)})$ then $\mathcal{J}(\mathcal{A}):=\mathcal{A} \cap \mathcal{J}$ is an invariant closed left ideal in $\mathcal{A}$.

\begin{theorem} Let $P$ be an irreducible finite stochastic matrix. 
Let
$$\mathcal{T}^c(P) := C^*\Big(\vnMaS \cup \big\{ \ T^{(n)}_{A}, W^{(n)}_{A} \ | \ n\in \mathbb{N}, \  \ A \in Arv(P)_n \ \big\}\Big)$$
Then $\mathcal{J}(\mathcal{T}^c(P))$ is an invariant two sided closed ideal in $\mathcal{T}^c(P)$ so that both
$\mathcal{J}(\mathcal{T}(P))$ and $\mathcal{J}(\mathcal{T}^{\infty}(P))$ are invariant two sided closed ideals in $\mathcal{T}(P)$ and $\mathcal{T}^{\infty}(P)$ respectively, and if $P$ is also irreducible then
$$
\mathcal{O}(P) : = \mathcal{T}(P) / \mathcal{J}(\mathcal{T}(P)) \cong \mathcal{T}^{\infty}(P) / \mathcal{J}(\mathcal{T}^{\infty}(P)) $$
\end{theorem}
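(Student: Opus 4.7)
The plan is to first establish that $\mathcal{J}(\mathcal{T}^c(P))$ is a two-sided closed invariant ideal in $\mathcal{T}^c(P)$, then obtain the analogous statements for $\mathcal{T}(P)$ and $\mathcal{T}^{\infty}(P)$ by intersection, and finally deduce the isomorphism of Cuntz-Pimsner quotients from Proposition~\ref{proposition:in-cuntz-ideal}, which identifies the two families of generators modulo $\mathcal{J}(\mathcal{T}^c(P))$.

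For the first step, the key observation is that every generator $G$ of $\mathcal{T}^c(P)$ is gauge-homogeneous, meaning there exists $k\in\mathbb{Z}$ such that $GQ_m\subseteq Q_{m+k}$ (or vanishes) for every $m$: the operators $T^{(n)}_A$ and $W^{(n)}_A$ have degree $k=n$, their adjoints have degree $-n$ (routine from the inner product formula on $Arv(P)$), and elements of $\vnMaS$ have degree $0$. Since $\mathcal{J}$ is already a closed left ideal in $\mathcal{L}(\mathcal{F}_{Arv(P)})$, it suffices to verify that $\mathcal{J}(\mathcal{T}^c(P))$ is closed under right multiplication by generators in order to conclude that it is a two-sided ideal. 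For $G$ of degree $k$ and $J\in\mathcal{J}(\mathcal{T}^c(P))$, the identity $Q_{m+k}GQ_m=GQ_m$ yields
$$\|JGQ_m\| \;=\; \|JQ_{m+k}GQ_m\| \;\leq\; \|JQ_{m+k}\|\cdot\|G\| \;\longrightarrow\; 0,$$
so $JG\in\mathcal{J}$. Extending this to polynomials in the generators by linearity, and then to all of $\mathcal{T}^c(P)$ by continuity of multiplication together with the norm-closedness of $\mathcal{J}$, shows that $\mathcal{J}(\mathcal{T}^c(P))$ is a right ideal, and invariance follows since both $\mathcal{J}$ and $\mathcal{T}^c(P)$ are gauge invariant (the action being $\alpha_\lambda(T^{(n)}_A)=\lambda^n T^{(n)}_A$ and $\alpha_\lambda(W^{(n)}_A)=\lambda^n W^{(n)}_A$). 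The analogous statements for $\mathcal{T}(P)$ and $\mathcal{T}^{\infty}(P)$ then follow from $\mathcal{J}(\mathcal{T}(P))=\mathcal{J}(\mathcal{T}^c(P))\cap\mathcal{T}(P)$ and $\mathcal{J}(\mathcal{T}^{\infty}(P))=\mathcal{J}(\mathcal{T}^c(P))\cap\mathcal{T}^{\infty}(P)$, since intersections of closed two-sided invariant ideals with invariant C*-subalgebras inherit these properties.

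For the isomorphism, I consider the composition of the inclusion $\iota:\mathcal{T}(P)\hookrightarrow \mathcal{T}^c(P)$ with the quotient map $q:\mathcal{T}^c(P)\to\mathcal{T}^c(P)/\mathcal{J}(\mathcal{T}^c(P))$; its kernel is exactly $\mathcal{T}(P)\cap\mathcal{J}(\mathcal{T}^c(P))=\mathcal{J}(\mathcal{T}(P))$. For surjectivity, Proposition~\ref{proposition:in-cuntz-ideal} gives $q(T^{(n)}_A)=q(W^{(n)}_A)$ (and the same for the adjoints), so the image of $q\circ\iota$ is a C*-subalgebra of $\mathcal{T}^c(P)/\mathcal{J}(\mathcal{T}^c(P))$ containing the cosets of all generators of $\mathcal{T}^c(P)$, hence equals the entire quotient. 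By the first isomorphism theorem, $\mathcal{T}(P)/\mathcal{J}(\mathcal{T}(P))\cong \mathcal{T}^c(P)/\mathcal{J}(\mathcal{T}^c(P))$. Applying exactly the same argument with $\mathcal{T}^{\infty}(P)$ in place of $\mathcal{T}(P)$ yields $\mathcal{T}^{\infty}(P)/\mathcal{J}(\mathcal{T}^{\infty}(P))\cong \mathcal{T}^c(P)/\mathcal{J}(\mathcal{T}^c(P))$, and chaining the two isomorphisms gives the conclusion. The main technical hurdle is the first step — showing $\mathcal{J}(\mathcal{T}^c(P))$ is a right ideal — but once the homogeneity of the generators is recognized this collapses into the one-line telescoping estimate above, and the remainder is a soft argument exploiting Proposition~\ref{proposition:in-cuntz-ideal}.
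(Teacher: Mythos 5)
Your proposal is correct and follows essentially the same route as the paper: the right-ideal property of $\mathcal{J}(\mathcal{T}^c(P))$ is obtained from the identical telescoping estimate $\|SQ_{n+k}\|\cdot\|T\|$ applied to homogeneous elements (the paper works with general homogeneous polynomials, you with homogeneous generators and monomials, which is the same thing), and the final isomorphism is the second isomorphism theorem applied to $\mathcal{T}^c(P)=\mathcal{T}(P)+\mathcal{J}(\mathcal{T}^c(P))=\mathcal{T}^{\infty}(P)+\mathcal{J}(\mathcal{T}^c(P))$, which the paper cites from Davidson's Corollary I.5.6 and you prove directly via the kernel/surjectivity argument for $q\circ\iota$. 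No gaps.
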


\begin{proof}
$\mathcal{T}^c(P)$ is the closure of the linear span of all homogeneous polynomials in the variables $T^{(n)}_{A}, W^{(n)}_{A}$ and their adjoints. Now $\alpha_{\lambda}(W^{(n)}_{A}) = \lambda^nW^{(n)}_{A}$ and $\alpha_{\lambda}(T^{(n)}_{A}) = \lambda^nT^{(n)}_{A}$, and thus one must have that $\mathcal{T}^c(P)$ is an invariant $C$*-subalgebra of $\mathcal{L}(\mathcal{F}_{Arv(P)})$.

We show that $\mathcal{J}(\mathcal{T}^c(P))$ being a closed invariant left ideal, is also a right ideal in $\mathcal{T}^c(P)$. Indeed, Take $S \in \mathcal{J}(\mathcal{T}^c(P))$ and $T \in \mathcal{T}^c(P)$. Assume first that $T$ is homogeneous of degree $m\in \mathbb{Z}$. Then $T: Arv(P)_n \rightarrow Arv(P)_{n+m}$ when $n+m \geq 0$. Thus,
$$ ||STQ_n|| = ||SQ_{n+m}TQ_n|| \leq ||SQ_{n+m}||\cdot ||T|| \underset{n\rightarrow \infty}{\longrightarrow} 0 $$
Now since $\mathcal{T}^c(P)$ is the closure of the linear span of homogeneous polynomials, one has this for general $T \in \mathcal{T}^c(P)$.
This shows that $\mathcal{J}(\mathcal{T}^c(P))$ and $\mathcal{J}(\mathcal{T}^{\infty}(P))$
are closed two sided ideals in their respective algebras.
By Proposition \ref{proposition:in-cuntz-ideal}, we have that 
$$\mathcal{T}^c(P) = \mathcal{T}(P) + \mathcal{J}(\mathcal{T}^c(P)) = \mathcal{T}^{\infty}(P) + \mathcal{J}(\mathcal{T}^c(P))$$
And by Corollary I.5.6 in \cite{Dav} we have that $$\mathcal{T}(P) /\mathcal{J}(\mathcal{T}(P)) \cong \mathcal{T}^{\infty}(P) / \mathcal{J}(\mathcal{T}^{\infty}(P))$$ 
as desired.
\end{proof}

Note that since for all $B \in Arv(P)_m$ we have $W^{(n)}_{A}(B) = A\cdot B$, then a simple calculation shows that $W^{(n)*}_{A} := \big(W_{A}^{(n)}\big)^* : Arv(P)_{n+m} \rightarrow Arv(P)_m $ is given by $$W^{(n)*}_{A}(B) = Gr(P^m) * \big[A^*\cdot B\big]$$
Usually $A^*B$ may not be an element of $Arv(P)_m$ since it may have non zero entries outside the support of $P^m$. Schur multiplication with $Gr(P^m)$ ensures that the output element is in $Arv(P)_m$.

\begin{proposition} \label{proposition:cuntz-direct-sum}
Let $P$ be a finite and essential stochastic matrix over $\St$. Assume that $P$ decomposes into irreducible stochastic matrices $P(1),P(2),..., P(\ell)$ in block diagonal form, then
$$\mathcal{O}(P) \cong \mathcal{O}(P(1))\oplus ... \oplus \mathcal{O}(P(\ell))$$
\end{proposition}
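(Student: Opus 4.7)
The plan is to obtain this as an immediate consequence of the two structural results already established. By Proposition~\ref{proposition:essential-is-direct-sum}, after identifying $\vnMaS$ with $\ell^{\infty}(\St(1)) \oplus \cdots \oplus \ell^{\infty}(\St(\ell))$ in the natural way, we have an Id-isomorphism of subproduct systems
\[
Arv(P) \cong_{Id} Arv(P(1)) \oplus Arv(P(2)) \oplus \cdots \oplus Arv(P(\ell)).
\]
Thus it suffices to take Cuntz-Pimsner algebras of both sides and apply the direct-sum decomposition formula of Proposition~\ref{proposition:essential-cuntz-direct-sum}.

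First, I would observe that any Id-isomorphism $V = \{V_n\}$ of subproduct systems over the same W*-algebra $\vnM$ descends to a *-isomorphism of the associated Cuntz-Pimsner algebras. The family $V$ assembles into a unitary $V_\infty := \oplus_n V_n : \mathcal{F}_{Arv(P)} \to \mathcal{F}_{Arv(P(1))\oplus\cdots\oplus Arv(P(\ell))}$, which by construction satisfies $V_\infty S^{(n)}_A V_\infty^* = S^{(n)}_{V_n(A)}$ and intertwines the left actions of $\vnM$. Conjugation by $V_\infty$ therefore implements a *-isomorphism $\mathcal{T}(Arv(P)) \cong \mathcal{T}(Arv(P(1)) \oplus \cdots \oplus Arv(P(\ell)))$. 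Since $V_\infty$ commutes with each fiber projection $Q_n$, it preserves the condition $\|TQ_n\| \to 0$ defining the ideal $\mathcal{J}$, and so the Cuntz ideals correspond under this *-isomorphism. The quotients therefore agree.

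Next, I would apply Proposition~\ref{proposition:essential-cuntz-direct-sum} inductively on $\ell$. The case $\ell = 2$ is the content of that proposition directly; the general case follows by grouping $Arv(P(1)) \oplus \cdots \oplus Arv(P(\ell-1))$ and applying the two-summand result together with the inductive hypothesis. Combining this with the previous paragraph yields
\[
\mathcal{O}(P) \cong \mathcal{O}\big(Arv(P(1)) \oplus \cdots \oplus Arv(P(\ell))\big) \cong \mathcal{O}(P(1)) \oplus \cdots \oplus \mathcal{O}(P(\ell)),
\]
which is the desired conclusion.

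There is no substantive obstacle in this argument, since the heavy lifting has already been carried out in Propositions~\ref{proposition:essential-is-direct-sum} and~\ref{proposition:essential-cuntz-direct-sum}. The only point requiring a sentence of justification is the naturality of the Cuntz-Pimsner construction under Id-isomorphism of subproduct systems, which is routine from the definitions of $\mathcal{T}(X)$, of $\mathcal{J}(\mathcal{T}(X))$, and of $\mathcal{O}(X)$ as a quotient.
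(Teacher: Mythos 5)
Your proposal is correct and follows exactly the route of the paper's own proof, which simply cites Proposition~\ref{proposition:essential-is-direct-sum} together with an iteration of Proposition~\ref{proposition:essential-cuntz-direct-sum}. The extra paragraph you include on naturality of the Cuntz--Pimsner construction under Id-isomorphisms of subproduct systems is a legitimate detail the paper leaves implicit, and it is handled correctly.
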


\begin{proof}
This follows immediately from Propositions \ref{proposition:essential-is-direct-sum} and the iteration of Proposition \ref{proposition:essential-cuntz-direct-sum}.
\end{proof}

The last proposition enables us to reduce the problem of computing the Cuntz-Pimsner algebra of $Arv(P)$ for finite essential $P$, to that of computing the Cuntz-Pimsner algebra of $Arv(P)$ when $P$ is finite and irreducible. Thus, we assume throughout the following discussion that $P$ is irreducible, unless stated otherwise. Let $P$ be a $d \times d$ irreducible stochastic matrix with period $r\geq 1$. Denote $q = \frac{d}{r}$.

Let $P$ be an irreducible $r$-periodic $d\times d$ matrix. Use Theorem \ref{theorem:cyclic-graph-decomposition} to write $P$ in the following form with appropriate enumeration:
$$ 
P = \left[\begin{smallmatrix}
       0 & P_0  & \cdots & 0 \\
       \vdots  & \ddots & \ddots & \vdots  \\
       0 & \cdots  & 0 & P_{r-2} \\
       P_{r-1} & \cdots & 0  & 0
\end{smallmatrix}\right]
$$

\begin{notation}
With reference to the decomposition of $P$ above, for 
$A_k \in M_{q}(\mathbb{C})$ for every $0 \leq k \leq r-1$ we define the matrices
$$
A_0 \oplus ... \oplus A_{r-1} := 
\left[\begin{smallmatrix}
       A_0 & 0 & \cdots & 0 \\
       0 & A_1 & \cdots & 0 \\
       \vdots & \vdots & \ddots & \vdots  \\
       0 & 0 & \cdots & A_{r-1}
\end{smallmatrix}\right], \qquad
F : = 
\left[\begin{smallmatrix}
       0 & I_{q} & \cdots & 0 \\
       \vdots   & \ddots & \ddots & \vdots  \\
       0 & \cdots & 0 & I_{q} \\
       I_{q} & \cdots & 0 & 0
\end{smallmatrix}\right]
$$
Where $F$ is $d \times d$ with $I_{q} \in M_{q}(\mathbb{C})$ the $q\times q$ identity matrix.
\end{notation}

Thus, we note that for every $A \in Arv(P)_n$ one has the following decomposition
\begin{equation} \label{eq:decomp}
A = \big( A_0 \oplus ... \oplus A_{r-1} \big) \cdot F^n
\end{equation}
For unique $A_0, ... , A_{r-1} \in M_{q}(\mathbb{C})$, due to the matrix decomposition of $P$ assumed previously. 

\begin{remark}
Note that the matrices $A_0,...,A_{r-1}$ can have non-zero entries only in non-zero entries of the matrices comprising the cyclic decomposition of $P^n$ (For $n=1$ these are just $P_0,...,P_{r-1}$). Due to finiteness and positive-recurrence of $P$ (recall remark \ref{remark:recurrence-finite}) along with Theorem \ref{theorem:convergence-theorem-positive-recurrent}, there exists $n_0$ such that for all $n \geq n_0$, the matrices comprising the cyclic decomposition of $P^n$ have \emph{all} entries non-zero. Thus for $n \geq n_0$ \emph{any} $ A_0,...,A_{r-1} \in M_{q}(\mathbb{C})$ can be taken in equation \eqref{eq:decomp} so as to obtain an element in $Arv(P)_n$.
\end{remark}

\begin{remark} \label{remark:multiplication-in-quotient-red}
Due to the previous remark on equation \eqref{eq:decomp}, there exists $m_0 \in \mathbb{N}$ such that for all $m \geq m_0$ and $B\in Arv(P)_m$ one actually has
$$W^{(n)*}_{A}(B) = A^*\cdot B$$
Which simply means that the matrix $A^*\cdot B$ is already in $Arv(P)_m$, and one does not have to Schur-multiply it with $Gr(P^m)$, to obtain an element in $Arv(P)_m$.
\end{remark}

\begin{defi}
Let $P$ be an irreducible $r$-periodic $d \times d$ stochastic matrix with cyclic decomposition as above. For $A_k \in M_{q}(\mathbb{C})$ with $0\leq k \leq r-1$ and $n\in \mathbb{N}$ the operators $M_{A_0 \oplus ... \oplus A_{r-1}} : Arv(P)_m \rightarrow Arv(P)_m$ and $S_n : Arv(P)_m \rightarrow  Arv(P)_{n+m}$ by
$$ 
M_{A_0 \oplus ... \oplus A_{r-1}}(B) = 
Gr(P^m) * \big[
(A_0 \oplus ... \oplus A_{r-1})  \cdot B \big]
 \ \ and \ \  
S_n(B) = 
Gr(P^{n+m}) * \big[
F^n  \cdot B \big]
$$
for $B \in Arv(P)_m$.
\end{defi}

\begin{proposition}
Let $P$ be an irreducible $r$-periodic $d \times d$ stochastic matrix with cyclic decomposition as above. For every $ A_k \in M_{q}(\mathbb{C}) $ with $0 \leq k \leq r-1$ and $n\in \mathbb{N}$, we have that $M_{A_0 \oplus ... \oplus A_{r-1}} \ , \ \ S_n \in \mathcal{T}^{\infty}(P)$.
\end{proposition}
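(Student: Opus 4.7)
The plan is to realize $M_{A_0 \oplus \cdots \oplus A_{r-1}}$ and $S_n$ as compositions of generators $W^{(k)}_{C}$ of $\mathcal{T}^\infty(P)$ together with their adjoints. The key auxiliary observation is that $F^r = I_d$, and Theorem~\ref{theorem:convergence-theorem-positive-recurrent} (via the remark following equation~\eqref{eq:decomp}) guarantees the existence of $n_0 \in \nn$ such that every block in the cyclic decomposition of $P^k$ has strictly positive entries for all $k \geq n_0$. Consequently, for any $N$ with $rN \geq n_0$, three facts hold: first, $I_d \in Arv(P)_{rN}$, since $P^{rN}$ is block diagonal with totally positive diagonal blocks; second, the block diagonal matrix $A_0 \oplus \cdots \oplus A_{r-1}$ lies in $Arv(P)_{rN}$ for the same reason; and third, $F^n \in Arv(P)_{rN+n}$ for every $n \in \nn$, since $F^{rN+n} = F^n$ shares the cyclic support pattern of $P^{rN+n}$ at the block level, with all block entries strictly positive.

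Fix such an $N$. The first step is to compute the action of $W^{(rN)*}_{I_d}$: from the formula $W^{(k)*}_{C}(D) = Gr(P^m) * [C^* \cdot D]$ for $D \in Arv(P)_{k+m}$, one obtains $W^{(rN)*}_{I_d}(D) = Gr(P^m) * D$ for every $D \in Arv(P)_{rN+m}$ and every $m$. The next step is to claim the two identities
\begin{equation*}
M_{A_0 \oplus \cdots \oplus A_{r-1}} = W^{(rN)*}_{I_d} \circ W^{(rN)}_{A_0 \oplus \cdots \oplus A_{r-1}}, \qquad S_n = W^{(rN)*}_{I_d} \circ W^{(rN+n)}_{F^n},
\end{equation*}
where in the second identity $F^n$ is regarded as an element of $Arv(P)_{rN+n}$. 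Each identity is checked by direct evaluation on an arbitrary $B \in Arv(P)_m$: for the first, the right-hand side sends $B$ to $W^{(rN)*}_{I_d}((A_0 \oplus \cdots \oplus A_{r-1}) \cdot B) = Gr(P^m) * [(A_0 \oplus \cdots \oplus A_{r-1}) \cdot B] = M_{A_0 \oplus \cdots \oplus A_{r-1}}(B)$; for the second, the right-hand side sends $B$ to $W^{(rN)*}_{I_d}(F^n \cdot B) = Gr(P^{n+m}) * [F^n \cdot B] = S_n(B)$.

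Since each of $W^{(rN)}_{A_0 \oplus \cdots \oplus A_{r-1}}$, $W^{(rN+n)}_{F^n}$ and $W^{(rN)}_{I_d}$ is by definition a generator of $\mathcal{T}^\infty(P)$, and the algebra is closed under adjoints, both $M_{A_0 \oplus \cdots \oplus A_{r-1}}$ and $S_n$ lie in $\mathcal{T}^\infty(P)$. The only delicate point is the bookkeeping: one must verify that the chosen $N$ is large enough for all three matrices $I_d$, $A_0 \oplus \cdots \oplus A_{r-1}$, and $F^n$ to genuinely lie in the claimed Arveson--Stinespring fibers, which reduces to the total positivity of the cyclic blocks of $P^k$ for $k \geq n_0$. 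No conceptual obstacle beyond matching fiber degrees arises.
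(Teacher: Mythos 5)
Your proof is correct and follows essentially the same route as the paper: both realize $M_{A_0 \oplus \cdots \oplus A_{r-1}}$ as $W^{(kr)*}_{I_d} \circ W^{(kr)}_{A_0 \oplus \cdots \oplus A_{r-1}}$ and $S_n$ as $W^{(kr)*}_{I_d} \circ W^{(kr+n)}_{F^n}$ for $k$ large enough that the relevant matrices lie in the stated fibers. Your write-up simply makes explicit the verification of the two identities and the positivity bookkeeping that the paper leaves implicit.
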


\begin{proof}
Let $k\in \mathbb{N}$ be large enough so that,
$$F^n \in Arv(P)_{kr+n} \ \ and \ \ I_d \  , \ 
A_0 \oplus ... \oplus A_{r-1} \in Arv(P)_{kr} $$
Where $I_d$ is the identity matrix in $M_{d}(\mathbb{C})$. Then we have,
$$ M_{A_0 \oplus ... \oplus A_{r-1}} = W^{(kr)*}_{I_d} \circ W^{(kr)}_{A_0 \oplus ... \oplus A_{r-1}}, \qquad
S_n = W^{(kr)*}_{I_d} \circ W^{(kr+n)}_{F^n} 
$$
hence both belong to $\mathcal{T}^{\infty}(P)$.
\end{proof}

Denote for $A_0\oplus ...\oplus A_{r-1} \in \oplus_0^{r-1}M_{q}(\mathbb{C})$ the element $\sigma(A_0\oplus ...\oplus A_{r-1}) = A_1\oplus ... \oplus A_{r-1} \oplus A_0 \in \oplus_0^{r-1}M_{q}(\mathbb{C})$ which is the backward cyclic shift. 

\begin{corollary} \label{corollary:spatial-action}
Let $P$ be an irreducible $r$-periodic $d \times d$ stochastic matrix with cyclic decomposition as above. For every $n\in \mathbb{N}$ and $A \in Arv(P)_n$ as in equation \ref{eq:decomp} for unique $A_k \in M_{q}(\mathbb{C})$ with $0\leq k \leq r-1$, one has
$$W^{(n)}_{A} - M_{A_0 \oplus ... \oplus A_{r-1}} \circ S_n \in \mathcal{J} \ \ and \ \ S_1 \circ M_{A_0 \oplus ... \oplus A_{r-1}} \circ S^*_1  - M_{\sigma(A_0 \oplus ... \oplus A_{r-1})} \in \mathcal{J}$$
\end{corollary}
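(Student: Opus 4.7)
The plan is to strengthen both conclusions by showing that the two operator differences \emph{vanish identically} on $Q_m$ for all sufficiently large $m$, which is more than enough to place them in $\mathcal{J}$. The crucial input is the stabilization principle recorded in the remark following \eqref{eq:decomp}: combining Theorem~\ref{theorem:cyclic-graph-decomposition} with Theorem~\ref{theorem:convergence-theorem-positive-recurrent} yields an integer $m_0$ such that for every $m \geq m_0$ the set $E(P^m)$ is the \emph{full} set compatible with the cyclic structure, namely $\{((i,a),(j,b)) : j \equiv i+m \pmod{r}\}$. Consequently, on the fibers indexed by $m \geq m_0$, Schur multiplication by $Gr(P^m)$ acts as the identity on any matrix whose support already lies in that set.

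For the first identity I would fix $B \in Arv(P)_m$ with $m \geq m_0$ and unfold $M_D \circ S_n$, writing $D = A_0 \oplus \cdots \oplus A_{r-1}$. Left multiplication by $F^n$ shifts the row block index by $n$, so $F^n B$ is supported inside $E(P^{n+m})$, which makes the Schur product with $Gr(P^{n+m})$ in the definition of $S_n$ trivial and gives $S_n(B) = F^n B$. Left multiplication by the block diagonal $D$ preserves this support, so the second Schur product is also trivial, and one obtains $M_D(S_n B) = D F^n B = AB = W^{(n)}_A(B)$, where the middle equality is exactly the decomposition \eqref{eq:decomp}. Hence $(W^{(n)}_A - M_D \circ S_n)Q_m = 0$ for every $m \geq m_0$, so the difference lies in $\mathcal{J}$.

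For the second identity, my first task is to identify $S_1^*$ explicitly in the stable regime. A short computation using the correspondence inner product $\la C, C'\ra = \Diag(C^*C')$ shows that the adjoint of left multiplication by $F$ is left multiplication by $F^*$; combining this with the support bookkeeping above gives $S_1^* B = F^* B \in Arv(P)_{m-1}$ whenever $B \in Arv(P)_m$ and $m-1 \geq m_0$. Iterating this pattern through $M_D$ and then $S_1$ collapses every remaining Schur product to an identity, yielding $(S_1 M_D S_1^*)(B) = F D F^* B$. The key algebraic step is then to verify $F D F^* = \sigma(D)$: this is a direct block matrix computation, as conjugation by the cyclic block permutation $F$ carries the block $A_i$ in position $(i,i)$ to position $(i-1,i-1)$, producing precisely $A_1 \oplus A_2 \oplus \cdots \oplus A_{r-1} \oplus A_0$. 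Since in the same regime $M_{\sigma(D)}(B) = \sigma(D) B$, the two operators agree on $Q_m$ for all $m$ with $m-1 \geq m_0$, so the difference lies in $\mathcal{J}$.

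The most delicate step, and the one I would pay the closest attention to, is the uniform support bookkeeping: verifying that $F^n B$, $D F^n B$, $F^* B$, and $F D F^* B$ all land in the exact cyclic class on which the relevant $Gr(P^{\cdot})$ is full, so that every Schur product really does disappear. Once this is under control, both identities collapse to the transparent algebraic facts $A = D F^n$ and $F D F^* = \sigma(D)$.
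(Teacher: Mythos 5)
Your proposal is correct and follows essentially the same route as the paper: the paper's proof likewise invokes the stabilization of $Gr(P^m)$ for large $m$ (Remark~\ref{remark:multiplication-in-quotient-red}) to conclude that both operator differences vanish identically on $Arv(P)_m$ for all $m\geq m_0$, hence lie in $\mathcal{J}$. You have merely written out the support bookkeeping and the block identities $A = DF^n$ and $FDF^* = \sigma(D)$ that the paper leaves implicit, and these check out.
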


\begin{proof}
Using \ref{remark:multiplication-in-quotient-red} one sees that there exists $m_0 \in \mathbb{N}$ such that for all $m\geq m_0$ and all $B \in Arv(P)_m$ one has
$$ W^{(n)}_{A}(B) = (M_{A_0 \oplus ... \oplus A_{r-1}} \circ S_n)(B) \ \ and \ \ (S_1 \circ M_{A_0 \oplus ... \oplus A_{r-1}} \circ S^*_1)(B)  = M_{\sigma(A_0 \oplus ... \oplus A_{r-1})}(B)$$
\end{proof}

Recall the definition of $\mathcal{O}_0(P)$ given in remark \ref{cuntz-homogeneous-action} along with the surrounding discussion.
We denote by $\overline{T} \in \mathcal{O}(P)$ the image of $T\in \mathcal{T}^c(P)$ under the canonical quotient map $q: \mathcal{T}^c(P) \rightarrow \mathcal{O}(P)$

\begin{corollary} \label{proposition:element-decomp}
Let $P$ be an $r$-periodic finite $d\times d$ irreducible stochastic matrix with cyclic decomposition as above. Then,
$$\mathcal{O}_{0}(P) = \{ \ \overline{M_{A_0 \oplus ... \oplus A_{r-1}}} \ | \ A_k \in M_{q}(\mathbb{C}) \ \}$$
\end{corollary}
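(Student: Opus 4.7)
The plan is to establish both inclusions. The direction $\supseteq$ is immediate: by the proposition preceding Corollary~\ref{corollary:spatial-action}, one has $M_{A_{0}\oplus\cdots\oplus A_{r-1}}=W^{(kr)*}_{I_{d}}\,W^{(kr)}_{A_{0}\oplus\cdots\oplus A_{r-1}}$ for all sufficiently large $k$, so this operator lies in $\mathcal{T}^{\infty}(P)$, and since it is manifestly fixed by the gauge action, it descends to an element of $\mathcal{O}_{0}(P)$.

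For the direction $\subseteq$, I would first verify that
$$
\mathcal{N} := \{\,\overline{M_{A_{0}\oplus\cdots\oplus A_{r-1}}} : A_{k}\in M_{q}(\mathbb{C})\,\}
$$
is a $*$-subalgebra of $\mathcal{O}(P)$. Using Remark~\ref{remark:multiplication-in-quotient-red}, for $X,Y\in\bigoplus_{k=0}^{r-1}M_{q}(\mathbb{C})$ the operators $M_{X}M_{Y}$ and $M_{XY}$ agree on every fiber $Arv(P)_{m}$ of sufficiently large $m$, so their difference belongs to $\mathcal{J}$; an analogous computation gives $M_{X}^{*}=M_{X^{*}}$ modulo $\mathcal{J}$. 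Hence $X\mapsto\overline{M_{X}}$ is a $*$-homomorphism from the finite-dimensional $C^{*}$-algebra $\bigoplus_{k=0}^{r-1}M_{q}(\mathbb{C})$ into $\mathcal{O}(P)$, so its image $\mathcal{N}$ is finite dimensional and therefore automatically closed in $\mathcal{O}(P)$.

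Next, $\mathcal{O}_{0}(P)$ is the closure of the image of $\mathcal{T}^{\infty}(P)_{0}$, and by the analogue of Corollary~\ref{corollary:invariant-subspace} applied to $\mathcal{T}^{\infty}(P)$, the latter is the closed linear span of degree-$0$ monomials in the generators $W^{(n)}_{A}$, $W^{(n)*}_{B}$ and elements of $\vnMaS$ (which are themselves of the form $M_{X}$ for diagonal $X$). It therefore suffices to show that the image in $\mathcal{O}(P)$ of each such monomial lies in $\mathcal{N}$. The key reductions come from Corollary~\ref{corollary:spatial-action}: writing $X_{A}=A_{0}\oplus\cdots\oplus A_{r-1}$ for the block-diagonal datum of $A\in Arv(P)_{n}$, one has modulo $\mathcal{J}$
$$
\overline{W^{(n)}_{A}} = \overline{M_{X_{A}}}\;\overline{S_{n}},\qquad \overline{W^{(n)*}_{A}} = \overline{S_{n}^{*}}\;\overline{M_{X_{A}^{*}}},\qquad \overline{S_{1}}\;\overline{M_{X}}\;\overline{S_{1}^{*}} = \overline{M_{\sigma(X)}}.
$$
The second identity follows from the first by taking adjoints, since $\mathcal{J}(\mathcal{T}^{c}(P))$ is a self-adjoint two-sided ideal. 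Via Remark~\ref{remark:multiplication-in-quotient-red}, on every fiber of sufficiently large degree $S_{n}$ equals multiplication by $F^{n}$, so $S_{n}-S_{1}^{n}\in\mathcal{J}$ and both $S_{1}S_{1}^{*}-Id$ and $S_{1}^{*}S_{1}-Id$ lie in $\mathcal{J}$; thus $\overline{S_{1}}$ is a unitary element of $\mathcal{O}(P)$.

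The remaining step is a routine bookkeeping argument. Any degree-$0$ monomial in the generators becomes, modulo $\mathcal{J}$, an alternating product of the $\overline{M_{X_{i}}}$'s and powers of $\overline{S_{1}}$. The cyclic shift relation allows me to commute every $\overline{S_{1}}^{\pm 1}$ past every $\overline{M_{X}}$ at the cost of applying $\sigma^{\pm 1}$ to $X$. After all $\overline{S_{1}}^{\pm 1}$ are gathered on one side they collapse to $\overline{S_{1}}^{N}$, where $N$ is the total degree of the monomial (hence zero), so they disappear and a single $\overline{M_{X'}}\in\mathcal{N}$ remains. Closedness of $\mathcal{N}$ then yields $\mathcal{O}_{0}(P)\subseteq\mathcal{N}$. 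The main obstacle is the clean verification that $\overline{S_{1}}$ is truly unitary in $\mathcal{O}(P)$ (the identities $S_{1}^{*}S_{1}=Id$ and $S_{1}S_{1}^{*}=Id$ fail on low-degree fibers but hold modulo $\mathcal{J}$) together with the combinatorial reduction of an arbitrary monomial to a single $\overline{M_{X'}}$.
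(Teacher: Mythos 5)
Your proof is correct and follows the route the paper intends: the statement is given there as an unproved corollary of Corollary~\ref{corollary:spatial-action} and Remark~\ref{cuntz-homogeneous-action}, and your argument --- reducing degree-zero monomials in the generators $W^{(n)}_{A}$ modulo $\mathcal{J}$ to a single $\overline{M_{X}}$ via $\overline{W^{(n)}_{A}}=\overline{M_{X_A}}\;\overline{S_1}^{\,n}$ and $\overline{S_1}\,\overline{M_X}\,\overline{S_1^{*}}=\overline{M_{\sigma(X)}}$, with $\overline{S_1}$ unitary in the quotient --- is exactly the omitted verification. Your auxiliary checks (that $X\mapsto\overline{M_X}$ is a $*$-homomorphism with closed finite-dimensional image, and that $S_1^{*}S_1-Id$ and $S_1S_1^{*}-Id$ lie in $\mathcal{J}$ because they vanish on all fibers of sufficiently large degree) are sound.
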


\begin{proposition} \label{proposition:irreducible-periodic-cuntz}
Let $P$ be an $r$-periodic finite $d\times d$ irreducible stochastic matrix with cyclic decomposition as above. Then,
$ \mathcal{O}_{0}(P) \cong \bigoplus^r_1 M_{q}(\mathbb{C})$.
Furthermore, if we let $\sigma : \mathbb{Z} \rightarrow Aut(\mathcal{O}_{0}(P))$ be a $\mathbb{Z}$-action given by 
$\sigma_1(\overline{M_{A_0\oplus ... \oplus A_{r-1}}}) = \overline{M_{A_1\oplus ... \oplus A_{r-1} \oplus A_0}}$. Then we have that 
$$\mathcal{O}(P) \cong \Big( \bigoplus^r_1 M_{q}(\mathbb{C}) \Big) \rtimes_{\sigma} \mathbb{Z} $$

\end{proposition}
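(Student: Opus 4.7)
The strategy is a gauge-invariance argument based on Proposition~\ref{proposition:injectivity-fixed-point-alg}. I will construct a $*$-homomorphism $\pi$ from $\bigl(\bigoplus_{k=0}^{r-1} M_q(\mathbb{C})\bigr) \rtimes_\sigma \mathbb{Z}$ into $\mathcal{O}(P)$ via the universal property of the crossed product, verify that $\pi$ intertwines the dual $\mathbb{T}$-action with the gauge action on $\mathcal{O}(P)$, obtain surjectivity by showing the generators lie in the image, and conclude injectivity by reducing to the fixed-point algebra.

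The first task is to prove $\mathcal{O}_0(P) \cong \bigoplus_{k=0}^{r-1} M_q(\mathbb{C})$. Define $\Phi(A_0,\ldots,A_{r-1}) = \overline{M_{A_0 \oplus \cdots \oplus A_{r-1}}}$. Remark~\ref{remark:multiplication-in-quotient-red} guarantees that on fibers $Arv(P)_m$ for $m$ sufficiently large, block-diagonal left multiplication preserves the cyclic support pattern of $B \in Arv(P)_m$, so the Schur correction by $Gr(P^m)$ disappears; this gives $M_A M_B - M_{AB} \in \mathcal{J}$ and $M_A^* - M_{A^*} \in \mathcal{J}$, so $\Phi$ is a well-defined $*$-homomorphism, surjective by Corollary~\ref{proposition:element-decomp}. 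To show $\Phi$ is isometric, Proposition~\ref{bound-norm-cuntz} gives
\[
\|\Phi(A_0,\ldots,A_{r-1})\| = \limsup_{m\to\infty}\|M_{A_0\oplus\cdots\oplus A_{r-1}} Q_m\|.
\]
By Theorem~\ref{theorem:convergence-theorem-positive-recurrent}, for $m$ large the matrices in the cyclic decomposition of $P^m$ have all entries positive, so $Arv(P)_m$ becomes the full space of matrices with the cyclic support pattern, and $M_{A_0\oplus\cdots\oplus A_{r-1}}$ restricts to genuine left multiplication by the block-diagonal matrix. A direct computation of the Hilbert module operator norm, using $\|B\|^2 = \max_j \sum_i |B_{ij}|^2$ on $Arv(P)_m$, then yields $\|M_A Q_m\| = \max_k \|A_k\|_{op}$, which is exactly the norm of $(A_0,\ldots,A_{r-1})$ in $\bigoplus_{k=0}^{r-1} M_q(\mathbb{C})$.

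For the main statement, set $V = \overline{S_1} \in \mathcal{O}(P)$. Since $F$ is a permutation matrix with $F^r = I_d$, a direct computation on large-index fibers shows that both $S_1^* S_1 - I$ and $S_1 S_1^* - I$ lie in $\mathcal{J}$, so $V$ is unitary. Corollary~\ref{corollary:spatial-action} gives $V\,\Phi(A)\,V^* = \Phi(\sigma(A))$, so $(\Phi, V)$ is a covariant representation of $\bigl(\bigoplus_{k=0}^{r-1} M_q(\mathbb{C}), \mathbb{Z}, \sigma\bigr)$ and induces the desired $*$-homomorphism $\pi$. Surjectivity of $\pi$ follows because $\mathcal{T}^\infty(P)$ is generated by $\vnMaS$ and the operators $W^{(n)}_A$, and $\overline{W^{(n)}_A} = \Phi(A_0,\ldots,A_{r-1}) V^n$ in $\mathcal{O}(P)$, using $S_n = S_1^n$ modulo $\mathcal{J}$ (verified by induction on large-index fibers). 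Since $V$ has gauge degree $1$ and $\Phi$ takes values in the gauge fixed-point algebra, $\pi$ intertwines the dual $\mathbb{T}$-action on the crossed product with the gauge action on $\mathcal{O}(P)$. The fixed-point algebra of the crossed product under the dual action is $\bigoplus_{k=0}^{r-1} M_q(\mathbb{C})$ itself, on which $\pi$ restricts to the isometric $\Phi$; Proposition~\ref{proposition:injectivity-fixed-point-alg} then delivers injectivity of $\pi$. I expect the main obstacle to be the precise norm computation in the preceding paragraph, which depends on Theorem~\ref{theorem:convergence-theorem-positive-recurrent} to eliminate the Schur correction for large $m$ and on the nonstandard column-type operator norm appropriate to the Hilbert $\vnMaS$-module structure of $Arv(P)_m$, as opposed to the usual matrix operator norm on $M_d(\mathbb{C})$.
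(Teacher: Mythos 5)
Your proposal follows essentially the same route as the paper: the same map $A_0\oplus\cdots\oplus A_{r-1}\mapsto \overline{M_{A_0\oplus\cdots\oplus A_{r-1}}}$ onto the fixed-point algebra, the same covariant pair with $\overline{S_1}$ implementing $\sigma$ spatially, the universal property of the crossed product, and injectivity via the gauge-invariant uniqueness of Proposition~\ref{proposition:injectivity-fixed-point-alg}. The only (harmless) divergences are that you establish the full isometry of $\Phi$ by an exact norm computation where the paper settles for a lower bound $\|\overline{M_C}\|^2\geq\|\Diag[C^*C]\|>0$ (injectivity of a $*$-homomorphism already forces isometry), and you make explicit the unitarity of $\overline{S_1}$ and the relation $\overline{S_n}=\overline{S_1}^{\,n}$, which the paper leaves implicit.
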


\begin{proof}
Define a map $\varphi : \oplus_0^{r-1}M_q(\mathbb{C}) \rightarrow \mathcal{O}_{0}(P)$ by $\varphi(A_0 \oplus ... \oplus A_{r-1}) = \overline{M_{A_0 \oplus ... \oplus A_{r-1}}}$ where $A_0\oplus ... \oplus A_{r-1} \in \oplus_0^{r-1}M_q(\mathbb{C})$. It follows that $\varphi$ is a surjective, bounded *-homomorphism, we show that it is 1-1. Assume $0 \neq C = A_0\oplus ... \oplus A_{r-1} \in \oplus_0^{r-1}M_q(\mathbb{C})$. There exists $k_0 \in \mathbb{N}$ such that for all $k \geq k_0$ one has $I_d \in Arv(P)_{kr}$, and thus in $Arv(P)_{kr}$ ,
$$||M_{C}(I_d)||^2 = ||C||^2 = ||\Diag[C^*C]|| $$
Due to Proposition \ref{bound-norm-cuntz} we have that 
$$||\overline{M_C}||^2 \geq \limsup_{k\rightarrow \infty}||M_C \circ Q_{kr}||^2 \geq ||\Diag[C^*C]|| > 0$$
We must have that $\varphi$ is 1-1, and thus it is a *-isomorphism.

For the second part assume $\mathcal{O}(P) \subseteq B(H)$ for some Hilbert space $H$. Due to Corollary \ref{corollary:spatial-action}, one has a faithful $\sigma$-covariant representation $(S_1, \varphi^{-1}, H)$ (in which the action of $\sigma$ is spatially implemented). Thus, by the universal property of crossed products, one attains a *-homomorphism $\Phi: \Big( \bigoplus^r_1 M_{q}(\mathbb{C}) \Big) \rtimes_{\sigma} \mathbb{Z} \rightarrow B(H)$ satisfying $\Phi(\sum_{s\in \mathbb{Z}}A_s s) = \sum_{s\in \mathbb{Z}}\varphi^{-1}(A_s)(S_1)^s$ the image of which is $\mathcal{O}(Arv(P))$, due to Proposition the above. This assures us that this *-homomorphism respects the gauge actions and is injective on $\bigoplus^r_1 M_{q}(\mathbb{C})$ (which is the fixed point algebra of the crossed product). So by Proposition \ref{proposition:injectivity-fixed-point-alg} this *-homomorphism must be injective.
\end{proof}

\begin{corollary} \label{corollary:cuntz-irreducible}
Let $P$ be an irreducible $d\times d$ stochastic matrix. Then $\mathcal{O}(P) \cong M_d(\mathbb{C}) \otimes C(\mathbb{T}) \cong C(\mathbb{T};M_d(\mathbb{C})) $
\end{corollary}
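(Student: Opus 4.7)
The plan is to invoke Proposition~\ref{proposition:irreducible-periodic-cuntz}, which (for $P$ an $r$-periodic irreducible $d\times d$ stochastic matrix with $q=d/r$) identifies
\[
\mathcal{O}(P)\cong\Bigl(\bigoplus_{k=0}^{r-1}M_q(\mathbb{C})\Bigr)\rtimes_\sigma\mathbb{Z},
\]
where $\sigma$ acts on the $r$ summands by the cyclic shift of components and fixes the matrix entries within each summand. From here the goal is to compute this crossed product explicitly and show it is isomorphic to $M_d(\mathbb{C})\otimes C(\mathbb{T})$; the statement then follows because $C(\mathbb{T};M_d(\mathbb{C}))$ is just another name for this tensor product.

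The first step I would take is to pull the matrix factor out of the crossed product. Writing $\bigoplus_{k=0}^{r-1}M_q(\mathbb{C})\cong C(\mathbb{Z}/r)\otimes M_q(\mathbb{C})$ and noting that $\sigma$ acts trivially on the $M_q(\mathbb{C})$-factor and by translation on the $C(\mathbb{Z}/r)$-factor, the standard fact that tensoring with a fixed C*-algebra commutes with crossed products yields
\[
\Bigl(\bigoplus_{k=0}^{r-1}M_q(\mathbb{C})\Bigr)\rtimes_\sigma\mathbb{Z}\;\cong\;M_q(\mathbb{C})\otimes\bigl(C(\mathbb{Z}/r)\rtimes_\tau\mathbb{Z}\bigr),
\]
where $\tau$ is translation of $\mathbb{Z}$ through the quotient $\mathbb{Z}\to\mathbb{Z}/r$. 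The whole problem thus reduces to identifying $C(\mathbb{Z}/r)\rtimes_\tau\mathbb{Z}$ with $M_r(C(\mathbb{T}))$.

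For this reduced computation I would give a direct matrix-unit argument rather than invoke Green's imprimitivity theorem. Let $u$ denote the canonical unitary implementing the $\mathbb{Z}$-action in the crossed product and let $\{e_i\}_{i=0}^{r-1}$ be the minimal projections of $C(\mathbb{Z}/r)$, so that $ue_iu^*=e_{i-1\mod r}$. For $i,j\in\{0,\dots,r-1\}$ set $v_{ij}:=e_iu^{j-i}$. A brief computation using $u^me_k=e_{k-m}u^m$ shows that $v_{ij}v_{kl}=\delta_{jk}v_{il}$ and $v_{ij}^*=v_{ji}$, so $\{v_{ij}\}$ is a family of matrix units and $\sum_iv_{ii}=\sum_ie_i=1$; hence they generate an embedded copy of $M_r(\mathbb{C})$. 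Next I would observe that $z:=u^r$ commutes with every $e_i$ (since $i-r\equiv i\bmod r$) and with every $v_{ij}$, so $z$ is central. Finally, since $u$ itself can be written in terms of the $v_{ij}$ and $z$ (for instance $u=\sum_{i=0}^{r-2}v_{i,i+1}+v_{r-1,0}\,z$), the C*-algebra generated is $M_r(\mathbb{C})\otimes C^*(z)$, and a faithful regular representation shows $\mathrm{sp}(z)=\mathbb{T}$, so $C^*(z)\cong C(\mathbb{T})$.

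Putting these identifications together gives
\[
\mathcal{O}(P)\cong M_q(\mathbb{C})\otimes M_r(\mathbb{C})\otimes C(\mathbb{T})\cong M_d(\mathbb{C})\otimes C(\mathbb{T})\cong C(\mathbb{T};M_d(\mathbb{C})),
\]
which is the desired conclusion. I do not anticipate a serious obstacle here, since each step is essentially bookkeeping: the real content has already been done in Proposition~\ref{proposition:irreducible-periodic-cuntz}, where the crossed product description was established. The only point demanding care is keeping the modular arithmetic in the definition of the $v_{ij}$ consistent, so that $u^r$ is correctly identified as the generator of the central $C(\mathbb{T})$ rather than being forced to equal $1$.
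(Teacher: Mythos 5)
Your proof is correct and follows the paper's overall strategy up to the last step: both arguments start from Proposition~\ref{proposition:irreducible-periodic-cuntz}, rewrite $\bigoplus_{0}^{r-1}M_q(\mathbb{C})$ as $C(\mathbb{Z}/r\mathbb{Z})\otimes M_q(\mathbb{C})$ with the action being translation on the first factor and trivial on the second, and pull the $M_q(\mathbb{C})$ factor out of the crossed product. Where you diverge is in identifying $C(\mathbb{Z}/r\mathbb{Z})\rtimes_\tau\mathbb{Z}$: the paper quotes Green's imprimitivity theorem (\cite[Corollary 2.8]{PG}) to get $(C_0(G/H)\otimes A)\rtimes G\cong (A\rtimes_\tau H)\otimes K(L^2(G/H))$ with $G=\mathbb{Z}$, $H=r\mathbb{Z}$, whereas you exhibit the matrix units $v_{ij}=e_iu^{j-i}$ and the central unitary $z=u^r$ directly. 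Your computations check out: $u^me_k=e_{k-m}u^m$ gives $v_{ij}v_{kl}=\delta_{jk}v_{il}$ and $v_{ij}^*=v_{ji}$, the identity $u=\sum_{i=0}^{r-2}v_{i,i+1}+v_{r-1,0}z$ shows the generators are recovered, and fullness of $\mathrm{sp}(z)$ follows from the regular representation (or from invariance of $\mathrm{sp}(z)$ under the dual circle action, since $\hat\alpha_\lambda(z)=\lambda^r z$). The trade-off is the usual one: the paper's route is a one-line citation resting on the machinery of imprimitivity, while yours is self-contained and elementary at the cost of a little more verification (in particular, that $C^*(\{v_{ij}\},z)\cong M_r(C^*(z))$ via the corner $e_0(C(\mathbb{Z}/r\mathbb{Z})\rtimes\mathbb{Z})e_0$, which is standard). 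Either argument is acceptable.
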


\begin{proof}
By Proposition \ref{proposition:irreducible-periodic-cuntz} we have that $\mathcal{O}(P) \cong \Big( \bigoplus^r_1 M_{q}(\mathbb{C}) \Big) \rtimes_{\sigma} \mathbb{Z}$, and we 
have that
$$
\Big( \oplus^r_1 M_{q}(\mathbb{C}) \Big) \rtimes_{\sigma} \mathbb{Z} \cong \Big( C(\mathbb{Z} / r\mathbb{Z}) \otimes M_{q}(\mathbb{C}) \Big) \rtimes_{\widetilde{\delta}} \mathbb{Z}
$$
where $\widetilde{\delta}$ on $C(\mathbb{Z} / r\mathbb{Z}) \otimes M_{q}(\mathbb{C})$ is given by $\delta \otimes I$, and $\delta$ is the backward cyclic shift of the domain of functions in $C(\mathbb{Z} / r\mathbb{Z})$. Now let $G = \mathbb{Z}$, $H = r\mathbb{Z}$, $A = M_{q}(\mathbb{C})$ and let $\tau$ be the trivial action of $G$ on $A$. Then
by \cite[Corollary 2.8]{PG} we have that
\begin{align*}
\Big( C(\mathbb{Z} / r\mathbb{Z}) \otimes M_{q}(\mathbb{C}) \Big) \rtimes_{\widetilde{\delta}} \mathbb{Z} 
& \cong (C_0(G/H) \otimes A) \rtimes_{\widetilde{\delta}} G \simeq (A \rtimes_\tau H) \otimes K(L^2(G/H, \mu)) \\
& \cong (M_{q}(\mathbb{C}) \rtimes_{\tau} r \mathbb{Z}) \otimes K(L^2( \mathbb{Z} / r \mathbb{Z} ))
\cong C(\mathbb{T}) \otimes M_{q}(\mathbb{C}) \otimes M_{r}(\mathbb{C}) 
\end{align*}
Therefore 
$\mathcal{O}(P) \cong
C(\mathbb{T}) \otimes M_{q}(\mathbb{C}) \otimes M_{r}(\mathbb{C}) \cong C(\mathbb{T}) \otimes M_{d}(\mathbb{C})
\cong C(\mathbb{T};M_d(\mathbb{C}))$.
\end{proof}

\begin{corollary}[Cuntz-Pimsner C*-algebras in the finite essential case] \label{corollary:cuntz-computation} 
Let $P$ be a finite and essential stochastic matrix over $\St$. Then 
$$\mathcal{O}(P) \cong C\Big(\mathbb{T} ; \oplus_{k=1}^{\ell} M_{d_k}(\mathbb{C})\Big)$$
\end{corollary}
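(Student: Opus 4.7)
The plan is to assemble this corollary as an immediate consequence of the two main structural results already established in this section. Since $P$ is finite and essential, Theorem~\ref{theorem:graph-decomposition} lets us enumerate $\Omega$ so that $P$ has a block-diagonal form with irreducible stochastic blocks $P(1), \ldots, P(\ell)$ of sizes $d_1, \ldots, d_\ell$ (with $\sum d_k = |\Omega|$).

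The first step is to invoke Proposition~\ref{proposition:cuntz-direct-sum} to reduce to the irreducible case, yielding
$$
\mathcal{O}(P) \;\cong\; \bigoplus_{k=1}^\ell \mathcal{O}(P(k)).
$$
Next, I would apply Corollary~\ref{corollary:cuntz-irreducible} to each summand: since each $P(k)$ is an irreducible $d_k \times d_k$ stochastic matrix, we get
$$
\mathcal{O}(P(k)) \;\cong\; C(\mathbb{T}; M_{d_k}(\mathbb{C})).
$$

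Combining these, we obtain
$$
\mathcal{O}(P) \;\cong\; \bigoplus_{k=1}^\ell C(\mathbb{T}; M_{d_k}(\mathbb{C})),
$$
and the final step is to identify this direct sum of C*-algebras of continuous matrix-valued functions with a single algebra of functions taking values in the direct sum, via the natural *-isomorphism
$$
\bigoplus_{k=1}^\ell C(\mathbb{T}; M_{d_k}(\mathbb{C})) \;\cong\; C\Bigl(\mathbb{T}; \bigoplus_{k=1}^\ell M_{d_k}(\mathbb{C})\Bigr),
$$
which sends $(f_1, \ldots, f_\ell)$ to the function $z \mapsto f_1(z) \oplus \cdots \oplus f_\ell(z)$. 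This identification is routine and requires no further argument. There is no real obstacle here: everything has been done in the preceding corollaries, and this statement is purely a matter of packaging the results together.
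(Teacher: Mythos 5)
Your proposal is correct and follows exactly the same route as the paper: decompose $P$ into irreducible blocks via Theorem~\ref{theorem:graph-decomposition}, apply Proposition~\ref{proposition:cuntz-direct-sum} and Corollary~\ref{corollary:cuntz-irreducible}, and then make the routine identification $\bigoplus_k C(\mathbb{T}; M_{d_k}(\mathbb{C})) \cong C(\mathbb{T}; \bigoplus_k M_{d_k}(\mathbb{C}))$. Nothing is missing.
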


\begin{proof}
Assume that $P$ decomposes into irreducible stochastic matrices $P(1),P(2),..., P(\ell)$ in block diagonal form. Due to Proposition \ref{proposition:cuntz-direct-sum} and Corollary \ref{corollary:cuntz-irreducible} we have
$$\mathcal{O}(P) \cong \oplus_{k=1}^{\ell}\mathcal{O}(P(k)) \cong 
\oplus_{k=1}^{\ell} C\big(\mathbb{T};M_{d_k}(\mathbb{C})\big) \cong C\Big(\mathbb{T} ; \oplus_{k=1}^{\ell} M_{d_k}(\mathbb{C})\Big)$$
\end{proof}

Thus,  we obtain
the following result. We omit the 
proof, which is straightforward.

\begin{theorem}[Isomorphism between Cuntz-Pimsner C*-algebras in the finite essential case]
Let $P$ and $Q$ be two finite essential stochastic matrices. Assume
$P(1) \oplus P(2) ... \oplus P(\ell)$ and $Q(1) \oplus Q(2) \oplus ... \oplus Q(s)$
are irreducible decompositions for $P$ and $Q$ respectively, where $P(k) \in M_{d_k}(\mathbb{C})$ and $Q(k')\in M_{t_{k'}}(\mathbb{C})$ are irreducible, and such that $d_1 \leq d_2 \leq ... \leq d_{\ell}$ and $t_1 \leq t_2 \leq ... \leq t_s$. Then $\mathcal{O}(P) \cong \mathcal{O}(Q)$ if and only if $\ell = s$ and $d_k = t_k$ for all $1\leq k \leq \ell$.
\end{theorem}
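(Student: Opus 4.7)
The plan is to invoke Corollary~\ref{corollary:cuntz-computation}, which gives
$\cuntz(P) \cong C(\torus;\, \oplus_{k=1}^{\ell} M_{d_k}(\cc))$ and
$\cuntz(Q) \cong C(\torus;\, \oplus_{k=1}^{s} M_{t_k}(\cc))$, and thereby reduce the problem to recognizing the data $(\ell, d_1, \dots, d_\ell)$ intrinsically from the C*-algebra $C(\torus;\, \oplus_{k=1}^\ell M_{d_k}(\cc))$. The ($\Leftarrow$) direction is immediate: equal block counts and equal block sizes produce an obvious fiberwise *-isomorphism of the two continuous-function algebras.

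For the ($\Rightarrow$) direction, suppose
$\cuntz(P)\cong \cuntz(Q)$, so that
$C(\torus;\, \oplus_{k=1}^\ell M_{d_k}(\cc)) \cong C(\torus;\, \oplus_{k=1}^s M_{t_k}(\cc))$. First I would recover $\ell$ by considering the center. For $A = \oplus_k M_{d_k}(\cc)$ the center $Z(A)$ is $\cc^\ell$, so $Z(C(\torus;A)) \cong C(\torus)^\ell$, whose Gelfand spectrum has exactly $\ell$ connected components. Since the center is a *-isomorphism invariant, and the number of connected components of its Gelfand spectrum is a topological invariant of the spectrum, we obtain $\ell = s$.

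Next I would match individual block sizes. The minimal central projections of $C(\torus;A)$ correspond bijectively to the direct summands $C(\torus) \otimes M_{d_k}(\cc)$, and any *-isomorphism permutes these summands up to equivalence. The number $d_k$ associated to the summand $C(\torus) \otimes M_{d_k}(\cc)$ can be recovered intrinsically as the common Hilbert-space dimension of its irreducible *-representations, which is a C*-invariant of the summand (and can also be read off as $\sqrt{\dim_{\cc} [C(\torus)\otimes M_{d_k}(\cc)/\mathfrak{m}]}$ for any maximal ideal $\mathfrak{m}$ containing a maximal ideal of the center). Since the lists $(d_k)$ and $(t_k)$ are both arranged in nondecreasing order and they determine the same multiset, it follows that $d_k = t_k$ for every $1\le k \le \ell$.

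No serious obstacle arises here: once Corollary~\ref{corollary:cuntz-computation} is available, the remainder is essentially the standard fact that a continuous-trace algebra of the form $C(X;A)$ with $A$ finite-dimensional and $X$ compact Hausdorff is determined up to *-isomorphism by $X$ together with the multiset of block dimensions of $A$, applied in the special case $X = \torus$.
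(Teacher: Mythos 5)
Your proof is correct and follows exactly the route the paper intends: the authors state this theorem as an immediate consequence of Corollary~\ref{corollary:cuntz-computation} and omit the proof as straightforward, and your recovery of $\ell$ from the connected components of the spectrum of the center and of the $d_k$ from the irreducible representation dimensions of the minimal central summands is precisely the standard argument they had in mind.
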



\section{Tensor algebras and their graded structure}\label{section:tensor-algebras}

In this section we begin the study of the \emph{non-self-adjoint} norm-closed tensor algebra associated to a subproduct system over $\mathbb{N}$. 

\begin{defi}
Let $X = (X_n)_{n\in \mathbb{N}}$ be a subproduct system over a W*-algebra  
$\vnM$. The \emph{tensor algebra associated to $X$} is the norm-closed subalgebra
 of $\mathcal{L}(\mathcal{F}_X)$ given by
$$
\mathcal{T}_{+}(X): = \overline{\mathrm{Alg}}\left\{ \vnM \cup \{  S^{(n)}_{\xi} \; | \; \xi \in X_n, \; n\in \mathbb{N}   \} \right\}
$$
The tensor algebra is clearly a gauge invariant closed
 subalgebra of $\mathcal{T}(X)$,
and the Fourier coefficient maps $\Phi_n$ restrict to idempotents.
Therefore the tensor algebra is graded by the
 spaces $\mathcal{T}_+(X)_n  =  
\Phi_n(\mathcal{T}_+(X)) =
\overline{\Span} \{\, S^{(n)}_{\xi} \mid \xi \in X_n \, \}$.
\end{defi}

For a subproduct system $X$, we denote by $\Omega_X$ the vacuum vector of the Fock module of $X$, which is simply $1_{\vnM} \oplus 0 \in \vnM \oplus \bigoplus_{n=1}^{\infty}X_n$.

\begin{proposition} \label{proposition:banach-bimodule}
Let $(X,U)$ be a subproduct system. For every $x\in \mathbb{N}$ 
we have that $X_n$ is isometrically isomorphic 
as a Banach $\vnM$-bimodule to $\mathcal{T}_+(X)_n$
via the map $\xi \mapsto S^{(n)}_\xi$.

Therefore, every element $T\in \mathcal{T}_+(X)$ 
has a unique representation as an infinite series
$T = \sum_{n = 0}^{\infty}S^{(n)}_{\xi_n}$
where $\xi_n \in X_n$ satisfies $\Phi_n(T) = S^{(n)}_{\xi_n}$
(called its Fourier series representation for short),  and the series 
converges Cesaro to $T$ in norm: 
if 
$\sigma_N(T) = \sum_{n=0}^{N}\Big(1 - \frac{n}{N+1} \Big)S^{(n)}_{\xi_n}$,
then we have that
$\lim_{N\to \infty} \| \sigma_N(T) - T\| = 0$.
Furthermore, if $T, T' \in \mathcal{T}_+(X)$ have Fourier series representations
$T = \sum_{i=0}^{\infty}S^{(i)}_{\xi_i}$ and $T' = \sum_{i=0}^{\infty}S^{(i)}_{\eta_i}$, then 
$$
TT' = \sum_{n=0}^{\infty}S^{(n)}_{\zeta},
\quad \text{where} \quad
\zeta= \sum^{n}_{k=0}U^X_{k,n-k}(\xi_k \otimes \eta_{n-k}).
$$
\end{proposition}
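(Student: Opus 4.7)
The plan is to establish the three parts of the statement in sequence, using the isometric bimodule isomorphism of part one as the foundation for everything else.

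First I would verify that the map $\Lambda_n : X_n \to \mathcal{L}(\mathcal{F}_X)$ given by $\Lambda_n(\xi) = S^{(n)}_{\xi}$ is an isometric $\vnM$-bimodule morphism. Bimodule linearity is immediate from the definition of the shift together with the left/right action axioms for $U_{n,0}$ and $U_{0,n}$. For the upper bound, each $U_{n,m}$ is a coisometry (hence contractive) and the internal tensor product of W*-correspondences satisfies $\|\xi \otimes \eta\| \leq \|\xi\|\,\|\eta\|$, so $\|S^{(n)}_\xi \eta\| = \|U_{n,m}(\xi \otimes \eta)\| \leq \|\xi\|\,\|\eta\|$ for every $\eta \in X_m$. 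For the matching lower bound, evaluate at the vacuum vector: since $U_{n,0}$ is the right action of $\vnM$, one has $S^{(n)}_\xi(\Omega_X) = \xi \cdot 1_{\vnM} = \xi$, giving $\|S^{(n)}_\xi\| \geq \|\xi\|$ because $\|\Omega_X\|=1$. Being an isometric linear map from a Banach space, the image $\Lambda_n(X_n)$ is automatically closed, and therefore coincides with $\mathcal{T}_+(X)_n = \overline{\Span}\{S^{(n)}_\xi : \xi \in X_n\}$.

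Second, for the Fourier series representation, I would exploit the gauge action already established for $\mathcal{T}(X)$. Since $\mathcal{T}_+(X)$ is gauge invariant and $\Phi_n$ is a contractive idempotent onto the $n$-th spectral subspace, $\Phi_n(T)$ lies in $\mathcal{T}_+(X) \cap \mathcal{T}(X)_n = \mathcal{T}_+(X)_n$. The isomorphism from step one then produces a unique $\xi_n \in X_n$ with $\Phi_n(T) = S^{(n)}_{\xi_n}$. Moreover, the Cesaro sum $\sigma_N(T)$ is precisely $\sum_{n=0}^{N}(1 - \frac{n}{N+1})S^{(n)}_{\xi_n}$, so the norm convergence $\sigma_N(T)\to T$ is a direct restriction to $\mathcal{T}_+(X)$ of the Cesaro convergence already proved for $\mathcal{T}(X)$.

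Third, for the multiplication formula, the decisive identity is
$$
S^{(i)}_\xi \circ S^{(j)}_\eta = S^{(i+j)}_{U_{i,j}(\xi \otimes \eta)}.
$$
I would verify this by evaluating both sides on an arbitrary $\zeta \in X_k$: the left side equals $U_{i,j+k}(I_{X_i} \otimes U_{j,k})(\xi \otimes \eta \otimes \zeta)$, while the right side equals $U_{i+j,k}(U_{i,j} \otimes I_{X_k})(\xi \otimes \eta \otimes \zeta)$, and these agree by the coassociativity axiom (3) of a subproduct system. Grouping the bilinear expansion of $\sigma_N(T)\sigma_M(T')$ by total degree and using continuity of the multiplication together with the Cesaro convergence from step two gives the stated formula for the Fourier coefficients of $TT'$.

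The only delicate point is the final passage from polynomials to the Fourier series: one needs that $\Phi_n(TT')$ equals the finite sum $\sum_{k=0}^{n} U^X_{k,n-k}(\xi_k \otimes \eta_{n-k})$ (under the identification from step one). This is immediate on polynomial $T,T'$ by the identity above, and the general case follows because $\Phi_n$ is continuous and the Fourier coefficients $\xi_k$ depend continuously on $T$, so the finite-sum formula extends by density.
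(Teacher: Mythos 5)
Your proposal is correct and follows essentially the same route as the paper: the isometry is obtained from contractivity of the coisometric multiplication maps together with evaluation at the vacuum vector, the Fourier/Cesaro statements are inherited from the gauge action on $\mathcal{T}(X)$, and the product formula comes from the identity $S^{(i)}_\xi S^{(j)}_\eta = S^{(i+j)}_{U_{i,j}(\xi\otimes\eta)}$ applied to the Cesaro sums and pushed through $\Phi_n$ by continuity. The only cosmetic difference is that you check the norm bound fiberwise and the paper checks it on finite sums $\sum_m \eta_m$ using orthogonality of the fibers, but these amount to the same observation.
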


\begin{proof}
Define $\Psi_n : X_n \rightarrow \mathcal{T}_+(X)_n$ by
$\Psi_n(\xi) = S^{(n)}_{\xi}$ for all $\xi \in X_n$. For every $\sum^M_{m=0}\eta_m$ with $\eta_m \in X_m$,
$$
|S^{(n)}_{\xi}\big(\sum^M_{m=0}\eta_m \big)|^2 = |\sum^M_{m=0}U_{n,m}(\xi \otimes \eta_m)|^2 = \sum^M_{m=0}|U_{n,m}(\xi \otimes \eta_m) |^2 \leq \sum^M_{m=0}|\xi \otimes \eta_m|^2 = |\xi \otimes \big(\sum^M_{m=0}\eta_m \big)|^2
$$
Thus, taking norms we obtain that 
$$
\|S^{(n)}_{\xi} \big(\sum^M_{m=0}\eta_m \big)\| \leq \|\xi \otimes \big(\sum^M_{m=0}\eta_m \big)\| \leq \| \xi \| \cdot  \| \sum^M_{m=0}\eta_m \| 
$$
Thus, $\|S^{(n)}_{\xi}\| \leq \|\xi\|$.
Moreover, $\|S^{(n)}_{\xi}(\Omega_X)\| = \|\xi\|$
hence $\Psi_n$ is an isometry. In addition, $\Psi_n$ preserves the left 
and right actions of $\vnM$: for $m\in \vnM$ and $\eta \in X_m$,
\begin{align*}
(m \cdot \Psi_n(\xi))(\eta) & = m \cdot U_{n,m}(\xi \otimes \eta) = U_{n,m}(m 
\cdot \xi \otimes \eta) = \Psi_n(m \cdot \xi) \\
(\Psi_n(\xi) \cdot m)(\eta) & = \Psi_n(\xi)(m \cdot \eta) = U_{n,m}(\xi 
\otimes m\cdot \eta) = U_{n,m}(\xi \cdot m \otimes \eta) = (\Psi_n(\xi \cdot m))(\eta)
\end{align*}
Thus, $\Psi_n$ is an $\vnM$-bimodule isometry with a dense image in 
$\mathcal{T}_+(X)_n$. Since the image of an isometry is closed, $\Psi_n$ must be onto, and we have proven that $\Psi_n$ has the desired properties.

Let $T\in \mathcal{T}_+(X)$. By the previous paragraph, 
there exist unique $\xi_n \in X_n$ such that $\Phi_n(T) = S^{(n)}_{\xi_n}$, and as discussed in Section~\ref{section:cuntz-pimsner},
the series converges to $T$ in the Cesaro sense.
Thus the Fourier series decomposition is unique, in the
sense that any two elements of $\tensor(X)$ with the same
Fourier series must be identical.

Finally, let  $T, T' \in \mathcal{T}_+(X)$
have Fourier series representations
$T = \sum_{i=0}^{\infty}S^{(i)}_{\xi_i}$ and $T' = \sum_{i=0}^{\infty}S^{(i)}_{\eta_i}$. 
\begin{align*}
\sigma_N(T) \sigma_N(T') & = \bigg(\sum_{n=0}^{N}\Big(1- \frac{n}{N+1} \Big)S^{(n)}_{\xi_n} \bigg)\bigg(\sum_{j=0}^{N}\Big(1- \frac{j}{N+1} \Big)S^{(j)}_{\eta_j} \bigg) \\
& =
\sum_{m=0}^N \sum_{k=0}^m \Big(1 - \frac{k}{N+1} \Big) S^{(k)}_{\xi_k} \Big( 1- \frac{m-k}{N+1} \Big)S^{(m-k)}_{\eta_{m-k}} \\
& = 
\sum_{m=0}^N \Big( 1- \frac{m}{N+1} \Big) \sum_{k=0}^m \frac{\Big(1 - \frac{k}{N+1} \Big)\Big( 1- \frac{m-k}{N+1} \Big)}{\Big( 1- \frac{m}{N+1} \Big)}S^{(m)}_{U_{k,m-k}(\xi_k\otimes \eta_{m-k})}
\end{align*}
Thus,
$$
\Phi_n(TT') = \lim_{N\to\infty} \Phi_n(\sigma_N(T) \sigma_N(T')) = S^{(n)}_{\zeta}, \quad \text{where} \quad \zeta=\sum_{k=0}^nU_{k,n-k}(\xi_k\otimes \eta_{n-k}).
$$
By uniqueness, the proposition is proven.
\end{proof}

\subsection*{Automatic continuity}
We are interested in the study of isomorphisms between tensor algebras. Under certain conditions, we will show that
algebraic isomorphisms are automatically bounded. 
We will follow closely the ideas of Donsig, Hudson and Katsoulis~\cite{donsig-hudson-katsoulis}, Katsoulis and Kribs~\cite{katsoulis-kribs}, and
Davidson and Katsoulis~\cite{davidson-katsoulis}.

Suppose that $\cala$ and $\calb$ are Banach algebras,
and suppose that $\varphi: \cala \to \calb$ is a surjective
homomorphism. Let 
$$
\cals(\varphi) = \{ b \in \calb \mid \exists (a_n)\subseteq \cala 
\text{ such that } 
a_n \to 0 \text{ and } \varphi(a_n) \to b \}.
$$
It is easy to see that the graph of $\varphi$ is closed if
and only if $\cals(\varphi) = \{0\}$, hence by the
closed graph theorem we have that $\phi$ is continuous
if and only if $\cals(\varphi) = \{0\}$. We will use the
following adaptation of \cite[Lemma 2.1]{sinclair},
which appeared first in Donsig, Hudson and Katsoulis~\cite{donsig-hudson-katsoulis}.

\begin{lemma}[Sinclair]\label{lemma:sinclair}
Suppose that $\cala$ and $\calb$ are Banach algebras and
$\varphi: \cala \to \calb$ is a surjective 
algebraic homomorphism. Let $(b_n)_{n\in\nn}$ be
any sequence in $\calb$. Then there exists $N \in \nn$
such that for all $n\geq N$,
$$
\overline{b_1 b_2 \dots b_n \cals(\phi)} = 
\overline{b_1 b_2 \dots b_{n+1} \cals(\phi)}
\quad \text{and} \quad
\overline{\cals(\varphi)b_n b_{n-1} \dots b_1} = 
\overline{\cals(\varphi)b_{n+1} b_{n} \dots b_1}.
$$
\end{lemma}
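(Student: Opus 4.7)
\emph{Proof proposal.} The statement is a direct instance of Sinclair's classical stability theorem for separating spaces, adapted to the Banach algebra setting. I would prove the first equality; the second follows by the symmetric argument (using right multiplication instead of left, and the fact that $\cals(\varphi)$ is right-$\calb$-invariant for the analogous reason). First I would verify two basic properties of $\cals(\varphi)$. A standard $\epsilon/2$-argument (given $b^{(k)} \to b$ with $b^{(k)} \in \cals(\varphi)$, pick $x_k \in \cala$ with $\|x_k\| < 1/k$ and $\|\varphi(x_k) - b^{(k)}\| < 1/k$, so that $x_k \to 0$ and $\varphi(x_k) \to b$) shows $\cals(\varphi)$ is closed. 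Crucially using surjectivity, left multiplication by any $b \in \calb$ preserves $\cals(\varphi)$: if $b = \varphi(c)$ and $s = \lim \varphi(a_n) \in \cals(\varphi)$ with $a_n \to 0$, then $bs = \lim \varphi(ca_n)$ with $ca_n \to 0$.

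Setting $c_n = b_1 b_2 \cdots b_n$ and $\calm_n := \overline{c_n \cals(\varphi)}$, the invariance just proven gives $c_{n+1}\cals(\varphi) = c_n \cdot (b_{n+1}\cals(\varphi)) \subseteq c_n \cals(\varphi)$, so $(\calm_n)$ is a descending chain of closed subspaces of $\calb$. I would now argue by contradiction: suppose no such $N$ exists; then one extracts an infinite sequence $n_1 < n_2 < \cdots$ with $\calm_{n_k+1} \subsetneq \calm_{n_k}$. For each $k$, pick $s_k \in \cals(\varphi)$ with $y_k := c_{n_k} s_k \notin \calm_{n_k+1}$, and by Hahn-Banach choose a functional $f_k \in \calb^*$ with $\|f_k\| \leq 1$, $f_k|_{\calm_{n_k+1}} = 0$, and $f_k(y_k) > 0$. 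Using $s_k \in \cals(\varphi)$, select $a_k \in \cala$ with $\|a_k\|$ rapidly summable (say $\|a_k\| < 2^{-k}$) and $f_k(c_{n_k} \varphi(a_k))$ bounded away from zero.

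A contradiction is then extracted via Sinclair's diagonal summation trick. Form an absolutely convergent series $a = \sum_k \lambda_k a_k \in \cala$, with positive scalars $\lambda_k$ carefully coordinated with the norms of $f_k$ and $c_{n_k}$ and with the approximation errors; by linearity of $\varphi$, for each $K$ one has the finite-sum decomposition
\[
\varphi(a) = \sum_{k \leq K} \lambda_k \varphi(a_k) + \varphi\!\left(\sum_{k > K} \lambda_k a_k\right),
\]
where the tail $\sum_{k>K} \lambda_k a_k$ has norm tending to zero in $\cala$. Applying $f_K \circ L_{c_{n_K}}$ to both sides yields numerical constraints on $f_K(c_{n_K}\varphi(a))$ that cannot hold simultaneously for all $K$, giving the desired contradiction. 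The main obstacle is the potential discontinuity of $\varphi$: one cannot directly pass $\varphi$ through a limit. This is circumvented exactly as in Sinclair's original argument, by fine-tuning $(\lambda_k)$ so that the closedness of $\cals(\varphi)$ substitutes for the missing continuity of $\varphi$ when one analyses the tail term in the above decomposition.
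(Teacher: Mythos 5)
First, a point of reference: the paper does not prove this lemma at all --- it is imported verbatim from Sinclair (\cite[Lemma 2.1]{sinclair}) via Donsig--Hudson--Katsoulis, so there is no internal proof to compare against. Your preliminary observations are correct and are precisely the part of the story that is specific to this setting: $\cals(\varphi)$ is closed, and surjectivity of $\varphi$ makes $\cals(\varphi)$ invariant under left (and right) multiplication by elements of $\calb$, so that $M_n := \overline{b_1\cdots b_n\,\cals(\varphi)}$ is a decreasing chain of closed subspaces. But be aware that this by itself proves nothing: a decreasing chain of closed subspaces of a Banach space need not stabilize (consider $\overline{\Span}\{e_k : k\geq n\}$ in $\ell^2$). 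The entire content of the lemma is the stabilization, which must exploit the separating-space structure, and that is exactly the part your argument leaves as a sketch.

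The sketch, as arranged, does not close. You apply $f_K\circ L_{c_{n_K}}$ to the decomposition $\varphi(a)=\sum_{k\leq K}\lambda_k\varphi(a_k)+\varphi(w_K)$, with $f_K$ annihilating $M_{n_K+1}$, hoping the diagonal term $\lambda_K f_K(c_{n_K}\varphi(a_K))$ dominates. Two things go wrong. First, neither the off-diagonal terms nor the tail are annihilated by $f_K$: writing $\varphi(w_K)=u+r_K$ with $u=\varphi(a)-\sum_k\lambda_k\varphi(a_k)\in\cals(\varphi)$ and $\|r_K\|\to 0$ (this is the correct use of closedness you allude to), one only gets $c_{n_K}u\in M_{n_K}$, and $f_K$ kills $M_{n_K+1}$, not $M_{n_K}$; likewise $c_{n_K}\varphi(a_k)\approx c_{n_K}s_k\in M_{n_K}$ for $k<K$, so these terms can only be controlled by norm estimates. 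Second, and fatally, the diagonal term obeys $|\lambda_K f_K(c_{n_K}\varphi(a_K))|\leq\lambda_K\|c_{n_K}\|\,\|\varphi(a_K)\|$, and (after the harmless normalization $\|c_n\|=1$) the quantity $\lambda_K\|\varphi(a_K)\|$ is exactly what you must make summable for $\sum_k\lambda_k\varphi(a_k)$ to converge and for the off-diagonal estimates to work; hence the diagonal term tends to $0$ and can never outweigh the fixed $O(\|u\|)$ error from the tail. The genuine proof is structured differently: one first establishes that $\cals(R\circ T)=\overline{R\,\cals(T)}$ for bounded $R$ and arbitrary linear $T$, together with the criterion that $R\circ T$ is continuous if and only if $R$ annihilates $\cals(T)$, and then runs the gliding hump against the quotient maps onto $\calb/M_{n_k+1}$ rather than against Hahn--Banach functionals paired with a single series as above. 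As written, the heart of the lemma remains unproved; either cite Sinclair's Lemma 1.6 (as the paper does) or carry out that argument in full.
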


\begin{theorem}\label{thm:automatic}
Let $X$ be a subproduct system over a
von Neumann algebra $\calm$ and let 
$\cala$ be a Banach algebra. 
Suppose that for all $d \geq 0$ and
$0\neq y\in \tensor(X)_d$ there exists a sequence
$(b_n) \in \ker \Phi_0$ such that $y b_n b_{n-1} \dots b_1 \neq 0$ for all $n\geq 1$. 
Then any surjective algebraic homomorphism 
$\varphi: \cala \to \tensor(X)$ is continuous.
\end{theorem}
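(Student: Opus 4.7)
The plan is to argue by contradiction. Suppose the surjective algebraic homomorphism $\varphi: \cala \to \tensor(X)$ fails to be continuous. By the closed graph theorem this is equivalent to $\cals := \cals(\varphi) \neq 0$, and I will combine Johnson's automatic continuity theorem, Sinclair's Lemma~\ref{lemma:sinclair}, and the given hypothesis to extract a contradiction.

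First I would verify the basic structure of $\cals$. A standard diagonal argument shows that $\cals$ is closed, and surjectivity of $\varphi$ forces $\cals$ to be a two-sided ideal of $\tensor(X)$: given $s \in \cals$ witnessed by $a_n \to 0$ with $\varphi(a_n) \to s$, and given $T = \varphi(a) \in \tensor(X)$, one has $\varphi(a\cdot a_n) \to Ts$ with $a \cdot a_n \to 0$, and symmetrically for right multiplication. Next, because $\calm$ is a C*-algebra and hence semisimple as a Banach algebra, Johnson's theorem applied to the surjective composition $\Phi_0 \circ \varphi : \cala \to \calm$ guarantees that it is automatically bounded. Consequently $\Phi_0(s) = \lim (\Phi_0 \circ \varphi)(a_n) = 0$ for every $s \in \cals$, so $\cals \subseteq \ker \Phi_0$; in particular every nonzero element of $\cals$ has its lowest nonzero Fourier coefficient at some degree $d \geq 1$.

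Now pick $0 \neq s \in \cals$ and let $d$ denote the degree of its lowest nonzero Fourier coefficient, so that $y := \Phi_d(s) = S^{(d)}_{\xi}$ is nonzero. By the hypothesis applied to $y$, choose $(b_n) \subseteq \ker \Phi_0$ so that $y c_n \neq 0$ for every $n \geq 1$, where $c_n := b_n b_{n-1} \cdots b_1$. Apply Sinclair's Lemma~\ref{lemma:sinclair} to this sequence to produce an $N$ such that the closed subspaces $\overline{\cals \cdot c_n}$ of $\tensor(X)$ coincide for all $n \geq N$. For each fixed $n \geq N$ the element $s c_n$ lies in $\overline{\cals \cdot c_{n+k}}$ for every $k \geq 0$, i.e.\ can be approximated by elements $s'_j c_{n+k}$ with $s'_j \in \cals$. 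Since $\cals \subseteq \ker \Phi_0$, each $s'_j$ has Fourier support in degrees $\geq 1$, hence $s'_j c_{n+k}$ is supported in degrees $\geq n+k+1$; letting $k \to \infty$ and using continuity of the Fourier projections $\Phi_j$ then forces $\Phi_j(s c_n) = 0$ for all $j$, so $s c_n = 0$ for every $n \geq N$.

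To finish, I would compare the Fourier expansions of the identity $s c_n = 0$ with the nonvanishing $y c_n \neq 0$. Writing $s = y + s_{> d}$, the product $s_{>d} c_n$ has Fourier support in degrees strictly greater than $d+n$, so that the vanishing of $s c_n$ at degree $d+n$ specialises to
\[
0 = \Phi_{d+n}(s c_n) = \Phi_{d+n}(y c_n) = S^{(d+n)}_{U_{d,n}(\xi \otimes \Phi_n(c_n))}.
\]
Reading off $\Phi_{d+n+j}(s c_n) = 0$ for successive $j \geq 0$ yields a triangular system that recursively expresses each $U_{d,n+j}(\xi \otimes \Phi_{n+j}(c_n))$ in terms of the higher fibers $\xi_{d+k}$ ($k \geq 1$) of $s$ and the lower Fourier pieces of $c_n$; coupled with the recursion $c_{n+1} = b_{n+1} c_n$ and taking $n$ arbitrarily large, these relations should propagate the vanishing to every Fourier coefficient of $y c_n$, contradicting $y c_n \neq 0$. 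The main obstacle lies precisely in this last propagation step, since the hypothesis only asserts $y c_n \neq 0$ as an element of $\tensor(X)$ rather than at its leading Fourier coefficient; the cleanest way around it is to argue beforehand that $\cals$ is invariant under the gauge action, so that $y = \Phi_d(s) \in \cals$ itself and $s c_n = 0$ becomes $y c_n = 0$, yielding the required contradiction at once.
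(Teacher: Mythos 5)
Your argument follows the paper's proof almost step for step through its main body: you introduce the separating space $\cals(\varphi)$, reduce continuity to $\cals(\varphi)=\{0\}$ via the closed graph theorem, take $0\neq s\in\cals(\varphi)$ with leading coefficient $y=\Phi_d(s)$, feed the sequence $(b_n)$ supplied by the hypothesis into Sinclair's Lemma, and conclude from the stabilization $\overline{\cals(\varphi)c_N}=\overline{\cals(\varphi)c_{n}}\subseteq\bigcap_{k<n}\ker\Phi_k$ that $sc_N=0$. (The detour through Johnson's theorem to obtain $\cals(\varphi)\subseteq\ker\Phi_0$ is correct but superfluous, since the hypothesis is stated for all $d\geq 0$, including $d=0$.) Up to that point everything is sound and coincides with the paper's argument.

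The gap is in the final step. You rightly observe that $sc_N=0$ does not immediately contradict $yc_N\neq 0$, since the hypothesis controls $yc_N$ only as an element of $\tensor(X)$ and not its leading Fourier coefficient; but the repair you propose --- that $\cals(\varphi)$ is gauge-invariant, so that $y=\Phi_d(s)\in\cals(\varphi)$ and hence $yc_N=0$ --- is unsubstantiated. Surjectivity of $\varphi$ does make $\cals(\varphi)$ a closed two-sided ideal, but closed ideals of $\tensor(X)$ need not be invariant under the gauge action (already in the disc algebra, i.e.\ $X_n=\cc$ for all $n$, the ideal of functions vanishing at a point $z_0\neq 0$ is not rotation-invariant), and $\alpha_\lambda\circ\varphi$ is a different homomorphism whose separating space need not coincide with that of $\varphi$; nothing in the data of a purely algebraic $\varphi$ forces such invariance. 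The sketched ``triangular system'' alternative is likewise not carried out. The paper closes the argument in the opposite direction: it passes from $yc_n\neq 0$ to $zc_n\neq 0$ (note that $\Phi_{d+\mu}(zc_n)=\Phi_{d+\mu}(yc_n)$ for $\mu=\mindeg(c_n)$, which settles this whenever $yc_n$ does not vanish in its lowest possible degree --- as happens in all the applications, where the $b_n$ are homogeneous), and then contradicts $zc_N\in\overline{\cals(\varphi)c_N}=\{0\}$. Until you either justify gauge-invariance of the separating space or supply this comparison between the Fourier coefficients of $zc_N$ and $yc_N$, the proof is not complete.
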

\begin{proof}   
Suppose towards a contradiction that there exists 
$0 \neq z\in \cals(\varphi)$. Let $d\in \nn$ be minimal such that $y := \Phi_d(z) \neq 0$,
and let $y = \Phi_d(z)$. By assumption, there
exists a sequence $(b_n) \subseteq \ker \Phi_0$ such
that for every $n$, $y b_n b_{n-1} \cdots b_1 \neq 0$,
and therefore $z  b_n b_{n-1} \cdots b_1\neq 0$.
Notice that since $b_n \in \ker \Phi_0$ for all n, 
we have that for all $n$ and every $k < n$, and every $w \in \cals(\varphi)$,
$$
\Phi_k( w b_n b_{n-1} \cdots b_1) =  0
$$
It follows that 
$$
\overline{\cals(\varphi) b_n b_{n-1} \cdots b_1} \subseteq 
\bigcap_{k<n} \ker \Phi_k
$$
However, by Lemma~\ref{lemma:sinclair}, there exists $N$ such that for all $n\geq N$,
$$
\overline{\cals(\varphi) b_N b_{N-1} \cdots b_1} =
\overline{\cals(\varphi)b_n b_{n-1} \cdots b_1} 
\subseteq 
\bigcap_{k<n} \ker \Phi_k.
$$
Thus we obtain that
$$
\overline{\cals(\varphi)b_N b_{N-1} \cdots b_1} \subseteq 
\bigcap_{k=1}^\infty \ker \Phi_k = \{0 \},
$$
and so we reach a contradiction to the fact that
$z b_N b_{N-1} \cdots b_1 \neq 0$.
\end{proof}

This approach  was used in
 \cite{katsoulis-kribs, davidson-katsoulis}, replacing a right acting sequence by a left acting one,  in the special case when there exists $b \in \ker \Phi_0$ that satisfies $\|by\| = \|y\|$ for all $y \in \tensor(X)_d$. That is analogous to the following corollary.

\begin{corollary}\label{cor:power-sequence}
Let $X$ be a subproduct system over a
von Neumann algebra $\calm$ and let 
$\cala$ be a Banach algebra. 
Suppose that 
for every $0\neq y\in \tensor(X)$ there exists 
$b \in \ker \Phi_0$ such that $ y b^n \neq 0$ and
for all $n\geq 1$. 
Then any surjective algebraic homomorphism 
$\varphi: \cala \to \tensor(X)$ is continuous.
\end{corollary}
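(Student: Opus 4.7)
The plan is to reduce this statement directly to Theorem \ref{thm:automatic}. The hypothesis of the theorem asks for a whole sequence $(b_n) \subseteq \ker \Phi_0$ with $y b_n b_{n-1} \cdots b_1 \neq 0$ for every $n\geq 1$, whereas here we are given only a single element $b \in \ker \Phi_0$ with $y b^n \neq 0$ for all $n$. The key (and essentially only) observation is that the constant sequence $b_n := b$ supplies exactly what Theorem \ref{thm:automatic} requires, since then $y b_n b_{n-1} \cdots b_1 = y b^n \neq 0$.

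More precisely, I would first fix an arbitrary $d \geq 0$ and an arbitrary nonzero $y \in \tensor(X)_d$. Applying the assumption of the corollary to $y$ (viewed as a nonzero element of $\tensor(X)$) produces some $b = b(y) \in \ker \Phi_0$ such that $y b^n \neq 0$ for all $n \geq 1$. Setting $b_n := b$ for every $n \in \nn$ yields a sequence in $\ker \Phi_0$ with $y b_n b_{n-1} \cdots b_1 = y b^n \neq 0$ for every $n\geq 1$, so the hypothesis of Theorem \ref{thm:automatic} is met for this particular $y$.

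Since $d$ and $y$ were arbitrary, the hypothesis of Theorem \ref{thm:automatic} holds in full generality. Invoking that theorem then immediately yields that any surjective algebraic homomorphism $\varphi : \cala \to \tensor(X)$ is continuous, completing the proof. There is no real obstacle here: the corollary is a strict specialization of the theorem to the case where the witnessing sequence can be chosen to consist of a single repeated element, so no genuine new argument is needed beyond pointing out this specialization.
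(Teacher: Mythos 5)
Your reduction is correct and is exactly what the paper intends: the corollary is stated as an immediate specialization of Theorem~\ref{thm:automatic}, obtained by taking the constant sequence $b_n = b$, and your observation that the corollary's hypothesis (for all nonzero $y \in \tensor(X)$) in particular covers the homogeneous elements $y \in \tensor(X)_d$ required by the theorem is the only point that needs checking. Nothing further is needed.
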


\subsection*{Admissible isomorphisms}
Our analysis of isomorphisms between tensor algebras
is most effective for the following class of isomorphisms.

\begin{defi}[Admissibility] \label{deifnition:admissibility}
Let $X$ and $Y$ be subproduct systems over a W*-algebra $\vnM$. Let $\varphi: \mathcal{T}_+(X) \rightarrow \mathcal{T}_+(Y)$ be an isomorphism. Denote for $n,m \in \mathbb{N}$ the maps $\varphi^{(n,m)} : = \Phi_m \circ \varphi \upharpoonright_{\mathcal{T}_+(X)_n}$. $\varphi$ is said to be \emph{admissible} if the following conditions hold:
\begin{enumerate}
\renewcommand{\labelenumi}{(A\theenumi)}
\item
The maps $\rho_{\varphi} := \Phi_0 \circ \varphi \upharpoonright_{\vnM}$ and $\rho_{\varphi^{-1}} := \Phi_0 \circ (\varphi^{-1}) \upharpoonright_{\vnM}$ are *-automorphisms of $\vnM$.
\item
For all $n,m\in \nn$ such that $m<n$ we have that the maps $\varphi^{(n,m)}$ and $(\varphi^{-1})^{(n,m)}$ are continuous in the $\sigma$-topology induced by the identification of $\mathcal{T}_+(X)_k$ as a W*-correspondence as in Proposition \ref{proposition:banach-bimodule}.
\end{enumerate}
\end{defi}

\begin{lemma}\label{lemma:isometric-automatic-admissibility}
Let $X$ and $Y$ be subproduct systems over a W*-algebra $\vnM$,
and let $\varphi : \mathcal{T}_+(X) \rightarrow \mathcal{T}_+(Y)$ be an isomorphism. If $\varphi$ is \emph{isometric} then it is admissible.
\end{lemma}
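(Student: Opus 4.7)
My plan is to reduce the problem to proving that $\varphi$ maps $\vnM$ onto itself; once this is established, both conditions (A1) and (A2) follow in a routine manner.

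First, I would prove the structural identity $\tensor(X) \cap \tensor(X)^* = \vnM$ inside the ambient Toeplitz C*-algebra (and similarly for $Y$). Since the gauge action $\alpha_\lambda$ is a *-automorphism of $\mathcal{L}(\mathcal{F}_X)$, a short computation yields $\Phi_n(T^*) = \Phi_{-n}(T)^*$. Consequently, if $T \in \tensor(X) \cap \tensor(X)^*$, then $T \in \tensor(X)$ forces $\Phi_n(T) = 0$ for $n < 0$, while $T^* \in \tensor(X)$ forces $\Phi_{-n}(T)^* = 0$ for $n < 0$, i.e.\ $\Phi_m(T) = 0$ for $m > 0$; combining these gives $T = \Phi_0(T) \in \vnM$. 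Now let $p \in \vnM$ be a nonzero projection; then $\|p\|_{\tensor(X)} = 1$, so $\varphi(p)$ is an idempotent of norm $1$ in $\mathcal{L}(\mathcal{F}_Y)$. By a standard argument (an adjointable norm-one idempotent on a Hilbert $C^*$-module has range orthogonal to its kernel), $\varphi(p)$ is a self-adjoint projection, hence lies in $\tensor(Y) \cap \tensor(Y)^* = \vnM$. Since a W*-algebra is the norm-closed linear span of its projections (by Borel functional calculus), this gives $\varphi(\vnM) \subseteq \vnM$; the symmetric argument applied to $\varphi^{-1}$ yields $\varphi(\vnM) = \vnM$.

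With this in hand, (A1) is almost immediate: the restriction $\varphi \upharpoonright_\vnM$ is a unital, surjective, isometric algebra isomorphism of the C*-algebra $\vnM$. By Kadison's theorem (a unital surjective linear isometry between $C^*$-algebras is a Jordan *-isomorphism, which coupled with multiplicativity is a *-isomorphism), $\varphi \upharpoonright_\vnM$ is a *-automorphism. Because $\Phi_0 \upharpoonright_\vnM = \mathrm{id}_\vnM$, we conclude $\rho_\varphi = \Phi_0 \circ \varphi \upharpoonright_\vnM = \varphi \upharpoonright_\vnM$ is a *-automorphism of $\vnM$, and the same reasoning covers $\rho_{\varphi^{-1}}$.

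For (A2), fix $m < n$, take $\xi \in X_n$ and $a,b \in \vnM$. Using multiplicativity of $\varphi$, the identity $\varphi \upharpoonright_\vnM = \rho_\varphi$, and the fact that $\Phi_m$ is a $\vnM$-bimodule map (degree-$0$ elements are fixed by the gauge action), we obtain
\begin{equation*}
\varphi^{(n,m)}(a\xi b) = \Phi_m\bigl(\rho_\varphi(a) \, \varphi(S^{(n)}_\xi) \, \rho_\varphi(b)\bigr) = \rho_\varphi(a) \, \varphi^{(n,m)}(\xi) \, \rho_\varphi(b).
\end{equation*}
Hence $\varphi^{(n,m)}: X_n \to Y_m$ is a bounded $\rho_\varphi$-bimodule morphism. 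Viewed as a bounded $\vnM$-linear map $X_n \to Y_m^{\rho_\varphi}$ between self-dual W*-correspondences over $\vnM$, Paschke's theorem guarantees adjointability, and hence $\rho_\varphi$-adjointability; by the discussion preceding Proposition \ref{proposition:extension-rho-self-dual} this automatically yields $\sigma$-topology continuity. The identical reasoning applies to $(\varphi^{-1})^{(n,m)}$. The main technical subtleties are the structural identity $\tensor(X) \cap \tensor(X)^* = \vnM$ and the passage from a norm-one idempotent to a self-adjoint projection; both are folklore facts but deserve a careful verification.
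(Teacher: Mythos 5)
Your proof is correct and follows essentially the same route as the paper: both establish $\varphi(\vnM)=\vnM$ via the identity $\tensor(Y)\cap\tensor(Y)^*=\vnM$ and then deduce (A2) from the automatic $\sigma$-topology continuity of bounded $\rho_\varphi$-correspondence morphisms between self-dual W*-modules. The only (harmless) variation is that you reach *-preservation through norm-one idempotents and the projection-span of $\vnM$, whereas the paper simply notes that a unital contractive homomorphism into a C*-algebra is positive, hence involution-preserving.
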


\begin{proof}
If $\varphi : \mathcal{T}_+(X) \rightarrow \mathcal{T}_+(Y)$ is an isometric isomorphism, 
since $\mathcal{T}_+(Y) \subseteq 
\mathcal{T}(Y)$ we can regard $\varphi$ as a map into
the Toeplitz algebra. In our case, $\varphi 
\upharpoonright _{\vnM} : \vnM \to
\mathcal{T}(Y)$ is a unital contractive homomorphism, hence
it is necessarily positive. Thus it preserves the involution from $\vnM$ to $\mathcal{T}(Y)$. 
Since we also have that $\varphi(\vnM) = \varphi(\vnM)^* \subseteq 
\mathcal{T}_+(Y)^*$ (considered inside $\mathcal{T}(Y)$), 
we must have that $\varphi(\vnM) 
\subseteq \mathcal{T}_+(Y) \cap \mathcal{T}_+(Y)^* = \vnM$. 
A similar argument then shows that $\varphi^{-1}(\vnM) \subseteq \vnM$, so that $\varphi(\vnM) = \vnM$ and $\varphi$ satisfies (A1). To show (A2) note that since $\varphi(\vnM) = \vnM$ and $\varphi$ is multiplicative, we must have that $\varphi^{(n,m)}$ is a $\rho_{\varphi}$-correspondence map for all $n,m\in \mathbb{N}$. But we have seen after defining $\rho$-correspondence maps that they are necessarily $\sigma$-topology continuous. The identical argument then works for $\varphi^{-1}$.
\end{proof}

When an isomorphism $\varphi : \mathcal{T}_+(X) \rightarrow \mathcal{T}_+(Y)$ fails to be isometric, then it may fail to satisfy condition (A1) even if we assume that $\rho_{\varphi}$ and $\rho_{\varphi^{-1}}$ are bijective, since $\rho_{\varphi}$ might not be *-preserving, or equivalently contractive. 
Nevertheless, if in addition $\vnM$ is commutative, the following lemma provides automatic contractivity of $\rho_{\varphi}$ and $\rho_{\varphi^{-1}}$, assuring (A1) holds.

\begin{lemma}\label{lemma:automatic-contractivity}
Let $A$ be a unital Banach algebra with $\|1_A\|=1$, and let $B$ be a unital Banach subalgebra of a commutative 
C*-algebra $B'$. If $\varphi: A \rightarrow B$ is a 
unital algebraic homomorphism, then $\varphi$ is contractive.
\end{lemma}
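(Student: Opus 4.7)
The plan is to reduce the statement, via Gelfand duality, to the classical fact that multiplicative linear functionals on a unital Banach algebra are automatically contractive. Since $B'$ is a commutative unital C*-algebra, I would identify $B' \cong C(X)$ for some compact Hausdorff space $X$, so that the norm on $B \subseteq B'$ is the restriction of the supremum norm. The strategy is then to test the norm of $\varphi(a)$ pointwise on $X$.

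First, for each $x \in X$, let $\chi_x : B' \to \cc$ denote the evaluation character. Composing with $\varphi$, define $\psi_x := \chi_x \circ \varphi : A \to \cc$. This is a multiplicative linear functional on $A$. Because $\varphi$ is unital and $B$ shares its identity with $B'$, we have $\psi_x(1_A) = \chi_x(1_{B'}) = 1$, so $\psi_x$ is nonzero.

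The key step is to invoke the standard fact that any nonzero multiplicative linear functional $\psi$ on a unital Banach algebra $A$ with $\|1_A\| = 1$ satisfies $\|\psi\| \leq 1$. The argument is that $\psi(a) \in \sigma_A(a)$ for every $a \in A$: if $\psi(a) = \lambda$, then $\psi(a - \lambda 1_A) = 0$, so $a - \lambda 1_A$ cannot be invertible (otherwise $\psi$ would be identically zero); hence $|\psi(a)| \leq r(a) \leq \|a\|$, using $\|1_A\|=1$. Applied to each $\psi_x$, this gives $|\chi_x(\varphi(a))| \leq \|a\|$ for all $x \in X$.

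To finish, I would take the supremum over $x \in X$ on both sides to obtain
\[
\|\varphi(a)\|_B = \|\varphi(a)\|_{C(X)} = \sup_{x \in X} |\chi_x(\varphi(a))| \leq \|a\|.
\]
There is no real obstacle here; the only point requiring minor care is ensuring that the unit of $B$ coincides with that of $B'$ (built into the phrase ``unital Banach subalgebra''), which is precisely what forces $\psi_x \neq 0$ and thus enables the automatic continuity of characters to apply.
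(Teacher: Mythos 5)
Your proof is correct and is essentially the paper's argument in Gelfand-dual clothing: the paper writes $\|\varphi(a)\| = r_{B'}(\varphi(a)) = r_B(\varphi(a)) \leq r_A(a) \leq \|a\|$ using normality of elements of a commutative C*-algebra and the fact that unital homomorphisms do not enlarge spectra, while you express the same spectral radius as a supremum of character values $|\chi_x(\varphi(a))|$ and bound each one by the automatic contractivity of multiplicative functionals. The one point of care you flag (the unit of $B$ versus that of $B'$) is handled in the paper by replacing $B'$ with $C^*(B)$, and in any case is harmless in your setup since characters killing $1_B$ kill all of $\varphi(A)$.
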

\begin{proof}
We may assume without loss of generality that $B'$ is unital and $1_B' \in B$
since $C^*(B) \subseteq B'$ is a unital commutative C*-algebra.
For all $a\in A$, the element $\varphi(a) \in B'$ must be normal, 
one has
$$
\|\varphi(a)\| = r_{B'}(\varphi(a)) = r_{B}(\varphi(a)) \leq r_{A}(a) \leq \|a\|,
$$
where the second equality follows from spectral permanence and the first inequality follows from unitality of $\varphi$.
\end{proof}

We were not able to prove that composition of admissible isomorphisms is always admissible. In the cases when
our analysis is most successful this is not an issue, 
because admissibility will be shown to be automatic. 
We already showed this fact for
the isometric case, and it will also hold for tensor
algebras arising from finite 
stochastic matrices (see Section~\ref{section:tensor-algebras-stochastic}).

\subsection*{Graded bounded admissible isomorphisms}
The most basic examples of bounded admissible isomorphisms 
between tensor
algebras are graded ones. These are 
easy to characterize.

\begin{defi}
Let $X$ and $Y$ be subproduct systems over a W*-algebra $\vnM$.  An isomorphism $\varphi:\tensor(X) \to \tensor(Y)$
is said to be \emph{graded} if $\Phi_n\varphi \Phi_n= \varphi \Phi_n$ for all $n$.
\end{defi}

\begin{remark} \label{remark:contractive-graded-is-admissible}
Admissibility of a graded isomorphism $\varphi$ is reduced to showing that $\rho_{\varphi} = \varphi \upharpoonright_{\vnM}$ is contractive. Indeed, in that case, we have that (A1) holds since then $\rho_{\varphi}$ is a *-automorphism (and as a consequence so is $\rho_{\varphi^{-1}} = \rho_{\varphi}^{-1}$), Furthermore, (A2) is also satisfied since $\varphi^{(n,m)} = 0 = (\varphi^{-1})^{(n,m)}$ when $m <n$. See also Proposition \ref{proposition:admissibility-semi-graded}.
\end{remark}

\begin{defi}
Let $(X,U^X)$ and $(Y,U^Y)$ be subproduct systems over the same W*-
algebra $\vnM$, and let $\rho$ be a *-automorphism of $\vnM$. A family 
$V=\{ V_n \}_{n \in \mathbb{N}}$ of maps 
$V_n :X_n \rightarrow Y_n$ is called a \emph{$\rho$-similarity}  
$V: X \to Y$ of subproduct systems if 
$V_0 =\rho$, and for $0\neq 
n \in \mathbb{N}$ the maps $V_n$ are bijective $\rho$-correspondence 
morphisms satisfying $\sup_{n\in \mathbb{N}}\{||V_n||, ||V_n^{-1}||\} 
< \infty$, such that
$$
V_{n+m}U^X_{n,m} = U^Y_{n,m}(V_n \otimes V_m).
$$
\end{defi}

If $V:X \to Y$ is a $\rho$-similarity, we automatically
have that $V^{-1} : Y \to X$ is a $\rho^{-1}$-similarity, 
since $V_0^{-1} = \rho^{-1}$ and 
$V^{-1}_{n+m}U^Y_{n,m} = U^X_{n,m}(V^{-1}_n \otimes V^{-1}_m)$
for all $n,m \in \mathcal{N}$.

Note also that if $\{V_n\}$ is a $\rho$-similarity such that 
$\|V_n\| = \|V_n^{-1}\| = 1$ for all $n\in \mathbb{N}$, 
then it is a unitary $\rho$-isomorphism of subproduct systems.  

When we have a similarity $\rho$-isomorphism $V: X \rightarrow Y$, we can define a natural $\rho$-correspondence morphism from 
$\mathcal{F}_X$ to $\mathcal{F}_Y$ as follows. The map
$$
W_V = \oplus_{n=0}^{\infty}V_n
$$
is a well-defined map between the Fock C*-direct sums,
since $\sup_{n\in \mathbb{N}}\{||V_n||, ||V_n^{-1}||\} < \infty$. 
Moreover, it extends uniquely to a map which we also denote by 
$W_V$ in $\mathcal{L}(\mathcal{F}_X, \mathcal{F}_Y)$ between the W*-direct sums, which is a $\rho$-correspondence morphism. 
Furthermore, we have that $W_V^{-1} = W_{V^{-1}}$ is  $\rho^{-1}$-correspondence morphism and 
\begin{equation}\label{eq:joining-it-all-up}
\|W_V\| \leq \sup_{n\in \mathbb{N}} \|V_n\|  \quad \text{and} \quad \|W_V^{-1}\| \leq \sup_{n\in \mathbb{N}} \|V^{-1}_n\|.
\end{equation}

\begin{proposition} \label{proposition:graded-tensor-isomorphism}
Let $X$ and $Y$ be subproduct systems over a W*-algebra $\vnM$. 
\begin{enumerate}
\item Let $\rho$ be a *-automorphism of $\vnM$.
If $V : X \rightarrow Y$ is a $\rho$-similarity, then $\Ad_V :
\mathcal{T}_+(X) \rightarrow \mathcal{T}_+(Y)$ given by $\Ad_V(T) = 
W_V T W_V^{-1}$ for every $T\in \mathcal{T}_+(X)$ is a graded completely bounded 
admissible isomorphism satisfying $\Ad_V \upharpoonright _{\vnM} = 
\rho$ and
$$
\max\{ \|\Ad_V\|_{cb}, \|\Ad_V^{-1}\|_{cb}\} \leq  \sup_{n\in \mathbb{N}} \|V_n\| \cdot
\sup_{n\in \mathbb{N}} \|V_n^{-1}\|.  
$$

\item
Let $\varphi : \mathcal{T}_+(X) \to \mathcal{T}_+(Y)$ be an admissible \emph{graded} bounded isomorphism, and let $\rho_\varphi = \varphi \restriction_\vnM$. 
Then the family  
$V^\varphi=(V^\varphi_n : X_n \to Y_n)$ uniquely
determined by $S^{(n)}_{V^{\varphi}_n(\xi)} = \varphi(S^{(n)}_\xi)$ for $\xi \in X_n$ constitutes a $\rho_\varphi$-similarity satisfying
$$
\sup_{n\in \mathbb{N}} \|V^{\varphi}_n\| 
\leq  \|\varphi\| 
\quad \text{and} \quad 
 \sup_{n\in \mathbb{N}} \|(V^{\varphi}_n)^{-1}\| \leq \|\varphi^{-1}\|.
$$
Thus, if $\varphi:\tensor(X) \rightarrow \tensor(Y)$ is an admissible graded bounded isomorphism, then $\varphi =\Ad_{V^\varphi}$ and $\varphi$ is completely bounded with $\| \varphi \|_{cb} \leq \| \varphi \| \| \varphi^{-1} \|$.
\end{enumerate}
\end{proposition}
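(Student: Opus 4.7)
My plan is to treat the two parts essentially as inverse constructions, using the isometric Banach $\vnM$-bimodule identification $X_n \cong \tensor(X)_n$ given by Proposition \ref{proposition:banach-bimodule} as the central bridge.

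For part (1), the assembly $W_V = \oplus V_n$ has already been set up in the paragraph preceding the statement as an adjointable bounded $\rho$-correspondence morphism on the Fock modules with $W_V^{-1} = W_{V^{-1}}$ and the norm estimates in equation \eqref{eq:joining-it-all-up}. The key verifications are that $\Ad_V(m) = \rho(m)$ for $m\in \vnM$ and $\Ad_V(S^{(n)}_\xi) = S^{(n)}_{V_n(\xi)}$ for $\xi \in X_n$. The first follows directly from each $V_n$ being a $\rho$-correspondence morphism. The second reduces, upon evaluating on $V_m(\eta) \in Y_m$, to the similarity identity $V_{n+m}U^X_{n,m}(\xi\otimes\eta) = U^Y_{n,m}(V_n\xi \otimes V_m\eta)$. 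Together these force $\Ad_V$ to carry generators of $\tensor(X)$ to generators of $\tensor(Y)$ grade by grade, hence it is a graded homomorphism onto $\tensor(Y)$ with algebraic inverse $\Ad_{V^{-1}}$. Admissibility is automatic by Remark \ref{remark:contractive-graded-is-admissible}, since $\rho_{\Ad_V} = \rho$ is a $*$-automorphism. For the completely bounded norm, conjugation by an invertible operator is always completely bounded with $\|\Ad_V\|_{cb} \leq \|W_V\|\cdot\|W_V^{-1}\|$, and combining this with \eqref{eq:joining-it-all-up} yields the stated bound.

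For part (2), since $\varphi$ is graded, $\varphi(\tensor(X)_n) \subseteq \tensor(Y)_n$ for every $n$. Composing with the isometric Banach $\vnM$-bimodule isomorphisms $X_n \cong \tensor(X)_n$ and $\tensor(Y)_n \cong Y_n$ of Proposition \ref{proposition:banach-bimodule} yields unique linear maps $V^\varphi_n : X_n \to Y_n$ with $S^{(n)}_{V^\varphi_n(\xi)} = \varphi(S^{(n)}_\xi)$, and the isometric nature of those identifications immediately gives $\|V^\varphi_n\| \leq \|\varphi\|$ and $\|(V^\varphi_n)^{-1}\| \leq \|\varphi^{-1}\|$. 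To show each $V^\varphi_n$ is a $\rho_\varphi$-correspondence morphism, I would use multiplicativity of $\varphi$: for $m, m' \in \vnM$ and $\xi \in X_n$,
\[
\varphi(S^{(n)}_{m\xi m'}) = \varphi(m)\, \varphi(S^{(n)}_\xi)\, \varphi(m') = \rho_\varphi(m)\, S^{(n)}_{V^\varphi_n(\xi)}\, \rho_\varphi(m'),
\]
and comparing with $S^{(n)}_{\rho_\varphi(m) V^\varphi_n(\xi) \rho_\varphi(m')}$ via the uniqueness clause of Proposition \ref{proposition:banach-bimodule} yields the $\rho_\varphi$-bimodule property. Compatibility with the subproduct maps follows analogously from the Fourier-product formula $S^{(n)}_\xi S^{(m)}_\eta = S^{(n+m)}_{U^X_{n,m}(\xi\otimes\eta)}$ combined with multiplicativity of $\varphi$. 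Applying the same construction to $\varphi^{-1}$ produces an inverse family, so $V^\varphi$ is a $\rho_\varphi$-similarity. Finally, part (1) applied to $V^\varphi$ yields $\Ad_{V^\varphi}$ agreeing with $\varphi$ on $\vnM$ and on every shift, hence on the dense subalgebra they generate, so $\varphi = \Ad_{V^\varphi}$, and the cb-estimate on $\varphi$ is inherited from part (1).

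This proposition is essentially a dictionary between $\rho$-similarities and graded admissible bounded isomorphisms, and I do not anticipate any substantial obstacle. The only points requiring some care are tracking $\|V_n\|$ and $\|V_n^{-1}\|$ separately (rather than as a single supremum), and invoking Proposition \ref{proposition:extension-rho-self-dual} at the outset to ensure that the direct sum $W_V$ extends correctly from the C*-algebraic to the W*-algebraic Fock module.
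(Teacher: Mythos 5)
Your proposal is correct and follows essentially the same route as the paper: part (1) via the prefabricated $W_V$ and the estimate \eqref{eq:joining-it-all-up} together with $\Ad_{V^{-1}} = (\Ad_V)^{-1}$, and part (2) via $V^\varphi_n = \Psi_n^{-1}\varphi\Psi_n$ using the isometric bimodule identification of Proposition \ref{proposition:banach-bimodule}, with the subproduct compatibility extracted from multiplicativity of $\varphi$ and the Fourier-product formula. The extra details you supply (checking $\Ad_V$ on generators, the bimodule property of $V^\varphi_n$) are exactly the steps the paper labels straightforward, so there is nothing to correct.
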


\begin{proof}
(1) Let us assume that $\{ V_n \}$ is a family of maps constituting a 
$\rho$-similarity from $X$ to $Y$. The map $\Ad_V$ is 
multiplicative and bounded by \eqref{eq:joining-it-all-up}.
Since $V^{-1}$ is a $\rho^{-1}$-similarity, the map  $\Ad_{V^{-1}}$, 
is also a bounded graded homomorphism 
from $\mathcal{T}_+(Y)$ into $\mathcal{T}_+(X)$
and $\Ad_{V^{-1}} = 
(\Ad_V)^{-1}$, 
ensuring that $\Ad_V$ is a graded bounded admissible \emph{isomorphism} between $\mathcal{T}_+(X)$ and $\mathcal{T}_+(Y)$. The norm inequalities
follow trivially from equation~\eqref{eq:joining-it-all-up}.

(2) Let us assume that $\varphi: \mathcal{T}_+(X) \rightarrow \mathcal{T}_+(Y)$ is a  graded bounded admissible bijective homomorphism. Notice that $V^{\varphi}_n : X_n \rightarrow Y_n$ 
in the assertion can be written as 
$V^{\varphi}_n = \Psi_n^{-1}\varphi \Psi_n,$
where $\Psi_n(\xi) = S^{(n)}_\xi$ is the map considered in
Proposition~\ref{proposition:banach-bimodule}.
Notice that $V^{\varphi}_0= \rho_{\varphi}$ is a *-automorphism,
and the proof of the identities $V^{\varphi}_{n+m}U^X_{n,m} = U^Y_{n,m}(V^{\varphi}_n \otimes V^{\varphi}_m)$ 
is straightforward. Moreover,
$$
\|V^{\varphi}_n(\xi)\| = \|(\Psi_n^{-1}\varphi \Psi_n)(\xi)|| = \|S^{(n)}_{(\Psi_n^{-1}\varphi \Psi_n)(\xi)}\| = \|\varphi(S^{(n)}_{\xi})\| \leq \|\varphi\| \cdot \|S^{(n)}_{\xi}\| = \|\varphi\| \cdot \|\xi\|.
$$
Notice that $V^{\varphi^{-1}}_n = (V^{\varphi}_n)^{-1}$. Hence by a similar argument for $(V^{\varphi}_n)^{-1}$ we obtain that $\|V^{\varphi}_n\|$ and $\|(V^{\varphi}_n)^{-1}\|$ are uniformly bounded by $\|\varphi\|$ and $\|\varphi^{-1}\|$ respectively. 

For the last part of the proposition, using the norm estimates of item (2) and item (1) in tandem while noting that $\varphi = \Ad_{V^{\varphi}}$, we arrive at the completely bounded norm estimate for $\varphi$.
\end{proof}

We obtain the following immediate corollary by noting that isometric isomorphisms are automatically admissible, or by Remark \ref{remark:contractive-graded-is-admissible}.

\begin{corollary} \label{corollary:isometric-isomorphism}
Let $X$ and $Y$ be subproduct systems over a W*-algebra $\vnM$.
\begin{enumerate}
\item
Let $\rho$ be a *-automorphism of $\vnM$. 
If $V : X \rightarrow Y$ is a unitary $\rho$-isomorphism, then 
$\Ad_V :\mathcal{T}_+(X) \rightarrow \mathcal{T}_+(Y)$ given by 
$\Ad_V(T) = W_V T W_V^{-1} = W_V T W_V^*$ for every $T\in 
\mathcal{T}_+(X)$ is a completely isometric isomorphism satisfying $\Ad_V 
\upharpoonright _{\vnM} = \rho$. 
\item
Let $\varphi : \mathcal{T}_+(X) \rightarrow \mathcal{T}_+(Y)$ be a 
\emph{graded} isometric isomorphism, 
and let $\rho_\varphi = \varphi \restriction_\vnM$. Then the family 
$V^\varphi=(V^\varphi_n : X_n \to Y_n)$ uniquely
determined by $S^{(n)}_{V^{\varphi}_n(\xi)} = \varphi(S^{(n)}_\xi)$ for $\xi \in X_n$ constitutes a unitary $\rho_{\varphi}$-isomorphism of subproduct systems satisfying $\varphi = \Ad_{V^\varphi}$.
\end{enumerate}
\end{corollary}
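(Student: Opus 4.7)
My plan is to derive this corollary as the specialization of Proposition \ref{proposition:graded-tensor-isomorphism} to the isometric/unitary setting, where the similarity constants all collapse to $1$. The two crucial enabling results are (i) that isometric isomorphisms are automatically admissible (Lemma \ref{lemma:isometric-automatic-admissibility}), so the hypotheses of Proposition \ref{proposition:graded-tensor-isomorphism}$(2)$ are met for free, and (ii) that a bijective $\rho$-correspondence morphism whose inverse is also contractive is automatically a $\rho$-unitary.

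For part $(1)$, I would start by noting that if $V: X \to Y$ is a unitary $\rho$-isomorphism, then each $V_n$ is a $\rho$-unitary and in particular a bijective $\rho$-correspondence morphism with $\|V_n\| = \|V_n^{-1}\| = 1$. Hence $V$ is a $\rho$-similarity in the sense of the previous section, and Proposition \ref{proposition:graded-tensor-isomorphism}$(1)$ yields that $\Ad_V$ is a graded admissible isomorphism with $\|\Ad_V\|_{cb} \leq \sup_n\|V_n\| \cdot \sup_n\|V_n^{-1}\| = 1$. Applying the same proposition to the $\rho^{-1}$-similarity $V^{-1}$ gives $\|\Ad_V^{-1}\|_{cb} = \|\Ad_{V^{-1}}\|_{cb} \leq 1$, so $\Ad_V$ is completely isometric. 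The identity $W_V^{-1} = W_V^{*}$ follows because $W_V = \oplus_n V_n$, being the direct sum of $\rho$-unitaries, is itself a $\rho$-unitary between the Fock modules, so its $\rho$-adjoint coincides with its inverse.

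For part $(2)$, suppose $\varphi:\tensor(X) \to \tensor(Y)$ is a graded isometric isomorphism. By Lemma \ref{lemma:isometric-automatic-admissibility}, $\varphi$ is automatically admissible. Applying Proposition \ref{proposition:graded-tensor-isomorphism}$(2)$ then produces the family $V^{\varphi} = (V^{\varphi}_n)$ of $\rho_{\varphi}$-correspondence morphisms determined by $S^{(n)}_{V^{\varphi}_n(\xi)} = \varphi(S^{(n)}_\xi)$; moreover $V^\varphi$ is a $\rho_\varphi$-similarity satisfying $\sup_n \|V^{\varphi}_n\| \leq \|\varphi\| = 1$ and $\sup_n \|(V^{\varphi}_n)^{-1}\| \leq \|\varphi^{-1}\| = 1$. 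Each $V^{\varphi}_n$ is therefore bijective and contractive with contractive inverse, and the chain
\[
\|\xi\| = \|(V^{\varphi}_n)^{-1}(V^{\varphi}_n \xi)\| \leq \|V^{\varphi}_n \xi\| \leq \|\xi\|
\]
forces $V^{\varphi}_n$ to be isometric. Thus each $V^{\varphi}_n$ is a surjective isometric $\rho_{\varphi}$-correspondence morphism, i.e.\ a $\rho_{\varphi}$-unitary in the sense of Definition \ref{definition:rho-coisometric-unitary}, and the intertwining with the subproduct maps together with the identity $\varphi = \Ad_{V^{\varphi}}$ are already part of Proposition \ref{proposition:graded-tensor-isomorphism}$(2)$.

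There is essentially no hard step; the only bookkeeping point is to verify that the Fock direct sum $W_V = \oplus_n V_n$ of $\rho$-unitaries is itself a $\rho$-unitary, so that the notation $W_V^{*}$ appearing in $(1)$ is justified, which is immediate from the block-diagonal structure and the fact that $\rho$-adjoints respect direct sums. Everything else is a direct application of Proposition \ref{proposition:graded-tensor-isomorphism} with all similarity constants equal to $1$.
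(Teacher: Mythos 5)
Your proposal is correct and follows exactly the paper's route: the paper states this corollary as an immediate consequence of Proposition \ref{proposition:graded-tensor-isomorphism} combined with the automatic admissibility of isometric isomorphisms (Lemma \ref{lemma:isometric-automatic-admissibility}), with all similarity constants equal to $1$. Your filled-in details (the $cb$-norm bounds collapsing to $1$ in part (1), and the contractive-with-contractive-inverse argument forcing each $V^{\varphi}_n$ to be a $\rho_\varphi$-unitary in part (2)) are precisely the bookkeeping the paper leaves implicit.
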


\subsection*{Semi-graded bounded admissible isomorphisms}
We will consider a special class of bounded admissible
isomorphisms which will provide a convenient platform for the
analysis of the general case.

\begin{defi}
Let $X$  be a subproduct system. The minimal degree of an element $0\neq T \in \mathcal{T}_+(X)$, also denoted by $\mindeg(T)$ is the smallest $n\in \mathbb{N}$ such that $\Phi_n(T) \neq 0$.

Let $Y$ be another subproduct system, and suppose that $\varphi: \mathcal{T}_+(X) \rightarrow \mathcal{T}_+(Y)$ is a bounded isomorphism. We say that $\varphi$ is \emph{semi-graded} if for all $T\in \mathcal{T}_+(X)$ 
$$
\mindeg(\varphi(T)) = \mindeg(T),
$$ 
or equivalently, for every $T\in \mathcal{T}_+(X)$ 
and $S \in \mathcal{T}_+(Y)$,
$$
\mindeg(\varphi(T)) \geq \mindeg(T) \quad \text{and} \quad  \mindeg(\varphi^{-1}(S)) \geq \mindeg(S).
$$

\end{defi}

\begin{proposition} \label{proposition:admissibility-semi-graded}
Let $X,Y$ be subproduct systems over a W*-algebra $\vnM$ 
and suppose that $\varphi: 
\mathcal{T}_+(X) \rightarrow \mathcal{T}_+(Y)$ is a 
semi-graded bounded isomorphism. Then $\rho_{\varphi}^{-1} = \rho_{\varphi^{-1}}$ and if $\rho_{\varphi}$ is contractive then $\varphi$ is admissible. In particular, if $\vnM$ is
commutative, then $\varphi$ is automatically admissible.
\end{proposition}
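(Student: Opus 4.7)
The plan is to prove part (i) first, then derive admissibility from it. The first step is to establish that $\rho_\varphi$ and $\rho_{\varphi^{-1}}$ are unital bijective algebra homomorphisms of $\vnM$ that are mutual inverses. The key observation is that by Proposition~\ref{proposition:banach-bimodule}, multiplication in $\tensor(X)$ satisfies $\Phi_0(TT') = \Phi_0(T)\Phi_0(T')$, since only the $k = 0$ term of $\sum_{k=0}^n U^X_{k,n-k}(\xi_k \otimes \eta_{n-k})$ contributes when $n = 0$. Applied to $\varphi(ab)$ for $a,b \in \vnM$, this yields that $\rho_\varphi$ is multiplicative; it is also unital and bounded. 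Semi-gradedness forces $\rho_\varphi$ to be injective, since $\mindeg(a) = 0$ for $0 \neq a \in \vnM$. To show $\rho_\varphi \circ \rho_{\varphi^{-1}} = \mathrm{id}_\vnM$, given $a \in \vnM$ I would write $\varphi^{-1}(a) = \rho_{\varphi^{-1}}(a) + S$ with $S \in \ker\Phi_0$, so $\mindeg(S) \geq 1$; semi-gradedness then gives $\mindeg(\varphi(S)) \geq 1$, so applying $\Phi_0$ to $a = \varphi(\rho_{\varphi^{-1}}(a)) + \varphi(S)$ yields $a = \rho_\varphi(\rho_{\varphi^{-1}}(a))$. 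The symmetric argument gives the other composition, completing part (i).

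Next, assuming $\rho_\varphi$ is contractive, I would establish that $\rho_\varphi$ is a $*$-automorphism of $\vnM$. This is the step I expect to be the main obstacle, as it requires upgrading contractivity to $*$-preservation. Here I would invoke the classical fact that every unital contractive linear map between unital C*-algebras is $*$-preserving: for $a = a^* \in \vnM$ with $\rho_\varphi(a) = x + iy$ (self-adjoint parts), the inequality $\|y + t\| \leq \|x + i(y + t)\| = \|\rho_\varphi(a + it)\| \leq \|a + it\| = \sqrt{\|a\|^2 + t^2}$, valid for all real $t$, forces the spectrum of $y$ to reduce to $\{0\}$ and hence $y = 0$. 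Combined with Step 1, this shows that $\rho_\varphi$ is a bijective unital $*$-homomorphism of $\vnM$, so its inverse $\rho_{\varphi^{-1}} = \rho_\varphi^{-1}$ is also a $*$-automorphism, establishing condition (A1).

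Condition (A2) is immediate from semi-gradedness: for $m < n$ and $T \in \tensor(X)_n$, one has $\mindeg(\varphi(T)) \geq n > m$, so $\varphi^{(n,m)}(T) = \Phi_m(\varphi(T)) = 0$. Thus $\varphi^{(n,m)} \equiv 0$ for $m < n$ and is trivially $\sigma$-topology continuous; the same holds for $(\varphi^{-1})^{(n,m)}$. This establishes admissibility whenever $\rho_\varphi$ is contractive.

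For the commutative $\vnM$ case, automatic contractivity of $\rho_\varphi$ follows from Lemma~\ref{lemma:automatic-contractivity} applied to the unital algebraic homomorphism $\rho_\varphi: \vnM \to \vnM$, taking $B' = \vnM$ in the lemma's statement. The previous steps then conclude that $\varphi$ is admissible.
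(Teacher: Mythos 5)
Your proposal is correct and follows essentially the same route as the paper: use semi-gradedness to get $\Phi_0\varphi = \Phi_0\varphi\Phi_0$ and hence $\rho_{\varphi^{-1}} = \rho_\varphi^{-1}$, note that (A2) is trivial because $\varphi^{(n,m)} = 0$ for $m<n$, and invoke Lemma~\ref{lemma:automatic-contractivity} in the commutative case. The only difference is that you spell out the standard spectral argument showing a unital contractive homomorphism is $*$-preserving, which the paper leaves implicit.
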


\begin{proof}
Note that since $\varphi$ is
a semi-graded isomorphism, if $T \in \tensor(X)$, then
$$
\Phi^Y_0 \varphi(T) = \Phi^Y_0 \varphi \Phi^X_0 (T).
$$
Thus $\rho_{\varphi}$ is surjective and in fact, since the 
same argument works for $\varphi^{-1}$ we have that 
$\rho_{\varphi}^{-1} = \rho_{\varphi^{-1}}$. Indeed, for 
$m\in \vnM$ we have $(\rho_{\varphi^{-1}} \circ 
\rho_{\varphi}) (m) = \rho_{\varphi^{-1}} (\Phi_0 
\varphi(m)) = \Phi_0\varphi^{-1}\Phi_0 \varphi(m) = 
\Phi_0\varphi^{-1}\varphi(m) = m$. Thus if $\rho_{\varphi}$ 
is a contractive bijective map then both $\rho_{\varphi}$ 
and $\rho_{\varphi^{-1}}$ are *-automorphisms of $\vnM$ 
which establishes (A1). Since $\varphi$ is semi-graded, it 
follows that for all $n,m\in \nn$ with $m< n$ we have 
$\varphi^{(n,m)} = 0 = (\varphi^{-1})^{(n,m)}$ and so (A2) 
is satisfied.  Finally, notice that if $\vnM$ is 
commutative, then by 
Lemma~\ref{lemma:automatic-contractivity} we have
that $\rho_\varphi$ is contractive, hence $\varphi$
is admissible.
\end{proof}

\begin{proposition}[Criteria for semi-gradedness] \label{proposition:semigradedness}
Let $X,Y$ be subproduct systems and suppose that $\varphi: 
\mathcal{T}_+(X) \rightarrow \mathcal{T}_+(Y)$ is an admissible bounded isomorphism. Then the following are 
equivalent.
\begin{enumerate}
\item $\varphi$ is semi-graded
\item $\mindeg(\varphi(T)) \geq\mindeg(T)$
for all $T\in\tensor(X)$
\item  $\mindeg(\varphi(S^{(1)}_{\xi})) 
\geq 1$ for every $\xi \in X_1$
\item $\varphi(\ker \Phi_0) = \ker \Phi_0$
\end{enumerate}
\end{proposition}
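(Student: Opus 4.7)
The plan is to establish the cycle $(1) \Rightarrow (2) \Rightarrow (3)$ together with the equivalence $(1) \Leftrightarrow (4)$ by routine arguments, with the real content appearing in the implication $(3) \Rightarrow (1)$. The implication $(1) \Rightarrow (2)$ is immediate from the definition of semi-graded, and $(2) \Rightarrow (3)$ follows by applying $(2)$ to $T = S^{(1)}_\xi \in \tensor(X)_1$. For $(1) \Leftrightarrow (4)$: semi-gradedness means $T \in \ker\Phi_0 \iff \mindeg(T) \geq 1$, a condition preserved both ways by $\varphi$; conversely, $(4)$ applied to $S^{(1)}_\xi \in \ker \Phi_0$ immediately yields $(3)$.

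For the main step $(3) \Rightarrow (1)$, by induction on $k \geq 1$ I will prove that $\varphi^{(k,m)} = 0$ for all $0 \leq m < k$. The base case $k=1$, $m=0$ is precisely $(3)$. For the inductive step, take $\alpha \in X_1$ and $\beta \in X_{k-1}$; the identity $S^{(k)}_{U^X_{1,k-1}(\alpha\otimes\beta)} = S^{(1)}_\alpha S^{(k-1)}_\beta$ from Proposition~\ref{proposition:banach-bimodule} combined with the Fourier-coefficient multiplication formula yields
\begin{equation*}
\Phi_m \varphi\bigl(S^{(k)}_{U^X_{1,k-1}(\alpha \otimes \beta)}\bigr)
= \sum_{j=0}^{m} U^Y_{j,m-j}\bigl(\Phi_j \varphi(S^{(1)}_\alpha) \otimes \Phi_{m-j}\varphi(S^{(k-1)}_\beta)\bigr).
\end{equation*}
The $j=0$ term vanishes by $(3)$; for $j \geq 1$ we have $m - j \leq m - 1 < k - 1$, so the inductive hypothesis gives $\Phi_{m-j}\varphi(S^{(k-1)}_\beta) = \varphi^{(k-1, m-j)}(\beta) = 0$. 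By bilinearity this yields $\varphi^{(k,m)}(\xi) = 0$ whenever $\xi = U^X_{1,k-1}(\zeta)$ with $\zeta$ in the algebraic tensor product $X_1 \otimes_{\mathrm{alg}} X_{k-1}$.

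The extension from this subset to all of $X_k$ is the main technical subtlety. Since $U^X_{1,k-1}$ is coisometric between W*-correspondences it is $\sigma$-continuous and surjective, and because the algebraic tensor product is $\sigma$-dense in the self-dual W*-tensor product $X_1 \otimes X_{k-1}$, the image $U^X_{1,k-1}(X_1 \otimes_{\mathrm{alg}} X_{k-1})$ is $\sigma$-dense in $X_k$. Condition $(A2)$ of admissibility provides $\sigma$-continuity of $\varphi^{(k,m)} : X_k \to Y_m$ for $m < k$, precisely enough to propagate the vanishing from this $\sigma$-dense subset to all of $X_k$. This closes the induction, and the Cesaro summation from Proposition~\ref{proposition:banach-bimodule} then yields $\mindeg \varphi(T) \geq \mindeg(T)$ for every $T$.

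To deduce the symmetric inequality $\mindeg \varphi^{-1}(S) \geq \mindeg(S)$, observe that the inductive vanishing gives $\Phi_0 \circ \varphi = \rho_\varphi \circ \Phi_0$; composing on the right with $\varphi^{-1}$ and using the invertibility of the $*$-automorphism $\rho_\varphi$ supplied by condition $(A1)$ yields $\Phi_0 \circ \varphi^{-1} = \rho_\varphi^{-1} \circ \Phi_0$, so that $\varphi^{-1}$ satisfies the analogue of $(3)$. Rerunning the induction for the admissible $\varphi^{-1}$ completes the argument. The crux of the whole proof is the passage from norm-density to $\sigma$-density, since the coisometric comultiplication's image of algebraic tensors is only $\sigma$-dense (not norm-dense) in the target fiber; this is exactly what admissibility's condition $(A2)$ is designed to handle.
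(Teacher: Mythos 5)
Your proposal is correct and takes essentially the same route as the paper: the paper's proof of the key implication observes that the span of products $S^{(1)}_{\xi_1}\cdots S^{(1)}_{\xi_n}$ is $\sigma$-dense in $\tensor(X)_n$, that $\Phi_m\circ\varphi$ vanishes there for $m<n$ by multiplicativity and (3), and then invokes condition (A2) to extend by $\sigma$-continuity --- exactly what your induction accomplishes one tensor factor at a time via the surjective coisometry $U^X_{1,k-1}$ and the $\sigma$-density of algebraic tensors. Your derivation of $\Phi_0\circ\varphi^{-1}=\rho_\varphi^{-1}\circ\Phi_0$ to handle $\varphi^{-1}$ is likewise equivalent to the paper's argument (writing $\varphi^{-1}(S^{(1)}_\eta)=m+T$ and using injectivity of $\rho_\varphi$ to conclude $m=0$), so no substantive difference remains.
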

\begin{proof}
We first prove that $(2)\iff(3)$. It is clear that
$(2)\implies(3)$. For the converse, let us suppose that (3) holds. By continuity of $\varphi$, 
in order to prove (2) it suffices to show that 
$\mindeg(\varphi(S^{(n)}_{\xi})) \geq n $ for every $n\geq 1$ and $\xi \in 
X_n$. Consider the set $\mathcal{S} \subseteq \tensor(X)_n$
given by
$$
\mathcal{S} = \Span\left\{ 
S^{(1)}_{\xi_1} \cdot \dots \cdot S^{(1)}_{\xi_n}
\;\;\mid\;\; \xi_1 , ..., \xi_n \in X_1  \right\}.
$$
By the natural identification given in 
Proposition~\ref{proposition:banach-bimodule}, we get that
$\tensor(X)_n$ can be considered as a 
W*-correspon\-dence, for which the set $\mathcal{S}$
is dense in the $\sigma$-topology. Next, since 
$\varphi$ is admissible, for all $n,m\in \nn$ such that $m<n$ the map 
$\Phi_m \circ \varphi \upharpoonright_{\mathcal{T}_+(X)_n} :\mathcal{T}_+(X)_n \rightarrow \mathcal{T}_+(Y)_m$ is  $\sigma$-topology to $\sigma$-topology continuous.  
Furthermore, when $m < n$ we have
that  for every $T \in \mathcal{S}$, 
$\Phi_m(\varphi(T)) = 0$, and hence 
$\Phi_m\varphi(\tensor(X)_n)\equiv 0$. We conclude that $\mindeg(\varphi(S^{(n)}_{\xi})) \geq n$.
 
 We prove that $(3) \implies (1)$.
Since $(2)\iff(3)$, it suffices to show that $\mindeg(\varphi^{-1}(S^{(1)}_{\eta})) \geq 1$ for $\eta \in Y_1$. Represent uniquely $\varphi^{-1}(S^{(1)}_{\eta}) = m + T$
for unique $m\in \vnM$ and $\mindeg(T) \geq 1$. Since $\varphi$ does not decrease minimal degree, we obtain that $\mindeg(\varphi(T)) \geq 1$, and thus from $S^{(1)}_{\eta} = \varphi(m) + \varphi(T)$, due to degree considerations we get $\rho_{\varphi}(m) = \Phi_0(\varphi(m)) = 0$. By injectivity of $\rho_{\varphi}$ we get that $m=0$.

Finally, we note that the implications 
$(1)\implies (3)$, $(1)\implies(4)$ and $(4) \implies(3)$ are trivial. 
\end{proof}

The concept of 
semi-gradedness in the sense of condition (4) in the 
previous Proposition, 
appeared in the work
by Muhly and Solel~\cite[section 5]{muhly-solel-morita}
in their study of tensor algebras of correspondences.

\begin{proposition}\label{proposition:bounded-semi-graded-turns-graded}
Let $X$ and $Y$ be subproduct systems over the same W*-algebra, and 
let $\varphi : \mathcal{T}_+(X) \rightarrow \mathcal{T}_+(Y)$ be a 
semi-graded bounded admissible isomorphism. Let 
$\widetilde{\varphi} : \mathcal{T}_+(X) \rightarrow \mathcal{T}_+(Y)$ be the unique bounded map satisfying
$$
\widetilde{\varphi}(S^{(n)}_{\xi}) = \Phi_n(\varphi(S^{(n)}_{\xi})), 
\qquad  \forall\xi \in X_n. 
$$
Then $\widetilde{\varphi}$ is a \emph{graded} completely bounded admissible isomorphism such that $\widetilde{\varphi^{-1}} = \widetilde{\varphi}^{-1}$, and $\|\widetilde{\varphi}\|_{cb} 
\leq \|\varphi\| \cdot \|\varphi^{-1}\|$.
\end{proposition}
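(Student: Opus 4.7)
The plan is to extract from $\varphi$ a $\rho_\varphi$-similarity $V = \{V_n\}_{n\in\nn}$ of subproduct systems, define $\widetilde{\varphi} := \Ad_V$, and then read off all the claimed properties from Proposition~\ref{proposition:graded-tensor-isomorphism}. Uniqueness of $\widetilde{\varphi}$ and the identification $\widetilde{\varphi}(S^{(n)}_\xi) = \Phi_n(\varphi(S^{(n)}_\xi))$ will be immediate from density of $\Span\{S^{(n)}_\xi : n\geq 0,\, \xi\in X_n\}$ in $\tensor(X)$ and the construction of $V$.

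First I would define $V_n : X_n \to Y_n$ as the unique element satisfying $S^{(n)}_{V_n(\xi)} = \Phi_n(\varphi(S^{(n)}_\xi))$, which is legal by the isometric Banach bimodule identification in Proposition~\ref{proposition:banach-bimodule}; clearly $\|V_n\|\leq \|\varphi\|$ and $V_0 = \rho_\varphi$. To show that $V_n$ is a $\rho_\varphi$-correspondence morphism, for $m\in\vnM$ decompose $\varphi(m) = \rho_\varphi(m) + R$ with $R\in\ker\Phi_0$ and $\varphi(S^{(n)}_\xi) = S^{(n)}_{V_n(\xi)} + T$ with $\mindeg T\geq n+1$. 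Multiplying out $\varphi(m)\varphi(S^{(n)}_\xi)$ produces four terms, three of which have minimal degree $\geq n+1$ (minimal degrees add under products), so only $\rho_\varphi(m) S^{(n)}_{V_n(\xi)} = S^{(n)}_{\rho_\varphi(m) V_n(\xi)}$ contributes to $\Phi_n$, yielding $V_n(m\xi) = \rho_\varphi(m)V_n(\xi)$. The right-action identity is symmetric. An entirely analogous expansion of $\varphi(S^{(n)}_\xi S^{(m)}_\eta) = \varphi(S^{(n)}_\xi)\varphi(S^{(m)}_\eta)$ shows that after applying $\Phi_{n+m}$ only $S^{(n+m)}_{U^Y_{n,m}(V_n\xi\otimes V_m\eta)}$ survives, matching $S^{(n+m)}_{V_{n+m}U^X_{n,m}(\xi\otimes\eta)}$ on the left, so the $V_n$ intertwine the subproduct maps.

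Running the identical construction on $\varphi^{-1}$ (which is also semi-graded, being a symmetric property) produces maps $V'_n : Y_n \to X_n$ with $\|V'_n\|\leq \|\varphi^{-1}\|$. To see that $V'_n = V_n^{-1}$, I compute $\Phi_n(\varphi^{-1}(\varphi(S^{(n)}_\xi)))$ using $\varphi(S^{(n)}_\xi) = S^{(n)}_{V_n\xi} + T$: the term $\varphi^{-1}(T)$ has $\mindeg\geq n+1$ by Proposition~\ref{proposition:semigradedness} applied to $\varphi^{-1}$, so its $\Phi_n$ vanishes, and one obtains $S^{(n)}_{V'_n V_n\xi} = S^{(n)}_\xi$; likewise $V_n V'_n = \mathrm{Id}_{Y_n}$. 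Thus $V$ is a bona fide $\rho_\varphi$-similarity with $\sup_n\|V_n\|\leq \|\varphi\|$ and $\sup_n\|V_n^{-1}\|\leq \|\varphi^{-1}\|$.

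Proposition~\ref{proposition:graded-tensor-isomorphism}(1) now yields $\Ad_V : \tensor(X)\to\tensor(Y)$ as a graded completely bounded admissible isomorphism with $\|\Ad_V\|_{cb}\leq \sup_n\|V_n\|\cdot\sup_n\|V_n^{-1}\|\leq \|\varphi\|\cdot\|\varphi^{-1}\|$. Since $\Ad_V(S^{(n)}_\xi) = S^{(n)}_{V_n\xi} = \Phi_n(\varphi(S^{(n)}_\xi))$ and the $S^{(n)}_\xi$ densely span $\tensor(X)$, $\Ad_V$ is the required $\widetilde{\varphi}$; applying the same construction to $\varphi^{-1}$ produces $\Ad_{V^{-1}}$, so $\widetilde{\varphi^{-1}} = \Ad_{V^{-1}} = (\Ad_V)^{-1} = \widetilde{\varphi}^{-1}$. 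The main obstacle in this argument is simply the careful bookkeeping of $\mindeg$-estimates needed to isolate the graded piece of $\varphi$ on products; once semi-gradedness is invoked to guarantee that $\varphi^{-1}$ does not decrease minimal degree, everything collapses to these degree-cutoff truncation arguments.
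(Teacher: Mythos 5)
Your proof is correct and follows essentially the same route as the paper: define $V_n$ on each fiber by $S^{(n)}_{V_n\xi}=\Phi_n(\varphi(S^{(n)}_\xi))$, check that the family is a $\rho_\varphi$-similarity using the semi-gradedness to isolate the degree-$(n+m)$ component of products, obtain $V_n^{-1}$ from the analogous construction for $\varphi^{-1}$, and conclude by Proposition~\ref{proposition:graded-tensor-isomorphism}. The only difference is presentational — you carry out the bimodule-morphism and subproduct-intertwining checks by explicit minimal-degree expansions, where the paper cites Proposition~\ref{proposition:banach-bimodule} and a compact identity $\Phi_{n+m}(\Phi_n\varphi(\cdot)\,\Phi_m\varphi(\cdot))=\Phi_{n+m}(\varphi(\cdot)\varphi(\cdot))$ — but the underlying argument is identical.
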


\begin{proof}
We first make a few observations. Notice that since $\varphi$ is
a semi-graded isomorphism, if $T \in \tensor(X)$
and $n= \mindeg(T)$, then
$$
\Phi^Y_n \varphi(T) = \Phi^Y_n \varphi \Phi^X_n (T).
$$
It follows that for all $n$,
\begin{equation}\label{v-inverse}
T= \Phi_n(T) =  \Phi_n\varphi^{-1}\varphi(T) = \Phi_n\varphi^{-1}\Phi_n\varphi(T), 
\qquad \forall T \in \mathcal{T}_+(X)_n.
\end{equation}

Recall $\rho_{\varphi} = \Phi_0 \circ \varphi\restriction_\vnM$. Since $\varphi$
is admissible, $\rho_{\varphi}$ is a *-automorphism.
For each $n\in \mathbb{N}$, let us define $V_n: X_n \to Y_n$ 
by the 
map $V_n(\xi) = (\Psi_n)^{-1} \Phi_n \varphi \Psi_n (\xi)$,
where $\Psi_n(\xi) = S_\xi^{(n)}$ (inserted in the appropriate
Toeplitz algebra; the superscripts $X$ and $Y$ are clear from
the context and hence omitted). Let also
$V'_n: Y_n \to X_n$ be the map given by 
$V'_n(\xi) = (\Psi_n)^{-1} \Phi_n \varphi^{-1} \Psi_n (\xi)$.

Notice that $V_n$ is clearly
well-defined and $\|V_n\| \leq \| \varphi \|$. It follows
from Proposition~\ref{proposition:banach-bimodule}
that $V_n$ is a $\rho_{\varphi}$-correspondence morphism for every $n$.
The same reasoning applies to $V_n'$ to show that it is a $\rho_{\varphi^{-1}}$-correspondence morphism, and in particular
$\| V_n'\| \leq \| \varphi^{-1} \|$. Furthermore, by
equation \eqref{v-inverse}, we have that $V_n^{-1} = V_n'$.
It follows that 
$\sup_{n\in\mathbb{N}} \|V_n\| \leq \| \varphi\|$ and 
$\sup_{n\in\mathbb{N}} \|V_n^{-1}\|  \leq \| \varphi\|$.

In order to obtain that $V = (V_n)$ is a $\rho_{\varphi}$-similarity, 
it remains to show that $V_{n+m}U^X_{n,m} = U^Y_{n,m}(V_n \otimes V_m)$ for all $n, m$. So let $\xi \in X_n$, $\eta \in X_m$,
and let $\xi' = V_n \xi$ and $\eta' = V_m \eta$, so
that $S_{\xi'}^{(n)} = \Phi_n\varphi(S_\xi^{(n)})$ and $S_{\eta'}^{(m)} = \Phi_m\varphi(S^{(m)}_\eta)$.
Notice that
$$
\Phi_{n+m}(S_\xi^{(n)} \cdot S_\eta^{(m)}) = S^{(n+m)}_{U_{n,m}(\xi \otimes \eta)},
\qquad
\Phi_{n+m}(S^{(n)}_{\xi'} \cdot S^{(m)}_{\eta'}) =  S^{(n+m)}_{U_{n,m}(\xi' \otimes \eta')}
$$
and because $\varphi$ is semi-graded we also have that
$$
\Phi_{n+m}( \Phi_n\varphi (S^{(n)}_\xi) \cdot \Phi_m\varphi(S^{(m)}_{\eta}))
= \Phi_{n+m}( \varphi (S^{(n)}_\xi) \cdot \varphi(S^{(m)}_{\eta})).
$$
Therefore we obtain the desired identity as follows:
\begin{align*}
S^{(n+m)}_{U_{n,m}(\xi' \otimes \eta')} 
& = \Phi_{n+m}(S^{(n)}_{\xi'} \cdot S^{(m)}_{\eta'})
 = \Phi_{n+m}( \Phi_n\varphi (S^{(n)}_\xi) \cdot \Phi_m\varphi(S^{(m)}_{\eta}))
= \Phi_{n+m}( \varphi (S^{(n)}_\xi) \cdot \varphi(S^{(m)}_{\eta})) \\
& = \Phi_{n+m}\varphi( S^{(n)}_\xi \cdot S^{(m)}_{\eta}) = 
\Phi_{n+m}\varphi\Phi_{n+m}( S^{(n)}_\xi \cdot S^{(m)}_{\eta})
= \Phi_{n+m}\varphi( S^{(n+m)}_{U_{n,m}(\xi \otimes \eta)}).
\end{align*}
By proposition~\ref{proposition:graded-tensor-isomorphism}, we have that $\tilde{\varphi} = \Ad_V$ is a graded completely bounded admissible isomorphism from $\mathcal{T}_+(X)$ to $\mathcal{T}_+(Y)$.
It is clear that it satisfies the required property, and
its uniqueness is also clear. Finally, the norm inequality follows from Proposition~\ref{proposition:graded-tensor-isomorphism}.
\end{proof}

\begin{proposition} \label{proposition:isometric-semi-graded-turns-graded}
Let $X,Y$ be subproduct systems and let $\varphi :\mathcal{T}_+(X) \rightarrow \mathcal{T}_+(Y)$ be a semi-graded isometric isomorphism. Let
$\widetilde{\varphi} : \mathcal{T}_+(X) \rightarrow \mathcal{T}_+(Y)$
be the unique map  satisfying
$$
\widetilde{\varphi}(S^{(n)}_{\xi}) = \Phi_n(\varphi(S^{(n)}_{\xi})), 
\qquad  \forall\xi \in X_n. 
$$
Then $\widetilde{\varphi}$ is a graded completely isometric isomorphism such that $\widetilde{\varphi^{-1}} = \widetilde{\varphi}^{-1}$.
\end{proposition}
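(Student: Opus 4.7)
The plan is to reduce this statement immediately to Proposition~\ref{proposition:bounded-semi-graded-turns-graded} and to extract the completely isometric property from the sharp norm estimate that proposition already provides.

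First I would observe that $\varphi$ is admissible: this is exactly Lemma~\ref{lemma:isometric-automatic-admissibility}. Admissibility, boundedness and semi-gradedness are all symmetric conditions in $\varphi$ and $\varphi^{-1}$ (admissibility is symmetric by its very definition, and semi-gradedness is manifestly so from the two-sided formulation in the definition and in Proposition~\ref{proposition:semigradedness}), and $\varphi^{-1}$ is again an isometry, so both $\varphi$ and $\varphi^{-1}$ fall within the hypotheses of Proposition~\ref{proposition:bounded-semi-graded-turns-graded}. Applying that proposition to $\varphi$ yields at once that $\widetilde{\varphi}$ is a graded completely bounded admissible isomorphism from $\tensor(X)$ onto $\tensor(Y)$, satisfying
$$
\widetilde{\varphi^{-1}} = \widetilde{\varphi}^{-1}
\qquad \text{and} \qquad
\|\widetilde{\varphi}\|_{cb} \leq \|\varphi\|\cdot \|\varphi^{-1}\| = 1.
$$
Applying the very same proposition to $\varphi^{-1}$ gives the companion estimate
$$
\|\widetilde{\varphi}^{-1}\|_{cb} = \|\widetilde{\varphi^{-1}}\|_{cb} \leq \|\varphi^{-1}\|\cdot \|\varphi\| = 1.
$$
Hence $\widetilde{\varphi}$ and its inverse are both completely contractive, so $\widetilde{\varphi}$ is a graded completely isometric isomorphism, and the identity $\widetilde{\varphi^{-1}} = \widetilde{\varphi}^{-1}$ is inherited from Proposition~\ref{proposition:bounded-semi-graded-turns-graded}.

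No substantial obstacle is anticipated, since the actual work has been carried out in Proposition~\ref{proposition:bounded-semi-graded-turns-graded}. The only conceptual points are that the isometry hypothesis collapses the product $\|\varphi\|\cdot\|\varphi^{-1}\|$ down to $1$, and that the identification $\widetilde{\varphi^{-1}} = \widetilde{\varphi}^{-1}$ supplied by that proposition is precisely what allows the bound obtained from $\varphi^{-1}$ to be transferred to $\widetilde{\varphi}^{-1}$ rather than merely to $\widetilde{\varphi^{-1}}$; together these two observations promote the complete contractivity of $\widetilde{\varphi}$ to a complete isometry.
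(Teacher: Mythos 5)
Your proposal is correct and follows essentially the same route as the paper: invoke Lemma~\ref{lemma:isometric-automatic-admissibility} for admissibility, apply Proposition~\ref{proposition:bounded-semi-graded-turns-graded} to get the graded completely bounded isomorphism with $\|\widetilde{\varphi}\|_{cb} \leq \|\varphi\|\,\|\varphi^{-1}\| = 1$, and use the same bound for the inverse to conclude complete isometry. The paper's ``similarly'' for the bound on $\widetilde{\varphi}^{-1}$ is exactly your explicit application of the proposition to $\varphi^{-1}$, justified as you note by the symmetry of the hypotheses.
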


\begin{proof}
First notice that by Lemma~\ref{lemma:isometric-automatic-admissibility},
the map $\varphi$ is admissible. Therefore by the previous proposition,
we have that the map $\tilde{\varphi}$ is a completely bounded graded isomorphism
satisfying $\| \tilde{\varphi} \|_{cb} \leq \| \varphi\|
\| \varphi^{-1}\|
$ and 
$\widetilde{\varphi^{-1}} = \widetilde{\varphi}^{-1}$. Since $\varphi$
is isometric, we have that $\| \tilde{\varphi} \|_{cb} \leq 1$ and also
$\| \tilde{\varphi}^{-1} \|_{cb} \leq 1$ similarly. It follows that $\tilde{\varphi}$ is completely isometric.
\end{proof}

We will show in Section~\ref{section:tensor-algebras-stochastic} that given a bounded/isometric 
admissible isomorphism between tensor algebras \emph{arising
from stochastic matrices}, then there exists a semi-graded
bounded/isometric isomorphism between them. The two previous propositions will then allow us to reach the desired goal of the existence of a graded isometric/bounded isomorphism between the algebras.

\subsection*{Reducing projections}
We now define the notion of \emph{reducing} projection, which will  be useful in converting bounded isomorphisms into semi-graded isomorphisms, in the context of subproduct systems arising from stochastic matrices.

\begin{defi}
Let $(X,U)$ be a subproduct system over the W*-algebra $\vnM$. A projection $p \in \vnM$ is said to be \emph{reducing} for $X$ if
$$
U_{n,m}^*(pX_{n+m}p) \subseteq pX_np \otimes pX_m p.
$$
\end{defi}

\begin{proposition}
Let $X$ be a subproduct system over a W*-algebra $\vnM$, and suppose that
 $p \in \vnM$ is
a projection. Then $p$ is reducing for $X$ if and only if $pXp = \{ pX_np \}_{n\in \mathbb{N}}$ with subproduct maps $\{ U_{n,m} \upharpoonright_{pX_np \otimes pX_mp} \}$ is a subproduct system over $p\vnM p$.
Moreover, if $\vnM$ is commutative, then $p$ is reducing if and only if
for every $\xi \in X_n$ and $\eta \in X_m$, 
\begin{equation}\label{eq:commutative-reducing}
U_{n,m}(p\xi (1-p) \otimes (1-p) \eta p) = 0,
\end{equation}
or alternatively
$$
U_{n,m}(p\xi  \otimes \eta p) = U_{n,m}( p\xi p  \otimes p\eta p).
$$
\end{proposition}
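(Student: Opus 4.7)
The plan is to treat both equivalences by identifying $U_{n,m}^*\upharpoonright_{pX_{n+m}p}$ as the candidate adjoint of the restricted subproduct map $V_{n,m} := U_{n,m}\upharpoonright_{pX_np \otimes pX_mp}$. First note that $pX_np$ inherits the structure of a W*-correspondence over $p\vnM p$: the inner product automatically lands in $p\vnM p$ since $\la \xi p, \eta p\ra = p\la \xi,\eta\ra p$, and the left/right actions of $p\vnM p$ restrict naturally.

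For the first equivalence, in the forward direction assume $p$ is reducing. The bimodule property of $U_{n,m}$ shows that $V_{n,m}$ lands in $pX_{n+m}p$, while the reducing hypothesis gives $U_{n,m}^*\upharpoonright_{pX_{n+m}p} \subseteq pX_np \otimes pX_mp$; this means $V_{n,m}^* = U_{n,m}^*\upharpoonright_{pX_{n+m}p}$, so $V_{n,m}V_{n,m}^* = I$ follows from $U_{n,m}U_{n,m}^* = I$. Associativity and the base case $pX_0p = p\vnM p$ are inherited by restriction. For the converse, assume $pXp$ is a subproduct system over $p\vnM p$, so $V_{n,m}$ is coisometric and $V_{n,m}^* \colon pX_{n+m}p \to pX_np \otimes pX_mp$ is isometric. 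For $\zeta\in pX_{n+m}p$ and any $\eta \in pX_np \otimes pX_mp$, one has $\la \eta, V_{n,m}^*\zeta - U_{n,m}^*\zeta\ra = \la V_{n,m}\eta, \zeta\ra - \la U_{n,m}\eta, \zeta\ra = 0$. Using this with $\eta = V_{n,m}^*\zeta$ and the identity $\|U_{n,m}^*\zeta\|^2 = \la \zeta, U_{n,m}U_{n,m}^*\zeta\ra = \|\zeta\|^2 = \|V_{n,m}^*\zeta\|^2$, a direct expansion of $\|V_{n,m}^*\zeta - U_{n,m}^*\zeta\|^2$ yields zero, so $U_{n,m}^*\zeta = V_{n,m}^*\zeta \in pX_np \otimes pX_mp$ and $p$ is reducing.

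For the commutative case, the key geometric input is the orthogonal decomposition
\[
pX_n \otimes X_m p \;=\; \bigl(pX_np \otimes pX_mp\bigr) \oplus \bigl(pX_n(1-p) \otimes (1-p)X_mp\bigr).
\]
The sum statement follows from the balanced tensor relation $p\xi \otimes \eta p = p\xi p \otimes p\eta p + p\xi(1-p) \otimes (1-p)\eta p$, and the orthogonality relies on commutativity: $\la \xi' p, p\xi(1-p)\ra = p\la \xi', p\xi\ra(1-p) = \la \xi', p\xi\ra \cdot p(1-p) = 0$. Assuming $p$ is reducing and setting $\tau = p\xi(1-p)\otimes(1-p)\eta p$, bimodularity gives $U_{n,m}\tau \in pX_{n+m}p$, hence $\|U_{n,m}\tau\|^2 = \la U_{n,m}^*U_{n,m}\tau,\tau\ra = 0$ because $U_{n,m}^*U_{n,m}\tau \in pX_np\otimes pX_mp$ is orthogonal to $\tau$. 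Conversely, if \eqref{eq:commutative-reducing} holds, then by $\sigma$-continuity of $U_{n,m}$ it vanishes on all of $pX_n(1-p) \otimes (1-p)X_mp$. Given $\zeta \in pX_{n+m}p$, bimodularity places $U_{n,m}^*\zeta \in pX_n\otimes X_mp$; decompose $U_{n,m}^*\zeta = w_1 + w_2$ along the two summands. Then $U_{n,m}w_2 = 0$, so $w_2 \in \ker U_{n,m} = \Ran(U_{n,m}^*)^\perp$; combined with $U_{n,m}^*\zeta \in \Ran(U_{n,m}^*)$ and $w_1 \perp w_2$ this forces $\la w_2, w_2\ra = 0$, so $U_{n,m}^*\zeta = w_1 \in pX_np \otimes pX_mp$. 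The reformulation $U_{n,m}(p\xi \otimes \eta p) = U_{n,m}(p\xi p \otimes p\eta p)$ follows from the same expansion. The main obstacle I anticipate is being careful about the identification of $pX_np \otimes_{p\vnM p} pX_mp$ with its image in $X_n \otimes_\vnM X_m$, and about the $\sigma$-topology density argument that extends pointwise vanishing to the whole summand.
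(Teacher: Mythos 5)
Your proposal is correct and follows essentially the same route as the paper: the same identification of $U_{n,m}^*\upharpoonright_{pX_{n+m}p}$ with the adjoint of the restricted map for the first equivalence, and the same orthogonal decomposition $pX_n \otimes X_mp = (pX_np \otimes pX_mp) \oplus (pX_n(1-p) \otimes (1-p)X_mp)$ for the commutative case; you in fact supply more detail than the paper does for the converse of the first equivalence and for the ``reducing $\Rightarrow$ \eqref{eq:commutative-reducing}'' direction, which the paper dismisses as immediate or straightforward. The only point the paper makes that you gloss over is the self-duality of $pX_np$ as a Hilbert module over $p\vnM p$ (needed for it to be a W*-correspondence), which the paper settles by viewing $X_np$ as a von Neumann module; this is a routine verification and not a genuine gap.
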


\begin{proof}
$\Rightarrow$: Suppose that $p \in \vnM$ is reducing for $X$. Then $pXp$ is readily shown to be a family of W*-correspondences over $p\vnM p$ with respect to the restricted actions and
inner product. Regarding self-duality, note that if $E$ is a right Hilbert 
W*-module over $\vnM$,
and $\vnM$ is faithfully represented in a Hilbert space $H$, 
then $E$ can be regarded as a von Neumann module inside $B(H, E\otimes H)$, in  
the terminology of \cite{skeide-vNmodules}. It is easy to see that if $p \in \vnM$ is
a projection, then $Ep$ is identified with a von Neumann module over $p\vnM p$ in 
$B(pH, Ep \otimes pH) \subseteq B(H, E \otimes H)p$, hence it is self-dual.

The only thing left to check is that the restriction of $U_{n,m}$ to $pX_np \otimes pX_m p$ is 
a \emph{coisometric} W*-correspondence map onto $pX_{n+m}p$,
which is an immediate consequence of the fact that $p$ is reducing.

$\Leftarrow$: 
Notice that since $U_{n,m}$ is a coisometry onto
$pX_{n+m}p$ when restricted to
$pX_np \otimes pX_mp$, we obtain immediately that
that $p$ is reducing.

For the remaining statement of the proposition, suppose that $\vnM$ is
commutative. Note that in that case we always have the
orthogonal direct sum decomposition
$pX_n \otimes X_mp = (pX_np \otimes pX_mp) \oplus (pX_n(1-p) \otimes (1-p)X_mp)$. 
If \eqref{eq:commutative-reducing} is satisfied, then
$(pX_n(1-p) \otimes (1-p)X_mp) \subseteq \ker U_{n,m}$. It follows
by orthogonality and the fact that $U_{n,m}^*$ is a correspondence map
that $U_{n,m}^* (pX_{n+m}p) \subseteq pX_n p \otimes p X_mp$.
Hence $p$ is reducing. The proof of the converse is straightforward.
\end{proof}

\begin{proposition} \label{proposition:tensor-cutdown}
Let $X$ be a subproduct system over the W*-algebra $\vnM$, and $p\in \vnM$ reducing for $X$. Then one has a canonical graded contractive isomorphism given by the restriction map $\Res_p : p\mathcal{T}_+(X)p \rightarrow \mathcal{T}_+(pXp)$ defined by $\Res_p(pTp) = pTp \upharpoonright_{\mathcal{F}_{pXp}}$. 
\end{proposition}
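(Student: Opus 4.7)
The plan is to establish four properties of $\Res_p$ in turn: invariance of $\mathcal{F}_{pXp}$ (which will give multiplicativity and contractivity automatically), landing in $\tensor(pXp)$, gradedness and surjectivity, and finally injectivity.

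First, I would identify $\mathcal{F}_{pXp}$ with the norm-closed subspace $\bigoplus_n pX_np$ of $\mathcal{F}_X$. Since $p \in \vnM \subseteq \tensor(X)$ acts on $\mathcal{F}_X$ as a left multiplication operator and is gauge-invariant, $\Phi_n$ commutes with the compression $T \mapsto pTp$; hence every element $T \in p\tensor(X)p$ has a Fourier series representation $T = \sum_{n\geq 0} S^{(n)}_{\xi_n}$ with $\xi_n = p\xi_np \in pX_np$. I would then verify that $\mathcal{F}_{pXp}$ is invariant for the generators: for $\xi = p\xi p \in pX_np$ and $\eta = p\eta p \in pX_mp$, a direct computation using the bimodule property of $U^X_{n,m}$ gives $S^{(n)}_{\xi}(\eta) = pU^X_{n,m}(\xi p \otimes p\eta)p \in pX_{n+m}p$, so by linearity and continuity the whole corner leaves $\mathcal{F}_{pXp}$ invariant. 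Contractivity and multiplicativity of $\Res_p$ then follow immediately since it is the restriction of operators to a common invariant subspace.

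To show that the image of $\Res_p$ lies in $\tensor(pXp)$, I would invoke the reducing hypothesis at exactly this point: by the preceding proposition, the subproduct maps $U^{pXp}_{n,m}$ are precisely the restrictions of $U^X_{n,m}$ to $pX_np \otimes pX_mp$. Consequently, the restriction of the generator $S^{(n)}_{p\xi p} \in p\tensor(X)p$ to $\mathcal{F}_{pXp}$ acts as the corresponding generator $S^{(n)}_{p\xi p}$ of $\tensor(pXp)$, and elements of $p\vnM p$ restrict to themselves. For gradedness I would note that $\Res_p$ intertwines the gauge actions on the two algebras, or equivalently sends each graded piece $p\tensor(X)_np$ into $\tensor(pXp)_n$. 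Surjectivity then follows because the image is a closed subalgebra containing every generator of $\tensor(pXp)$.

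For injectivity, suppose $\Res_p(T) = 0$ for some $T \in p\tensor(X)p$. Gradedness gives $\Res_p(\Phi_n(T)) = 0$ for every $n$; writing $\Phi_n(T) = S^{(n)}_{\xi_n}$ with $\xi_n = p\xi_np \in pX_np$ and evaluating the restricted operator at the vacuum vector $p \in pX_0p$ of $\mathcal{F}_{pXp}$ yields $\xi_n \cdot p = \xi_n = 0$. The uniqueness of the Fourier series representation from Proposition \ref{proposition:banach-bimodule} then forces $T = 0$. The principal subtlety I anticipate is not in any individual step but in the bookkeeping: one must keep straight the distinction between $p\tensor(X)p \subseteq \mathcal{L}(\mathcal{F}_X)$ and $\tensor(pXp) \subseteq \mathcal{L}(\mathcal{F}_{pXp})$, and deploy the reducing hypothesis at the single crucial moment where one identifies restrictions of shifts on $\mathcal{F}_X$ with the shifts of the subproduct system $pXp$.
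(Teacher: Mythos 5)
Your proof is correct and follows essentially the same route as the paper: the only step the paper writes out in detail is injectivity, which it establishes exactly as you do, by noting that $\Phi_n(\Res_p(pTp)) = S^{(n)}_{p\xi_n p}\upharpoonright_{\mathcal{F}_{pXp}}$ applied to the vacuum $\Omega_{pXp}$ recovers $p\xi_n p$, forcing all Fourier coefficients to vanish; the remaining assertions (contractivity, multiplicativity, surjectivity, gradedness) are dismissed in the paper as ``easily shown,'' and your verifications of them are the natural ones. The one place where your write-up, like the paper's, leaves a small amount of work implicit is surjectivity: the image of a contractive homomorphism is not automatically closed, so ``a closed subalgebra containing every generator'' needs a word of justification (e.g.\ that $\Res_p$ is bounded below, which one can extract from the same vacuum-evaluation estimate applied fiberwise), but this does not affect the substance of the argument.
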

\begin{proof}
$\Res_p$ is easily shown to be a contractive surjective homomorphism. We show it is injective. Indeed, if we represent $T = \sum_{n=0}^{\infty}S^{(n)}_{\xi_n}$ for unique $\xi_n \in X_n$, then $pTp = \sum_{n=0}^{\infty}S^{(n)}_{p \xi_n p}$. If $\Res_p(pTp) = 0$, then for every $n\in \mathbb{N}$ we have $\Phi_n(\Res_p(pTp)) = S^{(n)}_{p \xi_n p} \upharpoonright_{\mathcal{F}_{pXp}} = 0$. But since
$\| S^{(n)}_{p \xi_n p} \upharpoonright_{\mathcal{F}_{pXp}} \| \geq \|S^{(n)}_{p \xi_n p}\upharpoonright_{\mathcal{F}_{pXp}} (\Omega_{pXp})\| = \|p \xi_n p\|$, we must have $S^{(n)}_{p \xi_n p} = 0$ for all $n\in \mathbb{N}$, assuring that $pTp = 0$.
\end{proof}

We now focus our attention on the case of subproduct systems over a commutative W*-algebra. Note that in this case, by Proposition \ref{proposition:admissibility-semi-graded}, 
all bounded semi-graded isomorphisms of tensor algebras of subproduct systems over a commutative W*-algebra $\vnM$ are automatically admissible.

\begin{lemma} \label{lemma:perpendicular-subproduct}
Let $X$ and $Y$ be subproduct systems over a \emph{commutative} W*-algebra $\vnM$, and let $\varphi: \mathcal{T}_+(X) \rightarrow \mathcal{T}_+(Y)$ be an (algebraic) isomorphism. Then for every pair of projections $p,q \in \vnM$ with $pq =0$ and any $T \in \tensor(X)$
such that $\mindeg(T)\geq 1$ we have that
$$
\mindeg(\varphi(pTq)) \geq 1.
$$
\end{lemma}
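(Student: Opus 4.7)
The plan is to reduce the claim to a purely algebraic identity. Because $\vnM$ is commutative and $pq = 0$, we have $(1-p)\cdot pTq = 0$ and $pTq \cdot (1-q) = 0$ in $\tensor(X)$; equivalently, $p \cdot pTq = pTq = pTq \cdot q$. Applying the algebra homomorphism $\varphi$ yields the key identity
$$
\varphi(pTq) \;=\; \varphi(p)\cdot\varphi(pTq)\cdot \varphi(q).
$$

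Next I apply the conditional expectation $\Phi_0 : \tensor(Y) \to \vnM$. The crucial input is that $\Phi_0$ is \emph{multiplicative} on $\tensor(Y)$: $\Phi_0(AB) = \Phi_0(A)\Phi_0(B)$ for all $A,B \in \tensor(Y)$. This follows from Proposition~\ref{proposition:banach-bimodule}: writing $A$ and $B$ in their Fourier series representations, the explicit product formula shows that the zeroth Fourier coefficient of $AB$ is $\Phi_0(A)\Phi_0(B)$, reflecting the additive grading $\tensor(Y)_n \cdot \tensor(Y)_m \subseteq \tensor(Y)_{n+m}$. Applying $\Phi_0$ to the displayed identity gives
$$
\Phi_0(\varphi(pTq)) \;=\; \rho_\varphi(p)\cdot\Phi_0(\varphi(pTq))\cdot \rho_\varphi(q),
$$
where $\rho_\varphi := \Phi_0 \circ \varphi \upharpoonright_{\vnM}$ is a unital ring homomorphism $\vnM \to \vnM$ (unital and multiplicative by the same multiplicativity of $\Phi_0$ and the fact that $\varphi$ is a unital algebra homomorphism).

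Commutativity of $\vnM$ now lets me rewrite the right-hand side as $\Phi_0(\varphi(pTq))\cdot \rho_\varphi(p)\rho_\varphi(q)$. Using multiplicativity once more, $\rho_\varphi(p)\rho_\varphi(q) = \Phi_0(\varphi(p)\varphi(q)) = \Phi_0(\varphi(pq)) = 0$, so $\Phi_0(\varphi(pTq)) = 0$, which is the desired conclusion $\mindeg(\varphi(pTq)) \geq 1$.

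I do not anticipate any substantial obstacle: the argument is entirely formal once one notices that commutativity of $\vnM$ together with the natural grading of the tensor algebras allows one to ``sandwich'' $\varphi(pTq)$ between $\varphi(p)$ and $\varphi(q)$ and then collapse the result using multiplicativity of $\Phi_0$. Note, incidentally, that the hypothesis $\mindeg(T) \geq 1$ is never used; the conclusion in fact holds for \emph{every} $T \in \tensor(X)$, though the weaker form stated is evidently what is required in subsequent applications.
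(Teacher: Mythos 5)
Your proof is correct and uses exactly the same ingredients as the paper's: multiplicativity of $\Phi_0$ on $\tensor(Y)$, the homomorphism property of $\varphi$, and commutativity of $\vnM$ to move $\rho_\varphi(p)\rho_\varphi(q)=0$ together. The paper's version is a one-line variant (it applies $\Phi_0\circ\varphi$ directly to $pTq = p\cdot T\cdot q$ rather than sandwiching $\varphi(pTq)$), and your observation that the hypothesis $\mindeg(T)\geq 1$ is not actually needed is also consistent with the paper's argument.
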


\begin{proof}
Let $T \in \tensor(X)$ be given such that $\mindeg(T)\geq 1$
and let
$p' = \Phi_0(\varphi(p))$ and $q' = \Phi_0(\varphi(q))$.
Note that since $pq=0$ and both $\varphi$ and $\Phi_0$ are homomorphisms we have that
$p'q' = 0$. Thus,
$$
\Phi_0(\varphi(pTq)) = 
p'\Phi_0(\varphi(T)) q' =
p'q' \Phi_0(\varphi(T)) = 0
$$
where we used commutativity of $\vnM$ to interchange the order
of the elements in the product.
\end{proof}

\begin{defi}
Let $X$ and $Y$ be subproduct systems over a commutative W*-algebra $\vnM$, and let $\varphi: \mathcal{T}_+(X) \rightarrow \mathcal{T}_+(Y)$ be an (algebraic) isomorphism. We say that a projection $p\in \vnM$ is \emph{$\varphi$-singular} if for some $\xi \in X_1$ we have
$$
\mindeg(\varphi(p S^{(1)}_{\xi} p)) = 0
$$
Otherwise, we say that $p$ is \emph{$\varphi$-regular}.
\end{defi}

The following proposition will be useful in reducing the problem of finding a semi-graded bounded isomorphism even further, in the admissible case.

\begin{proposition} \label{proposition:condition-for-semi-graded}
Let $X$ and $Y$ be subproduct systems over a commutative W*-algebra $\vnM$, and let $\varphi : \mathcal{T}_+(X) \rightarrow \mathcal{T}_+(Y)$ be an admissible bounded isomorphism. If there exists a family of pairwise-perpendicular $\varphi$-regular projections $\{ p_i \}_{i\in I}$ such that $\sum_{i\in I} p_i = 1_{\vnM}$, then $\varphi$ is semi-graded.
\end{proposition}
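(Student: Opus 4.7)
The plan is to verify the simplest of the criteria for semi-gradedness established in Proposition~\ref{proposition:semigradedness}, namely (3): $\mindeg(\varphi(S^{(1)}_\xi)) \geq 1$ for every $\xi \in X_1$. Fixing such a $\xi$ and setting $m := \Phi_0(\varphi(S^{(1)}_\xi)) \in \vnM$, it suffices to show $m = 0$.

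The crucial preliminary observation I would use is that, thanks to the grading of $\tensor(Y)$ by \emph{non-negative} degrees, the Fourier coefficient $\Phi_0 : \tensor(Y)\to\vnM$ is itself a multiplicative $*$-homomorphism, not merely a bounded conditional projection. Indeed, writing any two elements of $\tensor(Y)$ as their degree-zero part plus an element of $\ker\Phi_0$ and expanding the product via the Fourier series formula of Proposition~\ref{proposition:banach-bimodule}, every cross-term stays in $\ker\Phi_0$, because a product involving any factor of positive minimum degree still has positive minimum degree. Consequently $\Phi_0\circ\varphi$ is an algebra homomorphism from $\tensor(X)$ to $\vnM$ whose restriction to $\vnM$ is the $*$-automorphism $\rho_\varphi$ (this is the point at which admissibility condition (A1) enters).

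Given this, the $\varphi$-regularity of each $p_i$ translates directly into the identity
$$
\rho_\varphi(p_i)\, m\, \rho_\varphi(p_i) = \Phi_0(\varphi(p_i))\,\Phi_0(\varphi(S^{(1)}_\xi))\,\Phi_0(\varphi(p_i)) = \Phi_0(\varphi(p_i S^{(1)}_\xi p_i)) = 0.
$$
Since $\vnM$ is commutative and $\rho_\varphi(p_i)$ is a projection, this collapses to $\rho_\varphi(p_i)\, m = 0$ for every $i$. Now, since $\rho_\varphi$ is a normal $*$-automorphism, $\sum_i \rho_\varphi(p_i)$ converges ultraweakly to $\rho_\varphi(1_\vnM) = 1_\vnM$, so ultraweak continuity of multiplication by $m$ gives
$$
m = \Bigl(\sum_i \rho_\varphi(p_i)\Bigr) m = \sum_i \rho_\varphi(p_i)\, m = 0.
$$

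The main (modest) obstacle here is the multiplicativity of $\Phi_0$ on the tensor algebra: without it one cannot translate a vanishing hypothesis about $p_i S^{(1)}_\xi p_i$ into a statement about compressions of $m = \Phi_0(\varphi(S^{(1)}_\xi))$. Once this observation is in place, commutativity of $\vnM$ renders Lemma~\ref{lemma:perpendicular-subproduct} unnecessary for this particular argument, since the diagonal regularity vanishings $\rho_\varphi(p_i)\, m = 0$ already combine, via normality of $\rho_\varphi$, to give $m = 0$.
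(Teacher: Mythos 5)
Your proof is correct, and its skeleton is the same as the paper's: reduce to criterion (3) of Proposition~\ref{proposition:semigradedness}, exploit the fact that $\Phi_0$ is multiplicative on the non-negatively graded algebra $\tensor(Y)$ so that $\Phi_0\circ\varphi$ is a homomorphism into $\vnM$, decompose the identity as $\sum_i p_i$, and use normality of $\rho_\varphi$ to sum. The one genuine difference is how the decomposition of unity is handled: the paper expands $\Phi_0(\varphi(S^{(1)}_\xi))$ as a double sum over pairs $(i,j)$ and invokes Lemma~\ref{lemma:perpendicular-subproduct} to annihilate the off-diagonal terms before applying regularity to the diagonal ones, whereas you observe that commutativity of $\vnM$ collapses the diagonal identity $\rho_\varphi(p_i)\,m\,\rho_\varphi(p_i)=0$ to $\rho_\varphi(p_i)\,m=0$, so a single sum suffices and the lemma is not needed. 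This is a modest but real streamlining; note, though, that Lemma~\ref{lemma:perpendicular-subproduct} is itself proved by exactly the same commutativity trick ($p'\Phi_0(\varphi(T))q'=p'q'\Phi_0(\varphi(T))$), so you have in effect inlined a one-line special case of it rather than found a structurally different argument. (A cosmetic quibble: $\tensor(Y)$ is not a $*$-algebra, so $\Phi_0$ restricted to it is a multiplicative algebra homomorphism rather than a ``$*$-homomorphism''; the content of your observation is unaffected.)
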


\begin{proof}
By Proposition~\ref{proposition:semigradedness} it suffices to show that for every $\xi \in X_1$ we have that $\mindeg \big(\varphi(S^{(1)}_{\xi}) \big) \geq 1$. Since $\sum_{i\in I}p_i = 1$ by assumption, due to Lemma \ref{lemma:perpendicular-subproduct} and normality of $\rho_{\varphi} : \vnM \rightarrow \vnM$ (it being a *-automorphism) we have 
the following chain of equalities
taking place in $\vnM$, where the sums are convergent in the weak* topology:
\begin{align*}
\Phi_0(\varphi(S^{(1)}_{\xi})) & =  \rho_{\varphi}(\sum_{i \in I}p_i) \cdot \Phi_0(\varphi(S^{(1)}_{\xi}))  \cdot \rho_{\varphi} (\sum_{j \in I}p_j) 
= \sum_{i\in I} \sum_{j\in I} \rho_\varphi(p_i) \Phi_0(\varphi(S^{(1)}_{\xi}))
\rho_\varphi(p_j)
\\
& =
\sum_{i\in I} \sum_{j\in I} \Phi_0( \varphi(p_i S^{(1)}_{\xi}p_j) ) = 
\sum_{i\in I} \Phi_0( \varphi(p_i S^{(1)}_{\xi}p_i) ) = 0.
\end{align*}
\end{proof}

We will need the following facts about reducing projections in the next section, for dealing with subproduct systems arising from stochastic matrices.

\begin{proposition} \label{proposition:preservation-reducing}
Let $X$ and $Y$ be subproduct systems over a \emph{commutative} W*-algebra $\vnM$, and let $\varphi : \mathcal{T}_+(X) \rightarrow \mathcal{T}_+(Y)$ be a bounded isomorphism, and $p\in \vnM$ a nonzero projection. Let $q = \Phi_0(\varphi(p))$.
\begin{enumerate}
\item $q$ is a nonzero self-adjoint projection in $\vnM$.
\item If $p$ is reducing for $X$ then $q$ is reducing for $Y$,
and $q\varphi(p)q = q$.
\item Suppose that $p$ is reducing and let
$\varphi_{p,q} : p\mathcal{T}_+(X)p \rightarrow q\mathcal{T}_+(Y) q$ be given by
$\varphi_{p,q}(x) = q \varphi(x) q$. 
Then $\varphi_{p,q}$ is a bounded isomorphism. 
\end{enumerate}
\end{proposition}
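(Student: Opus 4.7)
For (1), the key observation is that $\Phi_0 : \tensor(Y) \to \vnM$ is itself a unital algebra homomorphism: the Fourier product formula from Proposition~\ref{proposition:banach-bimodule} gives $\Phi_0(TS) = \Phi_0(T)\Phi_0(S)$, since the degree-$0$ Fourier coefficient of a product depends only on the degree-$0$ coefficients of the factors. Hence $\rho_\varphi := \Phi_0 \circ \varphi|_{\vnM}$ is a unital algebra homomorphism of the commutative $C^*$-algebra $\vnM$, which is contractive by Lemma~\ref{lemma:automatic-contractivity} and therefore $*$-preserving. Thus $q = \rho_\varphi(p)$ is a projection. Non-vanishing will follow from the rigidity principle that $\tensor(Y)$ contains no nonzero idempotent of positive minimal degree: if $T = T^2$ and $\mindeg(T) = n \geq 1$, the Fourier formula forces $\mindeg(T^2) \geq 2n > n$, a contradiction; so $q = 0$ would force $\varphi(p)$ to be such a forbidden nonzero idempotent.

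For (2), writing $\varphi(p) = q + T_+$ with $\mindeg(T_+) \geq 1$, the plan is to establish simultaneously that $qT_+q = 0$ (equivalent to $q\varphi(p)q = q$) and that $q$ is reducing for $Y$. Comparing degree-$n$ Fourier coefficients $\eta_n$ in the identity $\varphi(p)^2 = \varphi(p)$, compressing by $q\cdot(\cdot)\cdot q$, and using the balanced $\vnM$-action to eliminate mixed-corner terms (e.g.\ $q\xi q \otimes (1-q)\eta q = q\xi \otimes q(1-q)\eta q = 0$), one derives the recursion
\[
-q\eta_n q = \sum_{k=1}^{n-1} U^Y_{k,n-k}\bigl(q\eta_k q \otimes q\eta_{n-k} q\bigr) + \sum_{k=1}^{n-1} U^Y_{k,n-k}\bigl(q\eta_k(1-q) \otimes (1-q)\eta_{n-k} q\bigr).
\]
The first sum vanishes inductively once $q\eta_kq = 0$ for $k < n$, while the second is exactly the degree-$n$ obstruction to $q$ being reducing. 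To break this circular dependence, I would transport the reducing property of $p$ across $\varphi^{-1}$: since $p$ is reducing, $pS^{(k)}_\xi(1-p)S^{(\ell)}_\eta p = 0$ in $\tensor(X)$, so $\varphi(p)\varphi(S^{(k)}_\xi)\varphi(1-p)\varphi(S^{(\ell)}_\eta)\varphi(p) = 0$ in $\tensor(Y)$; compressing by $q$ on both sides and leveraging the partially-built identity $q\varphi(p)q = q$ forces the second sum to vanish at each degree, closing the induction.

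For (3), boundedness is immediate, and the homomorphism property reduces to showing $q\varphi(x)(1-q)\varphi(y)q = 0$ for $x, y \in p\tensor(X)p$, which follows from the reducing of $q$ via Fourier expansion and the balanced-tensor vanishing exactly as in (2). For bijectivity, I would apply (2) symmetrically to $\varphi^{-1}$ and $q$: this yields a reducing projection $\tilde p := \rho_{\varphi^{-1}}(q)$ and a bounded homomorphism $\varphi^{-1}_{q,\tilde p}$; verifying $\tilde p = p$ (by tracking degree-$0$ parts through $\varphi$ and $\varphi^{-1}$ and invoking that both projections are reducing) then produces a two-sided inverse for $\varphi_{p,q}$. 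The main obstacle will be the intertwined induction in (2): neither $qT_+q = 0$ nor the reducing of $q$ can be established in isolation, and the bridge---transporting the reducing of $p$ through $\varphi^{-1}$ while carefully tracking how $q$-compressions interact with subproduct maps---is the delicate step.
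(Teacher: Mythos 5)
Your part (1) is fine, and your outline of part (3) is broadly workable (multiplicativity of $\varphi_{p,q}$ from the reducibility of $q$ together with the Fourier product formula of Proposition~\ref{proposition:banach-bimodule}); note, however, that the paper proves injectivity by a minimal-degree argument and surjectivity directly from the identity $q\varphi(p)q=q$, rather than by constructing $(\varphi^{-1})_{q,\tilde p}$ and verifying $\tilde p:=\Phi_0(\varphi^{-1}(q))=p$ --- that last identity is not obvious without minimality of $p$ (one only gets $\tilde p\leq p$ easily) and is not needed. The genuine gap is in part (2). Your recursion involves only the Fourier coefficients $\eta_k$ of $\varphi(p)$, so your ``second sum'' is the reducibility obstruction evaluated at those particular elements; forcing it to vanish at every degree would yield $q\varphi(p)q=q$ but says nothing about $U^Y_{n,m}(q\xi(1-q)\otimes(1-q)\eta q)$ for \emph{arbitrary} $\xi\in Y_n$, $\eta\in Y_m$, which is what reducibility of $q$ requires. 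Your bridge has the same defect: pushing $pS^{(k)}_\xi(1-p)S^{(\ell)}_\eta p=0$ forward through $\varphi$ produces identities about the elements $\varphi(S^{(k)}_\xi)$, which are not the general homogeneous elements of $\tensor(Y)$.

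The circularity you are trying to break is in fact not there. One proves reducibility of $q$ first and on its own: take arbitrary $\xi\in Y_n$, $\eta\in Y_m$, expand $\varphi^{-1}(S^{(n)}_\xi)$ and $\varphi^{-1}(S^{(m)}_\eta)$ in Fourier series in $\tensor(X)$, and apply reducibility of $p$ coefficientwise to obtain $p\,\varphi^{-1}(S^{(n)}_\xi)\varphi^{-1}(S^{(m)}_\eta)\,p = p\,\varphi^{-1}(S^{(n)}_\xi)\,p\,\varphi^{-1}(S^{(m)}_\eta)\,p$. Pushing this forward with $\varphi$ and writing $\varphi(p)=q+T$ with $\mindeg(T)\geq 1$, the degree-$(n+m)$ Fourier coefficient of each side sees only the $q$'s, since every occurrence of $T$ strictly raises the degree; this yields $U^Y_{n,m}(q\xi\otimes\eta q)=U^Y_{n,m}(q\xi q\otimes q\eta q)$, i.e.\ $q$ is reducing. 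With that in hand, your induction for $q\eta_nq=0$ closes at once, because the cross terms $U^Y_{k,n-k}(q\eta_k(1-q)\otimes(1-q)\eta_{n-k}q)$ vanish by reducibility of $q$. (Your forward transport could be repaired by extending $pSTp=pSpTp$ from homogeneous $S,T$ to all of $\tensor(X)$ by linearity and continuity and then invoking surjectivity of $\varphi$, but that is the pullback argument in disguise.)
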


\begin{proof}
(1) Since $p$ is an idempotent we have that $\varphi(p)^2 = \varphi(p)$, and thus $q^2 = 
\Phi_0(\varphi(p))^2
= \Phi_0(\varphi(p)^2)
= \Phi_0(\varphi(p^2)) = \Phi_0(\varphi(p)) = q$, and since $\vnM$ is commutative, we must have that $q$ is a self-adjoint projection. 
Furthermore, it must be nonzero. Indeed, $\varphi(p) \neq 0$ is an
idempotent, and we must have $\mindeg(\varphi(p)) =
\mindeg(\varphi(p)^2) = 2\mindeg(\varphi(p))$.  Hence $\mindeg(\varphi(p))=0$.

(2) We show now that if $p$ is reducing for $X$, then $q$ must be reducing for $Y$. Let $\xi \in Y_n$ and $\eta \in Y_m$, and 
let us consider the following Fourier series representations:
$$
\varphi^{-1}(\xi) = \sum_{k=0}^{\infty}S^{(k)}_{\xi_k} 
\qquad \text{and} \qquad
\varphi^{-1}(\eta) = \sum_{k=0}^{\infty}S^{(k)}_{\eta_k}.
$$
Applying Proposition~\ref{proposition:banach-bimodule}
and the reducibility of $p$  we obtain
\begin{align*}
p\varphi^{-1}(S^{(n)}_{\xi}) \varphi^{-1}(S^{(m)}_{\eta}) p 
&= p\sum_{k=0}^{\infty}S^{(k)}_{\sum_{\ell =0}^kU^X_{\ell,k - \ell}(\xi_{\ell}\otimes \eta_{k-\ell})} p \\
& = 
\sum_{k=0}^{\infty}S^{(k)}_{\sum_{\ell =0}^kU^X_{\ell,k - \ell}(p \xi_{\ell} p \otimes p \eta_{k-\ell} p)} = p\varphi^{-1}(S^{(n)}_{\xi}) p \varphi^{-1}(S^{(m)}_{\eta}) p.
\end{align*}
Now let $T = \varphi(p) - q$, and note that it is an operator of minimal degree strictly greater than 0. By applying $\varphi$ on both sides of the previous equation we obtain that
$$
(q + T) S^{(n)}_{\xi} S^{(m)}_{\eta} (q + T) = 
(q + T) S^{(n)}_{\xi} (q + T) S^{(m)}_{\eta} (q + T).
$$
By applying $\Phi_{n+m}$ on both sides of the equation, we have that
$$ 
S^{(n+m)}_{U_{n,m}(q\xi \otimes \eta q)} = q S^{(n)}_{\xi} S^{(m)}_{\eta} q = q S^{(n)}_{\xi} q S^{(m)}_{\eta} q = S^{(n+m)}_{U_{n,m}(q\xi q \otimes q \eta q)}
$$
This establishes that $q$ is reducing for $Y$.

Let us show that $q\varphi(p)q = q$. Write uniquely the Fourier
series representation $\varphi(p) = q + \sum_{m=1}^{\infty}S^{(m)}_{\zeta_m}$. 
Since $\varphi(p)^2 = \varphi(p)$ we have that
\begin{align*}
\zeta_1 & = q\zeta_1 + \zeta_1q \\
\zeta_m & = q\zeta_m + \big(\sum_{k=1}^{m-1}U_{k, m-k}(\zeta_k \otimes \zeta_{m-k}) \big) + \zeta_m q, \qquad m\geq 2
\end{align*}
By multiplying the first equation by $q$ on both sides,
we obtain $q\zeta_1q = 2 q\zeta_1q$ and so $q\zeta_1q=0$.
Similarly, multiplying the second equation by $q$ from both sides, and using induction and the reducibility of $q$, we obtain
$$
q\zeta_m q = q\zeta_m q + \big(\sum_{k=1}^{m-1}U_{k, m-k}(q \zeta_k q \otimes q \zeta_{m-k} q) \big) + q \zeta_m q = 2  q \zeta_m q
$$
and thus, $q\zeta_m q = 0$ for all $m$. 
Therefore, $q\varphi(p)q = q$.

(3) Let us suppose that $p$ is reducing, and
$\varphi$ is a bounded isomorphism. It is
clear that $\varphi_{p,q}$ is a bounded linear map.
We will show that it is multiplicative and bijective.

Let $\xi \in X_n$ and $\eta \in X_m$. Write uniquely
the Fourier series representations
$$
\varphi(S^{(n)}_{\xi}) = \sum^{\infty}_{k=0}S^{(k)}_{\xi_k}
\qquad \text{and} \qquad
\varphi(S^{(m)}_{\eta}) = \sum^{\infty}_{k=0}S^{(k)}_{\eta_k}
$$
By an argument similar to the one earlier, using 
 Proposition~\ref{proposition:banach-bimodule}
and the reducibility of $q$  we obtain
\begin{align*}
q\varphi(S^{(n)}_{\xi}) \varphi(S^{(m)}_{\eta}) q &= 
q \sum^{\infty}_{k=0}S^{(k)}_{\sum^{k}_{\ell = 0}U_{\ell,k-\ell}(\xi_{\ell} \otimes \eta_{k-\ell})} q \\
&= 
q \sum^{\infty}_{k=0}S^{(k)}_{\sum^{k}_{\ell = 0}U_{\ell,k-\ell}( q \xi_{\ell} q \otimes q \eta_{k-\ell}q)} q = q\varphi(S^{(n)}_{\xi}) q \varphi(S^{(m)}_{\eta}) q
\end{align*}
Thus by boundedness of $\varphi$, we obtain that $\varphi_{p,q}$
is multiplicative.

Let us show that $\varphi_{p,q}$ is injective. Suppose that 
$0 \neq pTp \in p\mathcal{T}_+(X) p $ satisfies $q\varphi(pTp) q = 0$. 
Notice that for all $n\geq 0$,
$$
q\Phi_n(\varphi(pTp))q = \Phi_n(q\varphi(pTp) q) = 0.
$$
On the other hand, if $d=\mindeg(\varphi(pTp))$ then
$$
\Phi_d(\varphi(pTp))=
\Phi_d(\varphi(p)\varphi(pTp)\varphi(p)) = 
q \Phi_d(\varphi(pTp))q = 0
$$
We conclude that $\Phi_n(\varphi(pTp))=0$ for all $n$, and
therefore $\varphi(pTp)=0$. By injectivity of $\varphi$,
it follows that $pTp=0$, which is a contradiction.

We now show surjectivity. Let $qTq \in q \mathcal{T}_+(Y) q$ 
be given. By surjectivity of $\varphi$ there exists $S\in \mathcal{T}_+(X)$ such that $\varphi(S) = qTq$. Using the fact that $q\varphi(p)q = q$ we obtain
$$
\varphi_{p,q}(pSp) = q\varphi(pSp)q = q\varphi(p)\varphi(S)\varphi(p) q = q \varphi(p) q T q \varphi(p) q = qTq.
$$
\end{proof}


\section{Tensor algebras of subproduct systems arising from stochastic matrices}\label{section:tensor-algebras-stochastic}

We now head towards handling the specific case of our interest:
the characterization of isomorphisms between tensor algebras arising from subproduct systems of stochastic matrices over a countable state space $\Omega$.  In this case $\vnM = \vnMaS$, and we have a natural family of pairwise orthogonal projections summing up to 1, namely $\{ p_i \}_{i\in \St}$. Thus one can try to apply Proposition~\ref{proposition:condition-for-semi-graded} in the admissible case and Proposition~\ref{proposition:preservation-reducing}.

\subsection*{General Properties} 
Firstly, we characterize several general properties and concepts
in the particular circumstances arising from stochastic
matrices: $\rho$-similarities, admissibility and
automatic continuity of algebraic isomorphisms.

When $P$ is a stochastic matrix, we will write $\mathcal{T}_+(P) : = \mathcal{T}_+(Arv(P))$.

\begin{theorem}\label{theorem:finite-essential-subproduct}
Let $P$ and $Q$ be stochastic matrices. 
\begin{enumerate}
\item
If there exists a $\rho$-similarity of $Arv(P)$ and $Arv(Q)$ then $P\sim_{\sigma_{\rho}}Q$.
\item
If $P$ and $Q$ are finite and essential such that $P \sim_{\sigma}Q$, then there exists a $\rho_{\sigma}$-similarity of $Arv(P)$ and $Arv(Q)$. 
\end{enumerate}
\end{theorem}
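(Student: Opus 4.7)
The plan for part (1) is to mimic the strategy of Theorem~\ref{theorem:subproduct-systems-isomorphism}(1), exploiting only the bimodule structure, not isometry. Let $V=(V_n)$ be a $\rho$-similarity and set $\sigma = \sigma_\rho$, so that $\rho(p_i) = p_{\sigma(i)}$. For any $(i,j)\in E(P^n)$, the matrix unit $E_{ij}\in Arv(P)_n$ satisfies $p_i E_{ij} p_j = E_{ij}$, hence the $\rho$-correspondence morphism property gives
$$
V_n(E_{ij}) = \rho(p_i) V_n(E_{ij}) \rho(p_j) = p_{\sigma(i)} V_n(E_{ij}) p_{\sigma(j)},
$$
so $V_n(E_{ij}) = b^{(n)}_{ij} E_{\sigma(i)\sigma(j)}$ for some scalar $b^{(n)}_{ij}$. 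Bijectivity of $V_n$ forces $V_n(E_{ij})\neq 0$ and hence $(\sigma(i),\sigma(j))\in E(Q^n)$ (otherwise the target entry is unavailable in $Arv(Q)_n$). Applying the same reasoning to $V_n^{-1}$, which is a $\rho^{-1}$-correspondence morphism, gives the reverse inclusion. Specializing to $n=1$ yields $E(Q)=\sigma(E(P))$, i.e. $P\sim_\sigma Q$.

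For part (2) the plan is to write down $V$ explicitly on matrix units and verify the defining identities directly. Set $V_0=\rho_\sigma$ and, for $n\geq 1$ and $(i,j)\in E(P^n)$, define
$$
V_n(E_{ij}) \;=\; \lambda^{(n)}_{ij}\, E_{\sigma(i)\sigma(j)}, \qquad
\lambda^{(n)}_{ij} \;:=\; \sqrt{\frac{P^{(n)}_{ij}}{Q^{(n)}_{\sigma(i)\sigma(j)}}},
$$
extended linearly; since $P\sim_\sigma Q$, the denominator is nonzero exactly where the numerator is. A direct computation using the formula
$U^P_{n,m}(E_{ij}\otimes E_{jk}) = \sqrt{P^{(n)}_{ij}P^{(m)}_{jk}/P^{(n+m)}_{ik}}\, E_{ik}$
from Theorem~\ref{theorem:subproduct-computation} shows that both sides of
$V_{n+m}U^P_{n,m}(E_{ij}\otimes E_{jk}) = U^Q_{n,m}(V_n(E_{ij})\otimes V_m(E_{jk}))$
reduce to $\sqrt{P^{(n)}_{ij}P^{(m)}_{jk}/Q^{(n+m)}_{\sigma(i)\sigma(k)}}\,E_{\sigma(i)\sigma(k)}$, so the subproduct relation holds. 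That $V_n$ is a $\rho$-correspondence morphism on each fiber is immediate from the definition.

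The main obstacle, and the only point where the hypotheses that $P,Q$ are finite and essential enter, is establishing $\sup_n \|V_n\|<\infty$ and $\sup_n \|V_n^{-1}\|<\infty$. The norm of $V_n$ on $Arv(P)_n$ (and of $V_n^{-1}$ on $Arv(Q)_n$) is controlled by the supremum of $\lambda^{(n)}_{ij}$ (respectively $(\lambda^{(n)}_{ij})^{-1}$) over $(i,j)\in E(P^n)$, so it suffices to bound these ratios uniformly away from $0$ and $\infty$. First reduce via Proposition~\ref{proposition:essential-is-direct-sum} to the case where $P$ and $Q$ are irreducible (with $\sigma$ matching irreducible components, which it does automatically since $P\sim_\sigma Q$). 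By Remark~\ref{remark:recurrence-finite}, each such block is positive-recurrent, and the common period $r$ is preserved by $\sigma$. Then Theorem~\ref{theorem:convergence-theorem-positive-recurrent} gives, for any $(i,j)$ and the corresponding residue class $\ell$,
$$
\lim_{m\to\infty} P^{(mr+\ell)}_{ij} \;=\; \pi^P_j r \;>\; 0,
\qquad
\lim_{m\to\infty} Q^{(mr+\ell)}_{\sigma(i)\sigma(j)} \;=\; \pi^Q_{\sigma(j)} r \;>\; 0,
$$
so $\lambda^{(n)}_{ij} \to \sqrt{\pi^P_j/\pi^Q_{\sigma(j)}}\in(0,\infty)$ along the relevant subsequence. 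Finiteness of $\Omega$ means there are only finitely many pairs $(i,j)$ and residue classes to consider, and only finitely many ``small $n$'' exceptions (each giving a strictly positive finite ratio); hence the $\lambda^{(n)}_{ij}$ and their reciprocals are uniformly bounded, completing the construction of the $\rho_\sigma$-similarity.
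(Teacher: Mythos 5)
Your proposal is correct and follows essentially the same route as the paper: part (1) is the paper's matrix-unit argument (the paper leaves the reverse inclusion via $V^{-1}$ implicit, which you make explicit), and in part (2) your entrywise map $V_n(E_{ij}) = \sqrt{P^{(n)}_{ij}/Q^{(n)}_{\sigma(i)\sigma(j)}}\,E_{\sigma(i)\sigma(j)}$ is exactly the paper's $V_n(A) = (\sqrt{Q^n})^{\flat} * [R_{\sigma}(\sqrt{P^n} * A)R^{-1}_{\sigma}]$ written on matrix units. The uniform norm bound is obtained the same way in both: reduce to irreducible blocks via essentiality, then apply the convergence theorem for positive-recurrent matrices along the cyclic residue classes, with finiteness of $\Omega$ handling the remaining exceptional indices.
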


\begin{proof}
When $\sigma$ is a permutation of $\St$, we denote for brevity, $i' = \sigma(i)$.

$(1)$: Assume $V : Arv(P) \rightarrow Arv(Q)$ is a given $\rho$-similarity of the subproduct systems. Then $\rho = V_0 : \vnMaS \rightarrow \vnMaS$ is induced by a permutation $\sigma = \sigma_{\rho}$. Now for all $(i,j)\in E(P)$ denote
$E_{ij}$ to be the element in $Arv(P)_1$ which is $1$ at $(i,j)$ and zero otherwise. Due to $V_1$ being a $\rho$-correspondence morphism, we have,
$$
V_1(E_{ij})  = V_1(p_i E_{ij} p_j) = \rho(p_i) V_1(E_{ij}) \rho(p_j) = p_{i'}V_1(E_{ij}) p_{j'}
$$
So we must have that $0 \neq V_1(E_{ij}) = b_{ij} \cdot E_{i'j'}$ for some $0\neq b_{ij} \in \mathbb{C}$. Hence $(i',j') \in E(Q)$, and $P \sim_{\sigma_{\rho}} Q$.

$(2)$: Assume $P\sim_{\sigma}Q$. Define $V_0 : \vnMaS \rightarrow \vnMaS$ by $V_0 = \rho_{\sigma} = \rho$, that is, $V_0(p_i) = p_{\sigma(i)}$. Define $V_n : Arv(P)_n \rightarrow Arv(Q)_n$ by $V_n(A) = (\sqrt{Q^n})^{\flat} * [R_{\sigma}(\sqrt{P^n} * A)R^{-1}_{\sigma}]$.
We need to show that $V_n$ is a bijective, subproduct preserving, $\rho$-morphism with $\sup_{n\in \nn}\{ \| V_n \| , \|V_n^{-1} \| \} < \infty$.

It is immediate that $V_n$ is bijective with $\rho^{-1}$-morphism inverse $V_n^{-1}(A) = (\sqrt{P^n})^{\flat} * [R_{\sigma^{-1}}(\sqrt{Q^n} * A)R^{-1}_{\sigma^{-1}}]$. We show that $\sup_{n\in \nn} \{ \| V_n \| \} < \infty$ and the proof for $V_n^{-1}$ is analogous.
Indeed, for $A = [a_{ij}] \in Arv(P)_n$ we have
$$
\| V_n(A) \|^2 = \sup_j \sum_i \frac{P^{(n)}_{ij}}{Q^{(n)}_{i'j'}} \cdot |a_{ij}|^2 \leq \Big( \sup_{(i,j)\in E(P^n)}\frac{P^{(n)}_{ij}}{Q^{(n)}_{i'j'}}\Big) \cdot ||A||^2
$$

Denote 
\begin{displaymath}   
c_n(i,j) = \left\{     
\begin{array}{lr}       
 \frac{P^{(n)}_{ij}}{Q^{(n)}_{i'j'}} & : (i,j)\in E(P^n) \\       
0 & : (i,j)\notin E(P^n)     
\end{array}   \right.
\end{displaymath}

Due to finiteness of $P$, in order to show $\sup_{n\in \nn} \{ \| V_n \| \} < \infty$, it would suffice to show that for all $(i,j)\in \St^2$ one has $\limsup_{n\rightarrow \infty} c_n(i,j) < \infty$. Since $P$ and $Q$ are essential, Theorem \ref{theorem:graph-decomposition} allows us to assume WLOG that $P$ and $Q$ are irreducible. Indeed, if $i$ and $j$ do not belong to the same irreducible component in a decomposition of $P$ into irreducibles, we would just have $c_n(i,j) = 0$ for all $n\in \nn$. Thus, assuming $P$ (as well as $Q$) is irreducible and $r$-preiodic, with $i\in \St_{\ell_1}$ and $j\in \St_{\ell_2}$ where $\St_0,..,\St_{r-1}$ is a cyclic decomposition for $P$ given by Theorem \ref{theorem:cyclic-graph-decomposition}, noting that the only instance for which $c_n(i,j) > 0$ is when $n$ is of the form $n=n'r+\ell$, where $\ell = \ell_2 - \ell_1 (\mod r)$, we may apply Theorem \ref{theorem:convergence-theorem-positive-recurrent} to obtain that $\limsup_{n\rightarrow \infty} c_n(i,j) < \infty$.
Thus we establish that $V_n$ is a bijective $\rho$-morphism with $\sup_{n\in \nn} \{ \| V_n \| \} < \infty$. 

We now show that $V_n$ preserves subproducts. Indeed, for $A\in Arv(P)_n$ and $B\in Arv(P)_m$ we have
$$
U^Q_{n,m}(V_n(A) \otimes V_m(B)) = (\sqrt{Q^{n+m}})^{\flat}* [(R_{\sigma} (\sqrt{P^n}*A)R_{\sigma}^{-1}) \cdot (R_{\sigma} (\sqrt{P^m}*B)R_{\sigma}^{-1})] =
$$
$$
(\sqrt{Q^{n+m}})^{\flat}* [R_{\sigma} (\sqrt{P^{n+m}} * U^P_{n,m}(A\otimes B))R_{\sigma}^{-1} ] = V_{n+m} U^P_{n,m}(A\otimes B)
$$
Thus the family $V_n: Arv(P) \rightarrow Arv(Q)$ is a $\rho_{\sigma}$-similarity, as required.
\end{proof}

Thus, for finite essential stochastic matrices, only their \emph{graph} structure determines whether or not there exists a $\rho$-similarity of the subproduct systems.

\begin{theorem}[Automatic continuity]
Let $P$ and $Q$ be a stochastic matrices. If $\varphi : \tensor(Q) \rightarrow \tensor(P)$ is an algebraic isomorphism, then 
$\varphi$ is bounded.
\end{theorem}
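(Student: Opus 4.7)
The plan is to apply Corollary~\ref{cor:power-sequence}: for every nonzero $y \in \tensor(P)$ it is enough to exhibit $b \in \ker \Phi_0$ such that $yb^n \neq 0$ for every $n \geq 1$. Since any such $b$ has $\mindeg(b) \geq 1$, the minimum-degree Fourier coefficient of $yb^n$ coincides with that of $S^{(d)}_A \cdot b^n$, where $d = \mindeg(y)$ and $S^{(d)}_A = \Phi_d(y) \neq 0$; hence it suffices to handle the homogeneous case $y = S^{(d)}_A$. Fix $(i_0, j_0) \in E(P^d)$ with $A_{i_0 j_0} \neq 0$. The construction of $b$ branches according to how $j_0$ sits in the cycle structure of the directed graph $Gr(P)$.

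If $j_0$ is itself on a cycle of length $r \geq 1$, that is $P^{(r)}_{j_0 j_0} > 0$, take $b = S^{(r)}_{E_{j_0 j_0}}$. Using the formula for $U_{r,r}$ in the standard presentation of $Arv(P)$, one sees that $E_{j_0 j_0}$ is, up to a positive scalar, an idempotent in the relevant fiber, so $b^n = c_n S^{(rn)}_{E_{j_0 j_0}}$ for some $c_n > 0$, and a direct evaluation shows that $yb^n$ has nonzero $(i_0, j_0)$-entry proportional to $A_{i_0 j_0}$.

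If $j_0$ is not on a cycle but there is a path of length $m \geq 1$ from $j_0$ to some $j_* \neq j_0$ lying on a cycle of length $r$, take $b = u + v$ where $u = S^{(m)}_{E_{j_0 j_*}}$ and $v = S^{(r')}_{E_{j_* j_*}}$, and $r'$ is a multiple of $r$ chosen large enough that $r' > m$. The column/row mismatch at $j_* \neq j_0$ kills the matrix products $E_{j_0 j_*} E_{j_0 j_*}$ and $E_{j_* j_*} E_{j_0 j_*}$, so in any expansion of $b^n$ the nonvanishing length-$n$ words in $\{u, v\}$ are exactly $uv^{n-1}$ and $v^n$. Hence $b^n = \alpha_n S^{(m + (n-1)r')}_{E_{j_0 j_*}} + \beta_n S^{(nr')}_{E_{j_* j_*}}$ with $\alpha_n, \beta_n > 0$. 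Because $r' > m$, the first summand has strictly smaller degree, so the Fourier coefficient of $yb^n$ at degree $d + m + (n-1)r'$ equals $\alpha_n S^{(d)}_A \cdot S^{(m + (n-1)r')}_{E_{j_0 j_*}}$, whose $(i_0, j_*)$-entry is a positive multiple of $A_{i_0 j_0}$ and therefore nonzero.

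If no state reachable from $j_0$ lies on a cycle, stochasticity of $P$ lets us build an infinite \emph{injective} path $j_0, j_1, j_2, \ldots$ in $Gr(P)$: at every stage $j_k$ has an outgoing edge, and if every outgoing edge of $j_k$ returned to $\{j_0, \ldots, j_k\}$ we would expose a cycle reachable from $j_0$. Setting $B = \sum_{n \geq 0} E_{j_n, j_{n+1}}$, which is a valid element of $Arv(P)_1$ because injectivity forces each column of $B$ to be supported on at most one row, and $b = S^{(1)}_B$, an induction on $k$ shows that $B^{(k)}$ has strictly positive entries at exactly the positions $(j_m, j_{m+k})$ for $m \geq 0$ and zero elsewhere; injectivity again implies that the only summand contributing to the $(i_0, j_k)$-entry of $yb^k$ comes from $j = j_0$, yielding $A_{i_0 j_0}$ times a positive factor. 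The main obstacle is Case~2, where a priori the two summands of $yb^n$ could coincide in degree and cancel for some $n$, but the choice $r' > m$ neutralizes this; a secondary subtlety, avoided by the positivity of $B$ and the injectivity of the constructed path, is to ensure that no sign cancellations intrude in $B^{(k)}$ or in the sum defining the entries of $yb^k$ in Case~3.
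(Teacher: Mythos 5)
Your proof is correct and follows essentially the same route as the paper: the same reduction via Corollary~\ref{cor:power-sequence} to a homogeneous $S^{(d)}_A$, and the same three-way split according to whether the target state lies on a cycle, merely leads to one, or leads to no cycle at all, with the same choices of $b$ in each case. The only (harmless) divergence is in the middle case, where the paper kills the $v^n$ term outright using that $Ap_k=0$ under its case hypothesis, whereas you separate the two summands of $b^n$ by degree via the choice $r'>m$; both devices work.
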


\begin{proof}
By Corollary \ref{cor:power-sequence}, it suffices to show that for every $0 \neq T\in \tensor(P)_d$ there exists $Z\in \ker\Phi_0$ such that $TZ^n \neq 0$ for all $n\geq 1$. For the purpose of the proof, we say that $i\in \St$ is \emph{returning} for $P$ if $P^{(\ell)}_{ii} > 0$ for some (and hence infinitely many) $1 \leq \ell \in \nn$, and we say for $i,j\in \St$ that $i$ \emph{leads} to $j$ if $P^{(\ell)}_{ij} > 0$ for some $1 \leq \ell \in \nn$.

Let $T = S^{(d)}_{A} \neq 0$ and denote $A=[a_{ij}]$. 
There exist $i,j\in \St$ such that $p_i T p_j \neq 0$ , or equivalently $p_iAp_j\neq 0$, since $p_i T p_j =a_{ij}S^{(d)}_{E_{ij}}$.
Note that for $d=0$ we must have $i=j$. We split the proof into three cases.
\begin{enumerate}
\item There is a pair $i,j \in \Omega$ such that  $p_i A p_j \neq 0$ and $j\in \St$ is returning for $P$. Then let $Z=S^{(\ell)}_{E_{jj}}$, where $1 \leq \ell \in \nn$ is such that $P^{(\ell)}_{jj} >0$. Note that for each $n\geq 1$
there exists $c_n >0$ so that 
 $(S^{(\ell)}_{E_{jj}})^n = c_n S^{(n \ell)}_{E_{jj}}$. Therefore
$
TZ^n = S^{(d)}_{A} (S^{(\ell)}_{E_{jj}})^n = c_n S^{(d)}_{A}S^{(n \ell)}_{E_{jj}} = c_n' S^{(d+n \ell)}_B
$
where $c_n'$ is a nonzero scalar and
$B=(\sqrt{P^{d+n \ell}})^{\flat} * \sqrt{P^d} * (Ap_j)$, which is non-zero since $Ap_j \neq 0$, and the matrices $P^d$ and $P^{d+n \ell}$ have non-zero entries where $Ap_j$ does.

\item 
There is no pair $i,j \in \Omega$ such that  $p_i A p_j \neq 0$ and $j\in \St$ is returning for $P$, however there is a pair
$i,j \in \Omega$ such that  $p_i A p_j \neq 0$ and $j$ leads
to a returning $k\in \St$. In that case there exist 
$m , \ell\geq 1$ such that $S^{(m)}_{jk} \neq 0$ and $S^{(\ell)}_{kk} \neq 0$. Let us take $Z = S^{(m)}_{jk} + 
S^{(\ell)}_{kk}$. Note that since 
$S^{(\ell)}_{E_{kk}}S^{(m)}_{E_{jk}} = 
S^{(m)}_{E_{jk}}S^{(m)}_{E_{jk}} = 0$, 
we have $Z^n = S^{(m)}_{E_{jk}}(S^{(\ell)}_{E_{kk}})^{n-1} + (S^{(\ell)}_{E_{kk}})^n$.
Thus, since $Ap_k = 0$ we must have that
$$
TZ^n = S^{(d)}_A (S^{(m)}_{E_{jk}}(S^{(\ell)}_{E_{kk}})^{n-1} + (S^{(\ell)}_{E_{kk}})^n) = S^{(d)}_A S^{(m)}_{E_{jk}}(S^{(\ell)}_{E_{kk}})^{n-1}
$$
Let $T' = S^{(d)}_A S^{(m)}_{E_{jk}}$. Notice that 
$p_iT'p_k \neq 0$, and in this case $k$ is returning for 
$T'$. Therefore,  by the argument of case (1) we see that $T'(S^{(\ell)}_{E_{kk}})^{n-1} \neq 0$,
and hence $TZ^n\neq 0$, for all $n\geq 1$.

\item
There is no pair $i,j \in \Omega$ such that  $p_i A p_j \neq 0$ and $j\in \St$ is returning for $P$ or $j$ leads
to a returning $k\in \St$. Let $i,j \in \Omega$ be 
such that  $p_i A p_j \neq 0$.  Then there exists a sequence $\{j_t\}_{t=1}^{\infty}$ such that $j=j_1$ and $(j_t,j_{t+1})\in 
E(P)$ for all $t\geq 1$, and $j_{t_1} \neq j_{t_2}$ for all $t_1 
\neq t_2$ since $j$ does not lead to a returning state. Define 
$B = \sum^{\infty}_{t=1}E_{j_t,j_{t+1}}$ which gives an element in $Arv(P)_1$ by Theorem \ref{theorem:subproduct-computation} since every column of $B$ contains only one non-zero entry, at which it is $1$. Set $Z = S^{(1)}_{B}$. Multiplying $T$ by $Z$ on the right shifts the columns of $A$ indexed by $j_1,j_2,...$, up to multiplying each entry by a positive scalar multiple, to the columns indexed by $j_2,j_3,...$ respectively (and deletes all other columns). Therefore, if $C$ is such that  
$TZ^n = S^{(d)}_A(S^{(1)}_B)^n = S^{(d+n)}_C$, we see that
$p_iCp_{j_{n+1}}$ is a non-zero scalar multiple of $p_iAp_j$. 
Hence $TZ_n \neq 0$.
\end{enumerate}
\end{proof}

Henceforth, there is no ambiguity when referring to isomorphisms $\varphi :\tensor(P) \rightarrow \tensor(Q)$: they are always
bounded, and there is no need to mention boundedness explicitly.

\begin{proposition}[Automatic admissibility for finite matrices] \label{proposition:bounded-automatic-admisibility} 
Let $P$ and $Q$ be stochastic matrices over \emph{finite} $\St$. Then every (algebraic) isomorphism $\varphi : \mathcal{T}_+(P) \rightarrow \mathcal{T}_+(Q)$ is admissible.
\end{proposition}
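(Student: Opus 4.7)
The plan is to verify the two conditions (A1) and (A2) of admissibility separately, exploiting two finite-dimensional facts: that $\vnM = \vnMaS \cong \cc^{|\St|}$ and that every algebraic idempotent in $\cc^{d}$ is automatically an orthogonal projection.

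For (A1), I will show that $\rho_\varphi = \Phi_0 \circ \varphi \upharpoonright_\vnM$ is a $*$-automorphism of $\vnM$, and likewise for $\rho_{\varphi^{-1}}$. By construction $\rho_\varphi$ is a unital algebra homomorphism, so for every $j \in \St$ the element $\rho_\varphi(p_j) \in \vnM$ is an idempotent, hence a projection. Multiplicativity and unitality of $\rho_\varphi$ then force $\{\rho_\varphi(p_j)\}_{j \in \St}$ to be a family of pairwise orthogonal projections summing to $1_\vnM$, so $\rho_\varphi$ is automatically a unital $*$-homomorphism. Since $\vnM$ is finite-dimensional, to upgrade to a $*$-automorphism it suffices to prove injectivity, which (as the kernel is an ideal of $\cc^{|\St|}$) amounts to $\rho_\varphi(p_j) \neq 0$ for every $j$.

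The only point of substance is the following short degree lemma: if $0\neq T \in \tensor(Q)$ is an idempotent, then $\Phi_0(T) \neq 0$, equivalently $\mindeg(T)=0$. Indeed, suppose $d := \mindeg(T)>0$, so $\Phi_d(T) \neq 0$. Because multiplication respects the grading, one has
\[
\Phi_d(T) \;=\; \Phi_d(T^2) \;=\; \sum_{i+j=d} \Phi_i(T)\,\Phi_j(T),
\]
and any nonzero summand requires both $i \geq d$ and $j \geq d$, which together with $i+j=d$ forces $d=0$, a contradiction. Applied to the nonzero idempotent $\varphi(p_j) \in \tensor(Q)$, this yields $\rho_\varphi(p_j) = \Phi_0(\varphi(p_j)) \neq 0$ for every $j$, and the same argument applied to $\varphi^{-1}$ disposes of $\rho_{\varphi^{-1}}$, completing (A1).

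Condition (A2) will be essentially automatic: since $\St$ is finite, each fiber $Arv(P)_n$ is a finite-dimensional subspace of $M_{|\St|}(\cc)$, so its $\sigma$-topology coincides with the norm topology. The automatic continuity theorem established just before this proposition guarantees that $\varphi$ is bounded, so every $\varphi^{(n,m)} = \Phi_m \circ \varphi \upharpoonright_{\tensor(P)_n}$ is a bounded linear map between finite-dimensional normed spaces and is therefore $\sigma$-continuous, and analogously for $\varphi^{-1}$. I do not anticipate any real obstacle in the argument; the degree-of-idempotent lemma is the only non-cosmetic step, and everything else reduces to linear algebra in $\cc^{|\St|}$.
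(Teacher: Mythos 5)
Your proposal is correct and follows essentially the same route as the paper: the nonvanishing of $\rho_\varphi(p_j)$ via the observation that a nonzero idempotent has minimal degree zero is exactly the content of Proposition~\ref{proposition:preservation-reducing}(1), which the paper cites, and injectivity plus finite dimensionality then yields a $*$-automorphism, with (A2) automatic because linear maps between finite-dimensional correspondences are $\sigma$-continuous. The only cosmetic differences are that you reprove the degree lemma inline and obtain $*$-preservation from the orthogonal-projection structure of $\{\rho_\varphi(p_j)\}$ rather than from Lemma~\ref{lemma:automatic-contractivity}; your appeal to the automatic continuity theorem in (A2) is unnecessary but harmless.
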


\begin{proof}
We show that $\rho_{\varphi} = \Phi_0 \circ \varphi \upharpoonright_{\ell^{\infty}(\St)}$ is injective, since then, finite dimensionality would imply bijectiveness, and Lemma \ref{lemma:automatic-contractivity} will show that $\rho_{\varphi}$ is a *-automorphism, thus satisfying (A1) in Definition \ref{deifnition:admissibility}. Indeed, to show injectivity, note that the finite family $\{ p_i \}_{i \in \St}$ of pairwise perpendicular non-zero projections, which constitutes a basis for $\ell^{\infty}(\St)$, gets sent to a family of pairwise perpendicular non-zero projections $\{ \rho_{\varphi}(p_i) \}_{i \in \St}$, due to Proposition \ref{proposition:preservation-reducing} item (1) and $\rho_{\varphi}$ being a homomorphism. This implies, in particular, that this new family is a linearly independent set. For (A2) in Definition \ref{deifnition:admissibility}, note that $\varphi^{(n,m)}$ as maps between finite dimensional W*-correspondences are automatically $\sigma$-topology continuous.
\end{proof}

\subsection*{Reducing and Singular projections}
We proceed to characterize reducing projections in $Arv(P)$. Recall the terminology of paths from 
Definition~\ref{definition:paths}. We will sometimes mention paths and cycles without mentioning the stochastic matrix $P$ when it is determined unambiguously by the context.

\begin{proposition}\label{proposition:reducing-char}
Let $P$ be a stochastic matrix and let $p$ be a projection in $\vnMaS$. Let $C_p \subset \St$ be such that $p = \sum_{i\in C_p}p_i$. Then $p$ is reducing for $Arv(P)$ if and only if for any path $\gamma:\{0,...,\ell\} \rightarrow \St$ in the directed graph of $P$ with $\gamma(0),\gamma(\ell) \in C_p$, we have that $\gamma(k) \in C_p$ for all $0\leq k \leq \ell$.

\end{proposition}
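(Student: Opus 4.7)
The plan is to apply the commutative characterization of reducing projections established earlier: since $\vnMaS$ is commutative, $p$ is reducing for $Arv(P)$ if and only if $U_{n,m}(p\xi \otimes \eta p) = U_{n,m}(p\xi p \otimes p\eta p)$ for all $n,m \geq 1$, $\xi \in Arv(P)_n$ and $\eta \in Arv(P)_m$ (the $n=0$ or $m=0$ cases hold automatically since $U_{0,m}$ and $U_{n,0}$ are given by the bimodule actions). Using the standard presentation from Theorem~\ref{theorem:subproduct-computation} to write $\xi = [a_{ij}]$ and $\eta = [b_{jl}]$, a direct computation with the explicit subproduct formula yields that the $(i,l)$-entry of the difference $U_{n,m}(p\xi \otimes \eta p) - U_{n,m}(p\xi p \otimes p\eta p)$, for $(i,l) \in E(P^{n+m})$, equals
\begin{equation*}
\frac{\chi_{C_p}(i)\,\chi_{C_p}(l)}{\sqrt{P^{(n+m)}_{il}}}\sum_{j \notin C_p}\sqrt{P^{(n)}_{ij}P^{(m)}_{jl}}\, a_{ij}\,b_{jl},
\end{equation*}
and all other entries vanish.

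It follows that $p$ is reducing if and only if, for every $n,m \geq 1$ and every $i,l \in C_p$, there is no $j \notin C_p$ with both $P^{(n)}_{ij} > 0$ and $P^{(m)}_{jl} > 0$; I will then show this combinatorial condition is precisely equivalent to the stated path condition. For the forward implication from the path condition, consider any triple $(i,j,l)$ with $i,l \in C_p$, $j \notin C_p$, $P^{(n)}_{ij} > 0$, and $P^{(m)}_{jl} > 0$. The concatenation of a path of length $n$ from $i$ to $j$ with a path of length $m$ from $j$ to $l$ produces a path of length $n+m$ from $i \in C_p$ to $l \in C_p$ that passes through $j \notin C_p$, contradicting the path condition. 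Hence no such triple exists, the displayed sum is empty, and $p$ is reducing.

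For the converse, suppose the path condition fails and let $\gamma : \{0,\ldots,\ell\} \to \St$ be a path with $\gamma(0),\gamma(\ell) \in C_p$ but $\gamma(r) \notin C_p$ for some $0 < r < \ell$. Taking $n = r$, $m = \ell - r$, and $\xi = E_{\gamma(0)\gamma(r)} \in Arv(P)_n$, $\eta = E_{\gamma(r)\gamma(\ell)} \in Arv(P)_m$, which are valid fiber elements because $\gamma$ supplies positive probabilities along each segment, the displayed entry at $(\gamma(0),\gamma(\ell))$ becomes a strictly positive multiple of $1$, so the difference is a nonzero multiple of $E_{\gamma(0)\gamma(\ell)}$, violating the reducing condition. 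The entire argument is essentially a bookkeeping exercise once the matrix-unit identity $U_{n,m}(E_{ij}\otimes E_{j'l}) = \delta_{jj'}\sqrt{P^{(n)}_{ij}P^{(m)}_{jl}/P^{(n+m)}_{il}}\,E_{il}$ is in hand; no significant obstacle is anticipated.
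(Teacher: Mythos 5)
Your proof is correct and follows essentially the same route as the paper's: both reduce the reducing condition, via the commutative criterion from the preceding proposition and the explicit subproduct formula of Theorem~\ref{theorem:subproduct-computation}, to the nonexistence of a triple $(i,j,l)$ with $i,l\in C_p$, $j\notin C_p$, $P^{(n)}_{ij}>0$, $P^{(m)}_{jl}>0$, and then translate that into the path condition using the equivalence $P^{(n)}_{ij}>0 \iff$ there is a path of length $n$ from $i$ to $j$. The only cosmetic difference is that you compute the general entrywise formula for the difference while the paper tests directly on matrix units, but the content is identical.
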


\begin{proof}
($\Rightarrow):$ We prove the contrapositive. Suppose that there exists a path $\gamma$ of length $n+m$ such that $i:=\gamma(0) , k:=\gamma(n+m) \in C_p$ while $j:=\gamma(n) \notin C_p$. Decompose $\gamma$ into two paths, $\gamma_n$ from $i$ to $j$ and $\gamma_m$ from $j$ to $k$, which assures us that $P^{(n)}_{ij}, P^{(m)}_{jk} > 0$. Thus, 
$$ U_{n,m}(p E^{(n)}_{ij} (1-p) \otimes (1-p) E^{(m)}_{jk} p) =
U_{n,m}(E^{(n)}_{ij} \otimes E^{(m)}_{jk}) = \sqrt{\frac{P^{(n)}_{ij}P^{(m)}_{jk}}{P^{(n+m)}_{ik}}}E^{(n+m)}_{ik} \neq 0
$$
Which shows that $p$ is not reducing.

($\Leftarrow):$ Again we prove the contrapositive. If $p$ is not reducing, then
$$
0 \neq U_{n,m}(p A (1-p) \otimes (1-p) B p)
$$
for some $A \in Arv(P)_n$ and $B \in Arv(P)_m$. Thus there exists some $i,k \in C_p$ and $j\notin C_p $ such that
$$
0 \neq U_{n,m}(p_i A p_j \otimes p_j B p_k)
$$
In particular, $p_i A p_j$ and $p_j B p_k$ are nonzero
in the subproduct system, so that by considering their
supports we obtain $P^{(n)}_{ij}, P^{(m)}_{jk} > 0$. Hence there exists a path $\gamma$ with $\gamma(0), \gamma(n+m) \in C_p$ while $\gamma(n) \notin C_p$.
\end{proof}

\begin{defi} \label{definition:paths2}
Let $P$ be a stochastic matrix over a state set $\St$. A path $\gamma$ of length $\ell$ is said to be \emph{\streamlined} if for every $0 \leq k \leq \ell-1$ one has $\gamma(k) \neq \gamma(k+1)$. 
\end{defi}

Given two different states $i,j \in \Omega$, we note that  if there exists a path from $i$ to  $j$, then a simple culling procedure yields a \streamlined path 
from $i$ to $j$. The situation for cycles is slightly
different: in general, a cycle $\gamma$ of length $\ell$ 
from $i$ to $i$ only yields a \streamlined cycle by culling when there exists $1 \leq n \leq \ell-1$
such that $\gamma(n) \neq i$.

\begin{corollary}\label{cor:streamlined}
Let $P$ be a stochastic matrix over a state set $\St$. A projection $p_i$ for $i\in \St$ is reducing for $Arv(P)$ if and only if there are no \streamlined cycles through $i$ in the directed graph of $P$.
\end{corollary}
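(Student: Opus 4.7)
The plan is to deduce the corollary directly from Proposition~\ref{proposition:reducing-char} applied with $p = p_i$, so that $C_p = \{i\}$. Under this specialization, the reducing condition becomes: every path $\gamma$ with $\gamma(0) = \gamma(\ell) = i$ satisfies $\gamma(k) = i$ for all $0 \leq k \leq \ell$. In other words, $p_i$ is reducing if and only if no cycle at $i$ ever visits any state other than $i$.

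For the forward direction, I will argue contrapositively. If $\gamma$ is a \streamlined cycle through $i$ of length $\ell$, then $\ell \geq 2$: a length-$1$ cycle would force $\gamma(0) = \gamma(1) = i$, violating the \streamlined condition. Hence $\gamma(1) \neq \gamma(0) = i$, producing a cycle at $i$ that visits a state different from $i$, which contradicts the reformulated reducing condition above.

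For the reverse direction, again contrapositively, suppose $p_i$ fails to be reducing. Then there is a cycle $\gamma: \{0,\dots,\ell\} \to \St$ at $i$ passing through an intermediate state $j \neq i$ at some index $0 < n < \ell$. The strategy is to split $\gamma$ at index $n$ into two subpaths, $\gamma_1$ from $i$ to $j$ and $\gamma_2$ from $j$ to $i$, and then cull consecutive repetitions from each to obtain \streamlined paths $\gamma_1'$, $\gamma_2'$ with the same endpoints. The culling step deletes index $k+1$ whenever $\gamma(k) = \gamma(k+1)$; the path property is preserved since $P_{\gamma(k)\gamma(k+2)} = P_{\gamma(k+1)\gamma(k+2)} > 0$. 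Because $i \neq j$, neither culled path collapses to a single vertex.

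The one point that requires some care is checking that the concatenation $\gamma_1' \cdot \gamma_2'$ remains \streamlined at the junction $j$. This is where the hypothesis $i \neq j$ plays its essential role: the second-to-last vertex of $\gamma_1'$ differs from $j$ (since $\gamma_1'$ is \streamlined and ends at $j$), and the second vertex of $\gamma_2'$ differs from $j$ (since $\gamma_2'$ is \streamlined and starts at $j$), so no consecutive equality is introduced at the junction. Hence $\gamma_1' \cdot \gamma_2'$ is a \streamlined cycle through $i$, completing the contrapositive. Everything else is routine bookkeeping about paths.
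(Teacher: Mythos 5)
Your proof is correct and follows exactly the route the paper intends: specialize Proposition~\ref{proposition:reducing-char} to $C_p=\{i\}$, and use the culling remark preceding the corollary (a cycle at $i$ visiting some $j\neq i$ splits at $j$ into two paths that streamline and concatenate into a \streamlined cycle). The junction check you flag is the same observation the paper makes when it notes that culling a cycle only produces a \streamlined cycle when an intermediate vertex differs from $i$.
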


\begin{remark} \label{remark:sigular-is-cyclic}
If $P$ and $Q$ are stochastic over $\St$ and $i\in \St$ is such that $p_i$ is $\varphi$-singular for some $\varphi: \tensor(P) \rightarrow \tensor(Q)$ then it must be the case that $P_{ii} > 0$. Indeed, for some $\xi \in Arv(P)_1$, we have that $\Phi_0(\varphi(p_i S^{(1)}_\xi p_i)) \neq 0$ so that $p_i S^{(1)}_\xi p_i \neq 0$.
Therefore, since $p_i S^{(1)}_\xi p_i$ is supported in $(i,i)$ we have that $S^{(1)}_{E_{ii}} \neq 0$, and
therefore $P_{ii}>0$.

\end{remark}

\begin{remark} \label{remark:reducing-by-1-dim}
If $i\in \St$ is such that $p_i$ is reducing for $Arv(P)$, then $p_i Arv(P) p_i$ is a subproduct system over $\mathbb{C}$, with all fibers being dimension 0 (except for the $0$-th fiber) or all of dimension 1. Since  $ U_{n,m} \upharpoonright_{p_i Arv(P)_n p_i \otimes p_i Arv(P)_m p_i}$ remains coisometric for all $n,m\in \mathbb{N}$, it must in fact be isometric due to dimension considerations. Thus, $p_i Arv(P) p_i$ is isomorphic to a \emph{product} system as in example \ref{regular-product-system} with $E = 0$ or $E = \mathbb{C}$. The tensor algebras are then given up to a canonical isomorphism by $\mathbb{C}$ or $\mathbb{A}(\mathbb{D})$ respectively, where the latter is the disc algebra.
\end{remark}

\begin{proposition} \label{proposition:singular-is-reducing}
Let $i\in \St$. If for some stochastic $Q$ and isomorphism $\varphi : \mathcal{T}_+(P) \rightarrow \mathcal{T}_+(Q)$ we have that $p_i$ is $\varphi$-singular then it must be reducing for $Arv(P)$.
\end{proposition}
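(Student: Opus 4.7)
The plan is to argue by contradiction. Suppose $p_i$ is $\varphi$-singular but not reducing for $Arv(P)$. By Remark \ref{remark:sigular-is-cyclic} we have $P_{ii} > 0$, and since the $(i,i)$-component of each fiber $Arv(P)_n$ is one-dimensional, the singularity of $p_i$ is equivalent to saying that
$$
T \;:=\; \varphi\bigl(S^{(1)}_{E_{ii}}\bigr) \in \mathcal{T}_+(Q)
$$
satisfies $\mindeg T = 0$, so that $a := \Phi_0(T) \neq 0$ in $\ell^\infty(\Omega)$. By Corollary \ref{cor:streamlined}, the failure of $p_i$ to be reducing produces a \streamlined cycle through $i$: a sequence $i = j_0, j_1, \dots, j_\ell = i$ with $\ell \geq 2$, $(j_k,j_{k+1}) \in E(P)$, and $j_k \neq j_{k+1}$ for every $0 \leq k \leq \ell-1$.

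Using this cycle I would produce a single element of $\mathcal{T}_+(P)$ written in two different forms. Consider in $\mathcal{T}_+(P)_\ell$ the elements
$$
B \;=\; S^{(1)}_{E_{ij_1}} S^{(1)}_{E_{j_1 j_2}} \cdots S^{(1)}_{E_{j_{\ell-1} i}},
\qquad
S^\ell, \quad \text{where } S := S^{(1)}_{E_{ii}}.
$$
Both live in $p_i \mathcal{T}_+(P) p_i$, and their $\ell$-th Fourier components lie in the one-dimensional subspace of $Arv(P)_\ell$ spanned by $E_{ii}$. An iterated application of the subproduct formula in Theorem \ref{theorem:subproduct-computation} yields $B = c \, S^{(\ell)}_{E_{ii}}$ and $S^\ell = c' \, S^{(\ell)}_{E_{ii}}$ with strictly positive constants $c, c' > 0$ (positivity being immediate from $P_{ii}>0$ and from the positive transition probabilities along the cycle). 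In particular $B = (c/c') S^\ell$, so applying $\varphi$ gives $\varphi(B) = (c/c') T^\ell$.

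The contradiction comes from comparing $\Phi_0$ of the two sides. For the left-hand side, each factor satisfies $S^{(1)}_{E_{j_k j_{k+1}}} = p_{j_k} S^{(1)}_{E_{j_k j_{k+1}}} p_{j_{k+1}}$ with $p_{j_k}$ and $p_{j_{k+1}}$ orthogonal since $j_k \neq j_{k+1}$. Hence Lemma \ref{lemma:perpendicular-subproduct} gives $\mindeg \varphi(S^{(1)}_{E_{j_k j_{k+1}}}) \geq 1$ for every $k$, and the Fourier product formula of Proposition \ref{proposition:banach-bimodule} yields $\mindeg \varphi(B) \geq \ell \geq 2$, so $\Phi_0(\varphi(B)) = 0$. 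For the right-hand side, writing $T = a + V$ with $\mindeg V \geq 1$ and expanding $T^\ell$ via the same product formula isolates $a^\ell$ as the unique degree-zero contribution; since $\ell^\infty(\Omega)$ is commutative and $a \neq 0$ implies $a_j \neq 0$ for some coordinate $j$, we get $a^\ell \neq 0$. Hence $\Phi_0((c/c') T^\ell) \neq 0$, contradicting what we just derived for $\varphi(B)$.

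The crucial step is the identification $B = (c/c') S^\ell$, which transfers the graph-theoretic information about the \streamlined cycle into an identity between a ``cycle word'' and a pure power; once this identity is in place, Lemma \ref{lemma:perpendicular-subproduct} supplies the contradiction automatically. No additional hypothesis on continuity or admissibility of $\varphi$ is needed, since the argument only manipulates Fourier coefficients and finitely many products.
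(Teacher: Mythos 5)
Your proof is correct and follows essentially the same route as the paper's: the identity expressing the \streamlined-cycle word and the power $(S^{(1)}_{E_{ii}})^\ell$ as positive multiples of the same element $S^{(\ell)}_{E_{ii}}$, combined with Lemma~\ref{lemma:perpendicular-subproduct} and the absence of nilpotents in $\vnMaS$. The only difference is cosmetic (you argue by contradiction where the paper argues the contrapositive).
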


\begin{proof}
We prove the contrapositive. Suppose that $p_i$ is not
reducing. By Corollary~\ref{cor:streamlined}
there exists a  \streamlined cycle  $\gamma$ of length $\ell \geq 2$ such that $\gamma(0) = \gamma(\ell) = i$. Hence, 
for some $ c_{\gamma} > 0$ we have
\begin{equation}\label{eq:one-way}
S^{(1)}_{E_{i\gamma(1)}} \cdot ... \cdot S^{(1)}_{E_{\gamma(\ell - 1)i}} = c_{\gamma} S^{(\ell)}_{E_{ii}}.
\end{equation}
By Remark~\ref{remark:reducing-by-1-dim}, we must have $S^{(1)}_{E_{ii}} \neq 0$. We note that for
some $d_{i,\ell} > 0$ we have
\begin{equation}\label{eq:another-way}
(S^{(1)}_{E_{ii}})^{\ell} = d_{i,\ell} \cdot S^{(\ell)}_{E_{ii}}.
\end{equation}
Since $\gamma$ is \streamlined, by 
Lemma~\ref{lemma:perpendicular-subproduct} it follows that we must have  $\mindeg(\varphi(S^{(1)}_{E_{\gamma(k)\gamma(k+1)}})) \geq 1$ for all $0 \leq k \leq \ell-1$. So, by equation~\eqref{eq:one-way},
we must have $\mindeg(\varphi(S^{(\ell)}_{E_{ii}})) \geq \ell$. Therefore, since $\vnMaS$ (being commutative) does not contain any nilpotent elements,
it follows from equation~\eqref{eq:another-way} that we must have that $\mindeg(\varphi(S^{(1)}_{E_{ii}})) \geq 1$, contradicting $\varphi$-singularity of $p_i$.
\end{proof}

Thus we see that, in the context of subproduct systems arising from stochastic matrices, singular minimal projections (with respect to some isomorphism) can only arise from reducing minimal projections. In particular, the following special case of our main goal is resolved.

\begin{corollary}\label{corollary:essential-case}
Let $P$ and $Q$ be two essential stochastic matrices over $\St$ with no $1\times 1$ irreducible block. If $\varphi: \mathcal{T}_+(P) \rightarrow \mathcal{T}_+(Q)$ is an admissible isomorphism, then it is semi-graded. Therefore there exists a completely bounded graded (admissible) isomorphism from $\mathcal{T}_+(P) \rightarrow \mathcal{T}_+(Q)$, hence there exists a $\rho$-similarity
between $Arv(P)$ and $Arv(Q)$ and $P\sim_{\sigma_\rho} Q$.
\end{corollary}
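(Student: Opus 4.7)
The plan is to show that every minimal projection $p_i \in \vnMaS$ is $\varphi$-regular, from which all three assertions of the corollary follow by assembling earlier results. The crucial input from the hypotheses is that essentiality combined with the absence of $1 \times 1$ irreducible blocks rules out reducing minimal projections; without this, an absorbing state could yield a reducing $p_i$ with potentially singular behavior, directly obstructing semi-gradedness.

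Suppose, towards a contradiction, that some $p_i$ is $\varphi$-singular. By Proposition~\ref{proposition:singular-is-reducing}, $p_i$ must then be reducing for $Arv(P)$, so by Corollary~\ref{cor:streamlined} no \streamlined cycle passes through $i$. On the other hand, since $P$ is essential with no $1 \times 1$ irreducible block, the state $i$ sits inside an irreducible component of size at least $2$, and hence communicates with some $j \neq i$. Choose a path from $i$ to $j$ and a path from $j$ to $i$ of \emph{minimal} total length, and concatenate them into a cycle $\gamma$ through $i$ visiting $j$. Any consecutive repetition $\gamma(k) = \gamma(k+1)$ could be culled to produce a strictly shorter round-trip through $j$, contradicting minimality; hence $\gamma$ is \streamlined of length at least $2$, giving the desired contradiction. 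Thus every $p_i$ is $\varphi$-regular.

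Since $\{p_i\}_{i \in \St}$ is a family of pairwise orthogonal $\varphi$-regular projections summing to $1_{\vnMaS}$, Proposition~\ref{proposition:condition-for-semi-graded} gives that $\varphi$ is semi-graded. Proposition~\ref{proposition:bounded-semi-graded-turns-graded} then produces a graded completely bounded admissible isomorphism $\widetilde{\varphi} : \tensor(P) \to \tensor(Q)$, and Proposition~\ref{proposition:graded-tensor-isomorphism}(2) extracts from $\widetilde{\varphi}$ a $\rho$-similarity $V : Arv(P) \to Arv(Q)$ with $\rho = \widetilde{\varphi} \upharpoonright_{\vnMaS}$. A final application of Theorem~\ref{theorem:finite-essential-subproduct}(1) to $V$ then yields $P \sim_{\sigma_\rho} Q$, completing the proof.
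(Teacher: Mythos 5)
Your proof is correct and follows essentially the same route as the paper's: show every minimal projection $p_i$ is $\varphi$-regular (via the non-reducibility coming from a \streamlined cycle through $i$, which exists because $i$ lies in an irreducible block of size $\geq 2$), then chain Propositions~\ref{proposition:condition-for-semi-graded}, \ref{proposition:bounded-semi-graded-turns-graded}, \ref{proposition:graded-tensor-isomorphism} and Theorem~\ref{theorem:finite-essential-subproduct}. The only difference is that you spell out the minimal-length round-trip argument for the existence of the \streamlined cycle, which the paper simply asserts.
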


\begin{proof}
Using Proposition \ref{theorem:graph-decomposition},
we can decompose $P$ into irreducible blocks, which will
be at least $2\times 2$. For every $i\in \St$ we must have that $p_i$ is non-reducing for $Arv(P)$ (and $Arv(Q)$) since there exists a \streamlined cycle through $i$ in the directed graph of $P$. This makes $p_i$ a $\varphi$-regular projection for every $i\in \St$ due to Proposition \ref{proposition:singular-is-reducing}. Since $\varphi$ is admissible, by Proposition \ref{proposition:condition-for-semi-graded}, we obtain that $\varphi$ is semi-graded. Using Proposition~\ref{proposition:bounded-semi-graded-turns-graded} yields a the graded completely bounded admissible isomorphism, and by Proposition~\ref{proposition:graded-tensor-isomorphism} we obtain the desired $\rho$-similarity.
It follows from Theorem~\ref{theorem:finite-essential-subproduct} that $P\sim_{\sigma_\rho} Q$.
\end{proof}

We will develop further machinery to deal with the case of algebraic isomorphisms between tensor algebras of general (non-essential) stochastic matrices.

\begin{proposition} \label{proposition:isometric-one-dim}
Let $P$ and $Q$ be stochastic matrices over $\St$. Let $\varphi : \mathcal{T}_+(P) \rightarrow \mathcal{T}_+(Q)$ be an isomorphism and $i\in \St$ be such that $p_i$ is reducing. Then,
\begin{enumerate}
\item
There exists $j\in \St$ such that $p_j = \Phi_0 (\varphi(p_i))$ and $p_j$ is reducing. Furthermore, the map $\widehat{\varphi}_{p_i,p_j} := \Res_{p_j} \circ \varphi_{p_i,p_j} \circ \Res_{p_i}^{-1}:\mathcal{T}_+(p_i Arv(P) p_i)\rightarrow \mathcal{T}_+(p_j Arv(Q) p_j)$ is an isometric isomorphism.
\item
If $R$ is a stochastic matrix over $\St$ and $\psi : \mathcal{T}_+(Q) \rightarrow \mathcal{T}_+(R)$ is an isomorphism, let $k\in \St$ be such that $p_k = \Phi_0(\psi(p_j))$, then $p_k = \Phi_0((\psi \circ \varphi) (p_i))$ and $\psi_{p_j,p_k} \circ \varphi_{p_i,p_j} = (\psi \circ \varphi)_{p_i,p_k}$. In particular, $(\varphi_{p_i,p_j})^{-1} = (\varphi^{-1})_{p_j,p_i}$ and $(\widehat{\varphi}_{p_i,p_j})^{-1} = (\widehat{\varphi^{-1}})_{p_j,p_i}$
\end{enumerate}
\end{proposition}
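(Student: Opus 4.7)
The plan is to handle the two items in turn, using the rigidity structure from Proposition~\ref{proposition:preservation-reducing} as the main tool. For part (1), the central task is to show that $q := \Phi_0(\varphi(p_i))$ is a minimal projection. I would first apply Proposition~\ref{proposition:preservation-reducing} directly: its three items yield that $q$ is a self-adjoint projection in $\vnMaS$, that $q$ is reducing for $Arv(Q)$ with $q\varphi(p_i)q=q$, and that $\varphi_{p_i,q}\colon p_i\tensor(P)p_i \to q\tensor(Q)q$ is a bounded algebra isomorphism. Remark~\ref{remark:reducing-by-1-dim} identifies $p_i\tensor(P)p_i$ via $\Res_{p_i}$ with either $\cc$ or the disc algebra $A(\disk)$, both of which are commutative unital Banach algebras with connected Gelfand spectrum, and hence have only the trivial idempotents $0$ and $1$. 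Writing $q=\sum_{\ell\in C_q}p_\ell$, each $p_\ell\in q\tensor(Q)q$ is an idempotent that must pull back to $0$ or $p_i$ under $\varphi_{p_i,q}^{-1}$, forcing $|C_q|=1$ and so $q=p_j$ for a unique $j\in\St$. That $p_j$ is reducing follows from Proposition~\ref{proposition:preservation-reducing}(2), and $\widehat{\varphi}_{p_i,p_j}$ is a bounded algebra isomorphism between two uniform algebras; since the norm equals the spectral radius in a uniform algebra, and the spectral radius is an algebraic invariant, the map is automatically isometric.

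For part (2), I would first establish $p_k=\Phi_0((\psi\circ\varphi)(p_i))$. Starting from $p_j\varphi(p_i)p_j=p_j$ (Proposition~\ref{proposition:preservation-reducing}(2)), applying $\psi$ gives $\psi(p_j)\psi(\varphi(p_i))\psi(p_j)=\psi(p_j)$. Applying $\Phi_0$ and using that it is multiplicative on $\tensor(R)$ (the Fourier indices being non-negative), the identity becomes $p_k\cdot p_{k'}\cdot p_k=p_k$, where $p_{k'}:=\Phi_0((\psi\circ\varphi)(p_i))$ comes from applying part (1) to $\psi\circ\varphi$. Minimality of both projections in $\vnMaS$ then forces $k=k'$. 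For the equality $\psi_{p_j,p_k}\circ\varphi_{p_i,p_j}=(\psi\circ\varphi)_{p_i,p_k}$, I would compute the difference on $x\in p_i\tensor(P)p_i$, which reduces to showing $p_k[\psi(\varphi(x))-\psi(p_j)\psi(\varphi(x))\psi(p_j)]p_k=0$. Writing $\psi(p_j)=p_k+T$ with $p_kTp_k=0$, the idempotency relation $T=p_kT+Tp_k+T^2$ yields $p_kT^mp_k=0$ for all $m\geq 1$; combined with the decomposition $\psi(\varphi(x))=f\psi(\varphi(x))f$ where $f=(\psi\circ\varphi)(p_i)$, expansion collapses the difference to zero. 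I expect this combinatorial collapse to be the main technical obstacle, but it is a routine algebraic computation once all the idempotency identities from Proposition~\ref{proposition:preservation-reducing} are in hand.

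The ``in particular'' assertions then follow by specialization. Taking $\psi=\varphi^{-1}$ and $R=P$ in the first assertion of part (2) yields $\Phi_0(\varphi^{-1}(p_j))=p_i$, and the main equality of part (2) specializes to $(\varphi^{-1})_{p_j,p_i}\circ\varphi_{p_i,p_j}=\mathrm{id}_{p_i\tensor(P)p_i}$, whence $(\varphi_{p_i,p_j})^{-1}=(\varphi^{-1})_{p_j,p_i}$. Alternatively, one can verify this directly by writing $\varphi^{-1}(p_j)=p_i+S$ with $p_iSp_i=0$ and observing the cancellations $p_iSxp_i=p_ixSp_i=p_iSxSp_i=0$ for $x=p_ixp_i$, which follow from $p_iS=p_iS(1-p_i)$ and $Sp_i=(1-p_i)Sp_i$. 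The analogous identity for $\widehat{\varphi}_{p_i,p_j}$ then follows by conjugating with the restriction isomorphisms $\Res_{p_i}$ and $\Res_{p_j}$.
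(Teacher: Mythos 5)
Your part (1) and the first claim of part (2) are essentially the paper's own argument: the paper likewise shows minimality of $q=\Phi_0(\varphi(p_i))$ by pulling a nontrivial decomposition of $q$ back through $\varphi_{p_i,q}^{-1}$ to contradict minimality of $p_i$, and it obtains isometry of $\widehat{\varphi}_{p_i,p_j}$ from Lemma~\ref{lemma:automatic-contractivity} (your norm-equals-spectral-radius argument for $\mathbb{C}$ and $\mathbb{A}(\mathbb{D})$ is an equivalent formulation), and it derives $p_k=\Phi_0((\psi\circ\varphi)(p_i))$ exactly as you do.

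The gap is in your justification of $\psi_{p_j,p_k}\circ\varphi_{p_i,p_j}=(\psi\circ\varphi)_{p_i,p_k}$. Writing $\psi(p_j)=p_k+T$ reduces the claim to the vanishing of the cross terms $p_kT\psi(\varphi(x))p_k$, $p_k\psi(\varphi(x))Tp_k$ and $p_kT\psi(\varphi(x))Tp_k$, and the identities you invoke --- $p_kT^mp_k=0$ and $\psi(\varphi(x))=f\psi(\varphi(x))f$ with $f=(\psi\circ\varphi)(p_i)$ --- do not close the computation: from $p_kTp_k=0$ you only get $p_kT=p_kT(1-p_k)$, and $(1-p_k)\psi(\varphi(x))p_k$ has no reason to vanish, because $\psi(\varphi(x))$ is compressed by the idempotent $f$ (whose degree-zero part is $p_k$ but which is not $p_k$ itself), not by $p_k$. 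This is precisely why your alternative direct verification of $(\varphi_{p_i,p_j})^{-1}=(\varphi^{-1})_{p_j,p_i}$ \emph{does} work purely algebraically --- there the middle element satisfies $x=p_ixp_i$ on the nose, so $p_iS(1-p_i)xp_i=0$ --- but that cancellation is unavailable for the general composition. What is actually needed, and what the paper uses, is the reducibility of $p_k$ (Proposition~\ref{proposition:preservation-reducing}, item (2), applied to $\psi$ and $p_j$): rerunning the Fourier-series computation from the multiplicativity step of Proposition~\ref{proposition:preservation-reducing}, item (3), yields the insertion identity $p_kABp_k=(p_kAp_k)(p_kBp_k)$ for all $A,B\in\mathcal{T}_+(R)$, whence
$$
p_k\psi(p_j)\psi(\varphi(x))\psi(p_j)p_k=p_k\psi(p_j)p_k\cdot p_k\psi(\varphi(x))p_k\cdot p_k\psi(p_j)p_k=p_k\psi(\varphi(x))p_k .
$$
Your argument should cite this reducibility-based identity rather than idempotency alone.
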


\begin{proof}
(1): Let $q = \Phi_0(\varphi(p_i))$. We show that $q$ is minimal. If not then write $q = q_1 +q_2$ with $q_1 \perp q_2$ both non-zero in $q\vnMaS q$. Since $(\varphi_{p_i,q})^{-1}(q)$ is an idempotent in $p_i \mathcal{T}_+(P)p_i$, and $\varphi_{p_i,q}$ is an isomorphism by item (3) of Proposition \ref{proposition:preservation-reducing}, we must have, by similar arguments to those in item (1) of Proposition \ref{proposition:preservation-reducing}, that $\Phi_0 (\varphi_{p_i,q}(q_1))$ is a non-zero projection in $p_i \mathcal{T}_+(P) p_i$. Similarly, $\Phi_0 (\varphi_{p_i,q}(q_2))$ is also a non-zero projection $p_i \mathcal{T}_+(P) p_i$, which is perpendicular to $\Phi_0 (\varphi_{p_i,q}(q_1))$. This contradicts minimality of $p_i$. Now, since $\widehat{\varphi}_{p_i,p_j}$ is a map from $\mathcal{T}_+(p_i Arv(P) p_i)$ into $\mathcal{T}_+(p_j Arv(Q) p_j)$, each being isomorphic to either $\mathbb{A}(\mathbb{D})$ or $\mathbb{C}$, both embed into a commutative C* algebra, so by Lemma \ref{lemma:automatic-contractivity}, our map $\widehat{\varphi}_{p_i,p_j}$ is isometric.

(2): Using the previous item, let $j, k , r \in \St$ be such that $p_j = \Phi_0(\varphi(p_i))$, $p_k =  \Phi_0(\psi(p_k))$ and $p_r = \Phi_0(\psi(\varphi(p_i)))$.
By Proposition~\ref{proposition:preservation-reducing},
we have that $p_j = p_j \varphi(p_i) p_j$ and
$p_k = p_k\psi(p_j)p_k$. Therefore,
$$
p_k = p_k \psi(p_j) p_k = p_k \psi(p_j\varphi(p_i)p_j) p_k =
p_k \psi(p_j) \psi(\varphi(p_i)) \psi(p_j) p_k.
$$
Applying $\Phi_0$ to both sides and using the fact
that $\Phi_0$ is a homomorphism we obtain that
$$
p_k = \Phi_0(p_k) = \Phi_0(p_k) \Phi_0(\psi(p_j)) \Phi_0(\psi(\varphi(p_i))) \Phi_0(\psi(p_j)) \Phi_0(p_k) =
p_k p_r p_k.
$$
Therefore $p_k \leq p_r$. By minimality of the projections,
we obtain $p_r = p_k$ as desired.

For the remainder of the statement, given $p_iTp_i \in p_i \mathcal{T}_+(P)p_i$ a computation yields
$$
(\psi_{p_j,p_k} \circ \varphi_{p_i,p_j})(p_i T p_i) = p_k \psi(p_j) (\psi \circ \varphi)(p_i T p_i) \psi(p_j) p_k = 
$$
$$
p_k \psi(p_j) p_k (\psi \circ \varphi)(p_i T p_i) p_k \psi(p_j) p_k = p_k (\psi \circ \varphi)(p_i T p_i) p_k = (\psi \circ \varphi)_{p_i,p_k}(p_i T p_i)
$$
Where the second equality follows along the lines of the proof of multiplicativity of $\psi_{p_j,p_k}$ in item three of Proposition \ref{proposition:preservation-reducing}.
\end{proof}

\begin{proposition} \label{proposition:singularity-among-reducing}
Let $P$ and $Q$ be stochastic, let $i\in \St$ and let $\varphi: \mathcal{T}_+(P) \rightarrow \mathcal{T}_+(Q)$ be an isomorphism. Let $i'\in \St$ be such that that $p_{i'} = \Phi_0(\varphi(p_i))$. Then $p_i$ is $\varphi$-singular if and only if $p_{i'}$ is $\varphi^{-1}$-singular.
\end{proposition}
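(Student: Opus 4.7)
The strategy is to translate the statement into a standard fact about isometric automorphisms of the disc algebra $\mathbb{A}(\disk)$, passing through the reducing-projection machinery already developed in this section.

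First I would establish the setup. Assume $p_i$ is $\varphi$-singular. By Proposition~\ref{proposition:singular-is-reducing}, $p_i$ is reducing for $Arv(P)$, and by Remark~\ref{remark:sigular-is-cyclic}, $P_{ii} > 0$. Proposition~\ref{proposition:isometric-one-dim}(1) then yields that $p_{i'}$ is reducing for $Arv(Q)$ and that the map $\widehat{\varphi}_{p_i, p_{i'}}: \mathcal{T}_+(p_i Arv(P) p_i) \to \mathcal{T}_+(p_{i'} Arv(Q) p_{i'})$ is an isometric isomorphism; by part~(2) its inverse is $\widehat{\varphi^{-1}}_{p_{i'}, p_i}$. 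Since $P_{ii}>0$ and $p_i$ is reducing, Remark~\ref{remark:reducing-by-1-dim} identifies the domain with $\mathbb{A}(\disk)$ (the case $\mathbb{C}$ is excluded because $P_{ii}>0$); transporting across $\widehat{\varphi}_{p_i,p_{i'}}$ the same holds for the codomain, forcing $Q_{i'i'}>0$.

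Next I would recast the singularity condition. Because $p_i$ is minimal and reducing, every $\xi \in Arv(P)_1$ satisfies $p_i S^{(1)}_\xi p_i = c \, S^{(1)}_{E_{ii}}$ for a scalar $c$, and under the identification $\mathcal{T}_+(p_i Arv(P) p_i) \cong \mathbb{A}(\disk)$ the generator $S^{(1)}_{E_{ii}}$ corresponds to the coordinate function $z$. Moreover, $\Phi_0$ transports through $\Res_{p_i}$ to evaluation at $0$ on $\mathbb{A}(\disk)$. Hence $p_i$ is $\varphi$-singular if and only if $\Phi_0(\widehat{\varphi}_{p_i, p_{i'}}(z)) \neq 0$, and analogously $p_{i'}$ is $\varphi^{-1}$-singular if and only if $\Phi_0((\widehat{\varphi}_{p_i, p_{i'}})^{-1}(z)) \neq 0$.

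Finally I would invoke the classical description of the isometric automorphisms of the disc algebra: every such automorphism has the form $f \mapsto f \circ \phi$ for some M\"obius automorphism $\phi$ of $\disk$. Consequently $\widehat{\varphi}_{p_i, p_{i'}}(z) = \phi$ with value $\phi(0)$ at the origin, while $(\widehat{\varphi}_{p_i, p_{i'}})^{-1}(z) = \phi^{-1}$ with value $\phi^{-1}(0)$. Since $\phi$ is a bijection of $\disk$, the conditions $\phi(0)=0$ and $\phi^{-1}(0)=0$ are equivalent, which delivers both directions of the theorem simultaneously. The main obstacle I anticipate is the compatibility bookkeeping in the setup: carefully identifying the restricted subalgebras with $\mathbb{A}(\disk)$ (using reducing-ness plus $P_{ii}>0$ to conclude $P^{(n)}_{ii}=(P_{ii})^n$ and hence one-dimensional fibers) and verifying that $\Phi_0$ corresponds to evaluation at $0$ under this identification. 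Once the translation is in place, the classical disc-algebra automorphism theorem finishes the proof mechanically.
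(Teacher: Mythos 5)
Your proposal is correct, but it finishes the argument by a different route than the paper. Both proofs share the same setup: reduce to the corner map $\widehat{\varphi}_{p_i,p_{i'}}$ via Propositions~\ref{proposition:singular-is-reducing} and~\ref{proposition:isometric-one-dim}, and use the identity $(\widehat{\varphi}_{p_i,p_{i'}})^{-1}=(\widehat{\varphi^{-1}})_{p_{i'},p_i}$. From there the paper argues the contrapositive purely algebraically: if $p_i$ is $\varphi$-regular then $\widehat{\varphi}_{p_i,p_{i'}}$ is regular, hence (being isometric and therefore admissible by Lemma~\ref{lemma:isometric-automatic-admissibility}) semi-graded by Proposition~\ref{proposition:condition-for-semi-graded}, and semi-gradedness is symmetric under inversion, which forces $p_{i'}$ to be $\varphi^{-1}$-regular. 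You instead identify the corner algebras with $\mathbb{A}(\disk)$, identify $\Phi_0$ with evaluation at $0$, and invoke the classical description of isometric automorphisms of the disc algebra as composition with M\"obius maps, so that singularity becomes $\phi(0)\neq 0$, and the equivalence $\phi(0)=0\iff\phi^{-1}(0)=0$ closes the loop. This is a legitimate and more concrete argument; in fact it is exactly the picture the paper develops immediately afterwards (the maps $f^j_\varphi$ of Proposition~\ref{prop:fj}), so your proof essentially front-loads that machinery, at the cost of importing a function-theoretic fact where the paper's version needs none. One bookkeeping point to tighten: your disc-algebra framework is set up under the hypothesis that $p_i$ is $\varphi$-singular (which gives reducingness), so the claim that the M\"obius equivalence ``delivers both directions simultaneously'' needs a word for the converse --- either run the symmetric construction starting from $p_{i'}$ being $\varphi^{-1}$-singular and use Proposition~\ref{proposition:isometric-one-dim}(2) together with orthogonality of the $p_j$'s to identify $\Phi_0(\varphi^{-1}(p_{i'}))$ with $p_i$, or observe that whenever either projection is singular both corners are reducing and the picture applies, so the only remaining case is the one where both are regular and there is nothing to prove. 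The paper's own proof is equally terse on this point, so this is a presentational remark rather than a substantive gap.
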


\begin{proof}
If $p_i$ is $\varphi$-regular then $p_i$ is $\widehat{\varphi}_{p_i,p_{i'}}$-regular. Thus $\widehat{\varphi}_{p_i,p_{i'}}$ is semi-graded by Proposition \ref{proposition:condition-for-semi-graded}. Since $\widehat{\varphi^{-1}}_{p_{i'},p_i} = (\widehat{\varphi}_{p_i,p_{i'}})^{-1}$, we have that $p_{i'}$ is  $\widehat{\varphi^{-1}}_{p_{i'},p_i}$-regular and so $p_{i'}$ is  $\varphi^{-1}$-regular. By Proposition \ref{proposition:isometric-one-dim} item (2) we get that $p_i = \Phi_0(\varphi^{-1}(p_{i'}))$ so that by applying the above we get that if $p_{i'}$ is $\varphi^{-1}$-regular, then $p_i$ is $\varphi$-regular.
\end{proof}

\subsection*{An equivalence between singular projections} For every stochastic matrix $P$ over a state set $\St$ we define the set of singular states of $P$
$$
I_P : = \{ \ j \in \St  \ | \ j\text{ is $\varphi$-singular for some isomorphism }\varphi: \mathcal{T}_+(P) \rightarrow \mathcal{T}_+(Q) \ \}
$$
Now, if $\varphi : \mathcal{T}_+(P) \rightarrow \mathcal{T}_+(Q)$ is \emph{any} algebraic isomorphism, and $j \in I_P$ then there is a projection $p_{j'}$ such that $p_{j'} = \Phi_0(\varphi(p_j))$ and $j' \in I_Q$ and $\widehat{\varphi}_{p_j, p_{j'}} : \mathcal{T}_+(p_jArv(Q)p_j) \rightarrow \mathcal{T}_+(p_{j'} Arv(P)p_{j'})$ is an isometric isomorphism of Banach algebras, by Proposition~\ref{proposition:isometric-one-dim}. Since $P_{jj} > 0$, and $Q_{j'j'} >0$, we must have that $\mathcal{T}_+(p_j Arv(P)p_j)$ and $\mathcal{T}_+(p_{j'}Arv(Q)p_{j'})$ are both isomorphic to the disc algebra $\mathbb{A}(\mathbb{D})$ so that the spectra of these commutative Banach algebras can be naturally identified with $\overline{\mathbb{D}}$ where the vacuum state is identified with the point $0$. Composition by $\widehat{\varphi}_{p_{j'}, p_j}$ induces a homeomorphism $f_{\varphi}^j = (\widehat{\varphi}_{p_j, p_{j'}})^* : \overline{\mathbb{D}} \rightarrow \overline{\mathbb{D}}$, so that when it is restricted to $\mathbb{D}$, it becomes a biholomorphic automorphism of $\mathbb{D}$ (See \cite[Lemma 4.4]{DRS} for this fact in much greater generality), and $j$ is $\varphi$-regular if and only if $(\widehat{\varphi}_{p_j, p_{j'}})^*$ preserves the vacuum state, that is, if and only if $f_j^{\varphi}(0) = 0$. 
Biholomorphisms of the disk have a well-known description
as Moebius transformations. Thus, we have proven the following statement.

\begin{proposition}\label{prop:fj}
Let $P,Q$ be stochastic matrices, let 
$\varphi : \mathcal{T}_+(P) \rightarrow \mathcal{T}_+(Q)$ be an algebraic isomorphism, and let $j \in I_P$. Let 
$p_{j'} = \Phi_0(\varphi(p_j))$.  Then
there exist $w_j \in \mathbb{D}$ and $\theta_j \in [0, 2\pi)$ such that $f_\varphi^j = (\widehat{\varphi}_{p_j, p_{j'}})^* : \overline{\disk} \to \overline{\disk}$ is
given by
$$
f_{\varphi}^j(z) = e^{i\theta_j} \cdot \frac{z - w_j}{1 - \overline{w_j}z}
$$
Furthermore, $p_j$ is $\varphi$-regular if and only if
$f_j^{\varphi}(0) = 0$.
\end{proposition}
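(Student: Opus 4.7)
The plan is to exploit the fact that the given isomorphism $\varphi$, restricted to a reducing corner, becomes an isometric isomorphism between two copies of the disc algebra, and then invoke the classical description of biholomorphic automorphisms of $\disk$.

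First, since $j \in I_P$, Proposition~\ref{proposition:singular-is-reducing} gives that $p_j$ is reducing for $Arv(P)$, and Remark~\ref{remark:sigular-is-cyclic} gives $P_{jj}>0$. Applying Proposition~\ref{proposition:isometric-one-dim}(1) to the isomorphism $\varphi$ produces $p_{j'} = \Phi_0(\varphi(p_j))$ as a minimal reducing projection, and an \emph{isometric} isomorphism $\widehat{\varphi}_{p_j,p_{j'}}:\mathcal{T}_+(p_j Arv(P) p_j) \to \mathcal{T}_+(p_{j'} Arv(Q) p_{j'})$ between the tensor algebras of the compressed subproduct systems. By Remark~\ref{remark:reducing-by-1-dim}, each compressed subproduct system has either trivial or one-dimensional fibers, and since $P_{jj}>0$ (and hence $Q_{j'j'}>0$ by transfer through $\widehat{\varphi}_{p_j,p_{j'}}$), both tensor algebras are canonically isometrically isomorphic to the disc algebra $\mathbb{A}(\disk)$.

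Next, I identify the maximal ideal space of $\mathbb{A}(\disk)$ with $\overline{\disk}$ via evaluation characters, so that the vacuum state $\Phi_0$ corresponds to evaluation at $0$. The dual map $f_\varphi^j = (\widehat{\varphi}_{p_j,p_{j'}})^*$ is then automatically a homeomorphism of $\overline{\disk}$. Because an isometric isomorphism of $\mathbb{A}(\disk)$ must carry analytic characters (those corresponding to interior points) to analytic characters, the restriction of $f_\varphi^j$ to $\disk$ is a biholomorphic automorphism, exactly as invoked in the discussion via \cite[Lemma 4.4]{DRS}. The well-known Schwarz-lemma classification of biholomorphic automorphisms of $\disk$ then yields the Moebius form
\[
f_\varphi^j(z) = e^{i\theta_j}\,\frac{z - w_j}{1 - \overline{w_j}\,z}
\]
for uniquely determined $w_j \in \disk$ and $\theta_j \in [0,2\pi)$.

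For the final claim, I would show that $p_j$ is $\varphi$-regular if and only if $\widehat{\varphi}_{p_j,p_{j'}}$ carries the vacuum state on $\mathcal{T}_+(p_j Arv(P)p_j)$ to the vacuum state on $\mathcal{T}_+(p_{j'} Arv(Q)p_{j'})$, which (through the above identification) says exactly $f_\varphi^j(0)=0$, i.e.\ $w_j = 0$. To pin this down precisely, the key technical verification is that for every $\xi \in Arv(P)_1$,
\[
\Phi_0\bigl(\varphi(p_j S^{(1)}_\xi p_j)\bigr) \;=\; p_{j'}\,\Phi_0\bigl(\varphi(S^{(1)}_\xi)\bigr)\, p_{j'},
\]
which is obtained by expanding $\varphi(p_j) = p_{j'} + T$ with $\mindeg(T) \geq 1$ and $\varphi(S^{(1)}_\xi) = m + S$ with $\mindeg(S) \geq 1$, multiplying out, and discarding every cross-term of positive minimal degree by Proposition~\ref{proposition:banach-bimodule}. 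This identity guarantees that the $\varphi$-regularity condition (vanishing of $\Phi_0(\varphi(p_j S^{(1)}_\xi p_j))$ for all $\xi$) coincides with the $p_{j'}$-compressed version, which by checking on the generator $z$ of $\mathbb{A}(\disk)$ is equivalent to $f_\varphi^j$ fixing $0$.

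The only non-routine point is this bookkeeping identity; everything else is a direct assembly of Proposition~\ref{proposition:isometric-one-dim}, Remark~\ref{remark:reducing-by-1-dim}, and the classical theory of $\mathbb{A}(\disk)$.
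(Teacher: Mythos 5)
Your proposal is correct and follows essentially the same route as the paper, whose proof of this proposition is precisely the discussion preceding it: reducibility of $p_j$ from Proposition~\ref{proposition:singular-is-reducing}, the isometric corner isomorphism from Proposition~\ref{proposition:isometric-one-dim}, identification of both corners with $\mathbb{A}(\mathbb{D})$ via Remark~\ref{remark:reducing-by-1-dim}, the induced biholomorphism of $\disk$ as in \cite[Lemma 4.4]{DRS}, and the Moebius classification. The only addition is your explicit bookkeeping identity for the regularity equivalence, which the paper leaves implicit; it is correct and follows at once from multiplicativity of $\Phi_0$ on $\tensor(Q)$, since $\Phi_0(\varphi(p_j S^{(1)}_\xi p_j)) = \Phi_0(\varphi(p_j))\,\Phi_0(\varphi(S^{(1)}_\xi))\,\Phi_0(\varphi(p_j))$.
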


The preceeding discussion and Proposition~\ref{proposition:singularity-among-reducing} motivate the following definition

\begin{defi}
Let $P$ and $Q$ be stochastic matrices over $\St$. An isomorphism $\varphi :\tensor(P) \rightarrow \tensor(Q)$  is said to be \emph{regular} if for all $i\in \St$, the projection $p_i$ is $\varphi$-regular.
\end{defi}

Due to Proposition \ref{proposition:singularity-among-reducing}, we have that $\varphi$ is regular if and only if $\varphi^{-1}$ is regular and, by Proposition~\ref{proposition:condition-for-semi-graded}, 
 $\varphi$ is regular and admissible if and only if $\varphi$ is semi-graded.

\begin{defi}
Let $P$ be a stochastic matrix over $\St$. Define $\sim_P$ to be the equivalence relation given by pairs $(i,k)\in I_P^2$ such that there exists a finite chain $(i=j_0 , j_1,..., j_s = k)$ with the property that any two subsequent pairs $(j_{t}, j_{t+1})$ there exist $i',k' \in \St$ with two \streamlined paths from $i'$ to $k'$ in the graph of $P$, one through $j_t$ and the other through $j_{t+1}$. Let $\mathcal{R}_P$ be a distinct set of representatives for $\sim_P$,  henceforth taken to be fixed.
\end{defi}

\begin{remark} \label{remark:three-properties}
Let $P$ and $Q$ be stochastic matrices. We shall show that $\sim_P$ satisfies the following three properties:
\begin{enumerate}
\renewcommand{\labelenumi}{($\Diamond_\theenumi$)}
\item \label{diamond-1}
If  $\varphi : \mathcal{T}_+(P) \rightarrow \mathcal{T}_+(Q)$ is an isomorphism
and $p_j$ is $\varphi$-regular for all $j \in \rp$, then
$p_j$ is $\varphi$-regular for all $j \in \Omega$.
\item \label{diamond-2}
For every $\Lambda = \{ \lambda_j \} \in \mathbb{T}^{\mathcal{R}_P}$ there exists an isometric isomorphism $\alpha_{\Lambda} : \mathcal{T}_+(P) \rightarrow \mathcal{T}_+(P)$ such that 
for every $j\in I_P$ we have $f^{j}_{\alpha_{\Lambda}}(z) = \lambda_{\mu}z$ for the unique $\mu\in \mathcal{R}_P$ such that $j \sim_P \mu$.
\item \label{diamond-3}
Every isomorphism $\varphi : \mathcal{T}_+(P) \rightarrow \mathcal{T}_+(Q)$ induces an equivalence preserving bijection $\Upsilon_{\varphi} : I_P \rightarrow I_Q$, and hence
a bijection $\overline{\Upsilon}_j: \rp \to \calr_Q$. Moreover,
$\Upsilon_{\varphi^{-1}} = (\Upsilon_{\varphi})^{-1}$.
\end{enumerate}
These three properties of $\sim_P$ will suffice to show that every isomorphism can be ``deformed" into a regular isomorphism.
\end{remark}

First we prove a few auxiliary results about the relation $\sim_P$.

\begin{proposition} \label{proposition:forcing-non-singular}
Let $P$ and $Q$ be stochastic matrices, and let $\varphi:\mathcal{T}_+(P) \rightarrow \mathcal{T}_+(Q)$ be an isomorphism. Assume that $j\in \St$ is $\varphi$-singular. Assume further that for $i,k$ there exists a streamlined path $\gamma$ from $i$ to $k$ through $j$ of length $\ell \geq 2$. Then for any path $\gamma'$ from $i$ to $k$ and through $j'$, we have that $p_{j'}$ is reducing for $Arv(P)$. If in addition $P^{(n)}_{j'j'} > 0$ for some $n \geq 1$, then $p_{j'}$ must be $\varphi$-singular.
\end{proposition}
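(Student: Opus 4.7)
The proof proceeds by contradiction in both parts, resting on a common mechanism built around a single Fourier-coefficient identity. Let $s$ be the index with $\gamma(s) = j$, and set
\[
A = \prod_{u=0}^{s-1}S^{(1)}_{E_{\gamma(u)\gamma(u+1)}}, \qquad B = \prod_{u=s}^{\ell-1}S^{(1)}_{E_{\gamma(u)\gamma(u+1)}},
\]
with empty products interpreted as the identity. By Remark~\ref{remark:sigular-is-cyclic} we have $P_{jj} > 0$, so $A(S^{(1)}_{E_{jj}})^t B = c_t\, S^{(\ell+t)}_{E_{ik}}$ with some $c_t > 0$ for every $t \geq 0$. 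Since $\gamma$ is streamlined, Lemma~\ref{lemma:perpendicular-subproduct} applied factor-by-factor yields $\mindeg \varphi(A)\geq s$ and $\mindeg \varphi(B)\geq \ell-s$; the $\varphi$-singularity of $j$ gives $\Phi_0(\varphi(S^{(1)}_{E_{jj}})) = \alpha\, p_{j^*}$ for some nonzero $\alpha$, where $p_{j^*} = \Phi_0(\varphi(p_j))$. Expanding $\varphi(S^{(1)}_{E_{jj}})^t$ via its Fourier series and grouping contributions to the $D$-th Fourier slot by the degrees drawn from the three factors, I will establish for each $D \geq \ell$ the key identity
\[
c_t\, \Phi_D(\varphi(S^{(\ell+t)}_{E_{ik}})) \;=\; \alpha^{t}\, p_D(t),
\]
where $p_D$ is a polynomial in $t$ of degree at most $D-\ell$ with coefficients in $\mathcal{T}_+(Q)_D$.

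The common contradiction mechanism is to produce a sequence of paths in $P$ from $i$ to $k$ of lengths $N_r \to \infty$ whose single-shift factors all have $\varphi$-image of minimal degree at least $1$. This gives $\mindeg\varphi(S^{(N_r)}_{E_{ik}}) \geq N_r$, so for each fixed $D$, once $N_r > D$, the coefficient $\Phi_D(\varphi(S^{(N_r)}_{E_{ik}}))$ vanishes. Setting $t = N_r - \ell$ in the key identity turns this into $\alpha^t p_D(t) = 0$ along an infinite arithmetic progression of $t$'s; since $\alpha\neq 0$, the polynomial $p_D$ vanishes at infinitely many points and is therefore identically zero, and evaluating at $t=0$ gives $\Phi_D(\varphi(S^{(\ell)}_{E_{ik}})) = 0$. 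Running this over every $D\geq\ell$ (the range $D<\ell$ being trivial by the base bound $\mindeg\varphi(S^{(\ell)}_{E_{ik}})\geq\ell$) annihilates every Fourier coefficient of $\varphi(S^{(\ell)}_{E_{ik}})$, so by the Ces\`aro convergence in Proposition~\ref{proposition:banach-bimodule} we conclude $\varphi(S^{(\ell)}_{E_{ik}})=0$; this contradicts injectivity of $\varphi$ because $P^{(\ell)}_{ik}>0$ along $\gamma$.

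To implement this mechanism in Part 1, assuming $p_{j'}$ is not reducing, Corollary~\ref{cor:streamlined} supplies a streamlined cycle $\delta$ through $j'$ of length $m\geq 2$, and culling $\gamma'$ yields streamlined sub-paths $\gamma'_1: i\to j'$ and $\gamma'_2: j'\to k$ of lengths $a$ and $b$. A direct inspection of the transitions at the junctions shows that $\gamma'_1\cdot\delta^r\cdot\gamma'_2$ is itself streamlined of length $N_r = a+rm+b$, delivering the required sequence. For Part 2, Part 1 forces $p_{j'}$ to be reducing, and the hypothesis $P^{(n)}_{j'j'}>0$ then forces $P_{j'j'}>0$, since any cycle at $j'$ culls either to a forbidden streamlined cycle or to the empty path. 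Assuming for contradiction that $p_{j'}$ is $\varphi$-regular, so $\mindeg\varphi(S^{(1)}_{E_{j'j'}})\geq 1$, the paths $\gamma'_1\cdot(\text{self-loop at }j')^u\cdot\gamma'_2$ of length $N_u = a+u+b$ supply the required sequence, because the $u$ self-loop factors now also have $\varphi$-image of minimal degree at least $1$.

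The main obstacle lies in establishing the polynomial-times-exponential form of the key identity. Writing $\varphi(S^{(1)}_{E_{jj}}) = \alpha p_{j^*} + \sum_{n\geq 1} T_n$ with $T_n = \Phi_n(\varphi(S^{(1)}_{E_{jj}}))$ and expanding $\varphi(S^{(1)}_{E_{jj}})^t$ as a sum over length-$t$ index sequences, each contribution to the $D$-th Fourier coefficient factors as $\alpha^{t-d_M}$ times a polynomial in $t$ of degree at most $d_M$, arising from the combinatorics of placing the $d_M$ non-identity picks among the $t$ positions. Summing over the admissible splits $d_A + d_M + d_B = D$ with $d_A\geq s$, $d_B\geq\ell-s$, $d_M\leq D-\ell$, then collects every piece into $\alpha^t p_D(t)$ with $\deg p_D \leq D-\ell$; once this structural fact is in hand, the remainder of the argument relies only on the elementary observation that polynomials vanishing on an infinite arithmetic progression are identically zero.
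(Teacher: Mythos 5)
Your global strategy is the paper's: insert powers of the loop at the singular state $j$ into $\gamma$ to get one family $S^{(\ell+t)}_{E_{ik}}$, insert cycles at $j'$ into $\gamma'$ to get a second family whose $\varphi$-images have minimal degree growing without bound, and play the two off each other. Your path constructions, the reduction of Part 2 to $P_{j'j'}>0$, and the use of Lemma~\ref{lemma:perpendicular-subproduct} along streamlined edges are all fine. The gap is in the key identity, precisely at the step your contradiction hinges on: the function $t\mapsto c_t\,\alpha^{-t}\,\Phi_D\big(\varphi(S^{(\ell+t)}_{E_{ik}})\big)$ is only \emph{eventually} polynomial in $t$, and you evaluate the polynomial at $t=0$. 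When you expand $\varphi(S^{(1)}_{E_{jj}})^t=(\alpha p_{j^*}+\sum_{n\ge 1}T_n)^t$ and fix a word $T_{n_1}\cdots T_{n_k}$ of non-identity picks, the resulting operator depends on which of the $k+1$ gaps between the letters are occupied by at least one $p_{j^*}$ (consecutive $p_{j^*}$'s collapse, but $p_{j^*}Xp_{j^*}\neq X$), and the number of placements realizing a fixed empty/non-empty gap pattern is given by a polynomial in $t$ only for $t>k$; at $t=k$ the all-empty pattern contributes an indicator spike and the binomial formulas for the other patterns return spurious nonzero values. Already for $D=\ell+2$ one finds that $p_D(0)$ differs from $\Phi_D(\varphi(S^{(\ell)}_{E_{ik}}))$ by terms such as $\Phi_{a+1}(\varphi(A))(1-p_{j^*})\Phi_{b+1}(\varphi(B))$ and $\Phi_a(\varphi(A))\,T_1(1-p_{j^*})T_1\,\Phi_b(\varphi(B))$, which need not vanish. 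So ``$p_D\equiv 0$'' does not yield $\Phi_D(\varphi(S^{(\ell)}_{E_{ik}}))=0$.

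This is fatal to the argument as structured: what you can legitimately extract is $\Phi_D(\varphi(S^{(\ell+t)}_{E_{ik}}))=0$ for $t>D-\ell$, i.e. $\mindeg\varphi(S^{(N)}_{E_{ik}})\ge N$ for all $N\ge\ell$ — which is no contradiction at all (a graded isomorphism does exactly this). The paper sidesteps the problem by working only at the single bottom level $D=a+b$ with $a=\mindeg\varphi(A)$, $b=\mindeg\varphi(B)$, where the identity
$c_t\,\Phi_{a+b}\big(\varphi(S^{(\ell+t)}_{E_{ik}})\big)=\alpha^{t}\,\Phi_a(\varphi(A))\Phi_b(\varphi(B))$
is exact for \emph{every} $t\ge 0$, including $t=0$: there are no inner gaps, and the outer $p_{j^*}$'s are absorbed because $\Phi_a(\varphi(A))p_{j^*}=\Phi_a(\varphi(A))$ and $p_{j^*}\Phi_b(\varphi(B))=\Phi_b(\varphi(B))$ at the minimal degrees. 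From this one deduces that $\mindeg\varphi(S^{(\ell+t)}_{E_{ik}})$ does not grow with $t$, which collides directly with the unbounded growth along your second family. To repair your proof you should either retreat to that bottom level, or find an argument that genuinely transfers the vanishing of the higher Fourier coefficients back to $t=0$; the combinatorial expansion as you describe it does not do this.
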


\begin{proof}
If $p_{j'}$ is non-reducing for $Arv(P)$, there exists a \streamlined cycle through $j'$ of length $r \geq 2$ and thus $\mindeg(\varphi(S^{(nr)}_{E_{j'j'}})) \geq n \cdot \mindeg(\varphi(S^{(r)}_{E_{j'j'}})) \geq nr$ for all $n \geq 1$ due to Lemma \ref{lemma:perpendicular-subproduct}. Now, if $\gamma$ is of length $\ell$ with $\gamma(m) = j$, and $\gamma'$ is of length $\ell'$ with $\gamma'(m') = j'$ both assumed to be \emph{\streamlined}, then up to some positive scalar, which we denote by $\approx$, we have
$$
S^{(\ell+s)}_{E_{ik}} \approx S^{(1)}_{E_{i\gamma(1)}} \cdot ... \cdot S^{(1)}_{E_{\gamma(m-1)j}} \cdot S^{(s)}_{E_{jj}} \cdot ... \cdot S^{(1)}_{E_{\gamma(\ell - 1) k}}
$$
Now if $\mindeg(\varphi(S^{(\ell)}_{E_{ik}})) = t$, due to the above and the fact $\Phi_0(\varphi(S^{(s)}_{E_{jj}}))$ is a non-zero scalar multiple of $p_j$, we still have the equality $\mindeg(\varphi(S^{(\ell + s )}_{E_{ik}})) = t$ for all $s\geq 0$.

On the other hand for $n\geq 1$, 
$$
S^{(\ell'+n\ell)}_{E_{ik}} \approx S^{(1)}_{E_{i\gamma'(1)}} \cdot ... \cdot S^{(1)}_{E_{\gamma'(m'-1)j'}} \cdot S^{(nr)}_{E_{j'j'}} \cdot ... \cdot S^{(1)}_{E_{\gamma'(\ell' - 1) k}}
$$
Due to Proposition \ref{proposition:singular-is-reducing} we have that $p_{j'}$ is $\varphi$-regular, so we get that $\mindeg(S^{(nr)}_{E_{j'j'}}) \geq nr$, concluding that $\mindeg(\varphi(S^{(\ell'+nr)}_{E_{ik}})) \geq \ell' + nr$ for all $n \geq 1$, obtaining a contradiction. It follows that $p_{j'}$ must be reducing for $Arv(P)$.

For the second part, if by negation $p_{j'}$ is $\varphi$-regular, and $1 \leq r \in \mathbb{N}$ is minimal such that $P^{(r)}_{j'j'} > 0$, with the same assumptions on $\gamma$ and $\gamma'$ as before, we have the following equality for $n \geq 1$,
$$
S^{(\ell' + n r)}_{E_{ik}} \approx S^{(1)}_{E_{i\gamma'(1)}} \cdot ... \cdot S^{(1)}_{E_{\gamma'(m'-1)j'}} \cdot (S^{(r)}_{E_{j'j'}})^{n} \cdot ... \cdot S^{(1)}_{E_{\gamma'(\ell'-1)k}}
$$
So we will have $\mindeg(\varphi(S^{(\ell'+n r)}_{E_{ik}})) \geq \ell' +nr$ for all $n \geq 1$, obtaining a contradiction again.
\end{proof}

\begin{proposition} \label{proposition:almost-isomorphic-graphs-reducing}
Let $P$ be stochastic and $i\neq j$ both in $\St$ such that $(i,j)\in E(P)$, with $p_i,p_j$ both reducing for $Arv(P)$. Assume $\varphi : \mathcal{T}_+(P) \rightarrow \mathcal{T}_+(Q)$ is an isomorphism. Then
$$\mindeg(\varphi(S^{(1)}_{E_{ij}})) = 1$$
\end{proposition}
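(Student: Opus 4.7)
The argument is by contradiction. By Lemma~\ref{lemma:perpendicular-subproduct} (applied to the orthogonal projections $p_i,p_j$ and the identity $S^{(1)}_{E_{ij}}=p_iS^{(1)}_{E_{ij}}p_j$) we already have $\mindeg(\varphi(S^{(1)}_{E_{ij}}))\geq 1$; assume for contradiction that $d := \mindeg(\varphi(S^{(1)}_{E_{ij}}))\geq 2$ and write $\varphi(S^{(1)}_{E_{ij}}) = \sum_{n\geq d} S^{(n)}_{\eta_n}$ with $\eta_d\neq 0$.

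The first step is to establish the algebraic identity $S^{(1)}_{E_{ij}}\cdot T\cdot S^{(1)}_{E_{ij}} = 0$ for every $T\in\mathcal{T}_+(P)$. Because $p_j$ is reducing and $(i,j)\in E(P)$, Proposition~\ref{proposition:reducing-char} rules out any path from $j$ to $i$ in the directed graph of $P$ (such a path, concatenated with $i\to j$, would give a path from $j$ to $j$ leaving $\{j\}$, contradicting reducibility). Hence $P^{(m)}_{ji}=0$ for all $m\geq 1$, and by the subproduct formulas of Theorem~\ref{theorem:subproduct-computation} the degree-$(m+2)$ Fourier coefficient of $S^{(1)}_{E_{ij}}S^{(m)}_\xi S^{(1)}_{E_{ij}}$ is supported at the single position $(i,j)$ with value a scalar multiple of $\xi_{ji}$, which therefore vanishes. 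Applying $\varphi$ yields $f\cdot S\cdot f = 0$ for every $S\in\mathcal{T}_+(Q)$, where $f:=\varphi(S^{(1)}_{E_{ij}})$.

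Next I would describe the leading Fourier coefficient of $f$. Setting $p_{i'}=\Phi_0(\varphi(p_i))$ and $p_{j'}=\Phi_0(\varphi(p_j))$, Proposition~\ref{proposition:preservation-reducing} shows that these are reducing minimal projections of $\vnMaS$, distinct since $p_ip_j=0$ forces $p_{i'}p_{j'}=0$. From $f=\varphi(p_i)f\varphi(p_j)$ together with a degree-by-degree analysis one obtains $\eta_d = c\,E^{(d)}_{i'j'}$ for some $c\neq 0$, hence $Q^{(d)}_{i'j'}>0$. Running the same reasoning on $\varphi^{-1}$ starting from the identity $p_j\mathcal{T}_+(P)p_i=0$ (which holds since $P^{(n)}_{ji}=0$ for all $n$) gives the symmetric consequence $p_{j'}\mathcal{T}_+(Q)p_{i'}=0$, so $Q^{(n)}_{j'i'}=0$ for every $n\geq 1$. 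The reducibility of $p_{i'}$ and $p_{j'}$ then forces any length-$d$ path $i'=v_0\to\cdots\to v_d=j'$ in $Q$ to have the form: self-loops at $i'$, followed by a streamlined middle segment through vertices outside $\{i',j'\}$, followed by self-loops at $j'$.

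The principal obstacle is converting $f\cdot S\cdot f=0$ into a genuine contradiction with $d\geq 2$. A direct leading-term analysis of $\Phi_{2d+k}(f\cdot S^{(k)}_\eta\cdot f)$ fails because the relevant $(j',i')$-entries vanish by the previous paragraph. I therefore expect the contradiction to come from comparing $\varphi^{-1}$ of the product decomposition of $S^{(d)}_{E^{(d)}_{i'j'}}$ along the length-$d$ path in $Q$ against the upper bound $\mindeg(\varphi^{-1}(\Phi_d(f)))\leq 1$ that results from $\Phi_d(f)=c\,S^{(d)}_{E^{(d)}_{i'j'}}$ and $\varphi^{-1}(\Phi_d(f)) = c\cdot S^{(1)}_{E_{ij}} - \varphi^{-1}(\text{higher})$. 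The minimal degree of $\varphi^{-1}$ on each middle-segment one-step shift is at least $1$ by Lemma~\ref{lemma:perpendicular-subproduct}, while the self-loop factors at $i'$ and $j'$ must be treated via the M\"obius description of $\widehat{\varphi}_{p_i,p_{i'}}$ and $\widehat{\varphi}_{p_j,p_{j'}}$ from Proposition~\ref{prop:fj}; adding the contributions the total lower bound will exceed $1$ whenever $d\geq 2$. The delicate point, and the place where the bookkeeping becomes intricate, is handling the possibility that $p_{i'}$ or $p_{j'}$ is $\varphi$-singular, which can inject Fourier-degree-$0$ contributions that must be absorbed into the comparison.
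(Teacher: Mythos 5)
Your proposal is a plan rather than a proof, and the plan has a genuine gap at its center. Everything hinges on the claimed upper bound $\mindeg(\varphi^{-1}(\Phi_d(f)))\leq 1$, where $f=\varphi(S^{(1)}_{E_{ij}})$. Writing $f=\Phi_d(f)+g$ with $g=\sum_{n>d}S^{(n)}_{\eta_n}$, you have $\varphi^{-1}(\Phi_d(f))=S^{(1)}_{E_{ij}}-\varphi^{-1}(g)$, but nothing you have established prevents $\Phi_0(\varphi^{-1}(g))=0$ and $\Phi_1(\varphi^{-1}(g))=S^{(1)}_{E_{ij}}$, in which case $\mindeg(\varphi^{-1}(\Phi_d(f)))\geq 2$ and the comparison yields no contradiction. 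Controlling $\Phi_1(\varphi^{-1}(g))$ is precisely the difficulty the proposition is meant to overcome (it amounts to knowing that $\varphi^{-1}$ does not drop minimal degree on the tail), so the argument is circular as it stands. The two points you flag yourself --- that the identity $f\cdot S\cdot f=0$ leads nowhere, and that the ``bookkeeping'' for singular self-loops at $i'$ and $j'$ is unresolved --- confirm that the proof is not closed: when the streamlined middle segment of the length-$d$ path from $i'$ to $j'$ has length $1$ and $p_{i'}$ or $p_{j'}$ is singular, the self-loop factors contribute nothing to the lower bound on $\mindeg(\varphi^{-1}(S^{(d)}_{E_{i'j'}}))$, and your inequality chain collapses.

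The paper closes exactly this gap with a two-pass argument that never requires an upper bound on a minimal degree. First, assuming $\mindeg(f)\geq 2$, every nonzero $S^{(n)}_{E_{ij}}$ with $n\geq 1$ is, up to a positive scalar, either a multiple of $S^{(1)}_{E_{ij}}$ (whose image has minimal degree $\geq 2$ by hypothesis) or a product containing at least two non-self-loop one-step shifts, each of whose images has minimal degree $\geq 1$ by Lemma~\ref{lemma:perpendicular-subproduct}; hence $\Phi_1(\varphi(S^{(n)}_{E_{ij}}))=0$ for all $n\geq 1$. Applying $\Phi_1\circ\varphi$ to the expansion $p_i\varphi^{-1}(S^{(1)}_{E_{i'j'}})p_j=\sum_n b_nS^{(n)}_{E_{ij}}$ then forces $S^{(1)}_{E_{i'j'}}=0$, i.e.\ $(i',j')\notin E(Q)$. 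Second, because of this, every path from $i'$ to $j'$ contains at least two non-self-loop edges, so $\mindeg(\varphi^{-1}(S^{(n)}_{E_{i'j'}}))\geq 2$ for every $n$ --- the singular self-loops are now harmless, since the lower bound comes entirely from the non-loop factors. Applying $\Phi_1\circ\varphi^{-1}$ to $p_{i'}fp_{j'}=\sum_n a_nS^{(n)}_{E_{i'j'}}$ gives $S^{(1)}_{E_{ij}}=0$, contradicting $(i,j)\in E(P)$. The ingredient missing from your outline is precisely this intermediate conclusion $(i',j')\notin E(Q)$, which is what guarantees the middle segment has length at least $2$ and makes the singular case disappear.
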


\begin{proof}
Surely it is the case that $\mindeg(\varphi(S^{(1)}_{E_{ij}})) \geq 1$ due to Lemma \ref{lemma:perpendicular-subproduct}. Since $p_i,p_j$ are reducing for $Arv(P)$, there exist $i',j'\in \St$ such that $p_{i'},p_{j'}$ are reducing for $Arv(Q)$, and $\varphi_{p_i,p_{i'}}$ and $\varphi_{p_j,p_{j'}}$ are bounded isomorphisms. Assume by negation that $k \geq 2$ is minimal such that $p_{i'} \varphi(S^{(1)}_{E_{ij}}) p_{j'} = \sum^{\infty}_{n=k}a_nS^{(n)}_{E_{i'j'}}$ with $a_k\neq 0$ and $S^{(k)}_{E_{i'j'}} \neq 0$ (which also shows that a path of length at most $k$ from $i'$ to $j'$ exists in $Q$). Thus $\mindeg(\varphi(S^{(1)}_{E_{ij}})) \geq 2$, and since for all $n> 1$ we have that $S^{(n)}_{E_{ij}}$ is either a multiple of at least two operators of minimal degree 1, or a multiple of $S^{(1)}_{E_{ij}}$, it must be that $\mindeg(\varphi(S^{(n)}_{E_{ij}})) \geq 2$ for all $n\geq 1$. Write $p_i \varphi^{-1}(S^{(1)}_{E_{i'j'}}) p_j = \sum^{\infty}_{n=m}b_nS^{(n)}_{E_{ij}}$ for $b_m \neq 0$ and compute
\begin{align*}
S^{(1)}_{E_{i'j'}} & = \Phi_1(p_{i'} \varphi(\varphi^{-1}(S^{(1)}_{E_{i'j'}})) p_{j'}) = \Phi_1(\varphi(p_i \varphi^{-1}(S^{(1)}_{E_{i'j'}})p_j)) = 
\Phi_1(\varphi(\sum^{\infty}_{n=m}b_nS^{(n)}_{E_{ij}}))  
\\
& = \sum^{\infty}_{n=m}b_n\Phi_1(\varphi(S^{(n)}_{E_{ij}})) = 0
\end{align*}
This shows that $S^{(1)}_{E_{i'j'}} = 0 $ and in particular that $(i',j') \notin E(Q)$. Let $\ell \geq 2$ be the minimal length of a path from $i'$ to $j'$. Since for all $n\geq \ell$ the operator $S^{(n)}_{E_{i'j'}}$ can be written as a multiple of at least $2$ operators of minimal degree 1, we have that $\mindeg(\varphi^{-1}(S^{(n)}_{E_{i'j'}})) \geq 2$ for all $n\geq \ell$. By a similar computation to the one above we obtain
\begin{align*}
S^{(1)}_{E_{ij}} & = \Phi_1(p_{i} \varphi^{-1}(\varphi(S^{(1)}_{E_{ij}})) p_{j}) = \Phi_1(\varphi^{-1}(p_{i'} \varphi(S^{(1)}_{E_{ij}})p_{j'})) = 
\Phi_1(\varphi^{-1}(\sum^{\infty}_{n=k}a_nS^{(n)}_{E_{i'j'}}))\\
& =  \sum^{\infty}_{n=k}a_n\Phi_1(\varphi^{-1}(S^{(n)}_{E_{i'j'}})) = 0
\end{align*}
Which contradicts $(i,j)\in E(P)$.
\end{proof}

\begin{proposition}\label{proposition:forcing-length}
Let $P$ and $Q$ be stochastic matrices, and let $\varphi:\mathcal{T}_+(P) \rightarrow \mathcal{T}_+(Q)$ be an isomorphism. Assume that $j\in \St$ is $\varphi$-singular. Assume further that for $i,k \in \St$ there exists a \streamlined path from $i$ to $k$ through $j$ of length $\ell$ in the directed graph of $P$. Then any \streamlined path from $i$ to $k$ in the directed graph of $P$ is of length not greater than $\ell$.
\end{proposition}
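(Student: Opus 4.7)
The plan is to argue by contradiction. Suppose there exists a streamlined path $\gamma'$ from $i$ to $k$ of length $\ell' > \ell$, and focus on the main case $\ell \geq 2$ (so that Proposition~\ref{proposition:forcing-non-singular} applies). The strategy is to bracket $\mindeg(\varphi(S^{(\ell')}_{E_{ik}}))$ between two incompatible bounds: a lower bound $\geq \ell'$ obtained from $\gamma'$, and an exact value equal to $\ell$ obtained by writing $S^{(\ell')}_{E_{ik}}$ as a scalar multiple of a product along $\gamma$ with $\ell' - \ell$ inserted self-loops at $j$.

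For the lower bound, since $\gamma'$ is streamlined, consecutive vertices are distinct, so by Lemma~\ref{lemma:perpendicular-subproduct} each factor $\varphi(S^{(1)}_{E_{\gamma'(n)\gamma'(n+1)}})$ has minimal degree at least $1$. Factoring $S^{(\ell')}_{E_{ik}}$ as a positive scalar multiple of the telescoping product of these $\ell'$ edge operators along $\gamma'$, and using that the minimal degree of a product is at least the sum of the minimal degrees of the factors, one obtains $\mindeg(\varphi(S^{(\ell')}_{E_{ik}})) \geq \ell'$.

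For the exact value, I would first prove $\mindeg(\varphi(S^{(\ell)}_{E_{ik}})) = \ell$. By Proposition~\ref{proposition:forcing-non-singular}, every vertex on the path $\gamma$ has a reducing projection in $Arv(P)$; hence by Proposition~\ref{proposition:almost-isomorphic-graphs-reducing} each factor $\varphi(S^{(1)}_{E_{\gamma(n)\gamma(n+1)}})$ has minimal degree exactly $1$, with $\Phi_1$-coefficient a non-zero multiple of $S^{(1)}_{E_{\sigma_\varphi(\gamma(n))\sigma_\varphi(\gamma(n+1))}}$ in $Arv(Q)$. The product of these leading coefficients telescopes along $\gamma$ (now transported to $Q$ via $\sigma_\varphi$) to a non-zero multiple of $S^{(\ell)}_{E_{\sigma_\varphi(i)\sigma_\varphi(k)}}$, so the degree-$\ell$ Fourier coefficient of $\varphi(S^{(\ell)}_{E_{ik}})$ is non-zero. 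Next, since $j$ is $\varphi$-singular, Remark~\ref{remark:sigular-is-cyclic} gives $P_{jj} > 0$, and by definition $\Phi_0(\varphi(S^{(1)}_{E_{jj}})) = a\, p_{\sigma_\varphi(j)}$ with $a \neq 0$. For each $s \geq 0$, write $S^{(\ell+s)}_{E_{ik}}$ as a positive scalar multiple of the same product along $\gamma$, but with $s$ copies of $S^{(1)}_{E_{jj}}$ inserted at the position $m$ where $\gamma(m)=j$. Applying $\varphi$, the minimal degree of this augmented product is at least $\ell$ (the $s$ inserted factors each contribute $0$ to the sum of minimal degrees), while its degree-$\ell$ Fourier coefficient equals $a^s$ times the earlier non-zero leading coefficient, so it is non-zero. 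Hence $\mindeg(\varphi(S^{(\ell+s)}_{E_{ik}})) = \ell$ for every $s \geq 0$. Specializing to $s = \ell' - \ell > 0$ yields $\mindeg(\varphi(S^{(\ell')}_{E_{ik}})) = \ell < \ell'$, contradicting the lower bound.

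The main obstacle is the verification that the degree-$\ell$ Fourier coefficient of the augmented product stays non-zero for all $s$: one has to use the explicit support structure of the leading coefficients (each a scalar multiple of a specific matrix unit in $Arv(Q)$) and recognize that the product corresponds to a genuine path $\sigma_\varphi\circ\gamma$ in $Q$, which ensures survival of the leading term. The edge cases $\ell = 0$ (forcing $i=j=k$, where reducibility of $p_j$ forbids any nontrivial streamlined cycle and so $\ell' = 0$) and $\ell = 1$, where Proposition~\ref{proposition:forcing-non-singular} does not directly apply, would be handled by separate, simpler arguments.
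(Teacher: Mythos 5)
Your proposal is correct and follows essentially the same route as the paper's proof: derive a contradiction by comparing the two factorizations of $S^{(\ell')}_{E_{ik}}$, one along the longer streamlined path (giving $\mindeg \geq \ell'$ after applying $\varphi$ via Lemma~\ref{lemma:perpendicular-subproduct}) and one along $\gamma$ with $\ell'-\ell$ self-loops inserted at the singular state $j$ (giving $\mindeg = \ell$ via Propositions~\ref{proposition:forcing-non-singular} and~\ref{proposition:almost-isomorphic-graphs-reducing}). Your explicit verification that the degree-$\ell$ Fourier coefficient survives, and your flagging of the $\ell\in\{0,1\}$ edge cases, are points the paper passes over silently, but the argument is the same.
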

\begin{proof} 
Before we begin, we note that all states considered here are reducing due to Proposition \ref{proposition:forcing-non-singular}, thus making the use of Proposition \ref{proposition:almost-isomorphic-graphs-reducing} possible.
Assume by negation that while $\gamma$ is a \streamlined path of length $\ell$ from $i$ to $k$ with $\gamma(m) = j$, we have a \streamlined path $\gamma'$ of length $r > \ell$ from $i$ to $k$.
Thus, up to a positive scalar multiple, which we denote by $\approx$, we have the following chain of equalities
$$
S^{(1)}_{E_{i\gamma'(1)}} \cdot ... \cdot S^{(1)}_{E_{\gamma'(r -1)k}} \approx S^{(r)}_{E_{ik}} \approx S^{(1)}_{E_{i\gamma(1)}} \cdot ... \cdot S^{(1)}_{E_{\gamma(m-1)j}} \cdot (S^{(1)}_{E_{jj}})^{r- \ell} \cdot S^{(1)}_{E_{j\gamma(m+1)}} \cdot ... \cdot S^{(1)}_{E_{\gamma(\ell -1)k}}
$$
But since $j$ is $\varphi$-singular, after applying $\varphi$ to both sides, the right hand side of the equation would be of minimal degree \emph{equal} to $\ell$ while the left hand side of the equation would be of minimal degree $r > \ell$, obtaining a contradiction.
\end{proof}

\begin{corollary} \label{corollary:well-defined-rotation}
Let $P$ and $Q$ be stochastic matrices, and let $\varphi:\mathcal{T}_+(P) \rightarrow \mathcal{T}_+(Q)$ be an isomorphism. Assume that $j_1, j_2 \in \St$ are $\varphi$-singular, such that there exist $i,k \in \St$ and two \streamlined paths from $i$ to $k$, one through $j_1$ of length $\ell_1$ and the other through $j_2$ of length $\ell_2$. Then $\ell_1 = \ell_2$.
\end{corollary}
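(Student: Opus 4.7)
The plan is to derive this as a direct two-line consequence of Proposition~\ref{proposition:forcing-length}, which already does all the heavy lifting. The key observation is that the hypothesis is perfectly symmetric in $j_1$ and $j_2$: both are $\varphi$-singular, and for each one we are handed a \streamlined path from $i$ to $k$ passing through it (of lengths $\ell_1$ and $\ell_2$ respectively).

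First I would apply Proposition~\ref{proposition:forcing-length} with $j:=j_1$ and the \streamlined path of length $\ell_1$ from $i$ to $k$ through $j_1$. The conclusion says that every \streamlined path from $i$ to $k$ has length at most $\ell_1$. Applying this to the given \streamlined path through $j_2$, we conclude $\ell_2 \leq \ell_1$.

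Next I would apply Proposition~\ref{proposition:forcing-length} again, this time with the roles reversed: $j:=j_2$ and the \streamlined path of length $\ell_2$ from $i$ to $k$ through $j_2$. This yields $\ell_1 \leq \ell_2$. Combining the two inequalities gives $\ell_1 = \ell_2$.

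There is no real obstacle here; the corollary is essentially a bookkeeping statement extracting a symmetric consequence from the (asymmetric-looking) Proposition~\ref{proposition:forcing-length}. The substantive content, namely that $\varphi$-singularity of a state forces an upper bound on path lengths through it, is already established in that proposition, and the present corollary will be invoked later to justify that the rotation angles $f^{j}_{\varphi}$ are well-defined on $\sim_P$-equivalence classes (cf.\ the property $\Diamond_2$ in Remark~\ref{remark:three-properties}).
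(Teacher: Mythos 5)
Your proof is correct and is precisely the argument the paper intends: the corollary is left without an explicit proof because it follows by applying Proposition~\ref{proposition:forcing-length} symmetrically to $j_1$ and $j_2$, yielding $\ell_2 \leq \ell_1$ and $\ell_1 \leq \ell_2$. Nothing is missing.
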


\begin{proposition}[Property $\Diamond_1$] \label{proposition:reduction-to-equivalence-classes}
Let $P$ and $Q$ be stochastic matrices and $\varphi: \mathcal{T}_+(P) \rightarrow \mathcal{T}_+(Q)$ an isomorphism. Assume that for each $j \in \mathcal{R}_P$ we have that $p_j$ is $\varphi$-regular. Then for all $j\in \St$ we have that $p_j$ is $\varphi$-regular and thus $\varphi$ is regular.
\end{proposition}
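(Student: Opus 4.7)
My plan is to proceed by contrapositive: I will assume that some $p_j$ is $\varphi$-singular and derive that some $p_\mu$ with $\mu \in \calr_P$ must also be $\varphi$-singular, contradicting the hypothesis. The first reduction is trivial: if $j \notin I_P$, then by the very definition of $I_P$ the projection $p_j$ is not $\varphi'$-singular for any isomorphism $\varphi'$, so in particular $p_j$ is $\varphi$-regular. Thus it suffices to treat $j \in I_P$.

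For $j \in I_P$, since $\calr_P$ is a set of representatives of $\sim_P$ on $I_P$, there is a unique $\mu \in \calr_P$ with $j \sim_P \mu$, and by the very definition of $\sim_P$ a chain $j = j_0, j_1, \ldots, j_s = \mu$ in $I_P$ such that for each consecutive pair $(j_t, j_{t+1})$ we have a choice of $i', k' \in \St$ and two distinct streamlined paths from $i'$ to $k'$, one $\gamma_t$ through $j_t$ and the other $\gamma_{t+1}$ through $j_{t+1}$. The key step is the claim:
\emph{if $p_{j_t}$ is $\varphi$-singular, then $p_{j_{t+1}}$ is $\varphi$-singular.}
Iterating this along the chain yields that $p_\mu$ is $\varphi$-singular, contradicting our hypothesis, and completing the proof.

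To prove the claim, I would invoke Proposition~\ref{proposition:forcing-non-singular} applied to $j = j_t$, using some streamlined path through $j_t$ of length $\geq 2$, and then taking $j' = j_{t+1}$. Note that $j_{t+1} \in I_P$ forces $P_{j_{t+1}j_{t+1}} > 0$ by Remark~\ref{remark:sigular-is-cyclic}, so the auxiliary hypothesis $P^{(n)}_{j' j'} > 0$ of Proposition~\ref{proposition:forcing-non-singular} is automatic, and it will conclude that $p_{j_{t+1}}$ is $\varphi$-singular.

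The potentially delicate point, and the main obstacle, is that Proposition~\ref{proposition:forcing-non-singular} requires a streamlined path through the singular state $j_t$ of length at least $2$, whereas a priori $\gamma_t$ could have length $1$. I would handle this as follows. Since in a simple directed graph there is at most one edge from $i'$ to $k'$, two distinct streamlined paths from $i'$ to $k'$ cannot both be of length $1$, so at least one of $\gamma_t, \gamma_{t+1}$ has length $\geq 2$. If $\gamma_t$ has length $\geq 2$, I apply Proposition~\ref{proposition:forcing-non-singular} directly with $\gamma = \gamma_t$, $\gamma' = \gamma_{t+1}$. If instead $\gamma_t$ has length $1$, then $\gamma_t = (i', k')$, so $j_t \in \{i', k'\}$, in particular $j_t$ is an endpoint of every path from $i'$ to $k'$, hence $\gamma_{t+1}$ (which has length $\geq 2$) is itself a streamlined path through $j_t$ of length $\geq 2$; I then apply Proposition~\ref{proposition:forcing-non-singular} with $\gamma = \gamma_{t+1}$ through $j_t$ and $\gamma' = \gamma_{t+1}$ through $j_{t+1}$. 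In either case the claim follows, and the induction closes the argument.
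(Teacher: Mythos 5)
Your proof is correct and follows essentially the same route as the paper's: it propagates singularity along the chain witnessing $j \sim_P \mu$ using the second part of Proposition~\ref{proposition:forcing-non-singular} together with Remark~\ref{remark:sigular-is-cyclic} (every state in $I_P$ carries a self-loop, so the auxiliary hypothesis is automatic), reaching a contradiction with the assumed regularity of the representative $\mu \in \mathcal{R}_P$. Your explicit handling of the length-$\geq 2$ hypothesis of Proposition~\ref{proposition:forcing-non-singular} addresses a point the paper's proof passes over in silence, and is a welcome refinement.
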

\begin{proof}
Suppose towards a contradiction that there there exists 
$j \in I_P$ and $\varphi$-singular, and let $j' \in \mathcal{R}_P$ be such that $j \sim_P j'$. Then there exists a finite chain  $(j = s_0 , s_1 , ... , s_{\ell-1} , s_{\ell} = j')$  of elements in $I_P$, with $(s_k, s_{k+1})$ such that there exist $i,k \in \St$ with two \streamlined paths from $i$ to $k$, one through $s_k$ and the other through $s_{k+1}$ in the graph of $P$. By Remark \ref{remark:sigular-is-cyclic} every $s\in I_P$ must satisfy that $P_{ss} > 0$, so due to the second part of Proposition \ref{proposition:forcing-non-singular} we must have that $s_k$ and $s_{k+1}$ are either both $\varphi$-regular or both $\varphi$-singular. But $j' \in \mathcal{R}_P$ is $\varphi$-regular by assumption while $j$ is $\varphi$-singular, which leads to a contradiction.
\end{proof}

Let $P$ be a stochastic matrix. We say that a triple 
$(i,k, n) \in \Omega\times\Omega\times\nn$ is \emph{$P$-proper} (or just \emph{proper} when $P$ is
determined by the context) if $(i,k) \in E(P^n)$
and there exists a \streamlined path $\gamma$ 
from $i$ to $k$ in the graph of $P$ with length $\ell < n$ and there exists $0 \leq s \leq \ell$ such that $\gamma(s)\in I_P$. Note that by definition of $\sim_P$,
if $(i,k, n)$ is proper, it determines a unique element
$\mu \in \rp$ corresponding to the class of the element
$\gamma(s) \in I_P$. In this case we will say that
$\mu\in \rp$ is the singular class 
associated to $(i,k,n)$. 

\begin{theorem}[Property $\Diamond_2$]
Let $P$ be a stochastic matrix, and let $\Lambda = (\lambda_\mu)_{\mu \in \mathcal{R}_P}\in \mathbb{T}^{\mathcal{R}_P}$ be given. 
There exists a unique isometric (Id-)automorphism 
$V^{\Lambda} = \{ V_{n}^{\Lambda} \}$ 
of the subproduct system $Arv(P)$ satisfying the following
condition: for every $n\geq 1$ and for each pair $(i,k) \in E(P^n)$, 
$$
V_{n}^{\Lambda}(E_{ik}) = 
   \begin{cases} 
       \lambda_{\mu}^{n-\ell}E_{ik}, & (i, k, n) \text{ is proper, with associated singular class } \mu\in\rp\\
       E_{ik}, &  \text{otherwise }
   \end{cases}
$$
Moreover, the (Id)-similarity $\alpha_{\Lambda} : \mathcal{T}_+(P) \rightarrow \mathcal{T}_+(P)$ given by 
$\alpha_\Lambda = \Ad_{V^\Lambda}$ is an isometric
automor\-phism satisfying 
$f^{j}_{\alpha_{\Lambda}}(z) = \lambda_{\mu}z$
for all $j \in I_P$, where $j \sim_P \mu \in \rp$.
\end{theorem}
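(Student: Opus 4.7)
The plan is to define each $V_n^\Lambda$ on the orthogonal basis $\{E_{ik}:(i,k)\in E(P^n)\}$ of $Arv(P)_n$ by the stated scalar prescription, extend $\ell^\infty(\Omega)$-bilinearly, and verify that the resulting family is a unitary $\mathrm{Id}$-isomorphism of the subproduct system $Arv(P)$. Uniqueness is immediate since the prescription completely determines $V_n^\Lambda$ on a spanning set.

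The first substantive step is well-definedness of the prescription for a proper triple $(i,k,n)$. The equivalence class $\mu \in \rp$ is well-defined directly by the definition of $\sim_P$. For well-definedness of the length $\ell$: if $\gamma_1, \gamma_2$ are two streamlined witnessing paths from $i$ to $k$ passing through $j_1, j_2\in I_P$ of respective lengths $\ell_1, \ell_2$, pick any isomorphism $\varphi$ for which $j_1$ is $\varphi$-singular; since $P^{(r)}_{j_2 j_2}>0$ for some $r\geq 1$ by Remark~\ref{remark:sigular-is-cyclic}, Proposition~\ref{proposition:forcing-non-singular} forces $j_2$ to be $\varphi$-singular too, and then Corollary~\ref{corollary:well-defined-rotation} yields $\ell_1 = \ell_2$. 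Once well-definedness is in place, $V_n^\Lambda$ is manifestly an isometric $\ell^\infty(\Omega)$-bimodule morphism with $\mathrm{Id}$-inverse $V^{\bar\Lambda}_n$ (where $\bar\Lambda_\mu = \overline{\lambda_\mu}$), hence an $\mathrm{Id}$-unitary in the sense of Definition~\ref{definition:rho-coisometric-unitary}.

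The main technical step is the subproduct compatibility $V_{n+m}^\Lambda\circ U_{n,m} = U_{n,m}\circ(V_n^\Lambda\otimes V_m^\Lambda)$. Using the identity $U_{n,m}(E_{ij}\otimes E_{jk}) = \sqrt{P^{(n)}_{ij}P^{(m)}_{jk}/P^{(n+m)}_{ik}}\, E_{ik}$ from Theorem~\ref{theorem:subproduct-computation} together with $\ell^\infty(\Omega)$-bilinearity, this reduces to the scalar identity
\begin{equation*}
\alpha_{(i,j,n)}\cdot\alpha_{(j,k,m)} = \alpha_{(i,k,n+m)},
\end{equation*}
where $\alpha_{(u,v,t)}$ denotes the scalar by which $V_t^\Lambda$ acts on $E_{uv}$. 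The key combinatorial observation is that the concatenation of two streamlined paths at a shared endpoint is automatically streamlined (no consecutive repeat is introduced at the junction), so a streamlined witness for $(i,j,n)$ of length $\ell_1$ through class $\mu$ concatenated with any streamlined path from $j$ to $k$ of length $\ell_2$ produces a streamlined witness for $(i,k,n+m)$ of length $\ell_1 + \ell_2$ in the same class. Combining this with the uniqueness of $\ell$ already proven and with Proposition~\ref{proposition:forcing-length} (which severely restricts the lengths of streamlined paths from $i$ to $k$ once a singular state lies on one of them), a case analysis on which of the three triples is proper yields the identity.

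Having shown that $V^\Lambda$ is a unitary $\mathrm{Id}$-isomorphism of $Arv(P)$, Corollary~\ref{corollary:isometric-isomorphism} gives that $\alpha_\Lambda := \Ad_{V^\Lambda}$ is a completely isometric $\mathrm{Id}$-automorphism of $\tensor(P)$, which by Proposition~\ref{proposition:graded-tensor-isomorphism} is graded. For $j \in I_P$ with $j\sim_P\mu\in \rp$, the trivial length-zero path at $j$ witnesses properness of $(j,j,n)$ for every $n\geq 1$ with $\ell=0$, giving $V_n^\Lambda(E_{jj}) = \lambda_\mu^n E_{jj}$ and hence $\alpha_\Lambda(S^{(1)}_{E_{jj}}) = \lambda_\mu S^{(1)}_{E_{jj}}$. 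Under the identification of $\tensor(p_j Arv(P)p_j)$ with the disk algebra $\mathbb{A}(\disk)$ sending $S^{(1)}_{E_{jj}}$ to the coordinate function $z$, this translates to $f^j_{\alpha_\Lambda}(z) = \lambda_\mu z$, as required. The main obstacle will be the combinatorial case analysis for the scalar identity, specifically the interplay between path concatenation, the equivalence relation $\sim_P$, and Proposition~\ref{proposition:forcing-length}.
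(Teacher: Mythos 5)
Your overall strategy coincides with the paper's: define $V_n^{\Lambda}$ on the basis $\{E_{ik}\}$, obtain well-definedness of the exponent from Corollary~\ref{corollary:well-defined-rotation} (your bridging step --- fixing a single $\varphi$ for which both witnessing singular states are $\varphi$-singular via Remark~\ref{remark:sigular-is-cyclic} and Proposition~\ref{proposition:forcing-non-singular} --- is a correct filling-in of what the paper leaves implicit), reduce the subproduct compatibility to a scalar identity on matrix units, and finish with Corollary~\ref{corollary:isometric-isomorphism} and the length-zero path at $j$ to get $f^j_{\alpha_\Lambda}(z)=\lambda_\mu z$. All of that matches the paper.

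The one place your sketch does not close is the crux of the scalar identity, and the tool you name for it points in the wrong direction. Your ``key combinatorial observation'' --- that concatenating streamlined witnesses at a shared endpoint yields a streamlined witness --- only shows that properness of a \emph{factor} propagates to $(i,k,n+m)$. The dangerous case is the converse: $(i,k,n+m)$ proper, so $V^{\Lambda}_{n+m}$ multiplies $E_{ik}$ by $\lambda_\mu^{\,n+m-\ell}$ with $n+m-\ell>0$, while a priori neither $(i,j,n)$ nor $(j,k,m)$ is proper; if that could happen, multiplicativity would fail. The paper closes this as follows: take paths $\gamma^{(n)}$ from $i$ to $j$ of length $n$ and $\gamma^{(m)}$ from $j$ to $k$ of length $m$; their concatenation has length $n+m>\ell$, so by Proposition~\ref{proposition:forcing-length} it cannot be streamlined; hence it has a repeated vertex $j'$ with $P^{(r)}_{j'j'}>0$, which Proposition~\ref{proposition:forcing-non-singular} forces into $I_P$ (and into the class $\mu$), so the factor containing $j'$ is proper with class $\mu$. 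One then needs the bookkeeping $\ell=\ell_1+\ell_2$ for the culled streamlined lengths of the two factors (Corollary~\ref{corollary:well-defined-rotation} again), together with the observation that if a factor is improper then its path must already be streamlined, i.e.\ $\ell_2=m$, so its nominal scalar $\lambda_\mu^{\,m-\ell_2}$ equals $1$ and the exponents still add to $n+m-\ell$. Since you explicitly defer this case analysis as ``the main obstacle,'' this is the step you must supply; the rest of the proposal is in order.
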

\begin{proof}
By Corollary~\ref{corollary:well-defined-rotation} we see that 
for each $n\in\nn$ the map $V_{n}^{\Lambda}$ on 
$\{ E_{ik}  \mid  (i,k) \in E(P^n)  \}$ is 
well-defined and clearly extends uniquely to a unitary 
correspondence morphisms on 
$\Span \{ E_{ik} \mid (i,k) \in E(P^n) \}$. Thus we obtain
a unique unitary W*-correspondence morphism  $V_n^\Lambda$ on $Arv(P)_n$.

We need to show that $U^P_{n,m}(V_{n}^{\Lambda} \otimes V_{m}^{\Lambda}) = V_{n+m}^{\Lambda} U^P_{n,m}$ for every $n,m \in \mathbb{N}$. It suffices to show that for all $(i,j,k) \in E(P^n,P^m)$ we have
\begin{equation} \label{eq:subproduct-indicators}
U^P_{n,m}(V_{n}^{\Lambda}(E_{ij}) \otimes V_{m}^{\Lambda}(E_{jk})) = V_{n+m}^{\Lambda} U^P_{n,m}(E_{ij} \otimes E_{jk})
\end{equation}
Let a triple $(i,j,k) \in E(P^n,P^m)$ be given for $n,m\in \nn$. We split the proof into two cases.

Case 1: Suppose  that $(i,k, n+m)$ is proper, with associated
singular class $\mu \in\rp$. 
Let $\gamma$ be a \streamlined 
path of length $\ell< n + m$ from $i$ to $k$ such that $\gamma(s) \sim_P \mu$ for some $0\leq s \leq \ell$. By definition $V_{n+m}^\Lambda(E_{ik}) = \lambda_\mu^{n+m-\ell} E_{ik}$.
We first show that
at least one of $(i,j, n)$ and $(j,k, m)$
must be proper with associated singular class $\mu$.  Clearly there exist
two paths $\gamma^{(n)}$ from $i$ to $j$ of length $n$ and $\gamma^{(m)}$ from $j$ to $k$ of length $m$. Notice that the concatenation $\gamma'$ 
of $\gamma^{(n)}$ and $\gamma^{(m)}$ provides
a path from $i$ to $k$ which by Proposition 
\ref{proposition:forcing-length} cannot be streamlined,
 since $\ell < n+m$. Therefore, there exists some repeated index 
$j'$ in the path $\gamma'$, hence $P^{(r)}_{j'j'}>0$ for some $r\geq 1$.
By Proposition~\ref{proposition:forcing-non-singular}, we
must have that $j'\in I_P$, and by definition of the equivalence
relation we have therefore $j'\sim_P \mu$. Let us suppose without
loss of generality that $j'$ is in $\gamma^{(n)}$. Then $(i,j, n)$ is proper with associated singular class $\mu$. Let $\ell_1$ denote the length of the streamlined path
obtained from $i$ to $j$ by culling repeated vertices from $\gamma^{(n)}$. We obtain $V_n^\Lambda(E_{ij}) = \lambda_\mu^{n-\ell_1} E_{ij}$. 
Let $\ell_2$ be the length of the streamlined path from $j$ to
$k$ obtained from the culling procedure applied to $\gamma^{(m)}$.
We have $\ell =\ell_1 + \ell_2$ by Corollary~\ref{corollary:well-defined-rotation}. If $(j,k, m)$ is also proper (necessarily with associated singular class $\mu$),
we have $V_m^{\Lambda}(E_{jk}) = \lambda_\mu^{m-\ell_2} E_{jk}$.
On the other hand, notice that by the same argument we used above, 
if $\gamma^{(m)}$ is not streamlined, 
we must have that $(j,k, m)$ is proper.
Therefore, if $(j,k, m)$ is improper 
we must have $\ell_2 = m$, i.e. $\gamma^{(m)}$ already streamlined,
and we also have for this case the formula $V_m^{\Lambda}(E_{jk}) = \lambda_\mu^{m-\ell_2} E_{jk}$.  Since $n+m - \ell = (n-\ell_1) + (m-\ell_2)$, we obtain \eqref{eq:subproduct-indicators}.

Case 2: Suppose that $(i,k, n+m)$ is improper, so that
$V_{n+m}^\Lambda(E_{ik}) = E_{ik}$. Then
we clearly must have that $(i,j,n)$ and $(j,k,m)$ are also improper. It follows that
$V_{n}^\Lambda(E_{ij}) = E_{ij}$ and 
$V_{m}^\Lambda(E_{jk}) = E_{jk}$, and therefore
 \eqref{eq:subproduct-indicators} holds.

Finally, the map $\alpha_\Lambda =\Ad_{V^\Lambda}$, 
given by Corollary~\ref{corollary:isometric-isomorphism},  clearly satisfies the stated properties since it leaves the vacuum state
invariant and if $j\in I_P$, we have $(\widehat{\alpha_\Lambda})_{p_j,p_j}(S_{E_{jj}}) =  \lambda_\mu S_{E_{jj}}$ where $\mu\in \rp$ is the unique element with 
$j \sim_P \mu$. This is because any \streamlined path from $j$ to $j$ must be of length $\ell = 0$.
\end{proof}

\begin{proposition}[Property $\Diamond_3$] \label{proposition:equivalene-preserving}
Let $P$ and $Q$ be stochastic matrices and let $\varphi : \mathcal{T}_+(P) \rightarrow \mathcal{T}_+(Q)$ be an isomorphism. Then $\varphi$ induces an equivalence preserving bijection $\Upsilon_{\varphi}:I_P \rightarrow I_Q$ uniquely determined by the identity  $p_{\Upsilon(j)} = \Phi_0(\varphi(p_j)) $ for every $j \in I_P$. Furthermore, we have $\Upsilon_{\varphi}^{-1} = \Upsilon_{\varphi^{-1}}$.
\end{proposition}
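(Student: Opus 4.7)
First I would define $\Upsilon_\varphi$ and its inverse. For each $j \in I_P$, Proposition \ref{proposition:singular-is-reducing} forces $p_j$ to be reducing for $Arv(P)$, so Proposition \ref{proposition:isometric-one-dim}(1) produces a unique $j' \in \Omega$ with $p_{j'} = \Phi_0(\varphi(p_j))$, and $p_{j'}$ is reducing for $Arv(Q)$; I set $\Upsilon_\varphi(j) = j'$. Applying Proposition \ref{proposition:isometric-one-dim}(2) to the compositions $\varphi^{-1} \circ \varphi = \mathrm{Id}$ and $\varphi \circ \varphi^{-1} = \mathrm{Id}$ immediately gives $\Phi_0(\varphi^{-1}(p_{j'})) = p_j$, so that $\Upsilon_\varphi$ and $\Upsilon_{\varphi^{-1}}$ will be mutually inverse once we confirm that their images lie in $I_Q$ and $I_P$ respectively.

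To show $\Upsilon_\varphi(j) \in I_Q$, I would split into two cases. If $p_j$ is $\varphi$-singular, Proposition \ref{proposition:singularity-among-reducing} directly gives that $p_{j'}$ is $\varphi^{-1}$-singular, so $j' \in I_Q$ is witnessed by $\varphi^{-1}$. Otherwise, by definition of $I_P$ there is an isomorphism $\psi : \tensor(P) \to \tensor(R)$ making $p_j$ $\psi$-singular, and the plan is to show that $p_{j'}$ is $(\psi \circ \varphi^{-1})$-singular. Writing $p_k = \Phi_0(\psi(p_j))$, Proposition \ref{proposition:isometric-one-dim}(2) yields $\widehat{\psi \circ \varphi^{-1}}_{p_{j'},p_k} = \widehat{\psi}_{p_j,p_k} \circ \widehat{\varphi^{-1}}_{p_{j'},p_j}$. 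Passing to the Moebius maps furnished by Proposition \ref{prop:fj}, this becomes $f^{j'}_{\psi \circ \varphi^{-1}} = f^{j'}_{\varphi^{-1}} \circ f^{j}_\psi$. The regularity of $p_j$ under $\varphi$ (equivalently of $p_{j'}$ under $\varphi^{-1}$) forces $f^{j'}_{\varphi^{-1}}(0) = 0$, whence by the Moebius form the map $f^{j'}_{\varphi^{-1}}$ is a rotation of $\overline{\disk}$; combined with $f^{j}_\psi(0) \neq 0$ this yields $f^{j'}_{\psi \circ \varphi^{-1}}(0) \neq 0$, placing $j' \in I_Q$.

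For equivalence preservation it suffices to handle one step of the generating relation of $\sim_P$. Suppose $j_t, j_{t+1} \in I_P$ are paired by $i, k \in \Omega$ and streamlined paths $\gamma_1, \gamma_2$ from $i$ to $k$ through $j_t$ and $j_{t+1}$; Corollary \ref{corollary:well-defined-rotation} forces both paths to have a common length $\ell$. If $\ell \geq 2$, Proposition \ref{proposition:forcing-non-singular} makes every vertex of both paths (including $i$ and $k$) reducing; if $\ell = 1$ the vertices are precisely $j_t$ and $j_{t+1}$, which are already reducing. I would extend $\Upsilon_\varphi$ to a map $\Theta$ on the reducing states of $P$ by the same formula $p_{\Theta(a)} = \Phi_0(\varphi(p_a))$. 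Proposition \ref{proposition:isometric-one-dim}(1),(2) applied to both $\varphi$ and $\varphi^{-1}$ shows $\Theta$ is a well-defined injection on reducing states, and Proposition \ref{proposition:almost-isomorphic-graphs-reducing} combined with the identity $\Phi_1(\varphi(p_a S^{(1)}_{E_{ab}} p_b)) = p_{\Theta(a)}\, \Phi_1(\varphi(S^{(1)}_{E_{ab}}))\, p_{\Theta(b)} \neq 0$ shows that $\Theta$ preserves directed edges between distinct reducing states. Hence the vertex-wise images of $\gamma_1, \gamma_2$ under $\Theta$ are streamlined paths of length $\ell$ in $Q$ from $\Theta(i)$ to $\Theta(k)$, passing through $\Upsilon_\varphi(j_t)$ and $\Upsilon_\varphi(j_{t+1})$ respectively, witnessing $\Upsilon_\varphi(j_t) \sim_Q \Upsilon_\varphi(j_{t+1})$. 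Concatenating along chains and using the symmetry between $\varphi$ and $\varphi^{-1}$ then gives full equivalence preservation in both directions.

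The main obstacle I anticipate is Case B of the second step: leveraging singularity of $p_j$ under an auxiliary $\psi$ to extract singularity of $p_{j'}$ under $\psi \circ \varphi^{-1}$. This hinges on the composition formula for the associated Moebius transformations together with the rigidity observation that a regular Moebius transformation of $\overline{\disk}$ is necessarily a pure rotation, and therefore transports any nonzero displacement from $0$ to another nonzero displacement rather than cancelling it out.
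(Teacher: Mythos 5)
Your proof is correct, and it diverges from the paper's own argument in two useful ways. First, for well-definedness (that $j'=\Upsilon_\varphi(j)$ actually lies in $I_Q$), the paper only invokes Proposition~\ref{proposition:singularity-among-reducing}, which covers the case where the isomorphism witnessing $j\in I_P$ is $\varphi$ itself; your second case, which transports singularity along $\psi\circ\varphi^{-1}$ via the contravariant composition formula for the Moebius maps and the observation that a regular Moebius automorphism fixes $0$ and therefore cannot cancel a nonzero displacement, supplies the remaining case explicitly. (One small point worth recording: to speak of $f^{j'}_{\varphi^{-1}}$ before knowing $j'\in I_Q$, note that $\widehat{\varphi}_{p_j,p_{j'}}$ maps the disc algebra $\mathcal{T}_+(p_jArv(P)p_j)$ isometrically onto $\mathcal{T}_+(p_{j'}Arv(Q)p_{j'})$, which forces $Q_{j'j'}>0$ so the Moebius description applies; equivalently $f^{j'}_{\varphi^{-1}}=(f^j_\varphi)^{-1}$.) Second, for equivalence preservation the paper applies $\Phi_\ell\circ\varphi$ directly to the factorization $S^{(\ell)}_{E_{ik}}\approx S^{(1)}_{E_{i\gamma(1)}}\cdots S^{(1)}_{E_{\gamma(\ell-1)k}}$ and reads off the two paths from the nonvanishing of the degree-$\ell$ component, whereas you promote $\Upsilon_\varphi$ to a graph morphism $\Theta$ on reducing states, applying Proposition~\ref{proposition:almost-isomorphic-graphs-reducing} edge by edge and pushing the two streamlined paths forward vertex-wise. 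Both routes rest on the same key lemma, but yours makes it transparent that the image paths are again streamlined (by injectivity of $\Theta$ on reducing states), a point the paper's computation leaves implicit.
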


\begin{proof}
By Proposition \ref{proposition:singular-is-reducing}, we have that $p_j$ is a reducing projection for $Arv(P)$
for every $j\in I_P$. By Propositions \ref{proposition:isometric-one-dim} and  \ref{proposition:singularity-among-reducing} we see that for all $j\in I_P$ we must have that there is a unique $j' \in I_Q$ such that $p_{j'} = \Phi_0(\varphi(p_j))$, hence $\Upsilon_\varphi$ is a well-defined injection. Furthermore,
by symmetry and Proposition~\ref{proposition:isometric-one-dim}, we have that it is onto and $\Upsilon_{\varphi}^{-1} = \Upsilon_{\varphi^{-1}}$. To show that $\Upsilon_{\varphi}$ preserves equivalence, let $j_1,j_2$ be two elements in $I_P$ such that there exist two \streamlined paths (which must be of the same length due to Corollary \ref{corollary:well-defined-rotation}) $\gamma$ from $i$ to $k$ of length $\ell$ with $\gamma(t) = j_1$, and $\gamma'$ from $i$ to $k$ of length $\ell$ with $\gamma'(r) = j_2$.
Thus, we have that up to a positive scalar, which we denote by $\approx$,
$$
S^{(1)}_{E_{i\gamma(1)}} \cdot ... \cdot S^{(1)}_{E_{\gamma(t-1)j_1}} \cdot S^{(1)}_{E_{j_1\gamma(t+1)}} \cdot ... \cdot S^{(1)}_{E_{\gamma(\ell-1)k}} \approx S^{(\ell)}_{E_{ik}} \approx S^{(1)}_{E_{i\gamma'(1)}} \cdot ... \cdot S^{(1)}_{E_{\gamma'(r-1)j_2}} \cdot S^{(1)}_{E_{j_2\gamma'(r+1)}} \cdot ... \cdot S^{(1)}_{E_{\gamma'(\ell-1)k}}
$$
Denote $p_{j'_1} = \Phi_0(\varphi(p_{j_1}))$, $p_{j'_2} = \Phi_0(\varphi(p_{j_2}))$, $p_{i'} = \Phi_0(\varphi(p_{i}))$ and $p_{k'} = \Phi_0(\varphi(p_{k}))$. By Proposition \ref{proposition:almost-isomorphic-graphs-reducing} we have the equality $\mindeg(\varphi(S^{(\ell)}_{E_{ik}})) = \ell$. Thus, write uniquely $\varphi(S^{(\ell)}_{E_{ik}}) = \sum_{n=\ell}^{\infty}S^{(n)}_{\xi_n}$
For $\xi_n \in Arv(Q)_n$ with $\xi_{\ell} = p_{i'} \xi_{\ell} p_{k'} \neq 0$. By applying $\Phi_\ell$ to the equation above, we have that up to a positive scalar,
$$
p_{i'} \Phi_t(\varphi(S^{(1)}_{E_{i\gamma(1)}} \cdot ... \cdot S^{(1)}_{E_{\gamma(t-1)j_1}})) p_{j_1'} \Phi_{\ell-t}(\varphi(S^{(1)}_{E_{j_1\gamma(t+1)}} \cdot ... \cdot S^{(1)}_{E_{\gamma(\ell-1)k}}) )p_{k'} \approx 
$$
$$
p_{i'} \Phi_r(\varphi(S^{(1)}_{E_{i\gamma'(1)}} \cdot ... \cdot S^{(1)}_{E_{\gamma'(r-1)j_2}})) p_{j'_2} \Phi_{\ell-r}(\varphi(S^{(1)}_{E_{j_2\gamma'(r+1)}} \cdot ... \cdot S^{(1)}_{E_{\gamma'(\ell-1)k}})) p_{k'} \approx
 \Phi_{\ell}(\varphi(S^{(\ell)}_{E_{ik}})) = S^{(\ell)}_{\xi_{\ell}} \neq 0
$$
Thus we obtain two \streamlined paths from $i'$ to $k'$, one through $j'_1$ and the other through $j'_2$ and we are done.
\end{proof}

\subsection*{Main results}
We now use the three properties of $\sim_P$ to establish the existence of a regular isomorphism, from that of a general isomorphism. We were inspired by \cite[Proposition 4.7]{DRS}.

\begin{theorem}\label{theorem:main-tool}
Let $P$ and $Q$ be stochastic matrices over $\St$. If there exists an algebraic isomorphism $\varphi : \mathcal{T}_+(P) \rightarrow \mathcal{T}_+(Q)$ then there exists a \emph{regular} isomorphism from $\mathcal{T}_+(P)$ to $\mathcal{T}_+(Q)$, which can be taken to be isometric if $\varphi$ is isometric.
\end{theorem}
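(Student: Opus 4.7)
The plan is to precompose $\varphi$ with an automorphism $\beta$ of $\tensor(P)$ so that $\psi := \varphi \circ \beta$ becomes regular. By property $\Diamond_1$ (Proposition \ref{proposition:reduction-to-equivalence-classes}), it suffices to arrange regularity at the representatives $\mu \in \rp$, since this propagates to all of $\St$. Property $\Diamond_3$ (Proposition \ref{proposition:equivalene-preserving}) provides an equivalence-preserving bijection $\Upsilon_\varphi : I_P \to I_Q$, so for each $\mu \in \rp$ we may set $\mu' = \Upsilon_\varphi(\mu) \in \mathcal{R}_Q$ and treat each class independently.

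For each such $\mu$, the map $\widehat{\varphi}_{p_\mu, p_{\mu'}} : p_\mu \tensor(P) p_\mu \to p_{\mu'} \tensor(Q) p_{\mu'}$ is an isometric isomorphism between copies of the disc algebra (Proposition \ref{proposition:isometric-one-dim} and Remark \ref{remark:reducing-by-1-dim}). Proposition \ref{prop:fj} then yields the Moebius form $f^\mu_\varphi(z) = e^{i\theta_\mu}(z - w_\mu)/(1 - \overline{w_\mu}z)$. The target is to produce $\beta$ such that $f^\mu_\beta(0) = w_\mu$ for every $\mu \in \rp$, because then $f^\mu_{\varphi \circ \beta}(0) = f^\mu_\varphi(w_\mu) = 0$, establishing regularity of $\psi$ at $\mu$. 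By $\Diamond_1$ this gives regularity at every $j \in \St$.

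The main obstacle is that property $\Diamond_2$ only provides automorphisms $\alpha_\Lambda$ whose disc restrictions $f^\mu_{\alpha_\Lambda}(z) = \lambda_\mu z$ fix the origin, so no composition with any $\alpha_\Lambda$ can shift a nonzero $w_\mu$ to zero. Thus one must construct a new family of automorphisms of $\tensor(P)$ whose restriction to each reducing corner $p_\mu \tensor(P) p_\mu$ realizes a genuine (non-rotational) Moebius transformation. The natural approach is to take $\beta = \Ad_W$ for an invertible $W \in \mathcal{L}(\mathcal{F}_{Arv(P)})$ assembled coherently over each class from the shifts $\{S^{(1)}_{E_{jj}}\}_{j \sim_P \mu}$ via resolvent-type formulas, namely the classical lift of the disc automorphism $z \mapsto (z-w_\mu)/(1-\overline{w_\mu}z)$ to the Fock space of a single shift. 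The crux is to define $W$ globally on $\mathcal{F}_{Arv(P)}$ respecting the streamlined-path combinatorics that underlie $\sim_P$ (in the spirit of how $V^\Lambda$ was built class by class in $\Diamond_2$), and to verify that $\Ad_W$ maps $\tensor(P)$ boundedly onto itself.

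For the isometric case, note that Fock-space lifts of Moebius transformations of $\disk$ are unitary, so $W$ can be chosen unitary and $\Ad_W$ becomes isometric; the resulting $\psi = \varphi \circ \beta$ is then an isometric regular isomorphism. The hard part is the concrete construction and verification of $\beta$: ensuring that the per-class Moebius lifts assemble into a globally defined bounded automorphism of $\tensor(P)$, and that the resulting composition $\psi$ is admissible (automatic via Proposition \ref{proposition:bounded-automatic-admisibility} when $\St$ is finite, otherwise requiring direct verification). Once this construction is accomplished, the equivalences $f^\mu_\psi(0) = 0$ for $\mu \in \rp$ together with $\Diamond_1$ deliver regularity of $\psi$.
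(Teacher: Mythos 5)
Your proposal correctly isolates the obstacle --- the automorphisms $\alpha_\Lambda$ of property $\Diamond_2$ act as rotations on each disc-algebra corner and therefore cannot move a nonzero $w_\mu$ to the origin --- but the resolution you propose is precisely the step you do not carry out, and it is the entire content of the theorem. You ask for a \emph{global} automorphism $\beta = \Ad_W$ of $\tensor(P)$ whose restriction to each corner $p_\mu \tensor(P) p_\mu \cong \mathbb{A}(\disk)$ is a genuine (non-rotational) Moebius transformation. Such a map would have to send the degree-one generator $S^{(1)}_{E_{\mu\mu}}$ to an element with a nonzero degree-zero Fourier component, and there is no evident way to extend this multiplicatively and boundedly to all of $\tensor(P)$ compatibly with the off-corner pieces $p_i\tensor(P)p_\mu$ and the streamlined-path relations; the paper neither constructs such automorphisms nor suggests they exist, and if they did, most of the singular/regular machinery of this section would be unnecessary. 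Declaring this ``the hard part'' and leaving it as a construction-to-be-done is a genuine gap, not a proof.

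The paper's actual argument avoids the construction entirely. Instead of precomposing with a single new automorphism, it forms the five-fold composition
$\Psi_{\Lambda,\Theta} = \varphi \circ \alpha_\Lambda \circ \varphi^{-1} \circ \alpha_\Theta \circ \varphi$,
using only the rotation automorphisms supplied by $\Diamond_2$ on both $\tensor(P)$ and $\tensor(Q)$. On the level of the induced maps of $\overline{\disk}$ this gives, with $T_j = f^j_\varphi$ and property $\Diamond_3$ identifying $f^{j'}_{\varphi^{-1}} = T_j^{-1}$,
$f^j_{\Lambda,\Theta}(0) = T_j\bigl( \theta'_j\, T_j^{-1}( \lambda_j\, T_j(0))\bigr)$;
the Moebius transformations you were trying to manufacture are thus supplied for free by $T_j$ and $T_j^{-1}$, and an elementary geometric argument about rotating circles shows the two unimodular parameters $(\lambda_j,\theta'_j)$ can always be chosen per class to make this vanish. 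Properties $\Diamond_1$ and $\Diamond_3$ then propagate regularity from $\rp$ to all of $\Omega$, and isometry of $\Psi_{\Lambda,\Theta}$ in the isometric case is immediate since the $\alpha_\Lambda, \alpha_\Theta$ are isometric. If you want to salvage your outline, replace the unconstructed $\beta$ with this sandwich; as written, the proof is incomplete at its central step.
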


\begin{proof}
Let $\varphi : \mathcal{T}_+(Q) \rightarrow \mathcal{T}_+(P)$ be
an algebraic isomorphism. For each 
$(\Lambda, \Theta) \in \torus^\rp \times \torus^{\calr_Q}$,
let $\alpha_\Lambda$ and $\alpha_\Theta$ be the
automorphisms of $\tensor(P)$ and $\tensor(Q)$, respectively,
provided by property $\Diamond_2$, and consider
the isomorphism from $\mathcal{T}_+(Q)$ to $\mathcal{T}_+(P)$ given by
$$
\Psi_{\Lambda, \Theta} = 
\varphi \circ \alpha_\Lambda \circ \varphi^{-1} \circ
\alpha_{\Theta} \circ \varphi 
$$
We will prove that
there exists a pair $(\Lambda_0, \Theta_0)$ such that 
$\psi  = \Psi_{\Lambda_0, \Theta_0}$ has the property that 
$f^j_\psi(0)=0$ for every $j \in \rp$. By Proposition~\ref{prop:fj} and property $\Diamond_1$, such an isomorphism $\psi$ is regular. Furthermore, if $\varphi$ is isometric, then $\psi$ is isometric by property $\Diamond_2$.

For simplicity, for every $j \in \mathcal{R}_P$ and for each pair $(\Lambda, \Theta) \in \torus^\rp \times \torus^{\calr_Q}$,
let us denote $f_{\Lambda, \Theta}^j = 
f_{\Psi_{\Lambda, \Theta}}^j$. Let 
$j'=\overline{\Upsilon}_\varphi(j)$, and notice that 
$j = \overline{\Upsilon}_{\varphi^{-1}}(j') = (\overline{\Upsilon}_\varphi)^{-1}(j')$ by property 
$\Diamond_3$. Therefore, by contravariance, 
we have that
$$
f_{\Lambda, \Theta}^j = f_\varphi^j \circ 
f^{j'}_{\alpha_\Theta} \circ
f_{\varphi^-1}^{j'} \circ
f^{j}_{\alpha_\Lambda} \circ
f_\varphi^j
$$
Let us denote by $T_j = f^j_\varphi$ and 
$\theta'_j = \theta_{j'}$. By Proposition~\ref{proposition:isometric-one-dim} and 
property $\Diamond_3$, we have that
$f_{\varphi^-1}^{j'} = T_j^{-1}$. Thus we have
that for every $j\in \rp$,
$$
f_{\Lambda, \Theta}^j(0) = 
f_\varphi^j \circ 
f^{j'}_{\alpha_\Theta} \circ
f_{\varphi^-1}^{j'} \circ
f^{j}_{\alpha_\Lambda} \circ
f_\varphi^j (z) = T_j( \theta'_{j} T_j^{-1}( \lambda_j( T_j(0))))
$$
Since $T_j$ is a Moebius transformation, it is an
elementary fact that there exist $\theta'_j , \lambda_j \in \torus$ such that $f_{\Lambda, \Theta}^j(0)=0$. Indeed,
if $T_j(0)=0$ this is trivial. If $T_j(0)\neq 0$, then 
$C_j = \torus \cdot T_j(0)$ is a circle centered at
origin (and not containing it). On the other hand, since $T_j$ is a Moebius transformation, $T_j^{-1}(C_j)$ is a circle and it clearly
contains the origin, since $T_j(0) \in C_j$. Therefore, the
larger circle $C'_j = \torus \cdot T_j^{-1}(C_j)$ obtained by its rotation contains the interior of $T_j^{-1}(C_j)$. It follows that $T_j(C'_j)$ is a disk that will contain the interior of $C_j$, and
therefore it contains the origin.
\end{proof}

\begin{corollary} \label{corollary:isometric-case}
Let $P$ and $Q$ be stochastic matrices over $\St$. If there exists an isometric isomorphism $\varphi : \mathcal{T}_+(P) \rightarrow \mathcal{T}_+(Q)$ then there exists a \emph{graded} completely isometric isomorphism $\check{\varphi} : \mathcal{T}_+(P) \rightarrow \mathcal{T}_+(Q)$.
\end{corollary}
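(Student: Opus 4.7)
The plan is to combine Theorem~\ref{theorem:main-tool} with the semi-graded machinery built up in Section~\ref{section:tensor-algebras} to deduce the conclusion essentially without further work. First I would apply Theorem~\ref{theorem:main-tool} to the given isometric isomorphism $\varphi$: since it is isometric, the theorem produces an isometric isomorphism $\psi:\mathcal{T}_+(P)\to \mathcal{T}_+(Q)$ with the additional property of being regular, meaning that $p_i$ is $\psi$-regular for every $i\in \St$.

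Next I would promote $\psi$ to a semi-graded map. By Lemma~\ref{lemma:isometric-automatic-admissibility}, $\psi$ is automatically admissible, so in particular $\rho_{\psi}$ is a $*$-automorphism of $\vnMaS$. The family $\{p_i\}_{i\in\St}$ is a collection of pairwise orthogonal projections in $\vnMaS$ with $\sum_{i\in\St}p_i = 1_{\vnMaS}$ in the weak-$*$ topology, and by regularity each of them is $\psi$-regular. Proposition~\ref{proposition:condition-for-semi-graded} therefore applies and yields that $\psi$ is semi-graded.

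Finally, I would invoke Proposition~\ref{proposition:isometric-semi-graded-turns-graded} for the semi-graded isometric isomorphism $\psi$: it produces the associated map $\widetilde{\psi}$, defined uniquely by $\widetilde{\psi}(S^{(n)}_{\xi}) = \Phi_n(\psi(S^{(n)}_{\xi}))$ for $\xi\in Arv(P)_n$, and guarantees that $\widetilde{\psi}$ is a \emph{graded} completely isometric isomorphism from $\mathcal{T}_+(P)$ onto $\mathcal{T}_+(Q)$. Setting $\check{\varphi}:=\widetilde{\psi}$ gives the desired map.

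Since all genuine difficulty has already been absorbed into Theorem~\ref{theorem:main-tool} (where the equivalence relation $\sim_P$ and its three properties $\Diamond_1$, $\Diamond_2$, $\Diamond_3$ were used to absorb singular projections into rotations), there is essentially no obstacle left in this corollary; the only thing to verify is that each step of the chain \emph{isometric} $\Rightarrow$ \emph{regular isometric} $\Rightarrow$ \emph{admissible and regular isometric} $\Rightarrow$ \emph{semi-graded isometric} $\Rightarrow$ \emph{graded completely isometric} applies under the hypotheses available, which is straightforward from the cited results.
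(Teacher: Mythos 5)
Your proposal is correct and follows exactly the chain used in the paper's own proof: Theorem~\ref{theorem:main-tool} to obtain a regular isometric isomorphism, Lemma~\ref{lemma:isometric-automatic-admissibility} for admissibility, Proposition~\ref{proposition:condition-for-semi-graded} applied to the family $\{p_i\}_{i\in\St}$ for semi-gradedness, and Proposition~\ref{proposition:isometric-semi-graded-turns-graded} to conclude. No gaps.
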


\begin{proof}
If there exists an isometric isomorphism $\varphi : \mathcal{T}_+(P) \rightarrow \mathcal{T}_+(Q)$ then there exists a \emph{regular} isometric isometric isomorphism and since it must be admissible by Lemma \ref{lemma:isometric-automatic-admissibility}, Proposition \ref{proposition:condition-for-semi-graded} implies that it is semi-graded. Thus, by Proposition \ref{proposition:isometric-semi-graded-turns-graded} there exists a \emph{graded} completely isometric isomorphism.
\end{proof}

Further, if one takes $\St$ to be a finite set in Theorem \ref{theorem:main-tool}, then one obtains

\begin{corollary}\label{corollary:bounded-case}
Let $P$ and $Q$ be stochastic matrices over \emph{finite} $\St$. If there exists an isomorphism $\varphi : \mathcal{T}_+(P) \rightarrow \mathcal{T}_+(Q)$ then there exists a \emph{graded} completely bounded isomorphism $\check{\varphi} : \mathcal{T}_+(P) \rightarrow \mathcal{T}_+(Q)$.
\end{corollary}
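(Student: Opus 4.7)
The plan is to combine Theorem \ref{theorem:main-tool} with the structural results on admissibility and semi-gradedness developed earlier, exploiting the fact that finiteness of $\Omega$ yields automatic admissibility. Given an algebraic isomorphism $\varphi : \mathcal{T}_+(P) \to \mathcal{T}_+(Q)$, I would first invoke Theorem \ref{theorem:main-tool} to produce a \emph{regular} isomorphism $\psi : \mathcal{T}_+(P) \to \mathcal{T}_+(Q)$; this is the substantive step, and all the hard analytic work (the construction using rotations in disc-algebra automorphisms, Properties $\Diamond_1$, $\Diamond_2$, $\Diamond_3$ of $\sim_P$) is already encapsulated in that theorem.

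Next, I would upgrade $\psi$ from regular to semi-graded. Because $\Omega$ is finite, Proposition \ref{proposition:bounded-automatic-admisibility} applies and guarantees that $\psi$ is admissible. The family $\{p_i\}_{i\in\Omega}$ consists of pairwise-perpendicular projections in the commutative von Neumann algebra $\ell^\infty(\Omega)$, summing to the identity; since $\psi$ is regular, every $p_i$ is $\psi$-regular. Thus the hypotheses of Proposition \ref{proposition:condition-for-semi-graded} are satisfied, and I conclude that $\psi$ is semi-graded.

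Finally, I would apply Proposition \ref{proposition:bounded-semi-graded-turns-graded} to the semi-graded bounded admissible isomorphism $\psi$. This produces the desired map $\check{\varphi} := \widetilde{\psi}$ defined fiberwise by $\check{\varphi}(S^{(n)}_\xi) = \Phi_n(\psi(S^{(n)}_\xi))$, which is a graded completely bounded admissible isomorphism with $\|\check{\varphi}\|_{cb} \leq \|\psi\|\cdot\|\psi^{-1}\|$. This is exactly the conclusion needed.

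Since the essential building blocks — Theorem \ref{theorem:main-tool}, Propositions \ref{proposition:bounded-automatic-admisibility}, \ref{proposition:condition-for-semi-graded}, and \ref{proposition:bounded-semi-graded-turns-graded} — are all in place, the corollary reduces to a routine assembly. The only place where finiteness of $\Omega$ is genuinely used (beyond what Theorem \ref{theorem:main-tool} already provides) is in securing automatic admissibility; with $\Omega$ infinite one would either need to hypothesize admissibility separately or otherwise upgrade $\rho_\psi$ to a $*$-automorphism. There is therefore no substantive obstacle at this stage; the work is in the preceding machinery.
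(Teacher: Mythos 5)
Your proposal is correct and follows exactly the paper's own argument: Theorem~\ref{theorem:main-tool} to obtain a regular isomorphism, Proposition~\ref{proposition:bounded-automatic-admisibility} for automatic admissibility in the finite case, Proposition~\ref{proposition:condition-for-semi-graded} to upgrade to semi-graded, and Proposition~\ref{proposition:bounded-semi-graded-turns-graded} to produce the graded completely bounded isomorphism. No gaps; this matches the paper's proof step for step.
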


\begin{proof}
Since $\St$ is finite, the regular $\varphi$ assured by Theorem \ref{theorem:main-tool} must be admissible by Proposition \ref{proposition:bounded-automatic-admisibility}, and by an appeal to Proposition \ref{proposition:condition-for-semi-graded} we see that $\varphi$ is semi-graded. Finally, by Proposition \ref{proposition:bounded-semi-graded-turns-graded} we arrive at the desired isomorphism $\check{\varphi}$.
\end{proof}

We now rephrase our results in a unified manner. Recall 
Theorems~\ref{theorem:telling-apart} and 
\ref{theorem:finite-essential-subproduct}.

\begin{theorem}[Isometric tensor algebra isomorphism] \label{theorem:A}
Let $P$ and $Q$ be stochastic matrices over a set $\St$. Then the following are equivalent
\begin{enumerate}
\item
There exists a $\rho$-unitary isomorphism from $Arv(P)$ to $Arv(Q)$ for some *-automorphism $\rho$ of $\vnMaS$.
\item
There exists a graded completely isometric isomorphism $\varphi: \mathcal{T}_+(P) \rightarrow \mathcal{T}_+(Q)$
\item
There exists an isometric isomorphism $\varphi : \mathcal{T}_+(P) \rightarrow \mathcal{T}_+(Q)$
\end{enumerate}
If moreover $P$ and $Q$ are \emph{recurrent}, then the above conditions are equivalent to $P \cong_{\sigma} Q$ for some permutation $\sigma$ of $\St$.
\end{theorem}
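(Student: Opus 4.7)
The plan is to establish the three equivalences in a cycle and then handle the recurrent addendum separately. All the technical content needed for this theorem has already been packaged in the preceding sections, so the proof amounts to stringing the right ingredients together in the right order.

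First I would show $(1) \Rightarrow (2)$ as an immediate consequence of Corollary \ref{corollary:isometric-isomorphism}(1): a unitary $\rho$-isomorphism $V: Arv(P) \to Arv(Q)$ induces the map $\Ad_V : \tensor(P) \to \tensor(Q)$, which is a completely isometric isomorphism, and is graded by construction since each $V_n$ maps $Arv(P)_n$ onto $Arv(Q)_n$. The implication $(2) \Rightarrow (3)$ is trivial. For $(2) \Rightarrow (1)$, I would close the cycle using Corollary \ref{corollary:isometric-isomorphism}(2): given a graded (completely) isometric $\varphi: \tensor(P) \to \tensor(Q)$, the family $V^{\varphi} = (V_n^{\varphi})$ determined by $S^{(n)}_{V^{\varphi}_n(\xi)} = \varphi(S^{(n)}_{\xi})$ is a unitary $\rho_\varphi$-isomorphism of the subproduct systems, so that $Arv(P) \cong_{\rho_\varphi} Arv(Q)$.

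The main direction is $(3) \Rightarrow (2)$. I would invoke Corollary \ref{corollary:isometric-case} directly, which already bundles the key work: starting from an isometric $\varphi : \tensor(P) \to \tensor(Q)$, Theorem \ref{theorem:main-tool} produces a regular isometric isomorphism; by Lemma \ref{lemma:isometric-automatic-admissibility} this regular isomorphism is automatically admissible, hence semi-graded by Proposition \ref{proposition:condition-for-semi-graded} (since all projections $p_i$ are $\varphi$-regular, property (A1) together with regularity yields semi-gradedness), and finally Proposition \ref{proposition:isometric-semi-graded-turns-graded} extracts from it a graded completely isometric isomorphism. This is the step where all the machinery of reducing projections, the equivalence relation $\sim_P$, and the properties $\Diamond_1, \Diamond_2, \Diamond_3$ is actually invoked, and it is the part that would have been the main obstacle had we not already proved Theorem \ref{theorem:main-tool}.

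For the final assertion, assume $P$ and $Q$ are recurrent. If any of (1)--(3) holds then (1) holds, so there exists a $\rho$-unitary isomorphism $Arv(P) \cong_\rho Arv(Q)$; by Theorem \ref{theorem:telling-apart} (which is exactly the recurrence-based rigidity statement) this forces $P \cong_{\sigma_\rho} Q$. Conversely, if $P \cong_\sigma Q$, i.e.\ $R_\sigma^{-1} Q R_\sigma = P$, then $P^{(n)}_{ij} = Q^{(n)}_{\sigma(i)\sigma(j)}$ for every $n\in\nn$ and every $i,j\in\Omega$, so equation \eqref{eq:subproduct-matirx-equiv} is trivially satisfied via $\sigma$; Theorem \ref{theorem:subproduct-systems-isomorphism}(2) then yields $Arv(P) \cong_{\rho_\sigma} Arv(Q)$, which is (1). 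Since all preliminary results are at our disposal, the only work is to verify that this bookkeeping closes correctly; no new estimates are needed.
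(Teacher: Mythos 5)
Your proposal is correct and assembles the argument exactly as the paper intends: the theorem is stated there as a rephrasing of earlier results, with $(1)\Leftrightarrow(2)$ from Corollary~\ref{corollary:isometric-isomorphism}, $(3)\Rightarrow(2)$ from Corollary~\ref{corollary:isometric-case}, and the recurrent addendum from Theorem~\ref{theorem:telling-apart} together with Theorem~\ref{theorem:subproduct-systems-isomorphism}(2). No gaps; the bookkeeping closes as you describe.
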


\begin{theorem}[Algebraic tensor algebra isomorphism for finite matrices] \label{theorem:B}
Let $P$ and $Q$ be \emph{finite} stochastic matrices over a set $\St$. Then the following are equivalent
\begin{enumerate}
\item
There exists a $\rho$-similarity isomorphism from $Arv(P)$ to $Arv(Q)$ for some *-automorphism $\rho$ of $\vnMaS$.
\item
There exists a graded completely bounded isomorphism $\varphi: \mathcal{T}_+(P) \rightarrow \mathcal{T}_+(Q)$
\item
There exists an algebraic isomorphism $\varphi : \mathcal{T}_+(P) \rightarrow \mathcal{T}_+(Q)$
\end{enumerate}
If moreover $P$ and $Q$ are \emph{essential}, then the above conditions are equivalent to $P\sim_{\sigma}Q$ for some permutation $\sigma$ of $\St$.
\end{theorem}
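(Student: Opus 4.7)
The plan is to prove the theorem essentially by assembling the machinery already developed in the preceding sections, organizing the equivalences as a cycle $(1) \Rightarrow (2) \Rightarrow (3) \Rightarrow (2) \Rightarrow (1)$, and then handling the additional equivalence in the essential case separately.

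For $(1) \Rightarrow (2)$, I would simply invoke Proposition~\ref{proposition:graded-tensor-isomorphism}(1): given a $\rho$-similarity $V: Arv(P) \to Arv(Q)$, the map $\Ad_V$ is a graded completely bounded admissible isomorphism of tensor algebras. The implication $(2) \Rightarrow (3)$ is trivial by definition. The content-heavy step is $(3) \Rightarrow (2)$: starting with an algebraic isomorphism $\varphi$, first apply Theorem~\ref{theorem:main-tool} to deform it to a \emph{regular} isomorphism $\psi : \tensor(P) \to \tensor(Q)$; next use Proposition~\ref{proposition:bounded-automatic-admisibility} (finiteness of $\Omega$) to conclude that $\psi$ is automatically admissible; then apply Proposition~\ref{proposition:condition-for-semi-graded} to the family $\{p_i\}_{i\in\Omega}$ of minimal pairwise-orthogonal projections summing to $1$ (all of which are $\psi$-regular since $\psi$ is regular) to conclude that $\psi$ is semi-graded; finally apply Proposition~\ref{proposition:bounded-semi-graded-turns-graded} to produce the desired graded completely bounded admissible isomorphism $\tilde{\psi}$. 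For $(2) \Rightarrow (1)$, I would use Proposition~\ref{proposition:graded-tensor-isomorphism}(2): a graded bounded admissible isomorphism (admissibility again automatic by Proposition~\ref{proposition:bounded-automatic-admisibility}) yields a $\rho_\varphi$-similarity between the underlying subproduct systems.

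For the additional statement in the essential case, the equivalence with the graph isomorphism $P \sim_\sigma Q$ follows immediately from the two items of Theorem~\ref{theorem:finite-essential-subproduct}: item (1) gives the forward implication (the existence of a $\rho$-similarity forces $P \sim_{\sigma_\rho} Q$, with no essentiality hypothesis required), and item (2) gives the converse in the essential finite case (a graph isomorphism $P \sim_\sigma Q$ produces a $\rho_\sigma$-similarity, using the convergence theorem for positive-recurrent irreducible matrices to bound the norms uniformly).

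The genuinely nontrivial work is already encapsulated in Theorem~\ref{theorem:main-tool} (the existence of a regular isomorphism, whose proof uses the three properties $\Diamond_1, \Diamond_2, \Diamond_3$ of $\sim_P$ to move the Moebius transformations $f^j_\varphi$ so that they all fix the origin), and in the construction of Theorem~\ref{theorem:finite-essential-subproduct}(2) (where the uniform boundedness of the similarity is extracted from the cyclic decomposition and convergence to the stationary distribution). The proof of Theorem~\ref{theorem:B} itself is therefore short: it is just a matter of chaining together the cited results, with no further obstacles, and I would expect the write-up to occupy a few lines only.
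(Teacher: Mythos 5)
Your proposal is correct and follows essentially the same route as the paper, which presents Theorem~\ref{theorem:B} as an assembly of Corollary~\ref{corollary:bounded-case} (itself proved exactly by your chain: Theorem~\ref{theorem:main-tool}, then Propositions~\ref{proposition:bounded-automatic-admisibility}, \ref{proposition:condition-for-semi-graded} and \ref{proposition:bounded-semi-graded-turns-graded}), Proposition~\ref{proposition:graded-tensor-isomorphism} for the passage between similarities and graded isomorphisms, and Theorem~\ref{theorem:finite-essential-subproduct} for the essential case. No gaps; your handling of the automatic admissibility needed in the step $(2)\Rightarrow(1)$ is exactly what the paper relies on.
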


\begin{example}
For every $0<r<1/2$ let $P_r = \begin{bmatrix}
r & (1-r) \\
r & (1-r) 
\end{bmatrix}.
$
Then it follows from the previous theorem that 
$\tensor(P_r)$ is isomorphic to $\tensor(P_s)$ for
every $r\neq s \in (0,1/2)$, however the two algebras
are isometrically isomorphic only when $r=s$.
\end{example}

We note that when $P$ and $Q$ are essential (and $\St$ is possibly 
infinite), it is possible to prove more directly, and
without recourse to Theorem \ref{theorem:main-tool},
that the existence of an \emph{admissible} algebraic (resp. isometric)
isomorphism from $\tensor(P)$ to $\tensor(Q)$
implies the existence of a graded algebraic
(resp. isometric) isomophism from $\tensor(P)$ to $\tensor(Q)$.
For instance, under those conditions one can proceed as in the proof of Corollary~\ref{corollary:essential-case}, after 
fixing the $1 \times 1$ irreducible blocks by using Theorem~\ref{theorem:finite-essential-subproduct}. 
 The main point of Theorem \ref{theorem:main-tool} was to deal with the general, non-essential case.

\section*{Acknowledgments} We would like to thank Orr Shalit for his many helpful 
remarks which substantially refined this work. We also
thank Kenneth Dadvison, Ilan Hirshberg, N. Christopher Phillips 
and Baruch Solel for many useful comments. 
The first author would also like to thank Ariel Yadin, for many 
discussions and insights on stochastic matrices.


\providecommand{\bysame}{\leavevmode\hbox to3em{\hrulefill}\thinspace}
\providecommand{\MR}{\relax\ifhmode\unskip\space\fi MR }
\providecommand{\MRhref}[2]{%
  \href{http://www.ams.org/mathscinet-getitem?mr=#1}{#2}
}
\providecommand{\href}[2]{#2}

\end{document}